\newcommand{\Fg}{\mathfrak{g}}
\newcommand{\Fh}{\mathfrak{h}}
\newcommand{\Fsl}{\mathfrak{sl}}
\newcommand{\CB}{\mathcal{B}}
\newcommand{\BC}{\mathbb{C}}
\newcommand{\BR}{\mathbb{R}}
\newcommand{\BZ}{\mathbb{Z}}
\newcommand{\mv}{\mathcal{MV}}
\newcommand{\bz}{\mathcal{B\hspace{-1.5pt}Z}}
\newcommand{\Gr}{\mathcal{G}r}
\newcommand{\Hom}{\mathop{\rm Hom}\nolimits}
\newcommand{\wt}{\mathop{\rm wt}\nolimits}
\newcommand{\Wt}{\mathop{\rm Wt}\nolimits}
\newcommand{\conv}{\mathop{\rm Conv}\nolimits}
\newcommand{\res}{\mathop{\rm res}\nolimits}
\newcommand{\Int}{\mathop{\rm Int}\nolimits}
\newcommand{\pair}[2]{\langle #1,\,#2 \rangle}
\newcommand{\ve}{\varepsilon}
\newcommand{\vp}{\varphi}
\newcommand{\vpi}{\varpi}
\newcommand{\ha}[1]{\widehat{#1}}
\newcommand{\ol}[1]{\overline{#1}}
\newcommand{\bzero}{{\bf 0}}
\newcommand{\bM}{{\bf M}}
\newcommand{\bO}{{\bf O}}
\newcommand{\bqed}{\quad \hbox{\rule[-0.5pt]{3pt}{8pt}}}
\renewcommand\section{\@startsection{section}{1}{0pt}
{-3.5ex plus -1ex minus -.2ex}{1.0ex plus .2ex}{\large\bf}}
\renewcommand\subsection{\@startsection{subsection}{1}{0pt}
{2.5ex plus 1ex minus .2ex}{-1em}{\bf}}
\newcommand{\vsp}{\vspace{3mm}}
\theoremstyle{plain}
\newtheorem{thm}{Theorem}[subsection]
\newtheorem{lem}[thm]{Lemma}
\newtheorem{prop}[thm]{Proposition}
\newtheorem{fact}[thm]{Fact}
\newtheorem{claim}{Claim}[thm]
\newtheorem*{claim*}{Claim}
\theoremstyle{definition}
\newtheorem{dfn}[thm]{Definition}
\theoremstyle{remark}
\newtheorem{rem}[thm]{Remark}
\newtheorem{ex}[thm]{Example}
\begin{document}

\setlength{\baselineskip}{18pt}

\title{\Large\bf Toward Berenstein-Zelevinsky data in affine type $A$ \\[3mm]
I: Construction of affine analogs}
\author{
 Satoshi Naito \\ 
 \small Institute of Mathematics, University of Tsukuba, \\
 \small Tsukuba, Ibaraki 305-8571, Japan \ 
 (e-mail: {\tt naito@math.tsukuba.ac.jp}) \\[5mm]
 Daisuke Sagaki \\ 
 \small Institute of Mathematics, University of Tsukuba, \\
 \small Tsukuba, Ibaraki 305-8571, Japan \ 
 (e-mail: {\tt sagaki@math.tsukuba.ac.jp})
 \\[2mm] and \\[2mm]
 Yoshihisa Saito \\ 
 \small Graduate School of Mathematical Sciences, University of Tokyo, \\
 \small Meguro-ku, Tokyo 153-8914, Japan \ 
 (e-mail: {\tt yosihisa@ms.u-tokyo.ac.jp})
}
\date{}
\maketitle

%=======================%
%     START ABSTRACT    %
%=======================%
%
\begin{abstract} \setlength{\baselineskip}{16pt}
We give (conjectural) analogs of Berenstein-Zelevinsky data 
for affine type $A$. Moreover, by using these affine analogs of 
Berenstein-Zelevinsky data, we realize the crystal basis of the 
negative part of the quantized universal enveloping algebra of the 
(Langlands dual) Lie algebra of affine type $A$.
\end{abstract}
%
%=========================%
%     START SECTION 01    %
%=========================%
%
\section{Introduction.}
\label{sec:intro}
This paper provides the first step in our attempt 
to construct and describe analogs of Mirkovi\'{c}-Vilonen 
(MV for short) polytopes for affine Lie algebras. 
In this paper, we concentrate on the case of affine type $A$, 
and construct (conjectural) affine analogs of Berenstein-Zelevinsky 
(BZ for short) data. Furthermore, using these affine analogs of BZ data, 
we give a realization of the crystal basis of the negative part 
of the quantized universal enveloping algebra associated to (the 
Langlands dual Lie algebra of) the affine Lie algebra of affine type $A$.
Here we should mention that in the course of the much more sophisticated 
discussion toward the (conjectural) geometric Satake correspondence 
for a Kac-Moody group of affine type $A$, Nakajima \cite{Nak} constructed 
affine analogs of MV cycles by using his quiver varieties; see also 
\cite{BF1}, \cite{BF2}. 

Let $G$ be a semisimple algebraic group over $\BC$ with 
(semisimple) Lie algebra $\Fg$. Anderson \cite{A} introduced 
MV polytopes for $\Fg$ as moment polytopes of MV cycles in the 
affine Grassmannian $\Gr$ associated to $G$, and, on the basis of 
the geometric Satake correspondence, used them to count weight 
multiplicities and tensor product multiplicities for finite-dimensional 
irreducible representations of the Langlands dual group $G^{\vee}$ of $G$.

Soon afterward, Kamnitzer \cite{Kam1}, \cite{Kam2} gave a combinatorial 
characterization of MV polytopes in terms of BZ data; a BZ datum is 
a collection of integers (indexed by the set of chamber weights) 
satisfying the edge inequalities and tropical Pl\"ucker relations. 
To be more precise, let $W_{I}$ be the Weyl group of $\Fg$, and 
$\vpi_{i}^{I}$, $i \in I$, the fundamental weights, where $I$ is 
the index set of simple roots; the set $\Gamma_{I}$ of chamber weights 
is by definition $\Gamma_{I}:=\bigcup_{i \in I} W_{I}\vpi_{i}^{I}$. 
Then, for a BZ datum $\bM=(M_{\gamma})_{\gamma \in \Gamma_{I}}$ with 
$M_{\gamma} \in \BZ$, the corresponding MV polytope $P(\bM)$ is given by:
\begin{equation*}
P(\bM)=\bigl\{
 h \in (\Fh_{I})_{\BR} \mid 
 \text{$\pair{h}{\gamma} \ge M_{\gamma}$ for all $\gamma \in \Gamma_{I}$}
\bigr\},
\end{equation*}
where $(\Fh_{I})_{\BR}$ is a real form of the Cartan subalgebra $\Fh_{I}$ 
of $\Fg$, and $\pair{\cdot\,}{\cdot}$ is the canonical pairing between 
$\Fh_{I}$ and $\Fh_{I}^{\ast}$. We denote by $\bz_{I}$ the set of all BZ 
data $\bM=(M_{\gamma})_{\gamma \in \Gamma_{I}}$ such that 
$M_{w_{0}^{I}\vpi_{i}^{I}}=0$ for all $i \in I$, 
where $w_{0}^{I} \in W_{I}$ is the longest element.

Now, let $\ha{\Fg}$ denote the affine Lie algebra of type $A_{\ell}^{(1)}$ 
over $\BC$ with Cartan subalgebra $\ha{\Fh}$, and 
$\ha{A}=(\ha{a}_{ij})_{i,\,j \in \ha{I}}$ its Cartan matrix
with index set $\ha{I}=\bigl\{0,\,1,\,\dots,\,\ell\bigr\}$, 
where $\ell \in \BZ_{\ge 2}$ is a fixed integer. 
Before constructing the set of (conjectural) analogs of BZ data 
for the affine Lie algebra $\ha{\Fg}$, we need to construct the set 
$\bz_{\BZ}$ of BZ data of type $A_{\infty}$.

Let $\Fsl_{\infty}(\BC)$ denote the infinite rank Lie algebra over $\BC$ 
of type $A_{\infty}$ with Cartan subalgebra $\Fh$, and 
$A_{\BZ}=(a_{ij})_{i,\,j \in \BZ}$ its Cartan matrix with index set $\BZ$. 
Let $W_{\BZ}=\langle s_{i} \mid i \in \BZ \rangle \subset GL(\Fh^{\ast})$ 
be the Weyl group of $\Fsl_{\infty}(\BC)$, and 
$\Lambda_{i} \in \Fh^{\ast}$, $i \in \BZ$, the fundamental weights; 
the set $\Gamma_{\BZ}$ of chamber weights for $\Fsl_{\infty}(\BC)$ 
is defined to be the set
\begin{equation*}
\Gamma_{\BZ}:=\bigcup_{i \in \BZ} \bigl(-W_{\BZ}\Lambda_{i}\bigr)
 = \bigl\{-w\Lambda_{i} \mid w \in W_{\BZ},\,i \in \BZ\bigr\},
\end{equation*}
not to be the set $\bigcup_{i \in \BZ} W_{\BZ}\Lambda_{i}$. 
Then, for each finite interval $I$ in $\BZ$, we can (and do) 
identify the set $\Gamma_{I}$ of chamber weights for the 
finite-dimensional simple Lie algebra $\Fg_{I}$ over $\BC$ of 
type $A_{|I|}$ with the subset 
$\bigl\{-w\Lambda_{i} \mid w \in W_{I},\,i \in I\bigr\}$, 
where $|I|$ denotes the cardinality of $I$, and 
$W_{I}=\langle s_{i} \mid i \in I \rangle \subset W_{\BZ}$ 
is the Weyl group of $\Fg_{I}$ (see \S\ref{subsec:notation-inf} for details).
Here we note that the family 
$\bigl\{\bz_{I} \mid \text{$I$ is a finite interval in $\BZ$}\bigr\}$ 
forms a projective system (cf. Lemma~\ref{lem:res1}). 

Using the projective system 
$\bigl\{\bz_{I} \mid \text{$I$ is a finite interval in $\BZ$}\bigr\}$ above, 
we define the set $\bz_{\BZ}$ of BZ data of type $A_{\infty}$ 
to be a kind of projective limit, with a certain stability constraint, of the system 
$\bigl\{\bz_{I} \mid \text{$I$ is a finite interval in $\BZ$}\bigr\}$; 
see Definition~\ref{dfn:BZdatum2} for a precise statement. 
Because of this stability constraint, we can endow 
the set $\bz_{\BZ}$ a crystal structure for the Lie algebra 
$\Fsl_{\infty}(\BC)$ of type $A_{\infty}$. 

Finally, recall the fact that the Dynkin diagram of type $A_{\ell}^{(1)}$ 
can be obtained from that of type $A_{\infty}$ by the operation of ``folding''
under the Dynkin diagram automorphism $\sigma:\BZ \rightarrow \BZ$ in type $A_{\infty}$ 
given by: $\sigma(i)=i+\ell-1$ for $i \in \BZ$, where $\ell \in \BZ_{\ge 2}$. 
In view of this fact, we consider the fixed point subset $\bz_{\BZ}^{\sigma}$ 
of $\bz_{\BZ}$ under a natural action of 
the Dynkin diagram automorphism $\sigma:\BZ \rightarrow \BZ$. 
Then, we can endow a crystal structure (canonically induced by 
that on $\bz_{\BZ}$) for the quantized universal enveloping algebra 
$U_{q}(\ha{\Fg}^{\vee})$ associated to the (Langlands) dual Lie algebra 
$\ha{\Fg}^{\vee}$ of $\ha{\Fg}$. 

However, the crystal $\bz_{\BZ}^{\sigma}$ for $U_{q}(\ha{\Fg}^{\vee})$ 
may be too big for our purpose. Therefore, we restrict our attention to 
the connected component $\bz_{\BZ}^{\sigma}(\bO)$ of the crystal 
$\bz_{\BZ}^{\sigma}$ containing the BZ datum $\bO$ of type $A_{\infty}$ 
whose $\gamma$-component is equal to $0$ for each $\gamma \in \Gamma_{\BZ}$. 
Our main result (Theorem~\ref{thm:main}) states that the crystal 
$\bz_{\BZ}^{\sigma}(\bO)$ is isomorphic, as a crystal 
for $U_{q}(\ha{\Fg}^{\vee})$, to the crystal basis $\ha{\CB}(\infty)$
of the negative part $U_{q}^{-}(\ha{\Fg}^{\vee})$ of $U_{q}(\ha{\Fg}^{\vee})$. 
Moreover, for each dominant integral weight $\ha{\lambda} \in \ha{\Fh}$ 
for $\ha{\Fg}^{\vee}$, the crystal basis $\ha{\CB}(\ha{\lambda})$ of 
the irreducible highest weight $U_{q}(\ha{\Fg}^{\vee})$-module of 
highest weight $\ha{\lambda}$ can be realized 
as a certain explicit subset of $\bz_{\BZ}^{\sigma}(\bO)$ 
(see Theorem~\ref{thm:main2}). In fact, we first prove Theorem~\ref{thm:main2}
by using Stembridge's result on a characterization of highest weight crystals
for simply-laced Kac-Moody algebras; then, Theorem~\ref{thm:main} is 
obtained as a corollary. 

Unfortunately, we have not yet found an explicit characterization of 
the connected component $\bz_{\BZ}^{\sigma}(\bO) \subset \bz_{\BZ}^{\sigma}$
in terms of the ``edge inequalities'' and ``tropical Pl\"ucker relations'' 
in type $A_{\ell}^{(1)}$ in a way analogous to the finite-dimensional case; 
we hope to mention such a description of the connected component 
$\bz_{\BZ}^{\sigma}(\bO) \subset \bz_{\BZ}^{\sigma}$ in our 
forthcoming paper \cite{NSS}.
However, from our results in this paper, it seems reasonable to 
think of an element $\bM=(M_{\gamma})_{\gamma \in \Gamma_{\BZ}}$ of 
the crystal $\bz_{\BZ}^{\sigma}(\bO)$ as a (conjectural) analog of 
a BZ datum in affine type $A$.

This paper is organized as follows. In Section~\ref{sec:BZdatum}, 
following Kamnitzer, we review some standard facts about BZ data 
for the simple Lie algebra $\Fg_{I}$ of type $A_{|I|}$, where 
$I \subset \BZ$ is the index set of simple roots with cardinality $m$,
and then show that the system of sets $\bz_{I}$ of BZ data for $\Fg_{I}$, 
where $I$ runs over all the finite intervals in $\BZ$, forms 
a projective system. In Section~\ref{sec:BZdatum-inf}, we introduce 
the notion of BZ data of type $A_{\infty}$, and define Kashiwara 
operators on the set $\bz_{\BZ}$ of BZ data of type $A_{\infty}$. 
Also, we show a technical lemma about some properties of Kashiwara 
operators on $\bz_{\BZ}$. In Section~\ref{sec:BZdatum-aff}, we first
study the action of the Dynkin diagram automorphism $\sigma$ in 
type $A_{\infty}$ on the set $\bz_{\BZ}$. Next, we define the set of 
BZ data of type $A^{(1)}_{\ell}$ to be the fixed point subset $\bz_{\BZ}^{\sigma}$ 
of $\bz_{\BZ}$ under $\sigma$, and endow a canonical crystal structure on it. 
Finally, in Subsections~\ref{subsec:main} and \ref{subsec:proofs}, we state 
and prove our main results (Theorems~\ref{thm:main} and \ref{thm:main2}), 
which give a realization of the crystal basis $\ha{\CB}(\infty)$ for the 
(Langlands dual) Lie algebra $\ha{\Fg}^{\vee}$ of type $A^{(1)}_{\ell}$. 
In the Appendix, we restate Stembridge's result on a characterization of 
simply-laced crystals in a form that will be used in the proofs of the
theorems above. 

%=========================%
%     START SECTION 02    %
%=========================%
%
\section{Berenstein-Zelevinsky data of type $A_{m}$.}
\label{sec:BZdatum}

In this section, 
following \cite{Kam1} and \cite{Kam2}, 
we briefly review some basic facts about 
Berenstein-Zelevinsky (BZ for short) data for 
the complex finite-dimensional simple Lie algebra 
of type $A_{m}$. 

%==============================%
%     START SUBSECTION 0201    %
%==============================%
%
\subsection{Basic notation in type $A_{m}$.}
\label{subsec:notation}

Let $I$ be a fixed (finite) interval in $\BZ$ 
whose cardinality is equal to $m \in \BZ_{\ge 1}$; 
that is, $I \subset \BZ$ is a finite subset of the form: 
%
%%%%%%%%%%%%%%%%%%%
%%% eq:interval %%%
%%%%%%%%%%%%%%%%%%%
%
\begin{equation} \label{eq:interval}
I=\bigl\{ n+1,\,n+2,\,\dots,\,n+m \bigr\}
\quad \text{for some $n \in \BZ$}. 
\end{equation}
Let $A_{I}=(a_{ij})_{i,j \in I}$ denote the Cartan matrix of 
type $A_{m}$ with index set $I$; 
the entries $a_{ij}$ are given by: 
%
%%%%%%%%%%%%%%%%%
%%% eq:cartan %%%
%%%%%%%%%%%%%%%%%
%
\begin{equation} \label{eq:cartan}
a_{ij}=
 \begin{cases}
 2 & \text{if $i=j$}, \\
 -1 & \text{if $|i-j|=1$}, \\
 0 & \text{otherwise},
 \end{cases}
\end{equation}
for $i,\,j \in I$. Let $\Fg_{I}$ be 
the complex finite-dimensional simple Lie algebra 
with Cartan matrix $A_{I}$, Cartan subalgebra $\Fh_{I}$, 
simple coroots $h_{i} \in \Fh_{I}$, $i \in I$, and 
simple roots $\alpha_{i} \in \Fh_{I}^{\ast}:=
\Hom_{\BC}(\Fh_{I},\,\BC)$, $i \in I$; note that 
$\Fh_{I}=\bigoplus_{i \in I} \BC h_{i}$, and that 
$\pair{h_{i}}{\alpha_{j}}=a_{ij}$ for $i,\,j \in I$, 
where $\pair{\cdot\,}{\cdot}$ is the canonical pairing 
between $\Fh_{I}$ and $\Fh_{I}^{\ast}$. 
Denote by $\vpi^{I}_{i} \in \Fh_{I}^{\ast}$, $i \in I$, 
the fundamental weights for $\Fg_{I}$, and by 
$W_{I}:=\langle s_{i} \mid i \in I \rangle \ (\subset GL(\Fh_{I}^{\ast}))$ 
the Weyl group of $\Fg_{I}$, 
where $s_{i}$ is the simple reflection 
for $i \in I$, with $e$ and $w_{0}^{I}$ 
the identity element and the longest element of 
the Weyl group $W_{I}$, respectively. 
Also, we denote by $\le$ the (strong) 
Bruhat order on $W_{I}$. 
The (Dynkin) diagram automorphism for $\Fg_{I}$ is a bijection 
$\omega_{I}:I \rightarrow I$ defined by: 
$\omega_{I}(n+i)=n+m-i+1$ for $1 \le i \le m$ 
(see \eqref{eq:interval} and \eqref{eq:cartan}). 
It is easy to see that for $i \in I$, 
%
%%%%%%%%%%%%%
%%% eq:w0 %%%
%%%%%%%%%%%%%
%
\begin{equation} \label{eq:w0}
w_{0}^{I}(\alpha_{i})=-\alpha_{\omega_{I}(i)}, \qquad
w_{0}^{I}(\vpi^{I}_{i})=-\vpi^{I}_{\omega_{I}(i)}, \qquad
w_{0}^{I}s_{\omega_{I}(i)}=s_{i}w_{0}^{I}. 
\end{equation}

Let $\Fg_{I}^{\vee}$ denote the (Langlands) dual Lie algebra of $\Fg_{I}$; 
that is, $\Fg_{I}^{\vee}$ is the complex finite-dimensional 
simple Lie algebra of type $A_{m}$ associated to 
the transpose ${}^{t}\!A_{I}\, (=A_{I})$ of $A_{I}$, with 
Cartan subalgebra $\Fh_{I}^{\ast}$, 
simple coroots $\alpha_{i} \in \Fh_{I}^{\ast}$, $i \in I$, and 
simple roots $h_{i} \in \Fh_{I}$, $i \in I$. 
Let $U_{q}(\Fg_{I}^{\vee})$ be 
the quantized universal enveloping algebra 
over the field $\BC(q)$ of rational functions in $q$ 
associated to the Lie algebra $\Fg_{I}^{\vee}$, 
$U_{q}^{-}(\Fg_{I}^{\vee})$ the negative part of 
$U_{q}(\Fg_{I}^{\vee})$, and 
$\CB_{I}(\infty)$ the crystal basis of 
$U_{q}^{-}(\Fg_{I}^{\vee})$. 
Also, for a dominant integral weight 
$\lambda \in \Fh_{I}$ for $\Fg_{I}^{\vee}$, 
$\CB_{I}(\lambda)$ denotes the crystal basis of 
the finite-dimensional irreducible highest weight 
$U_{q}(\Fg_{I}^{\vee})$-module of highest weight $\lambda$. 

%==============================%
%     START SUBSECTION 0202    %
%==============================%
%
\subsection{BZ data of type $A_{m}$.}
\label{subsec:BZdatum}

We set
%
%%%%%%%%%%%%%%%%%
%%% eq:GammaI %%%
%%%%%%%%%%%%%%%%%
%
\begin{equation} \label{eq:GammaI}
\Gamma_{I}:=
\bigl\{w\vpi_{i}^{I} \mid w \in W_{I},\,i \in I\bigr\};
\end{equation}
note that by the second equation in \eqref{eq:w0}, 
the set $\Gamma_{I}$ (of chamber weights) coincides with the set 
$-\Gamma_{I}=
\bigl\{-w\vpi_{i}^{I} \mid w \in W_{I},\,i \in I\bigr\}$. 
Let $\bM=(M_{\gamma})_{\gamma \in \Gamma_{I}}$ be a collection 
of integers indexed by $\Gamma_{I}$. 
For each $\gamma \in \Gamma_{I}$, we call $M_{\gamma}$ 
the $\gamma$-component of the collection $\bM$, and 
denote it by $(\bM)_{\gamma}$. 
%
%%%%%%%%%%%%%%%%%%%
%%% dfn:BZdatum %%%
%%%%%%%%%%%%%%%%%%%
%
\begin{dfn} \label{dfn:BZdatum}
A collection 
$\bM=(M_{\gamma})_{\gamma \in \Gamma_{I}}$ 
of integers is 
called a Berenstein-Zelevinsky (BZ for short) datum 
for $\Fg_{I}$ if it satisfies the following 
conditions (1) and (2): 

(1) (edge inequalities) 
for all $w \in W_{I}$ and $i \in I$, 
%
%%%%%%%%%%%%%%%
%%% eq:edge %%%
%%%%%%%%%%%%%%%
%
\begin{equation} \label{eq:edge}
M_{w\vpi^{I}_{i}}+M_{ws_{i}\vpi^{I}_{i}}+
\sum_{j \in I \setminus \{i\}} a_{ji} M_{w\vpi^{I}_{j}} \le 0;
\end{equation}

(2) (tropical Pl\"ucker relations) 
for all $w \in W_{I}$ and $i,\,j \in I$ with 
$a_{ij}=a_{ji}=-1$ such that $ws_{i} > w$, $ws_{j} > w$, 
%
%%%%%%%%%%%%%%
%%% eq:TPR %%%
%%%%%%%%%%%%%%
%
\begin{equation} \label{eq:TPR}
M_{ws_{i}\vpi^{I}_{i}}+M_{ws_{j}\vpi^{I}_{j}} =
\min \bigl(
 M_{w\vpi^{I}_{i}}+M_{ws_{i}s_{j}\vpi^{I}_{j}}, \ 
 M_{w\vpi^{I}_{j}}+M_{ws_{j}s_{i}\vpi^{I}_{i}}
 \bigr).
\end{equation}
\end{dfn}

%==============================%
%     START SUBSECTION 0203    %
%==============================%
%
\subsection{Crystal structure on the set of BZ data of type $A_{m}$.}
\label{subsec:crystal}

Let $\bM=(M_{\gamma})_{\gamma \in \Gamma_{I}}$ 
be a BZ datum for $\Fg_{I}$. Following \cite[\S2.3]{Kam1}, 
we define 
\begin{equation*}
P(\bM):=
 \bigl\{h \in (\Fh_{I})_{\BR} \mid 
   \pair{h}{\gamma} \ge M_{\gamma} \ 
   \text{for all } \gamma \in \Gamma_{I}
 \bigr\},
\end{equation*}
where $(\Fh_{I})_{\BR}:=\bigoplus_{i \in I} \BR h_{i}$ 
is a real form of the Cartan subalgebra $\Fh_{I}$. We know from 
\cite[Proposition~2.2]{Kam1} that 
$P(\bM)$ is a convex polytope in $(\Fh_{I})_{\BR}$ 
whose set of vertices is given by: 
%
%%%%%%%%%%%%%%%%%
%%% eq:vertex %%%
%%%%%%%%%%%%%%%%%
%
\begin{equation} \label{eq:vertex}
\left\{
 \mu_{w}(\bM):=\sum_{i \in I} M_{w\vpi^{I}_{i}} \, w h_{i}
\ \Biggm| \ w \in W
\right\} \subset (\Fh_{I})_{\BR}.
\end{equation}
The polytope $P(\bM)$ is called 
a Mirkovi\'c-Vilonen (MV) polytope 
associated to the BZ datum 
$\bM=(M_{\gamma})_{\gamma \in \Gamma_{I}}$. 

We denote by $\bz_{I}$ the set of all BZ data 
$\bM=(M_{\gamma})_{\gamma \in \Gamma_{I}}$ for $\Fg_{I}$ 
satisfying the condition that 
$M_{w_{0}^{I}\vpi^{I}_{i}}=0$ for all $i \in I$, 
or equivalently, $M_{-\vpi^{I}_{i}}=0$ for all 
$i \in I$ (by the second equation in \eqref{eq:w0}). 
By \cite[\S3.3]{Kam2}, the set $\mv_{I}:=
\bigl\{P(\bM) \mid \bM \in \bz_{I}\bigr\}$ can be endowed
with a crystal structure for $U_{q}(\Fg_{I}^{\vee})$, and 
the resulting crystal $\mv_{I}$ is isomorphic to 
the crystal basis $\CB_{I}(\infty)$ of the negative part 
$U_{q}^{-}(\Fg_{I}^{\vee})$ of $U_{q}(\Fg_{I}^{\vee})$. 
Because the map $\bz_{I} \rightarrow \mv_{I}$ 
defined by $\bM \mapsto P(\bM)$ is bijective, 
we can also endow the set $\bz_{I}$ with a crystal structure for 
$U_{q}(\Fg_{I}^{\vee})$ in such a way that 
the bijection $\bz_{I} \rightarrow \mv_{I}$ is 
an isomorphism of crystals for $U_{q}(\Fg_{I}^{\vee})$. 

Now we recall from \cite{Kam2} 
the description of the crystal structure on $\bz_{I}$. 
For $\bM=(M_{\gamma})_{\gamma \in \Gamma_{I}} \in \bz_{I}$, 
define the weight $\wt(\bM)$ of $\bM$ by: 
%
%%%%%%%%%%%%%
%%% eq:wt %%%
%%%%%%%%%%%%%
%
\begin{equation} \label{eq:wt}
\wt (\bM) = \sum_{i \in I} M_{\vpi^{I}_{i}} \, h_{i}. 
\end{equation}
The raising Kashiwara operators 
$e_{p}$, $p \in I$, on $\bz_{I}$ 
are defined as follows 
(see \cite[Theorem~3.5\,(ii)]{Kam2}). 
Fix $p \in I$. For a BZ datum 
$\bM=(M_{\gamma})_{\gamma \in \Gamma_{I}}$ for $\Fg_{I}$ 
(not necessarily an element of $\bz_{I}$), 
we set 
%
%%%%%%%%%%%%%%%%%%
%%% eq:dfn-vep %%%
%%%%%%%%%%%%%%%%%%
%
\begin{equation} \label{eq:dfn-vep}
\ve_{p}(\bM) := 
 - \left(
     M_{\vpi^{I}_{p}}+M_{s_{p}\vpi^{I}_{p}}
     +\sum_{q \in I \setminus \{p\}} 
      a_{qp} M_{\vpi^{I}_{q}}
    \right), 
\end{equation}
which is nonnegative by condition (1) of Definition~\ref{dfn:BZdatum}. 
Observe that $\mu_{s_{p}}(\bM)-\mu_{e}(\bM)=\ve_{p}(\bM)h_{p}$, 
and hence that $\mu_{s_{p}}(\bM)=\mu_{e}(\bM)$ 
if and only if $\ve_{p}(\bM)=0$. 
In view of this, 
we set $e_{p}\bM:=\bzero$ if $\ve_{p}(\bM)=0$ 
(cf. \cite[Theorem~3.5(ii)]{Kam2}), 
where $\bzero$ is an additional element, 
which is not contained in $\bz_{I}$.
We know the following fact from 
\cite[Theorem~3.5\,(ii)]{Kam2} 
(see also the comment after \cite[Theorem~3.5]{Kam2}). 
%
%%%%%%%%%%%%%%%
%%% fact:ej %%%
%%%%%%%%%%%%%%%
%
\begin{fact} \label{fact:ej}
Let $\bM=(M_{\gamma})_{\gamma \in \Gamma_{I}}$ 
be a BZ datum for $\Fg_{I}$ 
(not necessarily an element of $\bz_{I}$).
If $\ve_{p}(\bM) > 0$, then 
there exists a unique BZ datum for $\Fg_{I}$, 
denoted by $e_{p}\bM$, such that 
$(e_{p}\bM)_{\vpi^{I}_{p}}=M_{\vpi^{I}_{p}}+1$, and 
such that $(e_{p}\bM)_{\gamma}=M_{\gamma}$ for all 
$\gamma \in \Gamma_{I}$ with $\pair{h_{p}}{\gamma} \le 0$. 
\end{fact}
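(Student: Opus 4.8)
The plan is to exploit the fact that in type $A_{m}$ every fundamental weight $\vpi_{i}^{I}$ is minuscule, so that $\pair{h_{p}}{\gamma} \in \{-1,\,0,\,1\}$ for every chamber weight $\gamma \in \Gamma_{I}$. Accordingly I would split $\Gamma_{I}$ into the three subsets $\Gamma_{I}^{+}$, $\Gamma_{I}^{0}$, $\Gamma_{I}^{-}$ on which $\pair{h_{p}}{\gamma}$ equals $1$, $0$, $-1$, respectively; the simple reflection $s_{p}$ then fixes $\Gamma_{I}^{0}$ pointwise and restricts to a bijection $\Gamma_{I}^{+} \xrightarrow{\sim} \Gamma_{I}^{-}$, $\gamma \mapsto s_{p}\gamma$. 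The prescription in the statement fixes the $\gamma$-component for every $\gamma \in \Gamma_{I}^{0} \cup \Gamma_{I}^{-}$ (these are exactly the $\gamma$ with $\pair{h_{p}}{\gamma} \le 0$) and raises the single component at $\vpi_{p}^{I} \in \Gamma_{I}^{+}$ by one; what remains is to pin down, and then to realize, the components indexed by the rest of $\Gamma_{I}^{+}$.

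For uniqueness, I would show that the tropical Pl\"ucker relations \eqref{eq:TPR} (which govern the change of the defining data across adjacent chambers) force every component $M_{\gamma}$ with $\gamma \in \Gamma_{I}^{+}$ once the components on $\Gamma_{I}^{0} \cup \Gamma_{I}^{-}$ and the value at $\vpi_{p}^{I}$ are fixed. The mechanism is already visible in rank two: applying \eqref{eq:TPR} with $w = e$, $i = p$ and $j$ adjacent to $p$ yields $M_{s_{p}\vpi_{p}^{I}} + M_{s_{j}\vpi_{j}^{I}} = \min\bigl(M_{\vpi_{p}^{I}} + M_{s_{p}s_{j}\vpi_{j}^{I}},\ M_{\vpi_{j}^{I}} + M_{s_{j}s_{p}\vpi_{p}^{I}}\bigr)$, in which every term except $M_{s_{j}\vpi_{j}^{I}} \in \Gamma_{I}^{+}$ lies in $\Gamma_{I}^{0} \cup \Gamma_{I}^{-}$ or equals the prescribed value at $\vpi_{p}^{I}$; this isolates the unknown $\Gamma_{I}^{+}$-component. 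In general I would order $\Gamma_{I}^{+}$ by a height function (for instance the minimal length of a Weyl group element representing $\gamma$) and check that, for each $\gamma \in \Gamma_{I}^{+}$ other than $\vpi_{p}^{I}$, there is an instance of \eqref{eq:TPR} expressing $M_{\gamma}$ as a $\min$ of sums of components that are either prescribed or already determined at an earlier stage; an induction on this order then gives uniqueness.

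For existence I would take the formulas produced by this propagation as the \emph{definition} of the candidate collection $e_{p}\bM$ and verify that it is genuinely a BZ datum, i.e.\ that it satisfies all of the edge inequalities \eqref{eq:edge} and all of the tropical Pl\"ucker relations \eqref{eq:TPR}, not merely those used in the construction. A cleaner route, which I would ultimately prefer, is to pass through Lusztig data: fixing a reduced word for $w_{0}^{I}$ beginning with the letter $p$, the set of BZ data is in an explicit (piecewise-linear) bijection with $\BZ_{\ge 0}^{N}$ (with $N$ the number of positive roots of $\Fg_{I}$), under which $\ve_{p}(\bM)$ is the first coordinate and the raising operator is realized by decreasing that coordinate by one---an operation that is admissible precisely when $\ve_{p}(\bM) > 0$ and that lands back in the set of BZ data automatically. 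One then only has to confirm the normalization: that this operation raises the $\vpi_{p}^{I}$-component by exactly one and leaves every $\gamma$-component with $\pair{h_{p}}{\gamma} \le 0$ unchanged, which follows because those components are governed by the subword obtained by deleting the initial letter $p$ and hence are insensitive to the first coordinate.

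I expect the existence half to be the crux. In the direct approach the difficulty is to verify that the collection produced by the propagation satisfies \emph{every} edge inequality and tropical Pl\"ucker relation, including those straddling the interface between $\Gamma_{I}^{+}$ and $\Gamma_{I}^{0} \cup \Gamma_{I}^{-}$; in the Lusztig-data approach the same difficulty migrates to confirming the normalization, namely that decrementing the first Lusztig coordinate raises the $\vpi_{p}^{I}$-component by exactly one while leaving untouched all components $M_{\gamma}$ with $\pair{h_{p}}{\gamma} \le 0$. Either way the essential input is the combinatorics specific to type $A$---equivalently, the consistency built into the passage between BZ data and Lusztig data; by contrast, the uniqueness half is a fairly mechanical propagation once the right height ordering of $\Gamma_{I}^{+}$ is fixed.
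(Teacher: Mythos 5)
The paper offers no proof of this statement: Fact~\ref{fact:ej} is imported verbatim from Kamnitzer \cite[Theorem~3.5(ii)]{Kam2}, so there is no internal argument to compare yours against, and your sketch is best judged as a reconstruction of Kamnitzer's own route --- which it essentially is. Your preliminary reductions are correct: in type $A_{m}$ every fundamental weight is minuscule, so $\pair{h_{p}}{\gamma}\in\{-1,0,1\}$ for every chamber weight, $s_{p}$ fixes $\Gamma_{I}^{0}$ pointwise and interchanges $\Gamma_{I}^{+}$ and $\Gamma_{I}^{-}$, and the statement prescribes the datum exactly on $\Gamma_{I}^{0}\cup\Gamma_{I}^{-}$ together with the single value at $\vpi_{p}^{I}$. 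Your uniqueness mechanism (propagating the tropical Pl\"ucker relations \eqref{eq:TPR} into $\Gamma_{I}^{+}$) is the right one, but note that the inductive step --- that every $\gamma\in\Gamma_{I}^{+}$ other than $\vpi_{p}^{I}$ occurs in an instance of \eqref{eq:TPR} whose other five terms are either prescribed or earlier in your height order --- is asserted, not proved; your rank-two computation is only the base case. The substantive caveat concerns existence: the Lusztig-data route you prefer presupposes the piecewise-linear bijection between BZ data and $\BZ_{\ge 0}^{N}$ (times a $\BZ^{m}$ of base values, since $\bM$ need not lie in $\bz_{I}$) attached to each reduced word of $w_{0}^{I}$, and that bijection is precisely the main theorem of \cite{Kam1}, proved there geometrically via MV cycles. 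Granting it, your normalization check is sound: for a reduced word beginning with $p$, the first Lusztig coordinate is the edge length in $\mu_{s_{p}}(\bM)-\mu_{e}(\bM)=\ve_{p}(\bM)h_{p}$, decrementing it is admissible exactly when $\ve_{p}(\bM)>0$, it moves $\mu_{e}$ up by $h_{p}$ (hence raises $M_{\vpi_{p}^{I}}$ by exactly one, the components $M_{\vpi_{i}^{I}}$ for $i\ne p$ being unaffected) and fixes the vertices $\mu_{w}$ that govern the components with $\pair{h_{p}}{\gamma}\le 0$. So your proposal is a correct reduction to Kamnitzer's parametrization theorem rather than a self-contained proof --- which is the same logical status the Fact has in the paper itself.
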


It is easily verified that 
if $\bM=(M_{\gamma})_{\gamma \in \Gamma_{I}} 
\in \bz_{I}$, then $e_{p}\bM \in \bz_{I} \cup \{\bzero\}$. 
Indeed, suppose that $\ve_{p}(\bM) > 0$, 
or equivalently, $e_{p}\bM \ne \bzero$. Let $i \in I$. 
Since $\pair{h_{p}}{w_{0}^{I}\vpi^{I}_{i}} \le 0$ 
by the second equation in \eqref{eq:w0}, 
it follows from the definition of $e_{p}\bM$ 
that $(e_{p}\bM)_{w_{0}^{I}\vpi^{I}_{i}}$ 
is equal to $M_{w_{0}^{I}\vpi^{I}_{i}}$, 
and hence that $(e_{p}\bM)_{w_{0}^{I}\vpi^{I}_{i}}=
M_{w_{0}^{I}\vpi^{I}_{i}}=0$. 
Thus, we obtain a map $e_{p}$ from $\bz_{I}$ to 
$\bz_{I} \cup \{\bzero\}$ sending $\bM \in \bz_{I}$ to 
$e_{p}\bM \in \bz_{I} \cup \{\bzero\}$. 
By convention, we set $e_{p}\bzero:=\bzero$. 

Similarly, the lowering Kashiwara operators 
$f_{p}$, $p \in I$, on $\bz_{I}$
are defined as follows. Fix $p \in I$. 
Let us recall the following fact from 
\cite[Theorem~3.5\,(i)]{Kam2}, 
the comment after \cite[Theorem~3.5]{Kam2}, and 
\cite[Corollary~5.6]{Kam2}. 
%
%%%%%%%%%%%%%%%
%%% fact:fj %%%
%%%%%%%%%%%%%%%
%
\begin{fact} \label{fact:fj}
Let $\bM=(M_{\gamma})_{\gamma \in \Gamma_{I}}$ 
be a BZ datum for $\Fg_{I}$ 
(not necessarily an element of $\bz_{I}$). 
Then, there exists a unique BZ datum for $\Fg_{I}$, 
denoted by $f_{p}\bM$, such that 
$(f_{p}\bM)_{\vpi^{I}_{p}}=M_{\vpi^{I}_{p}}-1$, and 
such that $(f_{p}\bM)_{\gamma}=M_{\gamma}$ 
for all $\gamma \in \Gamma_{I}$ 
with $\pair{h_{p}}{\gamma} \le 0$. 
Moreover, for each $\gamma \in \Gamma_{I}$, 
%
%%%%%%%%%%%%%
%%% eq:AM %%%
%%%%%%%%%%%%%
%
\begin{equation} \label{eq:AM}
(f_{p}\bM)_{\gamma}=
\begin{cases}
 \min \bigl(
   M_{\gamma},\ 
   M_{s_{p}\gamma}+c_{p}(\bM)
   \bigr)
   & \text{\rm if $\pair{h_{p}}{\gamma} > 0$}, \\[1.5mm]
 M_{\gamma} & \text{\rm otherwise}, 
\end{cases}
\end{equation}
where $c_{p}(\bM):=M_{\vpi^{I}_{p}}-M_{s_{p}\vpi^{I}_{p}}-1$.
\end{fact}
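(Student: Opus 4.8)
The plan is to establish existence together with the closed formula \eqref{eq:AM} by writing down an explicit candidate and checking that it is a BZ datum, and then to establish uniqueness by a propagation argument driven by the tropical Pl\"ucker relations \eqref{eq:TPR}.

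First I would record the elementary trichotomy that underlies everything: since each fundamental weight $\vpi^I_i$ is minuscule in type $A_m$, every chamber weight $\gamma\in\Gamma_I$ satisfies $\pair{h_p}{\gamma}\in\{-1,0,1\}$; moreover $s_p$ permutes $\Gamma_I$, interchanging $\{\gamma:\pair{h_p}{\gamma}=1\}$ with $\{\gamma:\pair{h_p}{\gamma}=-1\}$ and fixing every $\gamma$ with $\pair{h_p}{\gamma}=0$. Define $\bM'=(M'_\gamma)_{\gamma\in\Gamma_I}$ by the right-hand side of \eqref{eq:AM}. The two normalization conditions are then immediate: for $\pair{h_p}{\gamma}\le 0$ we have $M'_\gamma=M_\gamma$ by definition, and at $\gamma=\vpi^I_p$, which satisfies $\pair{h_p}{\vpi^I_p}=1$, the definition $c_p(\bM)=M_{\vpi^I_p}-M_{s_p\vpi^I_p}-1$ gives $M_{s_p\vpi^I_p}+c_p(\bM)=M_{\vpi^I_p}-1<M_{\vpi^I_p}$, so that $M'_{\vpi^I_p}=M_{\vpi^I_p}-1$. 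I would also note for later use that if $\pair{h_p}{\gamma}=1$ then $\pair{h_p}{s_p\gamma}=-1$, so the value $M_{s_p\gamma}$ occurring in \eqref{eq:AM} is an unchanged component; hence $M'_\gamma\le M_\gamma$, and $\bM'$ differs from $\bM$ only in components $\gamma$ with $\pair{h_p}{\gamma}=1$.

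The core existence step is to verify that $\bM'$ satisfies the edge inequalities \eqref{eq:edge} and the tropical Pl\"ucker relations \eqref{eq:TPR}. I would carry this out as a finite case analysis governed by the trichotomy above: for each relation, indexed by $(w,i)$ or $(w,i,j)$, one first determines the signs of $\pair{h_p}{\cdot}$ on the (finitely many) chamber weights entering it, and then deduces the relation for $\bM'$ from the corresponding one for $\bM$, together with the edge inequality or tropical Pl\"ucker relation for $\bM$ at the $s_p$-reflected data. The delicate point is the interaction of the decreasing modification $M'_\gamma\le M_\gamma$ with the terms of negative coefficient $a_{ji}=-1$ in \eqref{eq:edge}, and with the two-sided $\min$ in \eqref{eq:TPR}; this is where the actual inequalities must be extracted rather than read off, and where the fixed chamber weights with $\pair{h_p}{\gamma}=0$ still have to be tracked.

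For uniqueness I would show that the normalization alone forces the formula \eqref{eq:AM}, so that any BZ datum $\bN$ with $N_{\vpi^I_p}=M_{\vpi^I_p}-1$ and $N_\gamma=M_\gamma$ for all $\gamma$ with $\pair{h_p}{\gamma}\le0$ necessarily coincides with $\bM'$. The mechanism is a reconstruction by induction: ordering the chamber weights $\gamma$ with $\pair{h_p}{\gamma}=1$ by a height function compatible with the Bruhat order, I would determine each $N_\gamma$ from a single suitable instance of \eqref{eq:TPR} that involves only boundary values and already-determined components, and check that this recursion reproduces $\min\bigl(M_\gamma,\,M_{s_p\gamma}+c_p(\bM)\bigr)$. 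As a consistency check one can reuse Fact~\ref{fact:ej}: any such $\bN$ has $\ve_p(\bN)=\ve_p(\bM)+1>0$, so by the uniqueness assertion there, $e_p\bN=\bM$; this exhibits the candidates for $f_p\bM$ as the $e_p$-preimages of $\bM$, but does not by itself yield uniqueness. The main obstacle is precisely this propagation step, together with the tropical Pl\"ucker case analysis of the preceding paragraph: both rest on understanding how the components on $\{\gamma:\pair{h_p}{\gamma}=1\}$ are generated from the boundary data through \eqref{eq:TPR}, and it is the minuscule trichotomy that keeps the bookkeeping finite.
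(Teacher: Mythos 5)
Your write-up is a plan rather than a proof: the two steps that constitute the entire content of Fact~\ref{fact:fj} are named but not carried out. For existence you define the candidate $\bM'$ by the right-hand side of \eqref{eq:AM} and must then verify the edge inequalities \eqref{eq:edge} and the tropical Pl\"ucker relations \eqref{eq:TPR}; you explicitly defer this, and it is not the routine sign bookkeeping your sketch suggests. A single instance of \eqref{eq:TPR} for $\bM'$ involves up to six chamber weights, each of whose $\bM'$-components may itself be a $\min$ of two $\bM$-components, so one must prove an identity between nested minima; this requires auxiliary comparisons between $M_{\gamma}$ and $M_{s_p\gamma}+c_p(\bM)$ across different $w$'s, which do not follow from the trichotomy $\pair{h_p}{\gamma}\in\{-1,0,1\}$ alone. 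Likewise the uniqueness recursion (``a single suitable instance of \eqref{eq:TPR} that involves only boundary values and already-determined components'') is asserted without exhibiting the instance, the ordering, or an argument that every component with $\pair{h_p}{\gamma}=1$ is reachable this way. Since you yourself label both steps ``the main obstacle,'' the proposal does not establish the statement. (The preliminary observations --- the minuscule trichotomy, the normalization checks for $\bM'$, and the consistency check via Fact~\ref{fact:ej} --- are all correct.)

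For comparison: the paper does not prove this Fact either; it imports it from Kamnitzer (\cite[Theorem~3.5\,(i)]{Kam2}, the comment following it, and \cite[Corollary~5.6]{Kam2}). There, existence and uniqueness of $f_{p}\bM$ are obtained not by direct manipulation of the defining relations but from the parametrization of BZ data by Lusztig data attached to reduced words beginning with $s_{p}$, which transfers the crystal structure from $\CB_{I}(\infty)$; the closed formula \eqref{eq:AM} is the Anderson--Mirkovi\'c description, whose proof in \cite{Kam2} is a separate and genuinely delicate argument. So the route you propose differs from the one the literature actually takes, and the parts you leave open are precisely where that difficulty lives.
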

%
%%%%%%%%%%%%%%
%%% rem:AM %%%
%%%%%%%%%%%%%%
%
\begin{rem} \label{rem:AM}
Keep the notation and assumptions of Fact~\ref{fact:fj}. 
By \eqref{eq:AM}, we have 
$(f_{p}\bM)_{\gamma} \le M_{\gamma}$ 
for all $\gamma \in \Gamma_{I}$. 
\end{rem}

In exactly the same way as the case of $e_{p}$ above, 
we see that if $\bM \in \bz_{I}$, 
then $f_{p}\bM \in \bz_{I}$. 
Thus, we obtain a map $f_{p}$ from $\bz_{I}$ to itself  
sending $\bM \in \bz_{I}$ to $f_{p}\bM \in \bz_{I}$.
By convention, we set $f_{p}\bzero:=\bzero$. 

Finally, we set $\vp_{p}(\bM):=
 \pair{\wt(\bM)}{\alpha_{p}}+\ve_{p}(\bM)$ 
for $\bM \in \bz_{I}$ and $p \in I$. 
%
%%%%%%%%%%%%%%%%
%%% thm:binf %%%
%%%%%%%%%%%%%%%%
%
\begin{thm}[\cite{Kam2}] \label{thm:binf}
The set $\bz_{I}$, equipped with the maps 
$\wt$, $e_{p},\,f_{p} \ (p \in I)$, and 
$\ve_{p},\,\vp_{p} \ (p \in I)$ above, is 
a crystal for $U_{q}(\Fg_{I}^{\vee})$ 
isomorphic to the crystal basis $\CB_{I}(\infty)$ of 
the negative part $U_{q}^{-}(\Fg_{I}^{\vee})$ of $U_{q}(\Fg_{I}^{\vee})$. 
\end{thm}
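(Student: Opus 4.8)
The plan is to derive the statement from Kamnitzer's results \cite{Kam2} by transporting his crystal structure on $\mv_{I}$ along the bijection $\bz_{I} \rightarrow \mv_{I}$, $\bM \mapsto P(\bM)$, and then checking that the explicitly defined maps $\wt$, $e_{p}$, $f_{p}$, $\ve_{p}$, $\vp_{p}$ reproduce precisely this transported structure. Since the bijectivity of $\bM \mapsto P(\bM)$ and the crystal isomorphism $\mv_{I} \cong \CB_{I}(\infty)$ for $U_{q}(\Fg_{I}^{\vee})$ have already been recorded (following \cite[\S3.3]{Kam2}), the assertion that $\bz_{I}$ is a crystal isomorphic to $\CB_{I}(\infty)$ is immediate once the five maps are identified with the transported ones; the real task is therefore this identification.

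First I would treat the weight and the auxiliary functions. For $\bM \in \bz_{I}$ the normalization $M_{w_{0}^{I}\vpi^{I}_{i}}=0$ forces the vertex $\mu_{w_{0}^{I}}(\bM)$ of $P(\bM)$ (see \eqref{eq:vertex}) to sit at the origin, while $\mu_{e}(\bM)=\sum_{i \in I} M_{\vpi^{I}_{i}}\,h_{i}$, which by \eqref{eq:wt} is exactly $\wt(\bM)$. Thus $\wt(\bM)$ records, in the conventions of \cite{Kam2}, the weight of $P(\bM)$, the displacement between its two distinguished vertices $\mu_{e}(\bM)$ and $\mu_{w_{0}^{I}}(\bM)=0$. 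The identity $\mu_{s_{p}}(\bM)-\mu_{e}(\bM)=\ve_{p}(\bM)\,h_{p}$ noted after \eqref{eq:dfn-vep} then exhibits $\ve_{p}(\bM)$ as the lattice length of the edge of $P(\bM)$ emanating from $\mu_{e}(\bM)$ in the $h_{p}$-direction, hence as the maximal number of times $e_{p}$ may be applied; and $\vp_{p}(\bM)=\pair{\wt(\bM)}{\alpha_{p}}+\ve_{p}(\bM)$ is forced by the crystal axiom $\vp_{p}-\ve_{p}=\pair{\wt}{\alpha_{p}}$.

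Next I would match the Kashiwara operators. Facts~\ref{fact:ej} and \ref{fact:fj} characterize $e_{p}\bM$ and $f_{p}\bM$ by prescribing the $\vpi^{I}_{p}$-component (raised, resp.\ lowered, by $1$) together with the requirement that every component $M_{\gamma}$ with $\pair{h_{p}}{\gamma}\le 0$ be left unchanged; the remaining components are then pinned down by the tropical Pl\"ucker relations \eqref{eq:TPR}, equivalently by the explicit minimum formula \eqref{eq:AM}. This is exactly the combinatorial rule by which Kamnitzer describes the action of his crystal operators on MV polytopes, so that $P(e_{p}\bM)$ and $P(f_{p}\bM)$ coincide with the polytopes produced from $P(\bM)$ by his geometric raising and lowering operators once one unwinds \cite[Theorem~3.5]{Kam2}. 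Combined with the stability $e_{p}\bM,\,f_{p}\bM \in \bz_{I}\cup\{\bzero\}$ already established above, this identifies all five maps with the transported structure and yields the desired crystal isomorphism $\bz_{I} \cong \CB_{I}(\infty)$.

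I expect the main obstacle to be precisely this last identification: confirming that the purely local prescription of Facts~\ref{fact:ej} and \ref{fact:fj} --- fix all components below the $h_{p}$-hyperplane and shift only the $\vpi^{I}_{p}$-component --- genuinely coincides with Kamnitzer's geometric crystal operators, rather than merely resembling them. The delicate point is that the components of $e_{p}\bM$ and $f_{p}\bM$ lying strictly above that hyperplane are not given directly but must be reconstructed from the tropical Pl\"ucker relations; one must therefore check that this reconstruction is well defined and that it reproduces Kamnitzer's polytope operation. This is where the full strength of \cite[Theorem~3.5 and Corollary~5.6]{Kam2} is needed, and where a self-contained argument would require the most care.
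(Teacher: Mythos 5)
Your proposal is correct and follows essentially the same route as the paper, which gives no independent proof of this theorem: it defines the crystal structure on $\bz_{I}$ precisely by transport along the bijection $\bM \mapsto P(\bM)$ to $\mv_{I}$ and then recalls the explicit operators from \cite[Theorem~3.5]{Kam2}, so the statement is a direct citation of Kamnitzer's results. The identification of the local prescription in Facts~\ref{fact:ej} and \ref{fact:fj} with Kamnitzer's polytope operators, which you flag as the delicate point, is likewise delegated entirely to \cite[Theorem~3.5 and Corollary~5.6]{Kam2} in the paper.
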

%
%%%%%%%%%%%%%%%%%%%
%%% rem:highest %%%
%%%%%%%%%%%%%%%%%%%
%
\begin{rem} \label{rem:highest}
Let $\bO$ be the collection of integers 
indexed by $\Gamma_{I}$ whose $\gamma$-component is 
equal to $0$ for all $\gamma \in \Gamma_{I}$. 
It is obvious that $\bO$ is an element 
of $\bz_{I}$ whose weight is equal to $0$. 
Hence it follows from Theorem~\ref{thm:binf} that 
for each $\bM \in \bz_{I}$, there exists 
$p_{1},\,p_{2},\,\dots,\,p_{N} \in I$ such that 
$\bM=f_{p_{1}}f_{p_{2}} \cdots f_{p_{N}} \bO$. 
Therefore, using this fact and Remark~\ref{rem:AM}, 
we deduce that if 
$\bM=(M_{\gamma})_{\gamma \in \Gamma_{I}} \in \bz_{I}$, 
then $M_{\gamma} \in \BZ_{\le 0}$ 
for all $\gamma \in \Gamma_{I}$. 
\end{rem}

Let $\lambda \in \Fh_{I}$ be 
a dominant integral weight for $\Fg_{I}^{\vee}$. 
We define $\mv_{I}(\lambda)$ to be the set of those MV polytopes 
$P \in \mv_{I}$ such that $\lambda+P$ is contained in 
the convex hull $\conv (W_{I}\lambda)$ in $(\Fh_{I})_{\BR}$ 
of the $W_{I}$-orbit $W_{I}\lambda$ through $\lambda$. 
We see from \cite[\S3.2]{Kam2} that 
for $\bM=(M_{\gamma})_{\gamma \in \Gamma_{I}} \in \bz_{I}$, 
\begin{equation*}
\lambda+P(\bM)=
 \bigl\{h \in \Fh_{\BR} \mid 
   \pair{h}{\gamma} \ge M_{\gamma}' \ 
   \text{for all } \gamma \in \Gamma_{I}
 \bigr\},
\end{equation*}
where $M_{\gamma}':=M_{\gamma}+\pair{\lambda}{\gamma}$ 
for $\gamma \in \Gamma_{I}$. 
We know from \cite[Theorem~8.5]{Kam1} and \cite[\S6.2]{Kam2} that 
$\lambda+P(\bM) \subset \conv (W_{I}\lambda)$ 
if and only if $M_{w_{0}s_{i}\vpi^{I}_{i}}' \ge 
\pair{w_{0}\lambda}{\vpi^{I}_{i}}$ for all $i \in I$. 
A simple computation shows the following lemma.
\begin{lem}
Let $\bM=(M_{\gamma})_{\gamma \in \Gamma_{I}} \in \bz_{I}$. 
Then, the MV polytope $P(\bM)$ is contained in 
$\mv_{I}(\lambda)$ (i.e., $\lambda+P(\bM) \subset 
\conv (W_{I}\lambda)$) if and only if 
%
%%%%%%%%%%%%%%%%%%
%%% eq:blambda %%%
%%%%%%%%%%%%%%%%%%
%
\begin{equation} \label{eq:blambda}
M_{-s_{i}\vpi^{I}_{i}} \ge -\pair{\lambda}{\alpha_{i}}
\quad \text{\rm for all $i \in I$}.
\end{equation}
\end{lem}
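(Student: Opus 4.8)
The plan is to start from the criterion quoted just above the statement, namely that $\lambda+P(\bM) \subset \conv(W_{I}\lambda)$ holds if and only if $M'_{w_{0}^{I}s_{i}\vpi^{I}_{i}} \ge \pair{w_{0}^{I}\lambda}{\vpi^{I}_{i}}$ for all $i \in I$, where $M'_{\gamma}=M_{\gamma}+\pair{\lambda}{\gamma}$. Since this is a finite list of inequalities indexed by $i \in I$, I would transform each one, through elementary manipulations, into the corresponding inequality in \eqref{eq:blambda}; the diagram automorphism $\omega_{I}$ will then match the two index sets.

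First I would substitute the definition of $M'_{\gamma}$ and isolate the $M$-term, so that the $i$-th inequality reads
\[
M_{w_{0}^{I}s_{i}\vpi^{I}_{i}} \ge
 \pair{w_{0}^{I}\lambda}{\vpi^{I}_{i}}-\pair{\lambda}{w_{0}^{I}s_{i}\vpi^{I}_{i}}.
\]
Using the $W_{I}$-invariance of the canonical pairing together with the fact that $w_{0}^{I}$ is an involution, I would rewrite $\pair{\lambda}{w_{0}^{I}s_{i}\vpi^{I}_{i}}=\pair{w_{0}^{I}\lambda}{s_{i}\vpi^{I}_{i}}$, so that the right-hand side collapses to $\pair{w_{0}^{I}\lambda}{\vpi^{I}_{i}-s_{i}\vpi^{I}_{i}}$. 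Since $s_{i}\vpi^{I}_{i}=\vpi^{I}_{i}-\alpha_{i}$ (because $\pair{h_{j}}{\vpi^{I}_{i}}=\delta_{ij}$), this equals $\pair{w_{0}^{I}\lambda}{\alpha_{i}}$, and the $i$-th inequality becomes $M_{w_{0}^{I}s_{i}\vpi^{I}_{i}} \ge \pair{w_{0}^{I}\lambda}{\alpha_{i}}$.

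Next I would simplify both sides with the relations in \eqref{eq:w0}. On the right, $\pair{w_{0}^{I}\lambda}{\alpha_{i}}=\pair{\lambda}{w_{0}^{I}\alpha_{i}}=-\pair{\lambda}{\alpha_{\omega_{I}(i)}}$. On the left, expanding $w_{0}^{I}s_{i}\vpi^{I}_{i}=w_{0}^{I}\vpi^{I}_{i}-w_{0}^{I}\alpha_{i}=-\vpi^{I}_{\omega_{I}(i)}+\alpha_{\omega_{I}(i)}=-s_{\omega_{I}(i)}\vpi^{I}_{\omega_{I}(i)}$, so the index is a genuine element of $\Gamma_{I}$ of the desired shape. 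Thus the $i$-th inequality reads $M_{-s_{\omega_{I}(i)}\vpi^{I}_{\omega_{I}(i)}} \ge -\pair{\lambda}{\alpha_{\omega_{I}(i)}}$; as $i$ runs over $I$, the bijection $\omega_{I}:I \rightarrow I$ lets me reindex by setting $j=\omega_{I}(i)$, which yields exactly \eqref{eq:blambda}.

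As the paper indicates, this is a routine computation rather than a substantive theorem, so I do not anticipate a real obstacle; the only points demanding care are keeping the contragredient $W_{I}$-action on $\Fh_{I}$ straight (so that invariance and the involutivity of $w_{0}^{I}$ are applied in the correct direction) and verifying the identity $w_{0}^{I}s_{i}\vpi^{I}_{i}=-s_{\omega_{I}(i)}\vpi^{I}_{\omega_{I}(i)}$, which is precisely what reconciles the index $w_{0}^{I}s_{i}\vpi^{I}_{i}$ appearing in Kamnitzer's criterion with the index $-s_{i}\vpi^{I}_{i}$ appearing in \eqref{eq:blambda}.
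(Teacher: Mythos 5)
Your computation is correct and is precisely the ``simple computation'' the paper alludes to without writing out: starting from Kamnitzer's criterion $M'_{w_{0}^{I}s_{i}\vpi^{I}_{i}} \ge \pair{w_{0}^{I}\lambda}{\vpi^{I}_{i}}$, using $W_{I}$-invariance of the pairing, $s_{i}\vpi^{I}_{i}=\vpi^{I}_{i}-\alpha_{i}$, and the relations \eqref{eq:w0} to arrive at $M_{-s_{\omega_{I}(i)}\vpi^{I}_{\omega_{I}(i)}} \ge -\pair{\lambda}{\alpha_{\omega_{I}(i)}}$, then reindexing via $\omega_{I}$. All the identities you invoke (including $w_{0}^{I}s_{i}\vpi^{I}_{i}=-s_{\omega_{I}(i)}\vpi^{I}_{\omega_{I}(i)}$) check out, so the proposal is a complete and faithful expansion of the paper's argument.
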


We denote by $\bz_{I}(\lambda)$ the set of all BZ data 
$\bM=(M_{\gamma})_{\gamma \in \Gamma_{I}} \in \bz_{I}$ 
satisfying \eqref{eq:blambda}. By the lemma above,  
the restriction of the bijection 
$\bz_{I} \rightarrow \mv_{I}$, $\bM \mapsto P(\bM)$, 
to the subset $\bz_{I}(\lambda) \subset \bz_{I}$ 
gives rise to a bijection between 
$\bz_{I}(\lambda)$ and $\mv_{I}(\lambda)$. 
By \cite[Theorem~6.4]{Kam2}, the set 
$\mv_{I}(\lambda)$ can be endowed with a crystal structure 
for $U_{q}(\Fg_{I}^{\vee})$, and 
the resulting crystal $\mv_{I}(\lambda)$ is isomorphic to 
the crystal basis $\CB_{I}(\lambda)$ of 
the finite-dimensional irreducible highest weight 
$U_{q}(\Fg_{I}^{\vee})$-module of highest weight $\lambda$. 
Thus, we can also endow the set $\bz_{I}(\lambda)$ with 
a crystal structure for $U_{q}(\Fg_{I}^{\vee})$ 
in such a way that the bijection 
$\bz_{I}(\lambda) \rightarrow \mv_{I}(\lambda)$ above is 
an isomorphism of crystals for $U_{q}(\Fg_{I}^{\vee})$. 

Now we recall from \cite[\S6.4]{Kam2} 
the description of the crystal structure on $\bz_{I}(\lambda)$. 
For $\bM=(M_{\gamma})_{\gamma \in \Gamma_{I}} \in \bz_{I}(\lambda)$, 
define the weight $\Wt(\bM)$ of $\bM$ by: 
%
%%%%%%%%%%%%%%%%%
%%% eq:dfn-Wt %%%
%%%%%%%%%%%%%%%%%
%
\begin{equation} \label{eq:dfn-Wt}
\Wt(\bM)
 =\lambda+\wt(\bM)
 =\lambda+\sum_{i \in I} M_{\vpi^{I}_{i}} \, h_{i}.
\end{equation}
The raising Kashiwara operators $e_{p}$, $p \in I$, and 
the maps $\ve_{p}$, $p \in I$, on $\bz_{I}(\lambda)$ are 
defined by restricting those on $\bz_{I}$ to 
the subset $\bz_{I}(\lambda) \subset \bz_{I}$. 
The lowering Kashiwara operators $F_{p}$, $p \in I$, 
on $\bz_{I}(\lambda)$ are defined as follows: 
for $\bM \in \bz_{I}(\lambda)$ and $p \in I$, 
\begin{equation*}
F_{p}\bM=
\begin{cases}
f_{p}\bM & 
 \text{if $f_{p}\bM$ is an element of $\bz_{I}(\lambda)$}, \\[1.5mm]
\bzero & \text{otherwise}.
\end{cases}
\end{equation*}
Also, we set $\Phi_{p}(\bM):=
 \pair{\Wt(\bM)}{\alpha_{p}}+\ve_{p}(\bM)$ 
for $\bM \in \bz_{I}(\lambda)$ and $p \in I$. 
It is easily seen by \eqref{eq:dfn-vep} and 
\eqref{eq:dfn-Wt} that 
if $\bM=(M_{\gamma})_{\gamma \in \Gamma_{I}}$, then 
%
%%%%%%%%%%%%%%%%
%%% eq:vp-bM %%%
%%%%%%%%%%%%%%%%
%
\begin{equation} \label{eq:vp-bM}
\Phi_{p}(\bM)=
 M_{\vpi^{I}_{p}}-M_{s_{p}\vpi^{I}_{p}}+\pair{\lambda}{\alpha_{p}}.
\end{equation}
%
%%%%%%%%%%%%%%%%%%%
%%% thm:blambda %%%
%%%%%%%%%%%%%%%%%%%
%
\begin{thm}[{\cite[Theorem~6.4]{Kam2}}] \label{thm:blambda}
Let $\lambda \in \Fh_{I}$ be 
a dominant integral weight for $\Fg_{I}^{\vee}$. 
Then, the set $\bz_{I}(\lambda)$, equipped with the maps 
$\wt$, $e_{p},\,F_{p} \ (p \in I)$, and 
$\ve_{p},\,\Phi_{p} \ (p \in I)$ above, is 
a crystal for $U_{q}(\Fg_{I}^{\vee})$ 
isomorphic to the crystal basis $\CB_{I}(\lambda)$ of 
the finite-dimensional irreducible highest weight 
$U_{q}(\Fg_{I}^{\vee})$-module of highest weight $\lambda$.
\end{thm}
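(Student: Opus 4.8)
The plan is to deduce the assertion by transporting a known crystal structure across the bijection $\bz_I(\lambda) \to \mv_I(\lambda)$, $\bM \mapsto P(\bM)$, supplied by the preceding lemma, rather than re-deriving the geometry of MV polytopes. Concretely, \cite[Theorem~6.4]{Kam2} endows $\mv_I(\lambda)$ with a crystal structure isomorphic to $\CB_I(\lambda)$; pulling this structure back along the bijection produces a crystal structure on $\bz_I(\lambda)$ that is isomorphic to $\CB_I(\lambda)$ by construction, so everything reduces to checking that this pulled-back structure agrees with the maps $\Wt$, $e_p$, $F_p$, $\ve_p$, $\Phi_p$ written down above. The conceptual reason those particular formulas are forced is the standard realization of $\CB(\lambda)$ inside $\CB(\infty)$: for dominant $\lambda$ one has $\CB_I(\lambda) \cong \bigl\{b \in \CB_I(\infty) \mid \ve_p^{*}(b) \le \pair{\lambda}{\alpha_p} \text{ for all } p \in I\bigr\}$, carrying the weight $\lambda + \wt(b)$, the inherited raising operators $e_p$, the lowering operators $f_p$ truncated to $\bzero$ whenever they would leave the subset, and $\varphi_p(b) = \pair{\lambda+\wt(b)}{\alpha_p} + \ve_p(b)$. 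Via Theorem~\ref{thm:binf} this is exactly the shape of the data placed on $\bz_I(\lambda)$.

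The matchings for most of the maps are routine. The weight is immediate: $\Wt(\bM) = \lambda + \wt(\bM)$ in \eqref{eq:dfn-Wt} is precisely the translate recording the position of $P(\bM)$ inside $\conv(W_I\lambda)$, so it agrees with the weight grading on $\mv_I(\lambda)$. Since $e_p$ and $\ve_p$ on $\bz_I(\lambda)$ are defined by restriction from $\bz_I$, and the inclusion $\bz_I(\lambda)\hookrightarrow\bz_I$ corresponds under $\bM\mapsto P(\bM)$ to $\mv_I(\lambda)\hookrightarrow\mv_I$, these agree with the raising operators and $\ve_p$-functions of the crystal $\mv_I$ restricted to $\mv_I(\lambda)$. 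The identity \eqref{eq:vp-bM} then exhibits $\Phi_p$ as $\pair{\Wt(\cdot)}{\alpha_p}+\ve_p$, which is exactly the function $\varphi_p$ in the realization above; that $\Phi_p$ is nonnegative on $\bz_I(\lambda)$, as it must be, will follow from the crystal isomorphism itself once the operators are matched.

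The main obstacle is the lowering operator $F_p$, and it is here that the defining inequality \eqref{eq:blambda} must be reconciled with the truncation rule. I would first verify that the crystal lowering operator on $\mv_I(\lambda)$ produced by \cite[Theorem~6.4]{Kam2} is $P(\bM) \mapsto P(f_p\bM)$ when $P(f_p\bM) \in \mv_I(\lambda)$ and $\bzero$ otherwise, so that under the bijection it literally becomes the map $F_p$ defined above. The substantive point is then to identify the exit condition ``$f_p\bM \notin \bz_I(\lambda)$'' with the termination of the $f_p$-string in $\CB_I(\lambda)$; equivalently, to show that \eqref{eq:blambda}, namely $M_{-s_p\vpi^I_p} \ge -\pair{\lambda}{\alpha_p}$, is precisely the condition $\ve_p^{*}(\bM) \le \pair{\lambda}{\alpha_p}$. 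Here $\ve_p^{*}(\bM)$ is the $\ast$-analogue of \eqref{eq:dfn-vep}, obtained by replacing each chamber weight $\gamma$ by $-\gamma$; using the normalization $M_{-\vpi^I_j}=0$ ($j \in I$) built into $\bz_I$, this $\ast$-datum collapses to $\ve_p^{*}(\bM)=-M_{-s_p\vpi^I_p}$, turning $\ve_p^{*}(\bM)\le\pair{\lambda}{\alpha_p}$ into exactly \eqref{eq:blambda}. The genuinely delicate part is controlling how $\ve_p^{*}$ changes under a single application of $f_p$ through the formula \eqref{eq:AM}, so that $f_p$ leaves $\bz_I(\lambda)$ at precisely the step where the $f_p$-string terminates, together with the $w_0^I$-bookkeeping of \eqref{eq:w0} needed to relate the $\ast$-chamber weight $-s_p\vpi^I_p$ to the top-vertex data on which $f_p$ acts in \eqref{eq:AM}. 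Once this identification is established, the bijection $\bM \mapsto P(\bM)$ is a crystal isomorphism and the theorem follows.
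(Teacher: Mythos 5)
Your proposal is correct and follows essentially the same route as the paper, which states this theorem as a direct citation of \cite[Theorem~6.4]{Kam2} and obtains the crystal structure on $\bz_{I}(\lambda)$ purely by transporting Kamnitzer's structure on $\mv_{I}(\lambda)$ across the bijection $\bM \mapsto P(\bM)$, with the explicit maps simply ``recalled'' from \cite[\S6.4]{Kam2}. Your additional verification that the listed maps agree with the transported ones --- in particular the identification of the exit condition \eqref{eq:blambda} with $\ve_{p}^{*}(\bM) \le \pair{\lambda}{\alpha_{p}}$ via the realization of $\CB_{I}(\lambda)$ inside $\CB_{I}(\infty)$ --- is a faithful reconstruction of Kamnitzer's own argument and is consistent with the computation the paper performs in the lemma preceding the theorem.
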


%==============================%
%     START SUBSECTION 0204    %
%==============================%
%
\subsection{Restriction to subintervals.}
\label{subsec:res-subint}

Let $K$ be a fixed (finite) interval in $\BZ$ such that $K \subset I$. 
The Cartan matrix $A_{K}$ of 
the finite-dimensional simple Lie algebra $\Fg_{K}$ equals 
the principal submatrix of the Cartan matrix $A_{I}$ 
of $\Fg_{I}$ corresponding to the subset $K \subset I$. 
Also, the Weyl group $W_{K}$ of $\Fg_{K}$ can be identified 
with the subgroup of the Weyl group $W_{I}$ of $\Fg_{I}$ 
generated by the subset $\bigl\{s_{i} \mid i \in K\bigr\}$ 
of $\bigl\{s_{i} \mid i \in I\bigr\}$. 
Moreover, we can (and do) identify the set $\Gamma_{K}$ 
(of chamber weights) for $\Fg_{K}$ 
(defined by \eqref{eq:GammaI} with $I$ replaced by $K$) 
with the subset $\bigl\{-w\vpi_{i}^{I} \mid w \in W_{K},\,i \in K\bigr\}$ 
of the set $\Gamma_{I}$ (of chamber weights) through 
the following bijection of sets: 
%
%%%%%%%%%%%%%%%%%%%%%
%%% eq:bij-index1 %%%
%%%%%%%%%%%%%%%%%%%%%
%
\begin{equation} \label{eq:bij-index1}
\begin{array}{rcl}
\Gamma_{K} & \stackrel{\sim}{\rightarrow} & 
\bigl\{-w\vpi_{i}^{I} \mid 
w \in W_{K},\,i \in K\bigr\} \subset \Gamma_{I}, \\[3mm]
-w\vpi_{i}^{K} & \mapsto & -w\vpi_{i}^{I} 
\quad \text{for $w \in W_{K}$ and $i \in K$};
\end{array}
\end{equation}
observe that the map above is well-defined. 
Indeed, suppose that $w\vpi_{i}^{K}=v\vpi_{j}^{K}$ 
for some $w,\,v \in W_{K}$ and $i,\,j \in K$. 
Since $\vpi_{i}^{K}$ and $\vpi_{j}^{K}$ are dominant, 
it follows immediately that $i=j$, and hence 
$w\vpi_{i}^{K}=v\vpi_{j}^{K}=v\vpi_{i}^{K}$. 
Since $v^{-1}w\vpi_{i}^{K}=\vpi_{i}^{K}$ (i.e., 
$v^{-1}w$ stabilizes $\vpi_{i}^{K}$), we see that 
$v^{-1}w$ is a product of $s_{k}$'s for 
$k \in K \setminus \{i\}$. 
Therefore, we obtain $v^{-1}w\vpi_{i}^{I}=\vpi_{i}^{I}$, 
and hence $w\vpi_{i}^{I}=v\vpi_{i}^{I}=v\vpi_{j}^{I}$, 
as desired. 
Also, note that for each $i \in K$, 
the fundamental weight $\vpi_{i}^{K} \in \Gamma_{K}$ for $\Fg_{K}$ 
corresponds to 
$-w_{0}^{K}(\vpi_{\omega_{K}(i)}^{I})=
 w_{0}^{K}w_{0}^{I}\vpi_{\omega_{I}\omega_{K}(i)}^{I} \in \Gamma_{I}$ 
under the bijection \eqref{eq:bij-index1}, 
where $\omega_{K}:K \rightarrow K$ denotes the (Dynkin) diagram 
automorphism for $\Fg_{K}$. 
For a collection $\bM=(M_{\gamma})_{\gamma \in \Gamma_{I}}$ 
of integers indexed by $\Gamma_{I}$, 
we set $\bM_{K}:=(M_{\gamma})_{\gamma \in \Gamma_{K}}$, 
regarding the set $\Gamma_{K}$ 
as a subset of the set $\Gamma_{I}$ through 
the bijection \eqref{eq:bij-index1}. 
%
%%%%%%%%%%%%%%%%
%%% lem:res1 %%%
%%%%%%%%%%%%%%%%
%
\begin{lem} \label{lem:res1}
Keep the notation above. 
If $\bM=(M_{\gamma})_{\gamma \in \Gamma_{I}}$ is an element of $\bz_{I}$, 
then $\bM_{K}=(M_{\gamma})_{\gamma \in \Gamma_{K}}$ is 
a BZ datum for $\Fg_{K}$ that is an element of $\bz_{K}$. 
\end{lem}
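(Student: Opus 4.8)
The plan is to verify, for $\bM_K$, the three conditions defining membership in $\bz_K$: the edge inequalities \eqref{eq:edge} and the tropical Pl\"ucker relations \eqref{eq:TPR} for $\Fg_K$, together with the normalization $(\bM_K)_{-\vpi_i^K}=0$ for $i\in K$. The organizing idea is that, under the identification \eqref{eq:bij-index1}, the components of $\bM_K$ are read off most cleanly on \emph{negative} chamber weights, namely $(\bM_K)_{-v\vpi_i^K}=M_{-v\vpi_i^I}$ for $v\in W_K$ and $i\in K$. Accordingly, I would first rewrite the conditions defining a BZ datum in terms of negative chamber weights, and only afterward restrict from $I$ to $K$.

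To obtain the negative-form conditions, I would substitute $w\mapsto ww_0^I$ in \eqref{eq:edge} and \eqref{eq:TPR}, and use the relations \eqref{eq:w0} (in particular $w_0^I\vpi_i^I=-\vpi_{\omega_I(i)}^I$ and $w_0^Is_i=s_{\omega_I(i)}w_0^I$), the fact that right multiplication by $w_0^I$ reverses the Bruhat order, and the invariance $a_{\omega_I(j)\omega_I(i)}=a_{ji}$. After renaming $i\mapsto\omega_I(i)$, this produces the equivalent negative-form edge inequality $M_{-w\vpi_i^I}+M_{-ws_i\vpi_i^I}+\sum_{j\ne i}a_{ji}M_{-w\vpi_j^I}\le 0$ for all $w\in W_I$ and $i\in I$, and the negative-form tropical Pl\"ucker relation $M_{-ws_i\vpi_i^I}+M_{-ws_j\vpi_j^I}=\min\bigl(M_{-w\vpi_i^I}+M_{-ws_is_j\vpi_j^I},\,M_{-w\vpi_j^I}+M_{-ws_js_i\vpi_i^I}\bigr)$, valid whenever $a_{ij}=-1$ with $ws_i<w$ and $ws_j<w$.

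Granting this, the tropical Pl\"ucker relations for $\Fg_K$ and the normalization are essentially immediate. For the relations I take $w\in W_K$ and adjacent $i,j\in K$ with $ws_i<w$, $ws_j<w$; since $W_K$ is a standard parabolic subgroup of $W_I$, its length function and Bruhat order agree with those of $W_I$, so these descent conditions persist in $W_I$ and the negative-form relation for $\Fg_I$ applies. Every chamber weight occurring is of the form $-v\vpi_k^I$ with $v\in W_K$ and $k\in\{i,j\}\subset K$, hence lies in the image of \eqref{eq:bij-index1}, so the identity transports back to the required relation for $\bM_K$. The normalization is the case $w=e$ of \eqref{eq:bij-index1}: $(\bM_K)_{-\vpi_i^K}=M_{-\vpi_i^I}=0$ since $\bM\in\bz_I$.

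The hard part will be the edge inequalities for $\Fg_K$, where the passage is not verbatim. Applying the negative-form edge inequality for $\Fg_I$ at $(w,i)$ with $w\in W_K$ and $i\in K$ yields the desired $\Fg_K$-sum $M_{-w\vpi_i^I}+M_{-ws_i\vpi_i^I}+\sum_{j\in K\setminus\{i\}}a_{ji}M_{-w\vpi_j^I}$ \emph{together with} the surplus $\sum_{j\in I\setminus K}a_{ji}M_{-w\vpi_j^I}$. Since $K$ is an interval, this surplus is non-zero only when $i$ is an endpoint of $K$ and involves at most the outward neighbor $j\in I\setminus K$ (with $a_{ji}=-1$); thus the $\Fg_I$ inequality only shows that the $\Fg_K$-sum is $\le -\sum_{j\in I\setminus K}a_{ji}M_{-w\vpi_j^I}$. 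I would close this gap with the non-positivity of all components of an element of $\bz_I$ (Remark~\ref{rem:highest}): as $a_{ji}\le 0$ and $M_{-w\vpi_j^I}\le 0$, the right-hand side is $\le 0$, whence the edge inequality for $\Fg_K$. The only genuine care needed is the bookkeeping in the negative-form reformulation and the identification of the surplus indices as exactly the outward neighbors of the endpoints of $K$.
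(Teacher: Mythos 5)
Your proof is correct and follows essentially the same route as the paper's: both transport the BZ conditions through the identification \eqref{eq:bij-index1} using the longest-element relations \eqref{eq:w0} (your ``negative-form'' reformulation is literally the paper's correspondence \eqref{eq:corr} reparametrized by $w \mapsto ww_{0}^{K}$, $k \mapsto \omega_{K}(k)$), and both reduce the edge inequality to controlling the extra terms indexed by $j \in I\setminus K$. The only divergence is at that last step: you bound the surplus via non-positivity of components (Remark~\ref{rem:highest}), whereas the paper notes that for $v\in W_{K}$ and $j\in I\setminus K$ one has $v\vpi_{j}^{I}=\vpi_{j}^{I}$, so those terms are exactly $M_{-\vpi_{j}^{I}}=0$ because $\bM\in\bz_{I}$ --- both arguments are valid, and the non-positivity argument is the one the paper itself uses in the companion Lemma~\ref{lem:res2}.
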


\begin{proof}
First we show that $\bM_{K}$ satisfies condition (1) of 
Definition~\ref{dfn:BZdatum} (with $I$ replaced by $K$), 
i.e., for $w \in W_{K}$ and $i \in K$, 
%
%%%%%%%%%%%%%%%%
%%% eq:edge0 %%%
%%%%%%%%%%%%%%%%
%
\begin{equation} \label{eq:edge0}
M_{w\vpi^{K}_{i}}+M_{ws_{i}\vpi^{K}_{i}}+
\sum_{j \in K \setminus \{i\}} a_{ji} M_{w\vpi^{K}_{j}} \le 0. 
\end{equation}
Observe that under the bijection \eqref{eq:bij-index1}, 
we have 
%
%%%%%%%%%%%%%%%
%%% eq:corr %%%
%%%%%%%%%%%%%%%
%
\begin{equation} \label{eq:corr}
\begin{array}{rcl}
w\vpi_{k}^{K} & \mapsto & 
wv_{0}\vpi_{\tau(k)}^{I} \quad (k \in K), \\[3mm]
ws_{i}\vpi_{i}^{K} & \mapsto &
ws_{i}v_{0}\vpi_{\tau(i)}^{I}=
wv_{0}s_{\tau(i)}\vpi_{\tau(i)}^{I},
\end{array}
\end{equation}
where we set $v_{0}:=w_{0}^{K}w_{0}^{I}$ and 
$\tau:=\omega_{I}\omega_{K}$ for simplicity of notation.
Since $\bM$ is a BZ datum for $\Fg_{I}$, it follows 
from condition (1) of Definition~\ref{dfn:BZdatum} 
for $wv_{0} \in W_{I}$ and $\tau(i) \in I$ that
%
%%%%%%%%%%%%%%%%%
%%% eq:edge01 %%%
%%%%%%%%%%%%%%%%%
%
\begin{equation} \label{eq:edge01}
M_{wv_{0}\vpi_{\tau(i)}^{I}}+M_{wv_{0}s_{\tau(i)}\vpi_{\tau(i)}^{I}}+
\sum_{j \in I \setminus \{\tau(i)\}} 
a_{j, \tau(i)} M_{wv_{0}\vpi_{j}^{I}} \le 0. 
\end{equation}
Here, using the equality $a_{\omega_{I}(j),\tau(i)} = 
a_{j,\omega_{K}(i)}$ for $j \in I$, we see that
\begin{equation*}
\sum_{j \in I \setminus \{\tau(i)\}} 
a_{j, \tau(i)} M_{wv_{0}\vpi_{j}^{I}}=
\sum_{\omega_{I}(j) \in I \setminus \{\tau(i)\}} 
a_{\omega_{I}(j), \tau(i)} M_{wv_{0}\vpi_{\omega_{I}(j)}^{I}}=
\sum_{j \in I \setminus \{\omega_{K}(i)\}} 
a_{j, \omega_{K}(i)} M_{wv_{0}\vpi_{\omega_{I}(j)}^{I}}.
\end{equation*}
Also, if $j \in I \setminus K$, then 
\begin{align*}
M_{wv_{0}\vpi_{\omega_{I}(j)}^{I}} & 
 =M_{-ww_{0}^{K}\vpi_{j}^{I}}
 =M_{-\vpi_{j}^{I}} \quad \text{since $ww_{0}^{K} \in W_{K}$} \\
& = 0 \quad \text{since $\bM \in \bz_{I}$}.
\end{align*}
Hence it follows that 
\begin{equation*}
\sum_{j \in I \setminus \{\omega_{K}(i)\}} 
a_{j, \omega_{K}(i)} M_{wv_{0}\vpi_{\omega_{I}(j)}^{I}}=
\sum_{j \in K \setminus \{\omega_{K}(i)\}} 
a_{j, \omega_{K}(i)} M_{wv_{0}\vpi_{\omega_{I}(j)}^{I}}.
\end{equation*}
Furthermore, using the equality 
$a_{\omega_{K}(j),\omega_{K}(i)}=a_{ji}$ for $j \in K$, 
we get
\begin{align*}
\sum_{j \in K \setminus \{\omega_{K}(i)\}} 
a_{j, \omega_{K}(i)} M_{wv_{0}\vpi_{\omega_{I}(j)}^{I}} & =
\sum_{\omega_{K}(j) \in K \setminus \{\omega_{K}(i)\}} 
a_{\omega_{K}(j), \omega_{K}(i)} 
M_{wv_{0}\vpi_{\omega_{I}(\omega_{K}(j))}^{I}} \\[3mm]
& 
=
\sum_{j \in K \setminus \{i\}} 
a_{ji} M_{wv_{0}\vpi_{\tau(j)}^{I}}.
\end{align*}
Substituting this into \eqref{eq:edge01}, we obtain
\begin{equation*}
M_{wv_{0}\vpi_{\tau(i)}^{I}}+M_{wv_{0}s_{\tau(i)}\vpi_{\tau(i)}^{I}}+
\sum_{j \in K \setminus \{i\}} 
a_{ji} M_{wv_{0}\vpi_{\tau(j)}^{I}} \le 0.
\end{equation*}
The inequality \eqref{eq:edge0} follows immediately 
from this inequality and the correspondence \eqref{eq:corr}. 

Next we show that $\bM_{K}$ satisfies condition (2) of 
Definition~\ref{dfn:BZdatum} (with $I$ replaced by $K$), 
i.e., for $w \in W_{K}$ and $i,\,j \in K$ with 
$a_{ij}=a_{ji}=-1$ such that $ws_{i} > w$, $ws_{j} > w$, 
%
%%%%%%%%%%%%%%%
%%% eq:TPR0 %%%
%%%%%%%%%%%%%%%
%
\begin{equation} \label{eq:TPR0}
M_{ws_{i}\vpi^{K}_{i}}+M_{ws_{j}\vpi^{K}_{j}} =
\min \bigl(
 M_{w\vpi^{K}_{i}}+M_{ws_{i}s_{j}\vpi^{K}_{j}}, \ 
 M_{w\vpi^{K}_{j}}+M_{ws_{j}s_{i}\vpi^{K}_{i}}
 \bigr).
\end{equation}
Observe that under the bijection \eqref{eq:bij-index1}, 
we have 
%
%%%%%%%%%%%%%%%%
%%% eq:corr1 %%%
%%%%%%%%%%%%%%%%
%
\begin{equation} \label{eq:corr1}
\begin{array}{rcl}
w\vpi_{k}^{K} & \mapsto & 
wv_{0}\vpi_{\tau(k)}^{I} \quad (k \in K), \\[3mm]
ws_{k}\vpi_{k}^{K} & \mapsto &
ws_{k}v_{0}\vpi_{\tau(k)}^{I}=
wv_{0}s_{\tau(k)}\vpi_{\tau(k)}^{I} \quad (k \in K), \\[3mm]
ws_{l}s_{k}\vpi_{k}^{K} & \mapsto &
ws_{l}s_{k}v_{0}\vpi_{\tau(k)}^{I}=
wv_{0}s_{\tau(l)}s_{\tau(k)}\vpi_{\tau(k)}^{I} \quad (k,\,l \in K).
\end{array}
\end{equation}
Since $a_{\tau(i),\tau(j)}=a_{\tau(j),\tau(i)}=-1$ and 
$wv_{0}s_{\tau(k)}=ws_{k}v_{0} > wv_{0}$ for $k=i,\,j$, and 
since $\bM$ is a BZ datum for $\Fg_{I}$, 
it follows from condition (2) of Definition~\ref{dfn:BZdatum} 
for $wv_{0} \in W_{I}$ and $\tau(i),\,\tau(j) \in I$ that 
\begin{align*}
& M_{wv_{0}s_{\tau(i)}\vpi^{I}_{\tau(i)}}+
  M_{wv_{0}s_{\tau(j)}\vpi^{I}_{\tau(j)}} \\
& \hspace*{10mm} =
\min \bigl(
 M_{wv_{0}\vpi^{I}_{\tau(i)}}+M_{wv_{0}s_{\tau(i)}s_{\tau(j)}\vpi^{I}_{\tau(j)}}, \ 
 M_{wv_{0}\vpi^{I}_{\tau(j)}}+M_{wv_{0}s_{\tau(j)}s_{\tau(i)}\vpi^{I}_{\tau(i)}}
 \bigr).
\end{align*}
The equation \eqref{eq:TPR0} follows immediately 
from this equation and the correspondence \eqref{eq:corr1}. 

Finally, it is obvious that $M_{w_{0}^{K}\vpi_{i}^{K}}=
M_{-\vpi_{\omega_{K}(i)}^{I}}=0$ 
for all $i \in K$, since $\bM \in \bz_{I}$. 
This proves the lemma.
\end{proof}

Now, we set $\Gamma_{I}^{K}:=
\bigl\{w\vpi_{i}^{I} \mid w \in W_{K},\,i \in K\bigr\}
\subset \Gamma_{I}$. Then there exists 
the following bijection of sets between 
$\Gamma_{K}$ and $\Gamma_{I}^{K}$: 
%
%%%%%%%%%%%%%%%%%%%%%
%%% eq:bij-index2 %%%
%%%%%%%%%%%%%%%%%%%%%
%
\begin{equation} \label{eq:bij-index2}
\begin{array}{rcl}
\Gamma_{K} & \stackrel{\sim}{\rightarrow} & 
\Gamma_{I}^{K}, \\[3mm]
w\vpi_{i}^{K} & \mapsto & w\vpi_{i}^{I} 
\quad \text{for $w \in W_{K}$ and $i \in K$};
\end{array}
\end{equation}
the argument above for the correspondence \eqref{eq:bij-index1} 
shows that this map is well-defined. 
For a collection $\bM=(M_{\gamma})_{\gamma \in \Gamma_{I}}$ 
of integers indexed by $\Gamma_{I}$, 
we define $\bM^{K}:=(M_{\gamma})_{\gamma \in \Gamma_{I}^{K}}$, 
and regard it as a collection of integers indexed by 
$\Gamma_{K}$ through the bijection \eqref{eq:bij-index2} 
between the index sets. 
%
%%%%%%%%%%%%%%%%
%%% lem:res2 %%%
%%%%%%%%%%%%%%%%
%
\begin{lem} \label{lem:res2}
Keep the notation above. 
If $\bM=(M_{\gamma})_{\gamma \in \Gamma_{I}}$ 
is an element of $\bz_{I}$, then $\bM^{K}$ is 
a BZ datum for $\Fg_{K}$. 
% (but, in general, not contained in $\bz_{K}$). 
\end{lem}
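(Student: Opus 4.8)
The plan is to verify directly that $\bM^{K}$ satisfies the two defining conditions of Definition~\ref{dfn:BZdatum} for $\Fg_{K}$, transported through the identification \eqref{eq:bij-index2}, under which $w\vpi_{i}^{K} \leftrightarrow w\vpi_{i}^{I}$ for $w \in W_{K}$ and $i \in K$. Note that, unlike in Lemma~\ref{lem:res1}, the statement asserts only that $\bM^{K}$ is a BZ datum for $\Fg_{K}$ (no normalization at $w_{0}^{K}$), so only the edge inequalities \eqref{eq:edge} and the tropical Pl\"ucker relations \eqref{eq:TPR} need to be checked.

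For the edge inequalities, I would fix $w \in W_{K}$ and $i \in K$ and start from condition (1) for the ambient datum $\bM \in \bz_{I}$, applied to $w \in W_{K} \subset W_{I}$ and $i \in K \subset I$. Writing
\[
S_{I} := M_{w\vpi_{i}^{I}} + M_{ws_{i}\vpi_{i}^{I}} + \sum_{j \in I \setminus \{i\}} a_{ji} M_{w\vpi_{j}^{I}} \le 0,
\]
the edge inequality for $\bM^{K}$ reads, via \eqref{eq:bij-index2}, exactly that the analogous expression $S_{K}$ with the sum restricted to $j \in K \setminus \{i\}$ is $\le 0$. Splitting $S_{I} = S_{K} + \sum_{j \in I \setminus K} a_{ji} M_{w\vpi_{j}^{I}}$, the key observation is that the extra sum is nonnegative: since $K \subset I$ are intervals, $a_{ji} \ne 0$ with $i \in K$, $j \in I \setminus K$ forces $|i-j| = 1$, so $a_{ji} = -1$, while Remark~\ref{rem:highest} gives $M_{w\vpi_{j}^{I}} \le 0$; hence each such term equals $-M_{w\vpi_{j}^{I}} \ge 0$. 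Therefore $S_{K} \le S_{I} \le 0$, which is \eqref{eq:edge} for $\bM^{K}$.

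For the tropical Pl\"ucker relations the situation is simpler: for $w \in W_{K}$ and $i,j \in K$ with $a_{ij}=a_{ji}=-1$, the relation \eqref{eq:TPR} for $\bM^{K}$ is, through \eqref{eq:bij-index2}, literally condition (2) for $\bM$ at $w$ and $i,j$ (regarded in $W_{I}$ and $I$). The only point requiring comment is that the hypotheses $ws_{i} > w$ and $ws_{j} > w$ are the same whether the Bruhat order is read in $W_{K}$ or in $W_{I}$; this holds because $W_{K}$ is the standard parabolic subgroup of $W_{I}$ generated by $\{s_{k} \mid k \in K\}$, so lengths, and hence the Bruhat order on $W_{K}$, coincide with their restrictions from $W_{I}$. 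Thus condition (2) for $\bM$ yields \eqref{eq:TPR} for $\bM^{K}$ directly.

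The only nonroutine ingredient is the edge inequality, and the main obstacle—really the crux—is dealing with the boundary terms $M_{w\vpi_{j}^{I}}$ for $j \in I \setminus K$: unlike the embedding \eqref{eq:bij-index1} used in Lemma~\ref{lem:res1}, the ``positive'' embedding \eqref{eq:bij-index2} does not force these to vanish, so they cannot simply be discarded. The resolution is to recognize that one need not make them vanish at all: the sign $a_{ji} = -1$ together with the global negativity $M_{\gamma} \le 0$ of every component of a datum in $\bz_{I}$ (Remark~\ref{rem:highest}) makes each boundary term work in our favor. Once this is noticed, both conditions follow with no further computation.
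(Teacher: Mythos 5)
Your proof is correct and follows essentially the same route as the paper's: both arguments split the ambient edge inequality into the $K$-part and the boundary terms over $I \setminus K$, discard the latter using $a_{ji} \le 0$ together with the nonpositivity $M_{\gamma} \le 0$ from Remark~\ref{rem:highest}, and obtain the tropical Pl\"ucker relations by direct restriction through \eqref{eq:bij-index2}. Your extra remarks (that $a_{ji}=-1$ for the nonzero boundary terms, and that the Bruhat order on the parabolic subgroup $W_{K}$ agrees with its restriction from $W_{I}$) are correct refinements of points the paper leaves implicit.
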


\begin{proof}
First we show that $\bM^{K}$ satisfies condition (1) of 
Definition~\ref{dfn:BZdatum} (with $I$ replaced by $K$), 
i.e., for $w \in W_{K}$ and $i \in K$, 
%
%%%%%%%%%%%%%%%%
%%% eq:edge2 %%%
%%%%%%%%%%%%%%%%
%
\begin{equation} \label{eq:edge2}
M_{w\vpi^{K}_{i}}+M_{ws_{i}\vpi^{K}_{i}}+
\sum_{j \in K \setminus \{i\}} a_{ji} M_{w\vpi^{K}_{j}} \le 0. 
\end{equation}
Since $\bM$ is a BZ datum for $\Fg_{I}$, it follows 
from condition (1) of Definition~\ref{dfn:BZdatum} 
for $w \in W_{I}$ and $i \in I$ that
\begin{equation*}
M_{w\vpi^{I}_{i}}+M_{ws_{i}\vpi^{I}_{i}}+
\sum_{j \in I \setminus \{i\}} a_{ji} M_{w\vpi^{I}_{j}} \le 0,
\end{equation*}
and hence
%
%%%%%%%%%%%%%%%%%%
%%% eq:edge2-1 %%%
%%%%%%%%%%%%%%%%%%
%
\begin{equation} \label{eq:edge2-1}
M_{w\vpi^{I}_{i}}+M_{ws_{i}\vpi^{I}_{i}}+
\sum_{j \in K \setminus \{i\}} a_{ji} M_{w\vpi^{I}_{j}} + 
\sum_{j \in I \setminus K} a_{ji} M_{w\vpi^{I}_{j}} \le 0.
\end{equation}
Because $M_{\gamma} \in \BZ_{\le 0}$ for all $\gamma \in \Gamma_{I}$ 
by Remark~\ref{rem:highest}, it follows that all terms 
$a_{ji}M_{w\vpi^{I}_{j}}$, $j \in I \setminus K$, of 
the second sum in \eqref{eq:edge2-1} are 
nonnegative integers. Hence we obtain 
\begin{equation*}
M_{w\vpi^{I}_{i}}+M_{ws_{i}\vpi^{I}_{i}}+
\sum_{j \in K \setminus \{i\}} a_{ji} M_{w\vpi^{I}_{j}} \le 0.
\end{equation*}
The inequality \eqref{eq:edge2} follows immediately 
from this equality and the correspondence 
\eqref{eq:bij-index2}. 

Next we show that $\bM^{K}$ satisfies condition (2) of 
Definition~\ref{dfn:BZdatum} (with $I$ replaced by $K$), 
i.e., for $w \in W_{K}$ and $i,\,j \in K$ with 
$a_{ij}=a_{ji}=-1$ such that $ws_{i} > w$, $ws_{j} > w$, 
%
%%%%%%%%%%%%%%%
%%% eq:TPR1 %%%
%%%%%%%%%%%%%%%
%
\begin{equation} \label{eq:TPR1}
M_{ws_{i}\vpi^{K}_{i}}+M_{ws_{j}\vpi^{K}_{j}} =
\min \bigl(
 M_{w\vpi^{K}_{i}}+M_{ws_{i}s_{j}\vpi^{K}_{j}}, \ 
 M_{w\vpi^{K}_{j}}+M_{ws_{j}s_{i}\vpi^{K}_{i}}
 \bigr).
\end{equation}
Since $\bM$ is a BZ datum for $\Fg_{I}$, it follows from 
condition (2) of Definition~\ref{dfn:BZdatum} 
for $w \in W_{I}$ and $i,\,j \in I$ that 
\begin{equation*}
M_{ws_{i}\vpi^{I}_{i}}+M_{ws_{j}\vpi^{I}_{j}} =
\min \bigl(
 M_{w\vpi^{I}_{i}}+M_{ws_{i}s_{j}\vpi^{I}_{j}}, \ 
 M_{w\vpi^{I}_{j}}+M_{ws_{j}s_{i}\vpi^{I}_{i}}
 \bigr).
\end{equation*}
The equation \eqref{eq:TPR1} follows immediately 
from this equation and the correspondence \eqref{eq:bij-index2}. 
This proves the lemma.
\end{proof}

%=========================%
%     START SECTION 03    %
%=========================%
%
\section{Berenstein-Zelevinsky data of type $A_{\infty}$.}
\label{sec:BZdatum-inf}

%==============================%
%     START SUBSECTION 0301    %
%==============================%
%
\subsection{Basic notation in type $A_{\infty}$.}
\label{subsec:notation-inf}

Let $A_{\BZ}=(a_{ij})_{i,j \in \BZ}$ denote 
the generalized Cartan matrix of 
type $A_{\infty}$ with index set $\BZ$; 
the entries $a_{ij}$ are given by: 
%
%%%%%%%%%%%%%%%%%
%%% eq:index3 %%%
%%%%%%%%%%%%%%%%%
%
\begin{equation} \label{eq:index3}
a_{ij}=
 \begin{cases}
 2 & \text{if $i=j$}, \\
 -1 & \text{if $|i-j|=1$}, \\
 0 & \text{otherwise},
 \end{cases}
\end{equation}
for $i,\,j \in \BZ$. Let
\begin{equation*}
\bigl(A_{\BZ}, \, 
 \Pi:=\bigl\{\alpha_{i}\bigr\}_{i \in \BZ}, \, 
 \Pi^{\vee}:=\bigl\{h_{i}\bigr\}_{i \in \BZ}, \, 
 \Fh^{\ast},\,\Fh
\bigr)
\end{equation*}
be the root datum of type $A_{\infty}$. Namely, 
$\Fh$ is a complex infinite-dimensional vector space, with 
$\Pi^{\vee}$ a basis of $\Fh$, and 
$\Pi$ is a linearly independent subset of the (full) dual space 
$\Fh^{\ast}:=\Hom_{\BC}(\Fh,\,\BC)$ of $\Fh$ such that 
$\pair{h_{i}}{\alpha_{j}}=a_{ij}$ for $i,\,j \in \BZ$, 
where $\pair{\cdot}{\cdot}$ is the canonical pairing 
between $\Fh$ and $\Fh^{\ast}$. 
For each $i \in \BZ$, 
define $\Lambda_{i} \in \Fh^{\ast}$ by: 
$\pair{h_{j}}{\Lambda_{i}}=\delta_{ij}$ 
for $j \in \BZ$. 
Let $W_{\BZ}:=\langle s_{i} \mid i \in \BZ \rangle \ (\subset GL(\Fh^{\ast}))$ 
be the Weyl group of type $A_{\infty}$, 
where $s_{i}$ is the simple reflection for $i \in \BZ$. 
Also, we denote by $\le$ the (strong) Bruhat order on $W_{\BZ}$ 
(cf. \cite[\S8.3]{BjB}). 

Set 
%
%%%%%%%%%%%%%%%%%
%%% eq:GammaZ %%%
%%%%%%%%%%%%%%%%%
%
\begin{equation} \label{eq:GammaZ}
\Gamma_{\BZ}:=\bigl\{-w\Lambda_{i} \mid w \in W_{\BZ},\ i \in \BZ\bigr\}, 
\quad \text{and} \quad
\Xi_{\BZ}:=-\Gamma_{\BZ}.
\end{equation}
We should note that $\Gamma_{\BZ} \cap \Xi_{\BZ}=\emptyset$. 
Indeed, suppose that $\gamma \in \Gamma_{\BZ} \cap \Xi_{\BZ}$. 
Since $\gamma \in \Gamma_{\BZ}$ (resp., $\gamma \in \Xi_{\BZ}$), 
it can be written as: $\gamma=-w\Lambda_{i}$ 
(resp., $\gamma=v\Lambda_{j}$) for some $w \in W_{\BZ}$ and 
$i \in \BZ$ (resp., $v \in W_{\BZ}$ and $j \in \BZ$). 
Then we have $\gamma=-w\Lambda_{i}=v\Lambda_{j}$, 
and hence $-\Lambda_{i}=w^{-1}v\Lambda_{j}$. 
Since $\Lambda_{j}$ is a dominant integral weight, we see that 
$w^{-1}v\Lambda_{j}$ is of the form:
\begin{equation*}
w^{-1}v\Lambda_{j}=
 \Lambda_{j}-(m_{1}\alpha_{i_{1}}+m_{2}\alpha_{i_{2}}+ \cdots + m_{p}\alpha_{i_{p}})
\end{equation*}
for some $m_{1},\,m_{2},\,\dots,\,m_{p} \in \BZ_{> 0}$ and 
$i_{1},\,i_{2},\,\dots,\,i_{p} \in \BZ$ with 
$i_{1} < i_{2} < \cdots < i_{p}$. If we set $k:=i_{p}+1$, then
we see that 
\begin{equation*}
\pair{h_{k}}{w^{-1}v\Lambda_{j}}=
\pair{h_{k}}{\Lambda_{j}}-m_{p}\pair{h_{k}}{\alpha_{i_{p}}}=
\pair{h_{k}}{\Lambda_{j}}+m_{p} > 0.
\end{equation*}
However, we have
\begin{equation*}
0 < \pair{h_{k}}{w^{-1}v\Lambda_{j}}=
\pair{h_{k}}{-\Lambda_{i}} \le 0,
\end{equation*}
which is a contradiction. Thus we have shown that 
$\Gamma_{\BZ} \cap \Xi_{\BZ}=\emptyset$. 

Let $\bM=(M_{\gamma})_{\gamma \in \Gamma_{\BZ}}$ 
(resp., $\bM=(M_{\xi})_{\xi \in \Xi_{\BZ}}$) be 
a collection of integers indexed by $\Gamma_{\BZ}$ 
(resp., $\Xi_{\BZ}$). 
For each $\gamma \in \Gamma_{\BZ}$ 
(resp., $\xi \in \Xi_{\BZ}$), 
we call $M_{\gamma}$ (resp., $M_{\xi}$) 
the $\gamma$-component (resp. the $\xi$-component) of $\bM$, and 
denote it by $(\bM)_{\gamma}$ (resp., $(\bM)_{\xi}$). 

Let $I$ be a (finite) interval in $\BZ$. 
Then the Cartan matrix $A_{I}$ of 
the finite-dimensional simple 
Lie algebra $\Fg_{I}$ (see \S\ref{subsec:notation}) 
equals the principal submatrix of $A_{\BZ}$ corresponding to $I \subset \BZ$. 
Also, the Weyl group $W_{I}$ of $\Fg_{I}$ can be identified with 
the subgroup of the Weyl group $W_{\BZ}$ generated by 
the subset $\bigl\{s_{i} \mid i \in I\bigr\}$ of 
$\bigl\{s_{i} \mid i \in \BZ\bigr\}$.
Moreover, we can (and do) identify the set $\Gamma_{I}$ 
(of chamber weights) for $\Fg_{I}$, defined by \eqref{eq:GammaI}, 
with the subset $\bigl\{-w\Lambda_{i} \mid w \in W_{I},\,i \in I\bigr\}$ 
of the set $\Gamma_{\BZ}$ (of chamber weights) through 
the following bijection of sets: 
%
%%%%%%%%%%%%%%%%%%%%%
%%% eq:bij-index3 %%%
%%%%%%%%%%%%%%%%%%%%%
%
\begin{equation} \label{eq:bij-index3}
\begin{array}{rcl}
\Gamma_{I} & \stackrel{\sim}{\rightarrow} & 
\bigl\{-w\Lambda_{i} \mid 
w \in W_{I},\,i \in I\bigr\} \subset \Gamma_{\BZ}, \\[3mm]
-w\vpi_{i}^{I} & \mapsto & -w\Lambda_{i}
\quad \text{for $w \in W_{I}$ and $i \in I$};
\end{array}
\end{equation}
the same argument as for the correspondence \eqref{eq:bij-index1} 
shows that this map is well-defined.
Note that for each $i \in I$, 
the fundamental weight $\vpi_{i}^{I} \in \Gamma_{I}$ for $\Fg_{I}$ 
corresponds to 
$-w_{0}^{I}(\Lambda_{\omega_{I}(i)}) \in \Gamma_{\BZ}$ 
under the bijection \eqref{eq:bij-index3}, 
where $\omega_{I}:I \rightarrow I$ denotes 
the (Dynkin) diagram automorphism for $\Fg_{I}$. 

\begin{rem}
Let $I$ be an interval in $\BZ$, and fix $i \in I$. 
The element 
$\vpi_{i}^{I}=-w_{0}^{I}(\Lambda_{\omega_{I}(i)}) \in \Gamma_{\BZ}$ 
satisfies the following property: for $j \in \BZ$, 
%
%%%%%%%%%%%%%%%%%%%
%%% eq:pair-vpi %%%
%%%%%%%%%%%%%%%%%%%
%
\begin{equation} \label{eq:pair-vpi}
\pair{h_{j}}{\vpi_{i}^{I}}=
\begin{cases}
\delta_{ij} & \text{if $j \in I$}, \\
-1 & \text{if $j=(\min I) -1$ or $j= (\max I) + 1$}, \\
0 & \text{otherwise}.
\end{cases}
\end{equation}
Indeed, it is easily seen that 
$\pair{h_{j}}{\vpi_{i}^{I}}=\delta_{ij}$ for $j \in I$. 
Also, if $j < (\min I) -1$ or $j > (\max I) + 1$, 
then $(w_{0}^{I})^{-1}h_{j}=h_{j}$ 
since $w_{0}^{I} \in W_{I}=\langle s_{i} \mid i \in I \rangle$. 
Hence
\begin{equation*}
\pair{h_{j}}{\vpi_{i}^{I}}=
\pair{h_{j}}{-w_{0}^{I}(\Lambda_{\omega_{I}(i)})}=
-\pair{(w_{0}^{I})^{-1}h_{j}}{\Lambda_{\omega_{I}(i)}}=
-\pair{h_{j}}{\Lambda_{\omega_{I}(i)}}=0.
\end{equation*}
It remains to show that $\pair{h_{j}}{\vpi_{i}^{I}}=-1$ if 
$j=(\min I) -1$ or $j= (\max I) + 1$. 
For simplicity of notation, suppose that 
$I=\bigl\{1,\,2\,\dots,\,m\bigr\}$ and $j=0$. 
Then, by using the reduced expression 
$w_{0}^{I}=(s_{1}s_{2} \cdots s_{m})
  (s_{1}s_{2} \cdots s_{m-1}) \cdots 
  (s_{1}s_{2})s_{1}$
of the longest element $w_{0}^{I} \in W_{I}$, 
we deduce that 
$(w_{0}^{I})^{-1}h_{0}=
h_{0}+h_{1}+\cdots+h_{m}$.
Therefore, 
\begin{align*}
\pair{h_{0}}{\vpi_{i}^{I}} & =
\pair{h_{0}}{-w_{0}^{I}(\Lambda_{\omega_{I}(i)})}=
-\pair{(w_{0}^{I})^{-1}h_{0}}{\Lambda_{\omega_{I}(i)}} \\
& =
-\pair{h_{0}+h_{1}+\cdots+h_{m}}{\Lambda_{\omega_{I}(i)}}=-1,
\end{align*}
as desired.
\end{rem}

For a collection $\bM=(M_{\gamma})_{\gamma \in \Gamma_{\BZ}}$ 
of integers indexed by $\Gamma_{\BZ}$, 
we set $\bM_{I}:=(M_{\gamma})_{\gamma \in \Gamma_{I}}$, 
regarding the set $\Gamma_{I}$ as a subset of the set $\Gamma_{\BZ}$ 
through the bijection \eqref{eq:bij-index3}. 
Note that if $K$ is an interval in $\BZ$ such that $K \subset I$, 
then $(\bM_{I})_{K}=\bM_{K}$ (for the notation, 
see \S\ref{subsec:res-subint}). 

%==============================%
%     START SUBSECTION 0302    %
%==============================%
%
\subsection{BZ data of type $A_{\infty}$.}
\label{subsec:BZdatum-inf}
%
%%%%%%%%%%%%%%%%%%%%
%%% dfn:BZdatum2 %%%
%%%%%%%%%%%%%%%%%%%%
%
\begin{dfn} \label{dfn:BZdatum2}
A collection 
$\bM=(M_{\gamma})_{\gamma \in \Gamma_{\BZ}}$ 
of integers indexed by $\Gamma_{\BZ}$ is 
called a BZ datum of type $A_{\infty}$ 
if it satisfies the following conditions: 

(a) For each interval $K$ in $\BZ$, 
$\bM_{K}=(M_{\gamma})_{\gamma \in \Gamma_{K}}$ 
is a BZ datum for $\Fg_{K}$, and is an element of $\bz_{K}$ 
(cf. Lemma~\ref{lem:res1}). 

(b) For each $w \in W_{\BZ}$ and $i \in \BZ$, 
there exists an interval $I$ in $\BZ$ such that 
$i \in I$, $w \in W_{I}$, and 
$M_{w\vpi_{i}^{J}}=M_{w\vpi_{i}^{I}}$ for all 
intervals $J$ in $\BZ$ containing $I$. 
\end{dfn}
%
%%%%%%%%%%%%
%%% ex:O %%%
%%%%%%%%%%%%
%
\begin{ex} \label{ex:O}
Let $\bO$ be a collection of integers indexed by $\Gamma_{\BZ}$ 
whose $\gamma$-component is equal to $0$ 
for each $\gamma \in \Gamma_{\BZ}$. 
Then it is obvious that $\bO$ is a BZ datum of type $A_{\infty}$ 
(cf. Remark~\ref{rem:highest}). 
\end{ex}

Let $\bz_{\BZ}$ denote the set of all BZ data of type $A_{\infty}$. 
For $\bM=(M_{\gamma})_{\gamma \in \Gamma_{\BZ}} \in \bz_{\BZ}$, and 
for each $w \in W$ and $i \in \BZ$, we denote by $\Int (\bM; w,i)$ 
the set of all intervals $I$ in $\BZ$ satisfying 
condition~(b) of Definition~\ref{dfn:BZdatum2} for the $w$ and $i$. 
%
%%%%%%%%%%%%%%%%%
%%% rem:limit %%%
%%%%%%%%%%%%%%%%%
%
\begin{rem} \label{rem:limit}
(1) Let $\bM$ be a BZ datum of type $A_{\infty}$, 
i.e., $\bM \in \bz_{\BZ}$, and 
let $w \in W_{\BZ}$ and $i \in \BZ$. 
It is obvious that if $I \in \Int (\bM;w,i)$, 
then $J \in \Int(\bM;w,i)$ for every interval 
$J$ in $\BZ$ containing $I$. 

(2) Let $\bM_{b} \ (1 \le b \le a)$ be BZ data of type $A_{\infty}$, 
and let $w_{b} \in W_{\BZ} \ (1 \le b \le a)$ and 
$i_{b} \in \BZ \ (1 \le b \le a)$. Then the intersection 
\begin{equation*}
\Int(\bM_{1};w_{1},i_{1}) \cap
\Int(\bM_{2};w_{2},i_{2}) \cap \cdots \cap
\Int(\bM_{a};w_{a},i_{a})
\end{equation*}
is nonempty. Indeed, we first take 
$I_{b} \in \Int(\bM_{b};w_{b},i_{b})$ arbitrarily for each $1 \le b \le a$, 
and then take an interval $J$ in $\BZ$ such that 
$J \supset I_{b}$ for all $1 \le b \le a$ (i.e., 
$J \supset I_{1} \cup I_{2} \cup \cdots \cup I_{a}$). 
Then, it follows immediately from part (1) that 
$J \in \Int(\bM_{b};w_{b},i_{b})$ for all $1 \le b \le a$, 
and hence that $J \in \Int(\bM_{1};w_{1},i_{1}) \cap
\Int(\bM_{2};w_{2},i_{2}) \cap \cdots \cap
\Int(\bM_{a};w_{a},i_{a})$.
\end{rem}

For each $\bM=(M_{\gamma})_{\gamma \in \Gamma_{\BZ}} \in \bz_{\BZ}$, 
we define a collection 
$\Theta(\bM)=(M_{\xi})_{\xi \in \Xi_{\BZ}}$ 
of integers indexed by 
$\Xi_{\BZ}=-\Gamma_{\BZ}$ as follows. 
Fix $\xi \in \Xi_{\BZ}$, and write it as 
$\xi=w\Lambda_{i}$ for some 
$w \in W_{\BZ}$ and $i \in \BZ$.
Here we note that 
if $I_{1},\,I_{2} \in \Int (\bM;w,i)$, then 
$M_{w\vpi_{i}^{I_{1}}}=M_{w\vpi_{i}^{I_{2}}}$. 
Indeed, take an interval $J$ in $\BZ$ such that 
$J \supset I_{1} \cup I_{2}$. Then we have 
$M_{w\vpi_{i}^{I_{1}}}=M_{w\vpi_{i}^{J}}=
 M_{w\vpi_{i}^{I_{2}}}$, and hence 
$M_{w\vpi_{i}^{I_{1}}}=M_{w\vpi_{i}^{I_{2}}}$. 
We now define $M_{\xi}=M_{w\Lambda_{i}}:=M_{w\vpi_{i}^{I}}$
for $I \in \Int (\bM;w,i)$. 
Let us check that 
this definition of $M_{\xi}$ does not depend 
on the choice of an expression $\xi=w\Lambda_{i}$. 
Suppose that $\xi=w\Lambda_{i}=v\Lambda_{j}$
for some $w,\,v \in W_{\BZ}$ and $i,\,j \in \BZ$; 
it is obvious that $i=j$ since $\Lambda_{i}$ and $\Lambda_{j}$ 
are dominant integral weights. Take an interval $I$ in 
$\BZ$ such that $I \in \Int(\bM;w,i) \cap \Int(\bM;v,j)$ 
(see Remark~\ref{rem:limit}\,(2)). 
Then, since $w,\,v \in W_{I}$ and 
$w\Lambda_{i}=v\Lambda_{j}$, 
the same argument as for the correspondence 
\eqref{eq:bij-index1} shows that 
$w\vpi_{i}^{I}=v\vpi_{j}^{I}$. 
Therefore, we obtain 
$M_{w\Lambda_{i}}=M_{w\vpi_{i}^{I}}=
 M_{v\vpi_{j}^{I}}=M_{v\Lambda_{j}}$, 
as desired. 

%==============================%
%     START SUBSECTION 0303    %
%==============================%
%
\subsection{Kashiwara operators on the set of BZ data of type $A_{\infty}$.}
\label{subsec:kashiwara}

Let $\bM=(M_{\gamma})_{\gamma \in \Gamma_{\BZ}} \in \bz_{\BZ}$, 
and fix $p \in \BZ$. We define 
$f_{p}\bM=(M_{\gamma}')_{\gamma \in \Gamma_{\BZ}}$ as follows.
For each $\gamma \in \Gamma_{\BZ}$, take an interval $I$ 
in $\BZ$ such that 
%
%%%%%%%%%%%%%%%%%
%%% eq:fj-int %%%
%%%%%%%%%%%%%%%%%
%
\begin{equation} \label{eq:fj-int}
\gamma \in \Gamma_{I} \quad \text{and} \quad 
I \in \Int(\bM;e,p) \cap \Int(\bM;s_{p},p);
\end{equation}
since $\bM_{I} \in \bz_{I}$ 
by condition~(a) of Definition~\ref{dfn:BZdatum2}, 
we can apply the lowering Kashiwara operator $f_{p}$ on 
$\bz_{I}$ to $\bM_{I}$. 
We define $(f_{p}\bM)_{\gamma}=
M_{\gamma}'$ to be $(f_{p}\bM_{I})_{\gamma}$. 
It follows from \eqref{eq:AM} that 
\begin{equation*}
M_{\gamma}' =
\begin{cases}
 \min \bigl(
   M_{\gamma},\ 
   M_{s_{p}\gamma}+c_{p}(\bM_{I})
   \bigr)
   & \text{if $\pair{h_{p}}{\gamma} > 0$}, \\[1.5mm]
 M_{\gamma} & \text{otherwise},
\end{cases}
\end{equation*}
where $c_{p}(\bM_{I})=M_{\vpi_{p}^{I}}-M_{s_{p}\vpi_{p}^{I}}-1$. 
Since $I \in \Int(\bM;e,p) \cap \Int(\bM;s_{p},p)$, 
we have 
\begin{equation*}
c_{p}(\bM_{I})=
 M_{\vpi_{p}^{I}}-M_{s_{p}\vpi_{p}^{I}}-1=
 M_{\Lambda_{p}}-M_{s_{p}\Lambda_{p}}-1=:c_{p}(\bM), 
\end{equation*}
where $M_{\Lambda_{p}}:=\Theta(\bM)_{\Lambda_{p}}$, and 
$M_{s_{p}\Lambda_{p}}:=\Theta(\bM)_{s_{p}\Lambda_{p}}$. Thus, 
%
%%%%%%%%%%%%%%
%%% eq:AM2 %%%
%%%%%%%%%%%%%%
%
\begin{equation} \label{eq:AM2}
M_{\gamma}'=
\begin{cases}
 \min \bigl(
   M_{\gamma},\ 
   M_{s_{p}\gamma}+c_{p}(\bM)
   \bigr)
   & \text{if $\pair{h_{p}}{\gamma} > 0$}, \\[1.5mm]
 M_{\gamma} & \text{otherwise}. 
\end{cases}
\end{equation}
From this description, we see that
the definition of $M_{\gamma}'$ 
does not depend on the choice of an interval $I$ 
satisfying \eqref{eq:fj-int}. 
%
%%%%%%%%%%%%%%%
%%% rem:AM2 %%%
%%%%%%%%%%%%%%%
%
\begin{rem} \label{rem:AM2}
(1) Keep the notation and assumptions above. 
It follows from \eqref{eq:AM2} that 
$M_{\gamma}'=(f_{p}\bM)_{\gamma} \le M_{\gamma}$ 
for all $\gamma \in \Gamma_{\BZ}$. 

(2) For $\bM \in \bz_{\BZ}$ and $p \in I$, there holds 
%
%%%%%%%%%%%%%%%%%
%%% eq:res-fj %%%
%%%%%%%%%%%%%%%%%
%
\begin{equation} \label{eq:res-fj}
(f_{p}\bM)_{I}=f_{p}\bM_{I}%
\quad \text{if} \quad 
I \in \Int(\bM;e,p) \cap 
 \Int(\bM;s_{p},p).
\end{equation}

\end{rem}

%%%%%%%%%%%%%%%
%%% prop:fj %%%
%%%%%%%%%%%%%%%
%
\begin{prop} \label{prop:fj}
Let $\bM \in \bz_{\BZ}$, and $p \in \BZ$. 
Then, $f_{p}\bM$ is an element of $\bz_{\BZ}$. 
\end{prop}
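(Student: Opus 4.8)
The plan is to verify that $f_p\bM$ satisfies both conditions (a) and (b) of Definition~\ref{dfn:BZdatum2}. For condition (a), I need to show that for every interval $K$ in $\BZ$, the restriction $(f_p\bM)_K$ is an element of $\bz_K$. The natural strategy is to reduce this to the already-established behavior of $f_p$ on the finite-type sets $\bz_I$, using the compatibility formula~\eqref{eq:res-fj} from Remark~\ref{rem:AM2}\,(2). Given $K$, I would choose (by Remark~\ref{rem:limit}\,(2)) an interval $I \supset K$ with $I \in \Int(\bM;e,p) \cap \Int(\bM;s_p,p)$; then $(f_p\bM)_I = f_p\bM_I$ holds by~\eqref{eq:res-fj}. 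Since $\bM_I \in \bz_I$ by condition~(a) for $\bM$, the finite-type theory (the discussion following Fact~\ref{fact:fj}) gives $f_p\bM_I \in \bz_I$. Finally, using the transitivity of restriction $(\bM_I)_K = \bM_K$ noted at the end of \S\ref{subsec:notation-inf}, together with Lemma~\ref{lem:res1} applied to $f_p\bM_I \in \bz_I$, I obtain $(f_p\bM)_K = ((f_p\bM)_I)_K = (f_p\bM_I)_K \in \bz_K$, which is condition~(a).

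For condition (b), I must show that for each $w \in W_{\BZ}$ and $i \in \BZ$, there is an interval $I$ with $i \in I$, $w \in W_I$, and $(f_p\bM)_{w\vpi_i^J} = (f_p\bM)_{w\vpi_i^I}$ for all intervals $J \supseteq I$. The idea is to pick $I$ large enough to simultaneously lie in $\Int(\bM;w,i)$, $\Int(\bM;e,p)$, and $\Int(\bM;s_p,p)$, and also to contain a fixed interval controlling the value of $M_{s_p\gamma}$ appearing in~\eqref{eq:AM2} for $\gamma = w\vpi_i^I$; such an $I$ exists by the finite-intersection property in Remark~\ref{rem:limit}\,(2). The explicit formula~\eqref{eq:AM2} then expresses $(f_p\bM)_\gamma$ purely in terms of $M_\gamma$, $M_{s_p\gamma}$, and the scalar $c_p(\bM)$, all of which are stable once $I$ is chosen in the relevant intersection. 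Enlarging $J \supseteq I$ does not change any of these quantities (here I would invoke condition~(b) for $\bM$ at both $w\Lambda_i$ and $s_pw\Lambda_i$), so the $\gamma$-component of $f_p\bM$ stabilizes, giving condition~(b).

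The main obstacle I anticipate is the bookkeeping in condition~(b): the formula~\eqref{eq:AM2} involves not only the component at $\gamma = w\vpi_i^I$ but also the component at $s_p\gamma$, and one must confirm that $s_p\gamma$ is again of the form $w'\Lambda_{i'}$ (equivalently, that $s_p$ preserves $\Gamma_{\BZ}$ up to the bijection~\eqref{eq:bij-index3}) so that its stability can be invoked via condition~(b) for $\bM$. This requires choosing the witnessing interval $I$ to be a common member of several $\Int(\bM;\,\cdot\,,\,\cdot\,)$ sets at once, and checking that the reflection $s_p$ acting on $w\vpi_i^I$ stays within $\Gamma_I$ (or extends compatibly as $I$ grows). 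The computation itself is routine given the explicit formula, but verifying the independence from $J$ cleanly — and confirming that the interval $I$ can be taken to contain $p$, $p\pm 1$, and to witness stability at both $w\Lambda_i$ and $s_pw\Lambda_i$ — is where the care is needed.
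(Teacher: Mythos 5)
Your proposal is correct and follows essentially the same route as the paper: condition (a) via $(f_p\bM)_I=f_p\bM_I$ and Lemma~\ref{lem:res1}, and condition (b) by choosing $I$ in the fourfold intersection $\Int(\bM;e,p)\cap\Int(\bM;s_p,p)\cap\Int(\bM;w,i)\cap\Int(\bM;s_pw,i)$ and reading off stability from \eqref{eq:AM2}. The one detail you flagged but left open --- that the sign of $\pair{h_p}{w\vpi_i^I}$ does not change when $I$ is enlarged to $J$ --- is exactly the remaining check the paper performs using \eqref{eq:pair-vpi} and the fact that $w^{-1}h_p\in\bigoplus_{j\in I}\BZ h_j$.
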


By this proposition, for each $p \in \BZ$, 
we obtain a map $f_{p}$ from $\bz_{\BZ}$ to itself sending 
$\bM \in \bz_{\BZ}$ to $f_{p}\bM \in \bz_{\BZ}$, 
which we call the lowering Kashiwara operator on $\bz_{\BZ}$. 

\begin{proof}[Proof of Proposition~\ref{prop:fj}]
First we show that $f_{p}\bM$ satisfies condition~(a) of 
Definition~\ref{dfn:BZdatum2}. 
Let $K$ be an interval in $\BZ$. 
Take an interval $I$ in $\BZ$ such that 
$K \subset I$ and $I \in \Int(\bM;e,p) \cap 
\Int(\bM;s_{p},p)$. Then, by \eqref{eq:res-fj}, 
we have $(f_{p}\bM)_{I}=f_{p}\bM_{I} \in \bz_{I}$. 
Also, it follows from Lemma~\ref{lem:res1} that 
$\bigl((f_{p}\bM)_{I}\bigr)_{K}=(f_{p}\bM_{I})_{K} \in \bz_{K}$. 
Since $\bigl((f_{p}\bM)_{I}\bigr)_{K}=(f_{p}\bM)_{K}$, 
we conclude that $(f_{p}\bM)_{K} \in \bz_{K}$, as desired. 

Next we show that $f_{p}\bM$ satisfies condition~(b) of 
Definition~\ref{dfn:BZdatum2}.
Write $\bM \in \bz_{\BZ}$ and $f_{p}\bM$ as: 
$\bM=(M_{\gamma})_{\gamma \in \Gamma_{\BZ}}$ and 
$f_{p}\bM=(M_{\gamma}')_{\gamma \in \Gamma_{\BZ}}$. 
Fix $w \in W_{\BZ}$ and $i \in \BZ$, and take 
an interval $I$ in $\BZ$ such that 
%
%%%%%%%%%%%%%%%%%%
%%% eq:fj-int1 %%%
%%%%%%%%%%%%%%%%%%
%
\begin{equation} \label{eq:fj-int1}
I \in 
\Int(\bM;e,p) \cap \Int(\bM;s_{p},p) \cap 
\Int(\bM;w,i) \cap \Int(\bM;s_{p}w,i). 
\end{equation}
Then, by \eqref{eq:AM2}, we have 
\begin{equation*}
M_{w\vpi_{i}^{I}}'=
\begin{cases}
 \min \bigl(
   M_{w\vpi_{i}^{I}},\ 
   M_{s_{p}w\vpi_{i}^{I}}+c_{p}(\bM)
   \bigr)
   & \text{if $\pair{h_{p}}{w\vpi_{i}^{I}} > 0$}, \\[1.5mm]
 M_{w\vpi_{i}^{I}} & \text{otherwise}. 
\end{cases}
\end{equation*}
Now, let $J$ be an interval in $\BZ$ containing $I$. 
Then, $J$ is also an element of 
the intersection in \eqref{eq:fj-int1} 
(see Remark~\ref{rem:limit}\,(1)). 
Therefore, again by \eqref{eq:AM2}, 
\begin{equation*}
M_{w\vpi_{i}^{J}}'=
\begin{cases}
 \min \bigl(
   M_{w\vpi_{i}^{J}},\ 
   M_{s_{p}w\vpi_{i}^{J}}+c_{p}(\bM)
   \bigr)
   & \text{if $\pair{h_{p}}{w\vpi_{i}^{J}} > 0$}, \\[1.5mm]
 M_{w\vpi_{i}^{J}} & \text{otherwise}. 
\end{cases}
\end{equation*}
Since $I \in \Int(\bM;w,i)$ 
(resp., $I \in \Int(\bM;s_{p}w,i)$) 
and $J \supset I$, it follows from the definition that 
$M_{w\vpi_{i}^{J}}=M_{w\vpi_{i}^{I}}$ 
(resp., $M_{s_{p}w\vpi_{i}^{J}}=M_{s_{p}w\vpi_{i}^{I}}$).
Also, since $w \in W_{I}$ and $p \in I$, 
we see that $w^{-1}h_{p} \in 
\bigoplus_{j \in I} \BZ h_{j} \subset 
\bigoplus_{j \in J} \BZ h_{j}$. 
Hence it follows from \eqref{eq:pair-vpi} that 
\begin{equation*}
\pair{h_{p}}{w\vpi_{i}^{I}}=
\pair{w^{-1}h_{p}}{\vpi_{i}^{I}}=
\pair{w^{-1}h_{p}}{\vpi_{i}^{J}}=
\pair{h_{p}}{w\vpi_{i}^{J}}.
\end{equation*}
In particular, 
$\pair{h_{p}}{w\vpi_{i}^{I}} > 0$ if and only if
$\pair{h_{p}}{w\vpi_{i}^{J}} > 0$. 
Consequently, we obtain 
$M_{w\vpi_{i}^{J}}'=M_{w\vpi_{i}^{I}}'$, 
which shows that $f_{p}\bM=(M_{\gamma}')_{\gamma \in \Gamma_{\BZ}}$ 
satisfies condition~(b) of Definition~\ref{dfn:BZdatum2}, as desired. 
Thus, we have proved that $f_{p}\bM \in \bz_{\BZ}$, 
thereby completing the proof of the proposition.
\end{proof}
%
%%%%%%%%%%%%%%
%%% rem:fj %%%
%%%%%%%%%%%%%%
%
\begin{rem} \label{rem:fj}
Let $\bM \in \bz_{\BZ}$, and fix $p \in \BZ$. Also, 
let $w \in W_{\BZ}$ and $i \in \BZ$. 
The proof of Proposition~\ref{prop:fj} shows that 
if an interval $I$ in $\BZ$ is an element of 
the intersection 
\begin{equation*}
\Int(\bM;e,p) \cap 
\Int(\bM;s_{p},p) \cap 
\Int(\bM;w,i) \cap 
\Int(\bM;s_{p}w,i), 
\end{equation*}
then $I$ is an element of $\Int(f_{p}\bM; w,i)$. 
\end{rem}

For intervals $I$, $K$ in $\BZ$ such that $I \supset K$, 
let $\bz_{\BZ}(I,\,K)$ denote the subset of $\bz_{\BZ}$ 
consisting of all elements 
$\bM \in \bz_{\BZ}$ such that $I \in \Int(\bM;v,k)$ 
for every $v \in W_{K}$ and $k \in K$; 
note that $\bz_{\BZ}(I,\,K)$ is nonempty 
since $\bO \in \bz_{\BZ}(I,\,K)$ 
(for the definition of $\bO$, see Example~\ref{ex:O}). 
%
%%%%%%%%%%%%%%%%%
%%% lem:IK-fj %%%
%%%%%%%%%%%%%%%%%
%
\begin{lem} \label{lem:IK-fj}
Keep the notation above. 

{\rm (1)} The set $\bz_{\BZ}(I,\,K)$ is 
stable under the lowering Kashiwara operators $f_{p}$ for $p \in K$. 

{\rm (2)} Let $\bM \in \bz_{\BZ}(I,\,K)$, and 
$p_{1},\,p_{2},\,\dots,\,p_{a} \in K$. Then, 
%
%%%%%%%%%%%%%%%%%%
%%% eq:res-fj2 %%%
%%%%%%%%%%%%%%%%%%
%
\begin{equation} \label{eq:res-fj2}
(f_{p_{a}}f_{p_{a-1}} \cdots f_{p_{1}}\bM)_{I}=
f_{p_{a}}f_{p_{a-1}} \cdots f_{p_{1}}\bM_{I}.
\end{equation}
\end{lem}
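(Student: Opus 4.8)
The plan is to reduce both parts to two facts already established just above: Remark~\ref{rem:fj}, which specifies when the stability interval of $\bM$ is inherited by $f_{p}\bM$, and the restriction formula \eqref{eq:res-fj}, which says that $f_{p}$ commutes with the restriction $\bM \mapsto \bM_{I}$ whenever $I \in \Int(\bM;e,p) \cap \Int(\bM;s_{p},p)$. Part~(1) will follow at once from the former, and part~(2) from an induction on $a$ that feeds part~(1) into repeated use of the latter.

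For part~(1), I would fix $\bM \in \bz_{\BZ}(I,K)$ and $p \in K$. By Proposition~\ref{prop:fj} we already know $f_{p}\bM \in \bz_{\BZ}$, so it remains to verify $I \in \Int(f_{p}\bM;v,k)$ for every $v \in W_{K}$ and $k \in K$. The idea is to apply Remark~\ref{rem:fj} with $(w,i)=(v,k)$; its hypothesis is that $I$ lies in the fourfold intersection $\Int(\bM;e,p) \cap \Int(\bM;s_{p},p) \cap \Int(\bM;v,k) \cap \Int(\bM;s_{p}v,k)$. Since $p \in K$ and $v \in W_{K}$, each of the elements $e$, $s_{p}$, $v$, $s_{p}v$ lies in $W_{K}$ and each of the indices $p$, $k$ lies in $K$; hence by the very definition of $\bz_{\BZ}(I,K)$ the interval $I$ belongs to all four sets. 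Remark~\ref{rem:fj} then gives $I \in \Int(f_{p}\bM;v,k)$, as required.

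For part~(2), I would argue by induction on $a$. The base case $a=1$ is exactly \eqref{eq:res-fj}: since $\bM \in \bz_{\BZ}(I,K)$ and $p_{1} \in K$, we have $I \in \Int(\bM;e,p_{1}) \cap \Int(\bM;s_{p_{1}},p_{1})$, whence $(f_{p_{1}}\bM)_{I}=f_{p_{1}}\bM_{I}$. For the inductive step, set $\bM':=f_{p_{a-1}} \cdots f_{p_{1}}\bM$; by iterating part~(1) over $p_{1},\dots,p_{a-1} \in K$ we obtain $\bM' \in \bz_{\BZ}(I,K)$, so in particular $I \in \Int(\bM';e,p_{a}) \cap \Int(\bM';s_{p_{a}},p_{a})$ because $p_{a} \in K$. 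Thus \eqref{eq:res-fj} yields $(f_{p_{a}}\bM')_{I}=f_{p_{a}}\bM'_{I}$, while the induction hypothesis gives $\bM'_{I}=f_{p_{a-1}} \cdots f_{p_{1}}\bM_{I}$. Substituting the second equality into the first produces $(f_{p_{a}} \cdots f_{p_{1}}\bM)_{I}=f_{p_{a}} \cdots f_{p_{1}}\bM_{I}$, completing the induction.

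The only genuine work is the bookkeeping: one must track which Weyl group elements lie in $W_{K}$ and which indices lie in $K$, so that the membership conditions defining $\bz_{\BZ}(I,K)$ and the hypotheses of \eqref{eq:res-fj} and Remark~\ref{rem:fj} are met at every stage. The conceptual point, and the step I expect to carry the weight, is that part~(1) guarantees the intermediate data $f_{p_{b}} \cdots f_{p_{1}}\bM$ never leave $\bz_{\BZ}(I,K)$; this is precisely what is needed for restriction to commute with $f_{p_{a}}$ at the top of the tower. No new estimate is required beyond what Remark~\ref{rem:fj} and \eqref{eq:res-fj} already supply.
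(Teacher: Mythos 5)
Your proof is correct and follows essentially the same route as the paper's: part~(1) is obtained by checking that the fourfold intersection hypothesis of Remark~\ref{rem:fj} is met because $e$, $s_{p}$, $v$, $s_{p}v$ all lie in $W_{K}$ and $p,k \in K$, and part~(2) is the same induction on $a$ feeding part~(1) into \eqref{eq:res-fj}. No gaps.
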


\begin{proof}
(1) Let $\bM \in \bz_{\BZ}(I,\,K)$, and $p \in K$. 
We show that $I \in \Int(f_{p}\bM;v,k)$ 
for all $v \in W_{K}$ and $k \in K$. 
Fix $v \in W_{K}$ and $k \in K$. 
Since the interval $I$ is an element of 
the intersection 
\begin{equation*}
\Int(\bM;e,p) \cap 
\Int(\bM;s_{p},p) \cap 
\Int(\bM;v,k) \cap 
\Int(\bM;s_{p}v,k), 
\end{equation*}
it follows from Remark~\ref{rem:fj} that 
$I \in \Int(f_{p}\bM;v,k)$. 
This proves part (1). 

(2) We show formula \eqref{eq:res-fj2} by induction on $a$. 
Assume first that $a=1$. Since $I \in \Int(\bM;e,p) \cap \Int(\bM;s_{p},p)$ 
for all $p \in K$, it follows from \eqref{eq:res-fj} that 
$(f_{p_1}\bM)_{I}=f_{p_1}\bM_{I}$. Assume next that $a > 1$. 
We set $\bM':=f_{p_{a-1}} \cdots f_{p_{1}}\bM$. 
Because $\bM' \in \bz_{\BZ}(I,\,K)$ by part (1), 
we see by the same argument as above that
$(f_{p_{a}}f_{p_{a-1}} \cdots f_{p_{1}}\bM)_{I} = 
(f_{p_a}\bM')_{I}=f_{p_a}\bM'_{I}$. 
Also, by the induction hypothesis, 
$\bM'_{I}
 =(f_{p_{a-1}} \cdots f_{p_{1}}\bM)_{I}
 =f_{p_{a-1}} \cdots f_{p_{1}}\bM_{I}$.
Combining these, we obtain 
$(f_{p_{a}}f_{p_{a-1}} \cdots f_{p_{1}}\bM)_{I}= 
 f_{p_{a}}f_{p_{a-1}} \cdots f_{p_{1}}\bM_{I}$, 
as desired. This proves part (2). 
\end{proof}

%%%%%%%%%%%

For $\bM=(M_{\gamma})_{\gamma \in \Gamma_{\BZ}} \in \bz_{\BZ}$
and $p \in \BZ$, we set 
%
%%%%%%%%%%%%%%%%%%%
%%% eq:dfn-vep2 %%%
%%%%%%%%%%%%%%%%%%%
%
\begin{equation} \label{eq:dfn-vep2}
\ve_{p}(\bM):= - \left(
     M_{\Lambda_{p}}+M_{s_{p}\Lambda_{p}}
     +\sum_{q \in \BZ \setminus \{p\}} a_{qp} M_{\Lambda_{q}}
    \right),
\end{equation}
where $M_{\Lambda_{i}}:=\Theta(\bM)_{\Lambda_{i}}$ for $i \in \BZ$, 
and $M_{s_{p}\Lambda_{p}}:=\Theta(\bM)_{s_{p}\Lambda_{p}}$. 
Note that $\ve_{p}(\bM)$ is a nonnegative integer. 
Indeed, let $I$ be an interval in $\BZ$ such that 
\begin{equation*}
I \in \Int(\bM;e,p) \cap 
\Int(\bM;s_{p},p) \cap \Int(\bM;e,p+1) \cap 
\Int(\bM;e,p-1).
\end{equation*}
Then, we have
\begin{align}
\ve_{p}(\bM) & = - \left(
     M_{\Lambda_{p}}+M_{s_{p}\Lambda_{p}}-
     M_{\Lambda_{p-1}}-M_{\Lambda_{p+1}}
    \right) \nonumber \\[1.5mm]
& = - \left(
     M_{\vpi_{p}^{I}}+M_{s_{p}\vpi_{p}^{I}}-
     M_{\vpi_{p-1}^{I}}-M_{\vpi_{p+1}^{I}}
     \right) \nonumber \\[1.5mm]
& = - \left(
     M_{\vpi_{p}^{I}}+M_{s_{p}\vpi_{p}^{I}}
     +\sum_{q \in I \setminus \{p\}} a_{qp} M_{\vpi_{q}^{I}}
    \right) = \ve_{p}(\bM_{I}). \label{eq:vej}
\end{align}
Hence it follows from 
condition~(a) of Definition~\ref{dfn:BZdatum2} and 
the comment following \eqref{eq:dfn-vep}
that $\ve_{p}(\bM) = \ve_{p}(\bM_{I})$ 
is a nonnegative integer.

Now, for $\bM=(M_{\gamma})_{\gamma \in \Gamma_{\BZ}} \in \bz_{\BZ}$ 
and $p \in \BZ$, we define $e_{p}\bM$ as follows. 
If $\ve_{p}(\bM)=0$, then 
we set $e_{p}\bM:=\bzero$, 
where $\bzero$ is an additional element, 
which is not contained in $\bz_{\BZ}$.
If $\ve_{p}(\bM) > 0$, then 
we define $e_{p}\bM=(M_{\gamma}')_{\gamma \in \Gamma_{\BZ}}$ 
as follows.
For each $\gamma \in \Gamma_{\BZ}$, 
take an interval $I$ in $\BZ$ such that 
%
%%%%%%%%%%%%%%%%%
%%% eq:ej-int %%%
%%%%%%%%%%%%%%%%%
%
\begin{equation} \label{eq:ej-int}
\begin{array}{l}
\gamma \in \Gamma_{I} \quad \text{and} \\[3mm]
I \in \Int(\bM;e,p) \cap \Int(\bM;s_{p},p) \cap 
\Int(\bM;e,p-1) \cap \Int(\bM;e,p+1); 
\end{array}
\end{equation}
note that $\min I < p < \max I$, since $p-1,\,p+1 \in I$. 
Consider $\bM_{I} \in \bz_{I}$ (see condition~(a) of 
Definition~\ref{dfn:BZdatum2}); 
since $\ve_{p}(\bM)=\ve_{p}(\bM_{I})$ by \eqref{eq:vej}, 
we have $\ve_{p}(\bM_{I}) > 0$, 
which implies that $e_{p}\bM_{I} \ne \bzero$. 
We define $(e_{p}\bM)_{\gamma}=
M_{\gamma}'$ to be $(e_{p}\bM_{I})_{\gamma}$. 
By virtue of the following lemma, 
this definition of $M_{\gamma}'$ 
does not depend on the choice of an interval $I$ 
satisfying \eqref{eq:ej-int}. 

\begin{lem}
Keep the notation and assumptions above. 
Assume that an interval $J$ in $\BZ$ satisfies 
the condition \eqref{eq:ej-int} with $I$ replaced by $J$. 
Then, we have $(e_{p}\bM_{J})_{\gamma}=(e_{p}\bM_{I})_{\gamma}$.
\end{lem}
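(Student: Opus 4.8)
The plan is to compute the $\gamma$-component of $e_p\bM_I$ and of $e_p\bM_J$ separately and to see that each equals a quantity depending on $\bM$ alone. Throughout I am in the case $\ve_p(\bM) > 0$, so that $\ve_p(\bM_I) = \ve_p(\bM_J) = \ve_p(\bM) > 0$ and $e_p\bM_I, e_p\bM_J \ne \bzero$; moreover $\gamma \in \Gamma_{I} \cap \Gamma_{J}$, since both $I$ and $J$ satisfy \eqref{eq:ej-int}. I distinguish two cases according to the sign of $\pair{h_p}{\gamma}$, noting first that, because \eqref{eq:bij-index1} is $W$-equivariant and preserves $\pair{h_p}{\cdot}$ for $p \in I$ (resp. $p \in J$), this sign is computed consistently in $\Gamma_I$, in $\Gamma_J$, and in $\Gamma_{\BZ}$, and that $s_p\gamma$ again lies in $\Gamma_I \cap \Gamma_J$.

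If $\pair{h_p}{\gamma} \le 0$, then by Fact~\ref{fact:ej} both operators fix the $\gamma$-component, whence $(e_p\bM_I)_\gamma = (\bM)_\gamma = (e_p\bM_J)_\gamma$. If $\pair{h_p}{\gamma} > 0$, I pass through the lowering operator. Since $\bz_I \cong \CB_I(\infty)$ and $\bz_J \cong \CB_J(\infty)$ by Theorem~\ref{thm:binf}, the crystal identity $f_p e_p = \mathrm{id}$ gives $f_p(e_p\bM_I) = \bM_I$; writing \eqref{eq:AM} for the $\gamma$-component and using that $e_p$ fixes the component at $s_p\gamma$ (as $\pair{h_p}{s_p\gamma} < 0$), I obtain
\begin{equation*}
(\bM)_\gamma = \min\bigl((e_p\bM_I)_\gamma,\ (\bM)_{s_p\gamma} + c_p(e_p\bM_I)\bigr),
\end{equation*}
together with the same identity for $J$.

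The decisive observation is that the constant here does not depend on the interval. Indeed $c_p(\cdot) = \varphi_p(\cdot) - 1$ and $\varphi_p(e_p\bM_I) = \varphi_p(\bM_I) + 1$, so $c_p(e_p\bM_I) = \varphi_p(\bM_I)$; and a direct computation from \eqref{eq:wt}, \eqref{eq:dfn-vep} gives $\varphi_p(\bM_I) = (\bM_I)_{\vpi_p^I} - (\bM_I)_{s_p\vpi_p^I} = M_{\Lambda_p} - M_{s_p\Lambda_p}$, by the very argument used in \S\ref{subsec:kashiwara} to prove $c_p(\bM_I) = c_p(\bM)$ --- a quantity reading off $\Theta(\bM)$ only. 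The same holds for $J$, and $(\bM)_{s_p\gamma}$ is a single component of $\bM$. Hence both $(e_p\bM_I)_\gamma$ and $(e_p\bM_J)_\gamma$ are solutions $x$ of $\min(x,\,(\bM)_{s_p\gamma} + C) = (\bM)_\gamma$ for one and the same constant $C$ and the same right-hand side. Whenever $(\bM)_\gamma < (\bM)_{s_p\gamma} + C$ this forces $x = (\bM)_\gamma$, so the two values coincide; note also that the degeneracy condition $(\bM)_\gamma = (\bM)_{s_p\gamma} + C$ is itself interval-independent.

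The main obstacle is precisely this degenerate (``$p$-tight'') case, in which the minimum relation yields only $(e_p\bM_I)_\gamma \ge (\bM)_\gamma$ and does not determine the value. What remains is the purely finite-dimensional assertion that, for a genuine BZ datum with $(\bM_I)_\gamma = (\bM_I)_{s_p\gamma} + \varphi_p(\bM_I)$ and $\pair{h_p}{\gamma} > 0$, the operator $e_p$ raises the $\gamma$-component by exactly $1$, i.e. $(e_p\bM_I)_\gamma = (\bM)_\gamma + 1$; granting this for both $I$ and $J$, the two components are again equal, since $(\bM)_\gamma$ is interval-independent. I expect this tight-case increment to be the one genuinely combinatorial input and to follow from the tropical Pl\"ucker relations \eqref{eq:TPR} satisfied by $e_p\bM_I$ (equivalently, from an explicit description of $e_p$ on BZ data parallel to \eqref{eq:AM}): one expresses $(e_p\bM_I)_\gamma$ through neighboring chamber-weight components lying in the non-degenerate range, which are already fixed to their $\bM$-values and hence interval-independent, and reads off the forced $+1$. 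It is this tropical-Pl\"ucker bookkeeping, rather than the crystal formalism, that carries the essential content.
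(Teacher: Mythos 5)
Your reduction of the problem is sound up to a point: the case $\pair{h_{p}}{\gamma}\le 0$ is immediate from Fact~\ref{fact:ej}, and for $\pair{h_{p}}{\gamma}>0$ the identity $f_{p}e_{p}=\mathrm{id}$ together with \eqref{eq:AM} does give $M_{\gamma}=\min\bigl((e_{p}\bM_{I})_{\gamma},\,M_{s_{p}\gamma}+C\bigr)$ with an interval-independent constant $C=M_{\Lambda_{p}}-M_{s_{p}\Lambda_{p}}$, which settles the non-degenerate case. But the degenerate case $M_{\gamma}=M_{s_{p}\gamma}+C$ is exactly where the content of the lemma lies, and there your argument has a genuine gap: you do not prove the claimed ``tight-case increment,'' and the statement you propose to fill it with is in fact false. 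In type $A_{2}$ with $I=\{1,2\}$, take $M_{-\vpi_{1}}=M_{-\vpi_{2}}=0$, $M_{\vpi_{1}}=M_{\vpi_{2}}=-1$, $M_{s_{1}\vpi_{1}}=-1$, $M_{s_{2}\vpi_{2}}=0$; this lies in $\bz_{I}$ with $\ve_{1}(\bM)=1>0$. The weight $\gamma=s_{2}\vpi_{2}$ satisfies $\pair{h_{1}}{\gamma}=1>0$ and is tight ($M_{\gamma}=0=M_{s_{1}\gamma}+C$ with $C=0$), yet the tropical Pl\"ucker relation $(e_{1}\bM)_{s_{1}\vpi_{1}}+(e_{1}\bM)_{s_{2}\vpi_{2}}=\min\bigl((e_{1}\bM)_{\vpi_{1}},(e_{1}\bM)_{\vpi_{2}}\bigr)$ forces $(e_{1}\bM)_{s_{2}\vpi_{2}}=0=M_{s_{2}\vpi_{2}}$: the increment is $0$, not $1$. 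So the tight case cannot be resolved by a uniform ``$+1$'' rule, and any component-by-component computation would have to control neighboring chamber-weight components that are not among those stabilized by \eqref{eq:ej-int}.

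The paper avoids this entirely by exploiting the uniqueness in Fact~\ref{fact:ej}: it suffices to check that the restriction $(e_{p}\bM_{J})_{I}$ is a BZ datum for $\Fg_{I}$ agreeing with $\bM$ on the half $\pair{h_{p}}{\gamma}\le 0$ and having $\vpi_{p}^{I}$-component equal to $M_{\vpi_{p}^{I}}+1$; then it must coincide with $e_{p}\bM_{I}$ on all of $\Gamma_{I}$, degenerate components included. Only that single component needs to be computed, and the paper extracts it from one tropical Pl\"ucker relation in $\bz_{J}$ (see \eqref{eq:tpr1}) whose other ingredients ($M_{\vpi_{p\pm1}}$, $M_{s_{p}\vpi_{p}}$, $M_{\vpi_{p}}$) are precisely the components that \eqref{eq:ej-int} stabilizes, with the hypothesis $\ve_{p}(\bM)>0$ deciding which branch of the min is attained. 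I would recommend reorganizing your argument around this uniqueness principle rather than attempting to evaluate $(e_{p}\bM_{I})_{\gamma}$ for every $\gamma$.
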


\begin{proof}
We may assume from the beginning that $J \supset I$. 
Indeed, let $K$ be an interval in $\BZ$ containing 
both of the intervals $J$ and $I$. Then we see from 
Remark~\ref{rem:limit}\,(1) that $K$ satisfies 
the condition \eqref{eq:ej-int} with $I$ replaced by $K$.
If the assertion is true for $K$, then we have
$(e_{p}\bM_{K})_{\gamma}=(e_{p}\bM_{I})_{\gamma}$ and 
$(e_{p}\bM_{K})_{\gamma}=(e_{p}\bM_{J})_{\gamma}$, 
and hence $(e_{p}\bM_{J})_{\gamma}=(e_{p}\bM_{I})_{\gamma}$. 

We may further assume that 
$J=I \cup \bigl\{\max I + 1\bigr\}$ or 
$J=I \cup \bigl\{\min I - 1\bigr\}$; 
for simplicity of notation, suppose that 
$I=\bigl\{1,\,2,\,\dots,\,m\bigr\}$ and 
$J=\bigl\{1,\,2,\,\dots,\,m,\,m+1\bigr\}$. 
Note that $1=\min I < p < \max I=m$ 
(see the comment preceding this proposition).

We write $e_{p}\bM_{I} \in \bz_{I}$ and 
$e_{p}\bM_{J} \in \bz_{J}$ as: 
$e_{p}\bM_{I}=(M_{\gamma}')_{\gamma \in \Gamma_{I}}$ and
$e_{p}\bM_{J}=(M_{\gamma}'')_{\gamma \in \Gamma_{J}}$, 
respectively; we need to show that $M_{\gamma}''=M_{\gamma}'$ 
for all $\gamma \in \Gamma_{I}$. 
Recall that 
$e_{p}\bM_{I}=(M_{\gamma}')_{\gamma \in \Gamma_{I}}$
is defined to be the unique BZ datum for $\Fg_{I}$ 
such that $M_{\vpi_{p}^{I}}'=M_{\vpi_{p}^{I}}+1$, and 
such that $M_{\gamma}'=M_{\gamma}$ for all $\gamma \in \Gamma_{I}$ 
with $\pair{h_{p}}{\gamma} \le 0$ (see Fact~\ref{fact:ej}). 
It follows from Lemma~\ref{lem:res1} that 
$(e_{p}\bM_{J})_{I}=(M_{\gamma}'')_{\gamma \in \Gamma_{I}}$ 
is a BZ datum for $\Fg_{I}$. Also, we see from 
the definition of $e_{p}\bM_{J}$ that 
$M_{\gamma}''=M_{\gamma}$ for all 
$\gamma \in \Gamma_{I} \subset \Gamma_{J}$ 
with $\pair{h_{p}}{\gamma} \le 0$. 
Therefore, if we can show the equality 
$M_{\vpi_{p}^{I}}''=M_{\vpi_{p}^{I}}+1$, 
then it follows from the uniqueness that 
$(e_{p}\bM_{J})_{I}=(M_{\gamma}'')_{\gamma \in \Gamma_{I}}$ 
is equal to 
$e_{p}\bM_{I}=(M_{\gamma}')_{\gamma \in \Gamma_{I}}$, 
and hence $M_{\gamma}''=M_{\gamma}'$ 
for all $\gamma \in \Gamma_{I}$, as desired. 
We will show that 
$M_{\vpi_{p}^{I}}''=M_{\vpi_{p}^{I}}+1$. 

First, let us verify the following formula: 
%
%%%%%%%%%%%%%%%%%
%%% eq:vpi-IJ %%%
%%%%%%%%%%%%%%%%%
%
\begin{equation} \label{eq:vpi-IJ}
\vpi_{k}^{I}=
 s_{m+1} \cdots s_{k+2}s_{k+1}(\vpi_{k+1}^{J})
\quad \text{for $1 \le k \le m$}. 
\end{equation}
Indeed, we have 
\begin{align*}
\vpi_{k}^{I} & =-w_{0}^{I}(\Lambda_{\omega_{I}(k)})
=-w_{0}^{I}(\Lambda_{m-k+1}) \\
& 
=-w_{0}^{I}w_{0}^{J}w_{0}^{J}(\Lambda_{m-k+1})
=w_{0}^{I}w_{0}^{J}(\vpi_{\omega_{J}(m-k+1)}^{J})
=w_{0}^{I}w_{0}^{J}(\vpi_{k+1}^{J}). 
\end{align*}
Consequently, by using the reduced expressions
\begin{align*}
& w_{0}^{J}=s_{1}(s_{2}s_{1})(s_{3}s_{2}s_{1}) \cdots 
  (s_{m} \cdots s_{2}s_{1})(s_{m+1} \cdots s_{2}s_{1}), \\
& w_{0}^{I}=(s_{m} \cdots s_{2}s_{1}) \cdots 
  (s_{1}s_{2}s_{3})(s_{1}s_{2})s_{1},
\end{align*}
we see that
$\vpi_{k}^{I}=
 s_{m+1} \cdots s_{2}s_{1}(\vpi_{k+1}^{J})=
 s_{m+1} \cdots s_{k+2}s_{k+1}(\vpi_{k+1}^{J})$,
as desired. 

Now, let us show that 
$M_{\vpi_{p}^{I}}''=M_{\vpi_{p}^{I}}+1$. 
We set $w:=s_{m+1} \cdots s_{p+3}s_{p+2} \in W_{J}$. 
Then, $a_{p,p+1}=a_{p+1,p}=-1$ and 
$ws_{p+1} > w$, $ws_{p} > w$. 
Therefore, since $e_{p}\bM_{J}=(M_{\gamma}'')_{\gamma \in \Gamma_{J}} \in \bz_{J}$, 
it follows from condition~(2) of Definition~\ref{dfn:BZdatum} that 
%
%%%%%%%%%%%%%%%
%%% eq:tpr1 %%%
%%%%%%%%%%%%%%%
%
\begin{equation} \label{eq:tpr1}
M_{ws_{p+1}\vpi_{p+1}^{J}}''+M_{ws_{p}\vpi_{p}^{J}}'' =
\min \bigl(
 M_{w\vpi_{p+1}^{J}}''+M_{ws_{p+1}s_{p}\vpi_{p}^{J}}'', \ 
 M_{w\vpi_{p}^{J}}''+M_{ws_{p}s_{p+1}\vpi_{p+1}^{J}}''
 \bigr).
\end{equation}
Also, by using \eqref{eq:vpi-IJ} and the facts that 
$s_{q}\vpi_{p}^{J}=\vpi_{p}^{J}$, $s_{q}\vpi_{p+1}^{J}=\vpi_{p+1}^{J}$ 
for $p+2 \le q \le m+1$ and that $s_{q}s_{p}=s_{p}s_{q}$ 
for $p+2 \le q \le m+1$, we get 
\begin{align*}
& ws_{p+1}\vpi_{p+1}^{J}=
  s_{m+1} \cdots s_{p+2}s_{p+1}\vpi_{p+1}^{J}=
  \vpi_{p}^{I}, \\[1mm]
& ws_{p}\vpi_{p}^{J}=
  s_{m+1} \cdots s_{p+2}s_{p}\vpi_{p}^{J}=
  s_{p}s_{m+1} \cdots s_{p+2}\vpi_{p}^{J}=
  s_{p}\vpi_{p}^{J}, \\[1mm]
& w\vpi_{p+1}^{J}=s_{m+1} \cdots s_{p+2}\vpi_{p+1}^{J}
  =\vpi_{p+1}^{J}, \\[1mm]
& ws_{p+1}s_{p}\vpi_{p}^{J}=
  s_{m+1} \cdots s_{p+2}s_{p+1}s_{p}\vpi_{p}^{J}=
  \vpi_{p-1}^{I}, \\[1mm]
& w\vpi_{p}^{J}=s_{m+1} \cdots s_{p+2}\vpi_{p}^{J}
  =\vpi_{p}^{J}, \\[1mm]
& ws_{p}s_{p+1}\vpi_{p+1}^{J}=
  s_{m+1} \cdots s_{p+2}s_{p}s_{p+1}\vpi_{p+1}^{J}=
  s_{p}s_{m+1} \cdots s_{p+2}s_{p+1}\vpi_{p+1}^{J}=
  s_{p}\vpi_{p}^{I}.
\end{align*}
Hence the equation \eqref{eq:tpr1} can be rewritten as:
\begin{equation} \label{eq:tpr2}
M_{\vpi_{p}^{I}}''+M_{s_{p}\vpi_{p}^{J}}'' =
\min \bigl(
 M_{\vpi_{p+1}^{J}}''+M_{\vpi_{p-1}^{I}}'', \ 
 M_{\vpi_{p}^{J}}''+M_{s_{p}\vpi_{p}^{I}}''
 \bigr).
\end{equation}
Since $\pair{h_{p}}{s_{p}\vpi_{p}^{J}}=-1 < 0$, 
it follows from the definition of $e_{p}\bM_{J}$ that 
$M_{s_{p}\vpi_{p}^{J}}''=M_{s_{p}\vpi_{p}^{J}}$. 
Similarly, $M_{\vpi_{p+1}^{J}}''=M_{\vpi_{p+1}^{J}}$, 
$M_{\vpi_{p-1}^{J}}''=M_{\vpi_{p-1}^{J}}$, and 
$M_{s_{p}\vpi_{p}^{I}}''=M_{s_{p}\vpi_{p}^{I}}$. 
In addition, it follows from the definition of $e_{p}\bM_{J}$ that 
$M_{\vpi_{p}^{J}}''=M_{\vpi_{p}^{J}}+1$. 
Substituting these into \eqref{eq:tpr2}, we obtain 
\begin{equation} \label{eq:tpr3}
M_{\vpi_{p}^{I}}''+M_{s_{p}\vpi_{p}^{J}} =
\min \bigl(
 M_{\vpi_{p+1}^{J}}+M_{\vpi_{p-1}^{I}}, \ 
 M_{\vpi_{p}^{J}}+1+M_{s_{p}\vpi_{p}^{I}}
 \bigr).
\end{equation}
Here, observe that 
$M_{\vpi_{p-1}^{I}}=M_{\vpi_{p-1}^{J}}$ 
(resp., $M_{s_{p}\vpi_{p}^{I}}=M_{s_{p}\vpi_{p}^{J}}$) 
since $I \in \Int(\bM; e,p-1)$ 
(resp., $I \in \Int(\bM; s_{p},p)$) and $J \supset I$. 
As a result, we get
%
%%%%%%%%%%%%%%%
%%% eq:tpr4 %%%
%%%%%%%%%%%%%%%
%
\begin{equation} \label{eq:tpr4}
M_{\vpi_{p}^{I}}''+M_{s_{p}\vpi_{p}^{J}} =
\min \bigl(
 M_{\vpi_{p+1}^{J}}+M_{\vpi_{p-1}^{J}}, \ 
 M_{\vpi_{p}^{J}}+1+M_{s_{p}\vpi_{p}^{J}}
 \bigr).
\end{equation}
Moreover, since $\ve_{p}(\bM) > 0$ by assumption, we see 
from \eqref{eq:vej} with $I$ replaced by $J$ that 
$M_{\vpi_{p}^{J}}+M_{s_{p}\vpi_{p}^{J}} < 
 M_{\vpi_{p+1}^{J}}+M_{\vpi_{p-1}^{J}}$, 
which implies that
\begin{equation*}
\min \bigl(
 M_{\vpi_{p+1}^{J}}+M_{\vpi_{p-1}^{J}}, \ 
 M_{\vpi_{p}^{J}}+1+M_{s_{p}\vpi_{p}^{J}}
 \bigr)=
M_{\vpi_{p}^{J}}+1+M_{s_{p}\vpi_{p}^{J}}.
\end{equation*}
Combining this and \eqref{eq:tpr4}, we obtain 
$M_{\vpi_{p}^{I}}''=M_{\vpi_{p}^{J}}+1$.
Noting that $M_{\vpi_{p}^{J}}=M_{\vpi_{p}^{I}}$ 
since $I \in \Int(\bM;e,p)$ and $J \supset I$, 
we conclude that 
$M_{\vpi_{p}^{I}}''=M_{\vpi_{p}^{I}}+1$, 
as desired. This completes the proof of the lemma. 
\end{proof}

\begin{rem} 
(1) Let $\bM=(M_{\gamma})_{\gamma \in \Gamma_{\BZ}} \in \bz_{\BZ}$ 
and $p \in \BZ$ be such that $e_{p}\bM \ne \bzero$. Then, 
%
%%%%%%%%%%%%%%%%%%%
%%% eq:ep-gamma %%%
%%%%%%%%%%%%%%%%%%%
%
\begin{equation} \label{eq:ep-gamma}
(e_{p}\bM)_{\gamma}=M_{\gamma} 
\quad \text{for all $\gamma \in \Gamma_{\BZ}$ with 
$\pair{h_{p}}{\gamma} \le 0$}.
\end{equation}
Indeed, let $\gamma \in \Gamma_{\BZ}$ be such that 
$\pair{h_{p}}{\gamma} \le 0$. Take an interval $I$ in $\BZ$ 
satisfying the condition \eqref{eq:ej-int}. 
Then, by the definition, 
$(e_{p}\bM)_{\gamma}=(e_{p}\bM_{I})_{\gamma}$. 
Also, we see from the definition of $e_{p}$ on 
$\bz_{I}$ (see Fact~\ref{fact:ej}) that 
$(e_{p}\bM_{I})_{\gamma}=M_{\gamma}$. 
Hence we get $(e_{p}\bM)_{\gamma}=(e_{p}\bM_{I})_{\gamma}=
M_{\gamma}$, as desired. 

(2) For $\bM \in \bz_{\BZ}$ and $p \in \BZ$, there holds 
%
%%%%%%%%%%%%%%%%%
%%% eq:res-ej %%%
%%%%%%%%%%%%%%%%%
%
\begin{equation} \label{eq:res-ej}
\begin{array}{l}
(e_{p}\bM)_{I}=e_{p}\bM_{I} \\[3mm]
\text{if 
$I \in \Int(\bM;e,p) \cap 
 \Int(\bM;s_{p},p) \cap 
 \Int(\bM;e,p-1) \cap 
 \Int(\bM;e,p+1)$}.
\end{array}
\end{equation}
\end{rem}
%
%%%%%%%%%%%%%%%
%%% prop:ej %%%
%%%%%%%%%%%%%%%
%
\begin{prop} \label{prop:ej}
Let $\bM \in \bz_{\BZ}$, and $p \in \BZ$. 
Then, $e_{p}\bM$ is an element of 
$\bz_{\BZ} \cup \{\bzero\}$. 
\end{prop}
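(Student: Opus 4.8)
The plan is to mirror the proof of Proposition~\ref{prop:fj}: dispose of the trivial case, then verify conditions (a) and (b) of Definition~\ref{dfn:BZdatum2} for $e_{p}\bM$. If $\ve_{p}(\bM)=0$, then $e_{p}\bM=\bzero$ by definition, which lies in $\bz_{\BZ} \cup \{\bzero\}$, so there is nothing to prove. I therefore assume throughout that $\ve_{p}(\bM)>0$, so that $e_{p}\bM=(M_{\gamma}')_{\gamma \in \Gamma_{\BZ}}$ is defined component-wise.

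For condition (a) I would argue exactly as for $f_{p}$. Given an interval $K$ in $\BZ$, choose an interval $I \supset K$ lying in $\Int(\bM;e,p) \cap \Int(\bM;s_{p},p) \cap \Int(\bM;e,p-1) \cap \Int(\bM;e,p+1)$, as in \eqref{eq:ej-int}. By \eqref{eq:res-ej} we have $(e_{p}\bM)_{I}=e_{p}\bM_{I}$; since $\bM_{I} \in \bz_{I}$ and $\ve_{p}(\bM_{I})=\ve_{p}(\bM)>0$ by \eqref{eq:vej}, the finite-type operator gives $e_{p}\bM_{I} \in \bz_{I}$. Restricting once more by Lemma~\ref{lem:res1} yields $(e_{p}\bM)_{K}=(e_{p}\bM_{I})_{K} \in \bz_{K}$, as required.

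For condition (b), fix $w \in W_{\BZ}$ and $i \in \BZ$, and choose an interval $I$ in the intersection
\[
\Int(\bM;e,p) \cap \Int(\bM;s_{p},p) \cap \Int(\bM;e,p-1) \cap \Int(\bM;e,p+1) \cap \Int(\bM;w,i) \cap \Int(\bM;s_{p}w,i).
\]
As in Proposition~\ref{prop:fj}, it suffices to show that $(e_{p}\bM)_{w\vpi_{i}^{J}}=(e_{p}\bM_{J})_{w\vpi_{i}^{J}}$ is independent of $J$ for intervals $J \supset I$, and, just as there, \eqref{eq:pair-vpi} shows that the sign of $\pair{h_{p}}{w\vpi_{i}^{J}}$ does not depend on $J$. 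If $\pair{h_{p}}{w\vpi_{i}^{J}} \le 0$, then \eqref{eq:ep-gamma} gives $(e_{p}\bM)_{w\vpi_{i}^{J}}=M_{w\vpi_{i}^{J}}$, which stabilizes by condition (b) for $\bM$. If $\pair{h_{p}}{w\vpi_{i}^{J}}>0$, I would invoke the relation $f_{p}(e_{p}\bM_{J})=\bM_{J}$ (valid in $\bz_{J}\cong\CB_{J}(\infty)$ since $\ve_{p}(\bM_{J})>0$, by Theorem~\ref{thm:binf}) together with the explicit formula \eqref{eq:AM} for $f_{p}$. Using $(e_{p}\bM_{J})_{s_{p}w\vpi_{i}^{J}}=M_{s_{p}w\vpi_{i}^{J}}$ (as $\pair{h_{p}}{s_{p}w\vpi_{i}^{J}}<0$, by \eqref{eq:ep-gamma}) and $c_{p}(e_{p}\bM_{J})=c_{p}(\bM)+1$, this yields
\[
M_{w\vpi_{i}^{J}}=\min\bigl((e_{p}\bM_{J})_{w\vpi_{i}^{J}},\ M_{s_{p}w\vpi_{i}^{J}}+c_{p}(\bM)+1\bigr).
\]
When the minimum is attained strictly by the first argument, $(e_{p}\bM_{J})_{w\vpi_{i}^{J}}=M_{w\vpi_{i}^{J}}$, which stabilizes by condition (b) for $\bM$.

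The hard part is the degenerate (tie) case $M_{w\vpi_{i}^{J}}=M_{s_{p}w\vpi_{i}^{J}}+c_{p}(\bM)+1$, in which the displayed minimum fails to determine $(e_{p}\bM_{J})_{w\vpi_{i}^{J}}$. This is the genuine obstacle: unlike $f_{p}$, the operator $e_{p}$ has no closed component-wise formula analogous to \eqref{eq:AM}, so its value in the region $\pair{h_{p}}{\cdot}>0$ must be extracted from the tropical Pl\"ucker relations. I would resolve this exactly as in the lemma just used to show that $e_{p}\bM$ is well defined, where the representative tie weight $\vpi_{p}^{I}$ is handled by a single application of \eqref{eq:TPR} to $e_{p}\bM_{J}\in\bz_{J}$ for a suitable $v\in W_{J}$ and a neighbour $q$ of $p$ with $a_{p,q}=-1$. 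The same technique expresses $(e_{p}\bM_{J})_{w\vpi_{i}^{J}}$ through components lying in the locus $\pair{h_{p}}{\cdot}\le 0$ (on which $e_{p}$ acts as the identity, by \eqref{eq:ep-gamma}) together with $M_{w\vpi_{i}^{J}}$ and $c_{p}(\bM)$, giving the value $M_{w\vpi_{i}^{J}}+1$. Since every quantity on the right stabilizes by condition (b) for $\bM$ and by \eqref{eq:dfn-vep2}, so does $(e_{p}\bM_{J})_{w\vpi_{i}^{J}}$. This establishes condition (b), and hence $e_{p}\bM \in \bz_{\BZ}$, completing the proof.
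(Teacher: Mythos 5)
Your handling of condition (a) and of the non-tie part of condition (b) is fine, but the tie case contains a genuine gap, and the way you propose to close it is incorrect. The claim that the tropical Pl\"ucker relations force $(e_{p}\bM_{J})_{w\vpi_{i}^{J}}=M_{w\vpi_{i}^{J}}+1$ whenever $M_{w\vpi_{i}^{J}}=M_{s_{p}w\vpi_{i}^{J}}+c_{p}(\bM)+1$ is false. Already in type $A_{2}$ with $I=\{1,2\}$ and $p=1$, take $\bM_{I}=f_{1}f_{2}\bO$, i.e.\ $M_{\vpi_{1}}=M_{\vpi_{2}}=M_{s_{1}\vpi_{1}}=-1$ and $M_{s_{2}\vpi_{2}}=M_{w_{0}^{I}\vpi_{1}}=M_{w_{0}^{I}\vpi_{2}}=0$. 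Then $\ve_{1}(\bM_{I})=1>0$, $c_{1}(\bM_{I})=-1$, and for $\gamma=s_{2}\vpi_{2}$ (which satisfies $\pair{h_{1}}{\gamma}=1$ and $s_{1}\gamma=w_{0}^{I}\vpi_{2}$) the tie holds, since both sides equal $0$; yet $e_{1}\bM_{I}=f_{2}\bO$ has $(e_{1}\bM_{I})_{s_{2}\vpi_{2}}=0=M_{s_{2}\vpi_{2}}$, not $M_{s_{2}\vpi_{2}}+1$. So in the tie case the value is genuinely not a function of $M_{\gamma}$, $M_{s_{p}\gamma}$ and $c_{p}(\bM)$ alone. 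Moreover, the single-TPR trick from the well-definedness lemma does not transfer: there the special weight $\vpi_{p}^{I}$ sits in one carefully chosen Pl\"ucker relation all of whose other entries lie in the locus $\pair{h_{p}}{\cdot}\le 0$, and the ambient minimum is resolved by the hypothesis $\ve_{p}(\bM)>0$; for a general $w\vpi_{i}^{J}$ with $\pair{h_{p}}{w\vpi_{i}^{J}}>0$ every available relation involves further undetermined components in the positive locus, and there is no analogous quantity to break the tie. Hence your argument does not establish that $(e_{p}\bM_{J})_{w\vpi_{i}^{J}}$ stabilizes as $J$ grows.

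The paper sidesteps any attempt to evaluate $e_{p}$ on the positive locus. It fixes an interval $K$ with $w\in W_{K}$ and $i,\,p-1,\,p,\,p+1\in K$, chooses $I$ with $I\in\Int(\bM;v,k)$ for all $v\in W_{K}$, $k\in K$, and passes to the \emph{upper} restrictions $(\bM_{I})^{K}$ and $(\bM_{J})^{K}$ indexed by $\Gamma_{I}^{K}$ and $\Gamma_{J}^{K}$ (Lemma~\ref{lem:res2} and the bijection \eqref{eq:bij-index2}); these coincide as BZ data for $\Fg_{K}$. The uniqueness statement in Fact~\ref{fact:ej} then identifies both $(e_{p}\bM_{I})^{K}$ and $(e_{p}\bM_{J})^{K}$ with $e_{p}\bigl((\bM_{I})^{K}\bigr)$, so the $w\vpi_{i}$-components agree without the value ever being computed. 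If you want to keep your line of attack, the tie-case step has to be replaced by a uniqueness argument of this kind.
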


By this proposition, for each $p \in \BZ$, 
we obtain a map $e_{p}$ from $\bz_{\BZ}$ to 
$\bz_{\BZ} \cup \{\bzero\}$ sending 
$\bM \in \bz_{\BZ}$ to $e_{p}\bM \in \bz_{\BZ} \cup \{\bzero\}$, 
which we call the raising Kashiwara operator on $\bz_{\BZ}$. 
By convention, we set $e_{p}\bzero:=\bzero$ for all $p \in \BZ$, 
and $f_{p}\bzero:=\bzero$ for all $p \in \BZ$. 

\begin{proof}[Proof of Proposition~\ref{prop:ej}]
Assume that $e_{p}\bM \ne \bzero$. 
Using \eqref{eq:res-ej} instead of \eqref{eq:res-fj}, we can show 
by an argument (for $f_{p}\bM$) in the proof of Proposition~\ref{prop:fj}
that $e_{p}\bM$ satisfies condition~(a) of 
Definition~\ref{dfn:BZdatum2}. 
We will, therefore, show that 
$e_{p}\bM$ satisfies condition~(b) of 
Definition~\ref{dfn:BZdatum2}. 
We write $\bM$ and $e_{p}\bM$ as: 
$\bM=(M_{\gamma})_{\gamma \in \Gamma_{\BZ}}$ and 
$e_{p}\bM=(M_{\gamma}')_{\gamma \in \Gamma_{\BZ}}$, respectively. 
Fix $w \in W$ and $i \in \BZ$, 
and then fix an interval $K$ in $\BZ$ such that 
$w \in W_{K}$ and $i,\,p-1,\,p,\,p+1 \in K$. 
Now, take an interval $I$ in $\BZ$ such that 
$I \in \Int(\bM;v,k)$ 
for all $v \in W_{K}$ and $k \in K$ (see Remark~\ref{rem:limit}\,(2)); 
note that $I$ is an element of the intersection 
%
%%%%%%%%%%%%%%%%%%
%%% eq:ej-int2 %%%
%%%%%%%%%%%%%%%%%%
%
\begin{equation} \label{eq:ej-int2}
\Int(\bM;e,\,p) \cap 
\Int(\bM;s_{p},\,p) \cap \Int(\bM;e,\,p-1) \cap \Int(\bM;e,\,p+1),
\end{equation}
since $p-1,\,p,\,p+1 \in K$. 
We need to show that 
$M_{w\vpi_{i}^{J}}'=
 M_{w\vpi_{i}^{I}}'$ for all intervals $J$ in $\BZ$ 
containing $I$. 

Before we proceed further, we make some remarks: 
Through the bijections \eqref{eq:bij-index2} and \eqref{eq:bij-index3}, 
we can (and do) identify the set $\Gamma_{K}$ (of chamber weights) 
for $\Fg_{K}$ with the subset $\Gamma_{I}^{K}=
\bigl\{v\vpi_{k}^{I} \mid v \in W_{K},\,k \in K\bigr\} 
\subset \Gamma_{I} \subset \Gamma_{\BZ}$; note that 
$v\vpi_{k}^{K} \in \Gamma_{K}$ corresponds to 
$v\vpi_{k}^{I} \in \Gamma_{I}^{K}$ for $v \in W_{K}$ and $k \in K$. 
Let $J$ be an interval in $\BZ$ containing $I$. As above, 
we can (and do) identify the set $\Gamma_{K}$ (of chamber weights) 
for $\Fg_{K}$ with the subset $\Gamma_{J}^{K}=
\bigl\{v\vpi_{k}^{J} \mid v \in W_{K},\,k \in K\bigr\} 
\subset \Gamma_{J} \subset \Gamma_{\BZ}$; note that 
$v\vpi_{k}^{K} \in \Gamma_{K}$ corresponds to 
$v\vpi_{k}^{J} \in \Gamma_{J}^{K}$ for $v \in W_{K}$ and $k \in K$. 
Thus, the three sets $\Gamma_{J}^{K} \ (\subset \Gamma_{J} \subset \Gamma_{\BZ})$, 
$\Gamma_{I}^{K} \ (\subset \Gamma_{I} \subset \Gamma_{\BZ})$, and $\Gamma_{K}$ 
can be identified as follows: 
%
%%%%%%%%%%%%%%%%%%%%%
%%% eq:bij-index4 %%%
%%%%%%%%%%%%%%%%%%%%%
%
\begin{equation} \label{eq:bij-index4}
\begin{array}{ccccl}
\Gamma_{K} & \stackrel{\sim}{\rightarrow} & 
\Gamma_{J}^{K} & \stackrel{\sim}{\rightarrow} & \Gamma_{I}^{K}, \\[3mm]
v\vpi_{k}^{K} & \mapsto & 
v\vpi_{k}^{J} & \mapsto &  v\vpi_{k}^{I}
\quad \text{for $v \in W_{K}$ and $k \in K$}.
\end{array}
\end{equation}
Also, it follows from the definition of $\bz_{\BZ}$ that 
$\bM_{I}=(M_{\gamma})_{\gamma \in \Gamma_{I}} \in \bz_{I}$ and 
$\bM_{J}=(M_{\gamma})_{\gamma \in \Gamma_{J}} \in \bz_{J}$. 
Therefore, by Lemma~\ref{lem:res2}, 
$(\bM_{I})^{K}=(M_{\gamma})_{\gamma \in \Gamma_{I}^{K}}$ and 
$(\bM_{J})^{K}=(M_{\gamma})_{\gamma \in \Gamma_{J}^{K}}$ are 
BZ data for $\Fg_{K}$ if we identify the sets 
$\Gamma_{I}^{K}$ and $\Gamma_{J}^{K}$ with 
the set $\Gamma_{K}$
through the bijection \eqref{eq:bij-index4}. 
Since $M_{v\vpi_{k}^{J}}=M_{v\vpi_{k}^{I}}$ 
for all $v \in W_{K}$ and $k \in K$ by our assumption, 
we deduce that $(\bM_{J})^{K}=(\bM_{I})^{K}$
if we identify the three sets $\Gamma_{J}^{K}$, $\Gamma_{I}^{K}$, and 
$\Gamma_{K}$ as in \eqref{eq:bij-index4}. 

Now we are ready to show that 
$M_{w\vpi_{i}^{J}}'=
 M_{w\vpi_{i}^{I}}'$. 
By our assumption that $e_{p}\bM \ne \bzero$ and 
\eqref{eq:ej-int2}, it follows that 
$e_{p}\bM_{I} \ne \bzero$, and hence 
$e_{p}\bM_{I}$ is an element of $\bz_{I}$; 
we see 
from \eqref{eq:res-ej} that 
$e_{p}\bM_{I}=(e_{p}\bM)_{I}=
 (M_{\gamma}')_{\gamma \in \Gamma_{I}}$. 
Hence, by Lemma~\ref{lem:res2}, 
$(e_{p}\bM_{I})^{K}=(M_{\gamma}')_{\gamma \in \Gamma_{I}^{K}}$ is 
a BZ datum for $\Fg_{K}$ if we identify the set $\Gamma_{I}^{K}$ 
with the set $\Gamma_{K}$ through 
the bijection \eqref{eq:bij-index4}. 
Also, by the definition of $e_{p}\bM_{I}$, 
we see that $M_{\vpi_{p}^{I}}'=M_{\vpi_{p}^{I}}+1$, and 
$M_{v\vpi_{k}^{I}}'=M_{v\vpi_{k}^{I}}$ for all 
$v \in W_{K}$ and $k \in K$ with 
$\pair{h_{p}}{v\vpi_{k}^{I}} \le 0$. 
Here we observe that 
for $v \in W_{K}$ and $k \in K$, 
the inequality $\pair{h_{p}}{v\vpi_{k}^{I}} \le 0$ holds 
if and only if the inequality $\pair{h_{p}}{v\vpi_{k}^{K}} \le 0$ holds. 
Indeed, let $v \in W_{K}$, and $k \in K$. 
Note that $v^{-1}h_{p} \in 
\bigoplus_{j \in K} \BZ h_{j} \subset 
\bigoplus_{j \in I} \BZ h_{j}$ since $p \in K$ by our assumption. 
Hence it follows from \eqref{eq:pair-vpi} that
\begin{equation*}
\pair{h_{p}}{v\vpi_{k}^{I}}=
\pair{v^{-1}h_{p}}{\vpi_{k}^{I}}=
\pair{v^{-1}h_{p}}{\vpi_{k}^{K}}=
\pair{h_{p}}{v\vpi_{k}^{K}},
\end{equation*}
which implies that 
$\pair{h_{p}}{v\vpi_{k}^{I}} \le 0$ 
if and only if $\pair{h_{p}}{v\vpi_{k}^{K}} \le 0$. 
Therefore, we deduce from Fact~\ref{fact:ej} that 
$(e_{p}\bM_{I})^{K}=(M_{\gamma}')_{\gamma \in \Gamma_{I}^{K}}$
is equal to $e_{p}\bigl((\bM_{I})^{K}\bigr)$ 
if we identify $\Gamma_{I}^{K}$ and $\Gamma_{K}$
by \eqref{eq:bij-index4}. 
Furthermore, we see from Remark~\ref{rem:limit}\,(1) that 
the interval $J \supset I$ is also an element of 
$\Int(\bM;v,k)$ for all $v \in W_{K}$ and $k \in K$.
In exactly the same way as above 
(with $I$ replaced by $J$), we can show that 
$(e_{p}\bM_{J})^{K}=(M_{\gamma}')_{\gamma \in \Gamma_{J}^{K}}$
is a BZ datum for $\Fg_{K}$, and is equal to $e_{p}\bigl((\bM_{J})^{K}\bigr)$ 
if we identify $\Gamma_{J}^{K}$ and $\Gamma_{K}$ by \eqref{eq:bij-index4}. 
Since $(\bM_{I})^{K}=(\bM_{J})^{K}$ as seen above, 
we obtain $e_{p}\bigl((\bM_{I})^{K}\bigr)=e_{p}\bigl((\bM_{J})^{K}\bigr)$. 
Consequently, we infer that 
$(e_{p}\bM_{J})^{K}=(M_{\gamma}')_{\gamma \in \Gamma_{J}^{K}}$ 
is equal to $(e_{p}\bM_{I})^{K}=(M_{\gamma}')_{\gamma \in \Gamma_{I}^{K}}$ 
if we identify $\Gamma_{J}^{K}$ and $\Gamma_{I}^{K}$ by \eqref{eq:bij-index4}. 
Because $w\vpi_{i}^{J} \in \Gamma_{J}^{K}$ corresponds 
to $w\vpi_{i}^{I} \in \Gamma_{I}^{K}$
through the bijection \eqref{eq:bij-index4}, 
we finally obtain $M_{w\vpi_{i}^{J}}'=M_{w\vpi_{i}^{I}}'$, as desired. 
This completes the proof of the proposition.
\end{proof}
%
%%%%%%%%%%%%%%
%%% rem:ej %%%
%%%%%%%%%%%%%%
%
\begin{rem} \label{rem:ej}
Let $\bM \in \bz_{\BZ}$ and $p \in \BZ$ be 
such that $e_{p}\bM \ne \bzero$. 
Let $K$ be an interval in $\BZ$ 
such that $p-1,\,p,\,p+1 \in K$. 
The proof of Proposition~\ref{prop:ej} shows that 
if an interval $I$ in $\BZ$ is an element of 
$\Int(\bM;v,k)$ for all $v \in W_{K}$ and $k \in K$, 
then $I \in \Int(e_{p}\bM;v,k)$ for all 
$v \in W_{K}$ and $k \in K$. 
\end{rem}
%
%%%%%%%%%%%%%%%%%
%%% lem:IK-ej %%%
%%%%%%%%%%%%%%%%%
%
\begin{lem} \label{lem:IK-ej}
Let $I$ and $K$ be intervals in $\BZ$ such that $I \supset K$
and $\#K \ge 3$. 

{\rm (1)} The set $\bz_{\BZ}(I,\,K) \cup \{\bzero\}$ is 
stable under the raising Kashiwara operators $e_{p}$ for 
$p \in K$ with $\min K < p < \max K$. 

{\rm (2)} Let $\bM \in \bz_{\BZ}(I,\,K)$, and 
let $p_{1},\,p_{2},\,\dots,\,p_{a} \in K$ be such that 
$\min K < p_{1},\,p_{2},\,\dots,\,p_{a} < \max K$. 
Then, $e_{p_{a}}e_{p_{a-1}} \cdots e_{p_{1}}\bM \ne \bzero$
if and only if $e_{p_{a}}e_{p_{a-1}} \cdots e_{p_{1}}\bM_{I} \ne \bzero$.
Moreover, if $e_{p_{a}}e_{p_{a-1}} \cdots e_{p_{1}}\bM \ne \bzero$
(or equivalently, 
$e_{p_{a}}e_{p_{a-1}} \cdots e_{p_{1}}\bM_{I} \ne \bzero$), 
then 
%
%%%%%%%%%%%%%%%%%%
%%% eq:res-ej2 %%%
%%%%%%%%%%%%%%%%%%
%
\begin{equation} \label{eq:res-ej2}
(e_{p_{a}}e_{p_{a-1}} \cdots e_{p_{1}}\bM)_{I}=
e_{p_{a}}e_{p_{a-1}} \cdots e_{p_{1}}\bM_{I}.
\end{equation}
\end{lem}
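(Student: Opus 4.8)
The plan is to deduce part (1) immediately from Remark~\ref{rem:ej}, and then to establish part (2) by induction on $a$, using part (1) to keep every intermediate datum inside $\bz_{\BZ}(I,K)$ rather than merely inside $\bz_{\BZ}$.

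For part (1), let $\bM \in \bz_{\BZ}(I,K)$ and $p \in K$ with $\min K < p < \max K$. If $e_p\bM = \bzero$ there is nothing to prove, so I would assume $e_p\bM \ne \bzero$, in which case $e_p\bM \in \bz_{\BZ}$ by Proposition~\ref{prop:ej}. The condition $\min K < p < \max K$ forces $p-1,\,p,\,p+1 \in K$, so $K$ meets the hypothesis of Remark~\ref{rem:ej}. Since $\bM \in \bz_{\BZ}(I,K)$ says exactly that $I \in \Int(\bM;v,k)$ for all $v \in W_K$ and $k \in K$, Remark~\ref{rem:ej} yields $I \in \Int(e_p\bM;v,k)$ for all such $v,k$, that is, $e_p\bM \in \bz_{\BZ}(I,K)$. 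Together with the convention $e_p\bzero=\bzero$, this gives the asserted stability, and it is the only place where the interior condition (and hence $\#K \ge 3$) is needed.

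For part (2), I would argue by induction on $a$. In the base case $a=1$, membership of $\bM$ in $\bz_{\BZ}(I,K)$ gives $I \in \Int(\bM;e,p_1) \cap \Int(\bM;s_{p_1},p_1) \cap \Int(\bM;e,p_1-1) \cap \Int(\bM;e,p_1+1)$, since $e,\,s_{p_1} \in W_K$ and $p_1-1,\,p_1,\,p_1+1 \in K$. Then \eqref{eq:vej} gives $\ve_{p_1}(\bM)=\ve_{p_1}(\bM_I)$, so $e_{p_1}\bM \ne \bzero$ if and only if $e_{p_1}\bM_I \ne \bzero$, and when nonzero \eqref{eq:res-ej} gives $(e_{p_1}\bM)_I = e_{p_1}\bM_I$. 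For the inductive step I would set $\bM':=e_{p_{a-1}}\cdots e_{p_1}\bM$; by the induction hypothesis $\bM' \ne \bzero$ if and only if $e_{p_{a-1}}\cdots e_{p_1}\bM_I \ne \bzero$, with $\bM'_I = e_{p_{a-1}}\cdots e_{p_1}\bM_I$ in that case. If $\bM'=\bzero$ then both composites equal $\bzero$ by the convention $e_p\bzero=\bzero$, so the asserted equivalence holds with both sides equal to $\bzero$. If $\bM' \ne \bzero$, then iterating part (1) shows $\bM' \in \bz_{\BZ}(I,K)$, so the base-case computation applies verbatim to $\bM'$ and $p_a$: \eqref{eq:vej} gives $\ve_{p_a}(\bM')=\ve_{p_a}(\bM'_I)$, which yields the equivalence, and \eqref{eq:res-ej} together with $\bM'_I = e_{p_{a-1}}\cdots e_{p_1}\bM_I$ gives $(e_{p_a}\bM')_I = e_{p_a}\bM'_I = e_{p_a}\cdots e_{p_1}\bM_I$, which is exactly \eqref{eq:res-ej2}.

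I expect the main difficulty to be bookkeeping rather than computation: the crucial point is that at each stage one must verify that the intermediate datum $\bM'$ lies in $\bz_{\BZ}(I,K)$, and not merely in $\bz_{\BZ}$, because only this stronger stability licenses the use of \eqref{eq:vej} and \eqref{eq:res-ej} for the next Kashiwara operator $e_{p_a}$. Part (1) is precisely what supplies this stability, and the interior hypothesis $\min K < p_j < \max K$ on each index is exactly what makes part (1) applicable to each $p_j$ in turn.
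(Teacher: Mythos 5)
Your proposal is correct and follows essentially the same route as the paper: part (1) directly from Remark~\ref{rem:ej} (with $\min K < p < \max K$ guaranteeing $p-1,p,p+1\in K$), and part (2) by induction on $a$, using \eqref{eq:vej} and \eqref{eq:res-ej} in the base case and part (1) to keep the intermediate datum in $\bz_{\BZ}(I,K)$ in the inductive step. The only (harmless) difference is that you spell out the case $\bM'=\bzero$ explicitly, which the paper folds into ``we may assume.''
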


\begin{proof}
Part (1) follows immediately from Remark~\ref{rem:ej}. 
We will show part (2) by induction on $a$. 
Assume first that $a=1$. Since $\bM \in \bz_{\BZ}(I,\,K)$, 
it follows immediately that 
\begin{equation*}
I \in \Int(\bM;e,p_{1}) \cap 
\Int(\bM;s_{p_{1}},p_{1}) \cap \Int(\bM;e,p_{1}+1) \cap 
\Int(\bM;e,p_{1}-1). 
\end{equation*}
Therefore, we have 
$\ve_{p_{1}}(\bM)=\ve_{p_{1}}(\bM_{I})$ by \eqref{eq:vej}, 
which implies that $e_{p_{1}}\bM \ne \bzero$ if and 
only if $e_{p_{1}}\bM_{I} \ne \bzero$. 
Also, it follows from \eqref{eq:res-ej} that 
if $e_{p_{1}}\bM \ne \bzero$, then 
$(e_{p_{1}}\bM)_{I}=e_{p_{1}}\bM_{I}$. 

Assume next that $a > 1$. For simplicity of notation, we set
\begin{equation*}
\bM':=e_{p_{a-1}} \cdots e_{p_{1}}\bM
\quad \text{and} \quad
\bM'':=e_{p_{a-1}} \cdots e_{p_{1}}\bM_{I}.
\end{equation*}
Let us show that $e_{p_{a}}\bM' \ne \bzero$ 
if and only if $e_{p_{a}}\bM'' \ne \bzero$. 
By the induction hypothesis, we may assume that 
$\bM' \ne \bzero$, $\bM'' \ne \bzero$, and $\bM_{I}'=\bM''$. 
It follows from part (1) that 
$\bM' \in \bz_{\BZ}(I,\,K)$. 
Hence, by the same argument as above (the case $a=1$), 
we deduce that $e_{p_{a}}\bM' \ne \bzero$ if and 
only if $e_{p_{a}}\bM'_{I} \ne \bzero$, which implies that 
$e_{p_{a}}\bM' \ne \bzero$ if and 
only if $e_{p_{a}}\bM'' \ne \bzero$. 
Furthermore, it follows from \eqref{eq:res-ej} that 
if $e_{p_{a}}\bM' \ne \bzero$, then 
$(e_{p_{a}}\bM')_{I}=e_{p_{a}}\bM'_{I}=e_{p_{a}}\bM''$. 
This proves the lemma.
\end{proof}

%==============================%
%     START SUBSECTION 0304    %
%==============================%
%
\subsection{Some properties of Kashiwara operators on $\bz_{\BZ}$.}
\label{subsec:properties}

%%%%%%%%%%%%%%
%%% lem:ef %%%
%%%%%%%%%%%%%%
%
\begin{lem} \label{lem:ef}
{\rm (1)} Let $\bM \in \bz_{\BZ}$, and $p \in \BZ$. 
Then, $e_{p}f_{p}\bM=\bM$. Also, if $e_{p}\bM \ne \bzero$, 
then $f_{p}e_{p}\bM=\bM$. 

{\rm (2)} Let $\bM \in \bz_{\BZ}$, 
and let $p,\,q \in \BZ$ be such that $|p-q| \ge 2$. 
Then, $\ve_{p}(f_{p}\bM)=\ve_{p}(\bM)+1$ and $\ve_{q}(f_{p}\bM)=\ve_{q}(\bM)$. 
Also, if $e_{p}\bM \ne \bzero$, then
$\ve_{p}(e_{p}\bM)=\ve_{p}(\bM)-1$ and 
$\ve_{q}(e_{p}\bM)=\ve_{q}(\bM)$. 

{\rm (3)}
Let $p,\,q \in \BZ$ be such that $|p-q| \ge 2$. 
Then, $f_{p}f_{q}=f_{q}f_{p}$, $e_{p}e_{q}=e_{q}e_{p}$, 
and $e_{p}f_{q}=f_{q}e_{p}$ on $\bz_{\BZ} \cup \{\bzero\}$. 
\end{lem}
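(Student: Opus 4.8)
The plan is to reduce all three assertions to the corresponding statements on a single sufficiently large finite interval, where $\bz_I$ is, by Theorem~\ref{thm:binf}, a crystal for $U_{q}(\Fg_I^{\vee})$ isomorphic to $\CB_I(\infty)$. Concretely, fix the relevant indices (namely $p$ for part (1), and $p,q$ for parts (2) and (3)), and choose a finite interval $K$ in $\BZ$ containing these indices together with all of their nearest neighbors $p\pm 1$ (and $q\pm 1$); then $\#K\ge 3$ and $p$ (and $q$) satisfies $\min K<\cdot<\max K$. By Remark~\ref{rem:limit}\,(2) we may take an interval $I\supset K$ with $\bM\in\bz_{\BZ}(I,K)$. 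The point of this choice is that, by Lemma~\ref{lem:IK-fj} and Lemma~\ref{lem:IK-ej}, the set $\bz_{\BZ}(I,K)$ is stable (up to $\bzero$) under every $f_{r}$ and $e_{r}$ with $r$ interior to $K$, and the restriction $\bM\mapsto\bM_I$ intertwines these operators with the Kashiwara operators on $\bz_I$: for a word $X$ in the $e_{r},f_{r}$ with $r$ interior to $K$, one has $X\bM\ne\bzero$ iff $X\bM_I\ne\bzero$, and in that case $(X\bM)_I=X\bM_I$. Mixed words are handled one step at a time, using the parts~(1) of these two lemmas for stability and \eqref{eq:res-fj}, \eqref{eq:res-ej} for the single-step restriction; the ``$\ne\bzero$'' bookkeeping at each $e$-step is controlled by \eqref{eq:vej}, which also gives $\ve_{r}(\bM)=\ve_{r}(\bM_I)$ for every such $r$.

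Granting this, part~(1) is immediate: in the finite crystal $\bz_I\cong\CB_I(\infty)$ one has $e_{p}f_{p}\bM_I=\bM_I$ (the operator $f_{p}$ never produces $\bzero$), and $f_{p}e_{p}\bM_I=\bM_I$ whenever $e_{p}\bM_I\ne\bzero$. Transporting these equalities through the intertwining property shows that $e_{p}f_{p}\bM$ and $\bM$ (resp.\ $f_{p}e_{p}\bM$ and $\bM$, when $e_{p}\bM\ne\bzero$) have the same components on $\Gamma_I$; since $I$ may be enlarged at will and both sides satisfy condition~(b) of Definition~\ref{dfn:BZdatum2}, they agree on all of $\Gamma_{\BZ}$.

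For part~(2), the identity $\ve_{p}(f_{p}\bM)=\ve_{p}(\bM)+1$ follows from $\ve_{p}(f_{p}\bM)=\ve_{p}((f_{p}\bM)_I)=\ve_{p}(f_{p}\bM_I)$ together with the crystal axiom $\ve_{p}(f_{p}\bM_I)=\ve_{p}(\bM_I)+1$ and $\ve_{p}(\bM)=\ve_{p}(\bM_I)$. The equality $\ve_{q}(f_{p}\bM)=\ve_{q}(\bM)$ for $|p-q|\ge 2$ reduces likewise to $\ve_{q}(f_{p}\bM_I)=\ve_{q}(\bM_I)$ on $\bz_I$, which is a short direct computation: by \eqref{eq:dfn-vep}, $\ve_{q}$ depends only on $M_{\vpi_{q}^{I}}$, $M_{s_{q}\vpi_{q}^{I}}$, $M_{\vpi_{q\pm1}^{I}}$, and since $a_{pq}=0$ one checks via \eqref{eq:pair-vpi} that each of $\vpi_{q}^{I}$, $s_{q}\vpi_{q}^{I}=\vpi_{q}^{I}-\alpha_{q}$, $\vpi_{q\pm1}^{I}$ pairs to $\le 0$ with $h_{p}$; hence by \eqref{eq:AM} the operator $f_{p}$ fixes all four components. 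The two assertions about $e_{p}\bM$ (when $e_{p}\bM\ne\bzero$) follow by applying the already-established identities to $\bN:=e_{p}\bM$ in place of $\bM$ and using $f_{p}\bN=\bM$ from part~(1): from $\ve_{p}(f_{p}\bN)=\ve_{p}(\bN)+1$ we get $\ve_{p}(e_{p}\bM)=\ve_{p}(\bM)-1$, and from $\ve_{q}(f_{p}\bN)=\ve_{q}(\bN)$ we get $\ve_{q}(e_{p}\bM)=\ve_{q}(\bM)$.

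Finally, part~(3) is obtained by the same transfer: in $\bz_I\cong\CB_I(\infty)$ the Kashiwara operators attached to the orthogonal simple roots $\alpha_{p},\alpha_{q}$ (with $a_{pq}=a_{qp}=0$) commute, i.e.\ $f_{p}f_{q}=f_{q}f_{p}$, $e_{p}e_{q}=e_{q}e_{p}$, and $e_{p}f_{q}=f_{q}e_{p}$ on $\bz_I\cup\{\bzero\}$. For each identity the intertwining property shows that the two composite operators, applied to $\bM$, are simultaneously $\bzero$ or have equal restrictions to $\Gamma_I$, and letting $I$ grow yields the equalities on $\bz_{\BZ}\cup\{\bzero\}$. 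The main work lies entirely in the first paragraph --- verifying that a single interval $I$ can be chosen so that the restriction faithfully transports whole words of operators as well as the values of $\ve_{r}$ --- which is precisely what Lemmas~\ref{lem:IK-fj} and \ref{lem:IK-ej} supply; once that is in place, every assertion is a transcription of a standard fact about the finite crystal $\CB_I(\infty)$.
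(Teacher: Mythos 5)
Your proposal is correct, and its overall strategy --- choosing $K\ni p-1,p,p+1$ (and $q\pm 1$), passing to an interval $I$ with $\bM\in\bz_{\BZ}(I,K)$, and transporting everything to the finite crystal $\bz_{I}\cong\CB_{I}(\infty)$ via Lemmas~\ref{lem:IK-fj} and \ref{lem:IK-ej} and the single-step restrictions \eqref{eq:res-fj}, \eqref{eq:res-ej} --- is exactly the paper's strategy for parts (1) and (3), including the step-by-step handling of mixed words via the stability statements and the identity $\ve_{r}(\bM)=\ve_{r}(\bM_{I})$ from \eqref{eq:vej}.

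The one place you genuinely diverge is part (2). The paper does not transfer the $\ve$-identities to $\bz_{I}$; instead it computes the components of $\Theta(f_{p}\bM)$ directly on $\bz_{\BZ}$, establishing $M_{\Lambda_{p}}'=M_{\Lambda_{p}}-1$, $M_{\Lambda_{i}}'=M_{\Lambda_{i}}$ for $i\ne p$ (equation \eqref{eq:lam-i}) and $M_{s_{i}\Lambda_{i}}'=M_{s_{i}\Lambda_{i}}$ for $i\ne p\pm 1$ (equation \eqref{eq:silami}) from \eqref{eq:AM2} and \eqref{eq:pair-vpi}, and then substitutes into the defining formula \eqref{eq:dfn-vep2}. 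Your route --- $\ve_{p}(f_{p}\bM)=\ve_{p}(f_{p}\bM_{I})=\ve_{p}(\bM_{I})+1=\ve_{p}(\bM)+1$ by the crystal axioms on $\bz_{I}$, the short pairing computation with \eqref{eq:AM} for $\ve_{q}$, and deriving the $e_{p}$-versions from the $f_{p}$-versions via part (1) rather than redoing the argument --- is valid and somewhat more economical. What the paper's more pedestrian computation buys is the explicit formulas \eqref{eq:lam-i} and \eqref{eq:silami} themselves, which are reused later (e.g.\ in the proof of Proposition~\ref{prop:crystal} to show $\wt(\ha{f}_{p}\bM)=\wt(\bM)-\ha{h}_{p}$ and $\ha{\ve}_{p}(\ha{f}_{p}\bM)=\ha{\ve}_{p}(\bM)+1$); your shortcut proves the lemma but does not produce those auxiliary identities.
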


\begin{proof}
(1) We prove that $e_{p}f_{p}\bM=\bM$; 
by a similar argument, we can prove that 
$f_{p}e_{p}\bM=\bM$ if $e_{p}\bM \ne \bzero$.
We need to show that $e_{p}f_{p}\bM \ne \bzero$, and that 
the $\gamma$-component of $e_{p}f_{p}\bM$ is equal to 
that of $\bM$ for each $\gamma \in \Gamma_{\BZ}$. 
We fix $\gamma \in \Gamma_{\BZ}$. 
Set $K:=\bigl\{p-1,\,p,\,p+1\bigr\}$, and 
take an interval $I$ in $\BZ$ such that 
$\gamma \in \Gamma_{I}$, and such that 
$I \in \Int(\bM;v,k)$ for all $v \in W_{K}$ and $k \in K$. 
Then, we have $\bM \in \bz_{\BZ}(I,\,K)$, and hence 
we see from Lemma~\ref{lem:IK-fj} that 
$f_{p}\bM \in \bz_{\BZ}(I,\,K)$ and 
$(f_{p}\bM)_{I}=f_{p}\bM_{I}$. 
Because $e_{p}(f_{p}\bM)_{I}=e_{p}(f_{p}\bM_{I})=\bM_{I} \ne \bzero$ 
by condition~(a) of Definition~\ref{dfn:BZdatum2} and 
Theorem~\ref{thm:binf}, it follows from 
Lemma~\ref{lem:IK-ej}\,(2) that $e_{p}f_{p}\bM \ne \bzero$. 
Also, we deduce from Lemmas~\ref{lem:IK-fj}\,(2) and 
\ref{lem:IK-ej}\,(2) that 
$(e_{p}f_{p}\bM)_{I}=e_{p}f_{p}\bM_{I}=\bM_{I}$. 
Since $\gamma \in \Gamma_{I}$ by our assumption on $I$, 
we infer that the $\gamma$-component of $e_{p}f_{p}\bM$ 
is equal to that of $\bM$. This proves part (1). 

(2) We give a proof only for the equalities 
$\ve_{p}(f_{p}\bM)=\ve_{p}(\bM)+1$ and 
$\ve_{q}(f_{p}\bM)=\ve_{q}(\bM)$; 
by a similar argument, we can prove that 
$\ve_{p}(e_{p}\bM)=\ve_{p}(\bM)-1$ and 
$\ve_{q}(e_{p}\bM)=\ve_{q}(\bM)$ 
if $e_{p}\bM \ne \bzero$.
Write $\bM \in \bz_{\BZ}$ and $f_{p}\bM \in \bz_{\BZ}$ as: 
$\bM=(M_{\gamma})_{\gamma \in \Gamma_{\BZ}}$ and 
$f_{p}\bM=(M_{\gamma}')_{\gamma \in \Gamma_{\BZ}}$, 
respectively. 
Also, write 
$\Theta(\bM)$ and $\Theta(f_{p}\bM)$ as: 
$\Theta(\bM)=(M_{\xi})_{\xi \in \Xi_{\BZ}}$ and 
$\Theta(f_{p}\bM)=(M_{\xi}')_{\xi \in \Xi_{\BZ}}$, 
respectively. 
First we show that for $i \in \BZ$, 
%
%%%%%%%%%%%%%%%%
%%% eq:lam-i %%%
%%%%%%%%%%%%%%%%
%
\begin{equation} \label{eq:lam-i}
M_{\Lambda_{i}}'=
\begin{cases}
M_{\Lambda_{p}}-1 & \text{if $i=p$}, \\[1.5mm]
M_{\Lambda_{i}} & \text{otherwise}.
\end{cases}
\end{equation}
Fix $i \in \BZ$, and take 
an interval $I$ in $\BZ$ such that 
\begin{equation*}
I \in 
\Int(\bM;e,p) \cap \Int(\bM;s_{p},p) \cap 
\Int(\bM;e,i) \cap \Int(\bM;s_{p},i).
\end{equation*}
We see from Remark~\ref{rem:fj} that 
$I \in \Int(f_{p}\bM;e,i)$, and hence 
that $M_{\Lambda_{i}}'=M_{\vpi_{i}^{I}}'$ by the definition.
Assume now that $i \ne p$. 
Since $\pair{h_{p}}{\vpi_{i}^{I}} \le 0$ by \eqref{eq:pair-vpi}, 
it follows from \eqref{eq:AM2} that 
$M_{\vpi_{i}^{I}}'=
 (f_{p}\bM)_{\vpi_{i}^{I}}=M_{\vpi_{i}^{I}}$.
Also, since $I \in \Int(\bM;e,i)$, we have 
$M_{\vpi_{i}^{I}}=M_{\Lambda_{i}}$ by the definition. 
Therefore, we obtain 
\begin{equation*}
M_{\Lambda_{i}}'=M_{\vpi_{i}^{I}}'=M_{\vpi_{i}^{I}}=M_{\Lambda_{i}}
\quad \text{if $i \ne p$.}
\end{equation*}
Assume then that $i = p$. 
Since $\pair{h_{p}}{\vpi_{p}^{I}}=1$, 
it follows from \eqref{eq:AM2} that 
%
%%%%%%%%%%%%%%
%%% eq:AM3 %%%
%%%%%%%%%%%%%%
%
\begin{equation} \label{eq:AM3}
M_{\vpi_{p}^{I}}' = 
 (f_{p}\bM)_{\vpi_{p}^{I}} =
 \min \bigl(
   M_{\vpi_{p}^{I}},\ 
   M_{s_{p}\vpi_{p}^{I}}+c_{p}(\bM)
   \bigr),
\end{equation}
where $c_{p}(\bM)=
M_{\Lambda_{p}}-M_{s_{p}\Lambda_{p}}-1$. 
Note that $M_{\vpi_{p}^{I}}=M_{\Lambda_{p}}$ 
(resp., $M_{s_{p}\vpi_{p}^{I}}=M_{s_{p}\Lambda_{p}}$) 
since $I \in \Int(\bM;e,p)$ (resp., $I \in \Int(\bM;s_{p},p)$).
Substituting these into \eqref{eq:AM3}, 
we conclude that $M_{\Lambda_{p}}'=M_{\vpi_{p}^{I}}' = 
M_{\Lambda_{p}}-1$, as desired. 

Next we show that 
%
%%%%%%%%%%%%%%%%%
%%% eq:silami %%%
%%%%%%%%%%%%%%%%%
%
\begin{equation} \label{eq:silami}
M_{s_{i}\Lambda_{i}}'=M_{s_{i}\Lambda_{i}}
\quad \text{for $i \in \BZ$ with $i \ne p-1,\,p+1$}.
\end{equation}
Take an interval $I$ in $\BZ$ such that 
\begin{equation*}
I \in \Int(\bM;e,p) \cap 
\Int(\bM;s_{p},p) \cap 
\Int(\bM;s_{i},i) \cap 
\Int(\bM;s_{p}s_{i},i).
\end{equation*}
We see from Remark~\ref{rem:fj} that 
$I \in \Int(f_{p}\bM;s_{i},i)$, 
and hence that $M_{s_{i}\Lambda_{i}}'=
M_{s_{i}\vpi_{i}^{I}}'$ by the definition. 
Since $i \ne p-1,\,p+1$, 
we deduce from \eqref{eq:pair-vpi} that 
$\pair{h_{p}}{s_{i}\vpi_{i}^{I}} \le 0$. 
Hence it follows from \eqref{eq:AM2} that 
$M_{s_{i}\vpi_{i}^{I}}'= 
 (f_{p}\bM)_{s_{i}\vpi_{i}^{I}}=
 M_{s_{i}\vpi_{i}^{I}}$. 
Also, since $I \in \Int(\bM;s_{i},i)$, we have 
$M_{s_{i}\vpi_{i}^{I}}=M_{s_{i}\Lambda_{i}}$. 
Thus we obtain $M_{s_{i}\Lambda_{i}}'=
M_{s_{i}\vpi_{i}^{I}}'=M_{s_{i}\vpi_{i}^{I}}=
M_{s_{i}\Lambda_{i}}$, as desired. 

Now, recall from \eqref{eq:dfn-vep2} that 
\begin{equation*}
\ve_{p}(f_{p}\bM) = 
 - \left(
     M_{\Lambda_{p}}'+M_{s_{p}\Lambda_{p}}'
     +\sum_{r \in \BZ \setminus \{p\}} 
      a_{rp} M_{\Lambda_{r}}'
    \right). 
\end{equation*}
Here, by \eqref{eq:lam-i} and \eqref{eq:silami}, 
we have $M_{\Lambda_{p}}'=M_{\Lambda_{p}}-1$, 
$M_{s_{p}\Lambda_{p}}'=M_{s_{p}\Lambda_{p}}$, and 
\begin{equation*}
\sum_{r \in \BZ \setminus \{p\}} a_{rp} M_{\Lambda_{r}}' = 
\sum_{r \in \BZ \setminus \{p\}} a_{rp} M_{\Lambda_{r}}.
\end{equation*}
Therefore, by \eqref{eq:dfn-vep2}, we conclude that
\begin{equation*}
\ve_{p}(f_{p}\bM) = 
 - \left(
     (M_{\Lambda_{p}}-1)+M_{s_{p}\Lambda_{p}}
     +\sum_{r \in \BZ \setminus \{p\}} 
      a_{rp} M_{\Lambda_{r}}
    \right)=\ve_{p}(\bM)+1.
\end{equation*}
Arguing in the same manner, we can prove 
that $\ve_{q}(f_{p}\bM) = \ve_{q}(\bM)$. 
This proves part (2). 

(3) We prove that $e_{p}f_{q}=f_{q}e_{p}$; 
the proofs of the other equalities are similar. 
Let $\bM \in \bz_{\BZ}$. Assume first that 
$e_{p}\bM = \bzero$, or equivalently, 
$\ve_{p}(\bM)=0$. Then we have 
$f_{q}e_{p}\bM=\bzero$. Also, 
it follows from part (2) that 
$\ve_{p}(f_{q}\bM)=\ve_{p}(\bM)=0$, 
which implies that $e_{p}(f_{q}\bM)=\bzero$. 
Thus we get $e_{p}f_{q}\bM=f_{q}e_{p}\bM=\bzero$. 

Assume next that $e_{p}\bM \ne \bzero$, 
or equivalently, $\ve_{p}(\bM) > 0$. 
Then we have $f_{q}e_{p}\bM \ne \bzero$. 
Also, it follows from part (2) that 
$\ve_{p}(f_{q}\bM)=\ve_{p}(\bM) > 0$, 
which implies that $e_{p}(f_{q}\bM) \ne \bzero$. 
We need to show that 
$(e_{p}f_{q}\bM)_{\gamma}=(f_{q}e_{p}\bM)_{\gamma}$ 
for all $\gamma \in \Gamma_{\BZ}$. 
Fix $\gamma \in \Gamma_{\BZ}$, and 
take an interval $I$ in $\BZ$ satisfying 
the following conditions: 

(i) $\gamma \in \Gamma_{I}$; 

(ii) $I \in \Int(f_{q}\bM;e,p) \cap \Int(f_{q}\bM;s_{p},p) \cap 
\Int(f_{q}\bM;e,p-1) \cap \Int(f_{q}\bM;e,p+1)$; 

(iii) $I \in \Int(\bM;e,q) \cap \Int(\bM;s_{q},q)$; 

(iv) $I \in \Int(e_{p}\bM;e,q) \cap \Int(e_{p}\bM;s_{q},q)$; 

(v) $I \in \Int(\bM;e,p) \cap \Int(\bM;s_{p},p) \cap 
\Int(\bM;e,p-1) \cap \Int(\bM;e,p+1)$. 

\noindent
Then, we have 
\begin{align*}
(e_{p}f_{q}\bM)_{I}
 & =e_{p}(f_{q}\bM)_{I}
   \quad \text{by \eqref{eq:res-ej} and condition (ii)} \\
 & = e_{p}(f_{q}\bM_{I})
   \quad \text{by \eqref{eq:res-fj} and condition (iii)} \\
 & =e_{p}f_{q}\bM_{I},
\end{align*}
and 
\begin{align*}
(f_{q}e_{p}\bM)_{I}
 & =f_{q}(e_{p}\bM)_{I}
   \quad \text{by \eqref{eq:res-fj} and condition (iv)} \\
 & = f_{q}(e_{p}\bM_{I})
   \quad \text{by \eqref{eq:res-ej} and condition (v)} \\
 & = f_{q}e_{p}\bM_{I}.
\end{align*}
Hence we see from condition~(a) of Definition~\ref{dfn:BZdatum2} and 
Theorem~\ref{thm:binf} that 
$e_{p}f_{q}\bM_{I}=f_{q}e_{p}\bM_{I}$, and hence 
$(e_{p}f_{q}\bM)_{I}=(f_{q}e_{p}\bM)_{I}$. 
Therefore, we obtain 
$(e_{p}f_{q}\bM)_{\gamma}=(f_{q}e_{p}\bM)_{\gamma}$ 
since $\gamma \in \Gamma_{I}$ by condition~(i). 
This proves part (3), 
thereby completing the proof of the lemma.
\end{proof}

\begin{rem}
Let $\bM \in \bz_{\BZ}$, and $p \in I$. 
From the definition, it follows that 
$\ve_{p}(\bM)=0$ if and only if 
$e_{p}\bM=\bzero$, and that 
$\ve_{p}(\bM) \in \BZ_{\ge 0}$. 
In addition, $\ve_{p}(e_{p}\bM)=\ve_{p}(\bM)-1$ by 
Lemma~\ref{lem:ef}\,(2). 
Consequently, we deduce that 
$\ve_{p}(\bM)=
 \max \bigl\{N \ge 0 \mid 
 e_{p}^{N}\bM \ne \bzero\bigr\}$. 
\end{rem}

%=========================%
%     START SECTION 04    %
%=========================%
%
\section{Berenstein-Zelevinsky data of type $A_{\ell}^{(1)}$.}
\label{sec:BZdatum-aff}

Throughout this section, 
we take and fix $\ell \in \BZ_{\ge 2}$ arbitrarily. 

%==============================%
%     START SUBSECTION 0401    %
%==============================%
%
\subsection{Basic notation in type $A_{\ell}^{(1)}$.}
\label{subsec:notation-aff}
Let $\ha{\Fg}$ be the affine Lie algebra of 
type $A_{\ell}^{(1)}$ over $\BC$. Let 
$\ha{A}=(\ha{a}_{ij})_{i,j \in \ha{I}}$ 
denote the Cartan matrix of $\ha{\Fg}$ 
with index set $\ha{I}:=\bigl\{0,\,1,\,\dots,\,\ell\bigr\}$; 
the entries $\ha{a}_{ij}$ are given by: 
%
%%%%%%%%%%%%%%%%%
%%% eq:affine %%%
%%%%%%%%%%%%%%%%%
%
\begin{equation} \label{eq:affine}
\ha{a}_{ij}=
 \begin{cases}
 2 & \text{if $i=j$}, \\
 -1 & \text{if $|i-j|=1$ or $\ell$}, \\
 0 & \text{otherwise},
 \end{cases}
\end{equation}
for $i,\,j \in \ha{I}$. 
Denote by $\ha{\Fh}$ the Cartan subalgebra of $\ha{\Fg}$, 
by $\ha{h}_{i} \in \ha{\Fh}$, $i \in \ha{I}$, 
the simple coroots of $\ha{\Fg}$, and 
by $\ha{\alpha}_{i} \in \ha{\Fh}^{\ast}:=
\Hom_{\BC}(\ha{\Fh},\,\BC)$, $i \in \ha{I}$, 
the simple roots of $\ha{\Fg}$; note that 
$\pair{\ha{h}_{i}}{\ha{\alpha}_{j}}=\ha{a}_{ij}$ 
for $i,\,j \in \ha{I}$, where $\pair{\cdot\,}{\cdot}$ 
is the canonical pairing between $\ha{\Fh}$ and $\ha{\Fh}^{\ast}$. 

Also, let $\ha{\Fg}^{\vee}$ denote 
the (Langlands) dual Lie algebra of $\ha{\Fg}$; 
that is, $\ha{\Fg}^{\vee}$ is the affine Lie algebra of 
type $A_{\ell}^{(1)}$ over $\BC$ associated to 
the transpose ${}^{t}\!\ha{A} \, (=\ha{A})$ of $\ha{A}$, 
with Cartan subalgebra $\ha{\Fh}^{\ast}$, 
simple coroots $\ha{\alpha}_{i} \in \ha{\Fh}^{\ast}$, $i \in \ha{I}$, 
and simple roots $\ha{h}_{i} \in \ha{\Fh}$, $i \in \ha{I}$.
Let $U_{q}(\ha{\Fg}^{\vee})$ be 
the quantized universal enveloping algebra over $\BC(q)$
associated to the Lie algebra $\ha{\Fg}^{\vee}$, 
$U_{q}^{-}(\ha{\Fg}^{\vee})$ 
the negative part of $U_{q}(\ha{\Fg}^{\vee})$, 
and $\ha{\CB}(\infty)$ the crystal basis of 
$U_{q}^{-}(\ha{\Fg}^{\vee})$. 
For a dominant integral weight $\ha{\lambda} \in \ha{\Fh}$ 
for $\ha{\Fg}^{\vee}$, $\ha{\CB}(\ha{\lambda})$ denotes 
the crystal basis of the irreducible highest weight 
$U_{q}(\ha{\Fg}^{\vee})$-module of highest weight $\ha{\lambda}$. 

%==============================%
%     START SUBSECTION 0402    %
%==============================%
%
\subsection{Dynkin diagram automorphism in type $A_{\infty}$ and 
its action on $\bz_{\BZ}$.}
\label{subsec:da}

For the fixed $\ell \in \BZ_{\ge 2}$, 
the (Dynkin) diagram automorphism in type $A_{\infty}$ 
is a bijection $\sigma:\BZ \rightarrow \BZ$ given by: 
$\sigma(i)=i+\ell+1$ for $i \in \BZ$. 
This induces a $\BC$-linear automorphism 
$\sigma:\Fh \stackrel{\sim}{\rightarrow} \Fh$ of 
$\Fh=\bigoplus_{i \in \BZ} \BC h_{i}$ by: 
$\sigma(h_{i})=h_{\sigma(i)}$ for $i \in \BZ$, 
and also a $\BC$-linear automorphism 
$\sigma:\Fh^{\ast}_{\res} \stackrel{\sim}{\rightarrow} \Fh^{\ast}_{\res}$ 
of the restricted dual space 
$\Fh^{\ast}_{\res}:=\bigoplus_{i \in \BZ} \BC \Lambda_{i}$ of 
$\Fh=\bigoplus_{i \in \BZ} \BC h_{i}$ by: 
$\sigma(\Lambda_{i})=\Lambda_{\sigma(i)}$ for $i \in \BZ$. 
Observe that $\pair{\sigma(h)}{\sigma(\Lambda)}=
\pair{h}{\Lambda}$ for all $h \in \Fh$ 
and $\Lambda \in \Fh^{\ast}_{\res}$, and 
$\sigma(\alpha_{i})=\alpha_{\sigma(i)}$ 
for $i \in \BZ$; note also that 
$\alpha_{i} \in \Fh^{\ast}_{\res}$ for all $i \in \BZ$,
since $\alpha_{i}=2\Lambda_{i}-\Lambda_{i-1}-\Lambda_{i+1}$. 
Moreover, this $\sigma:\BZ \rightarrow \BZ$ 
naturally induces a group automorphism 
$\sigma:W_{\BZ} \stackrel{\sim}{\rightarrow} W_{\BZ}$ of 
the Weyl group $W_{\BZ}$ by: 
$\sigma(s_{i})=s_{\sigma(i)}$ for $i \in \BZ$. 

It is easily seen that 
$-w\Lambda_{i} \in \Fh^{\ast}_{\res}$ 
for all $w \in W_{\BZ}$ and $i \in \BZ$, and hence 
the set $\Gamma_{\BZ}$ (of chamber weights) is a subset of 
$\Fh^{\ast}_{\res}$. In addition,
%
%%%%%%%%%%%%%%%%%%
%%% eq:sig-gam %%%
%%%%%%%%%%%%%%%%%%
%
\begin{equation} \label{eq:sig-gam}
\sigma(-w\Lambda_{i})=-\sigma(w)\Lambda_{\sigma(i)}
\quad \text{for $w \in W_{\BZ}$ and $i \in \BZ$}.
\end{equation}
Therefore, the restriction of 
$\sigma:\Fh^{\ast}_{\res} \stackrel{\sim}{\rightarrow} 
\Fh^{\ast}_{\res}$ to the subset $\Gamma_{\BZ}$ 
gives rise to a bijection $\sigma:\Gamma_{\BZ} 
\stackrel{\sim}{\rightarrow} \Gamma_{\BZ}$. 
%
%%%%%%%%%%%%%%%%%%%
%%% rem:sig-vpi %%%
%%%%%%%%%%%%%%%%%%%
%
\begin{rem} \label{rem:sig-vpi}
Let $I$ be an interval in $\BZ$, and $i \in I$; 
note that $\sigma(i)$ is contained in $\sigma(I)$. 
Because $\vpi_{i}^{I} \in \Gamma_{\BZ}$ 
can be written as: $\vpi_{i}^{I}=
\Lambda_{i}-\Lambda_{(\min I)-1}-\Lambda_{(\max I)+1}$ 
(see \eqref{eq:pair-vpi}), we deduce that 
$\sigma(\vpi_{i}^{I})=\vpi_{\sigma(i)}^{\sigma(I)}$.
\end{rem}

Let $\bM=(M_{\gamma})_{\gamma \in \Gamma_{\BZ}}$ be 
a collection of integers indexed by $\Gamma_{\BZ}$. 
We define collections $\sigma(\bM)$ and $\sigma^{-1}(\bM)$ of 
integers indexed by $\Gamma_{\BZ}$ by: 
$\sigma(\bM)_{\gamma}=M_{\sigma^{-1}(\gamma)}$ and 
$\sigma^{-1}(\bM)_{\gamma}=M_{\sigma(\gamma)}$ 
for each $\gamma \in \Gamma_{\BZ}$, respectively. 
%
%%%%%%%%%%%%%%%%%%%%%%%%%
%%% lem:sigma-stable %%%%
%%%%%%%%%%%%%%%%%%%%%%%%%
%
\begin{lem} \label{lem:sigma-stable}
If $\bM \in \bz_{\BZ}$, 
then $\sigma(\bM) \in \bz_{\BZ}$ and 
$\sigma^{-1}(\bM) \in \bz_{\BZ}$. 
\end{lem}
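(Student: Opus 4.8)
The plan is to verify conditions (a) and (b) of Definition~\ref{dfn:BZdatum2} for $\sigma(\bM)$ directly, exploiting the fact that $\sigma$ is a Dynkin diagram automorphism and hence preserves all Cartan-matrix data. The case of $\sigma^{-1}(\bM)$ is obtained verbatim by exchanging the roles of $\sigma$ and $\sigma^{-1}$, so I would treat only $\sigma(\bM)$ in detail.

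First I would record the compatibility of $\sigma$ with the restriction operation $\bM \mapsto \bM_K$. For an interval $K$ in $\BZ$, the set $\sigma^{-1}(K)$ is again an interval, and by \eqref{eq:sig-gam} together with Remark~\ref{rem:sig-vpi}, the bijection $\sigma^{-1}\colon \Gamma_{\BZ} \stackrel{\sim}{\rightarrow} \Gamma_{\BZ}$ restricts to a bijection $\Gamma_K \stackrel{\sim}{\rightarrow} \Gamma_{\sigma^{-1}(K)}$ sending $w\vpi_i^K \mapsto \sigma^{-1}(w)\vpi_{\sigma^{-1}(i)}^{\sigma^{-1}(K)}$ for $w \in W_K$ and $i \in K$; that is, $\sigma^{-1}$ realizes the Dynkin diagram isomorphism $K \to \sigma^{-1}(K)$ on chamber weights. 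Writing $K':=\sigma^{-1}(K)$ and recalling $\sigma(\bM)_\gamma = M_{\sigma^{-1}(\gamma)}$, this identifies $\sigma(\bM)_K$, via the above bijection, with the restriction $\bM_{K'}$.

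With this identification in hand, condition (a) follows formally. Since $\bM \in \bz_{\BZ}$ satisfies condition (a), we have $\bM_{K'} \in \bz_{K'}$. The edge inequalities \eqref{eq:edge} and tropical Pl\"ucker relations \eqref{eq:TPR} for $\sigma(\bM)_K$ at an index pair $(w,i)$ become, after the above substitution and using the $\sigma$-invariance $a_{ji}=a_{\sigma^{-1}(j),\sigma^{-1}(i)}$ of the Cartan matrix, exactly the corresponding relations for $\bM_{K'}$ at $(\sigma^{-1}(w),\sigma^{-1}(i))$, which hold because $\bM_{K'} \in \bz_{K'}$; no new inequalities need to be checked. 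Finally, since $\sigma^{-1}$ is a Coxeter-system isomorphism it sends $w_0^K$ to $w_0^{K'}$, so the normalization $\sigma(\bM)_{w_0^K\vpi_i^K}=(\bM_{K'})_{w_0^{K'}\vpi_{\sigma^{-1}(i)}^{K'}}=0$ follows as well. Hence $\sigma(\bM)_K \in \bz_K$. For condition (b), given $w \in W_{\BZ}$ and $i \in \BZ$, I would apply condition (b) for $\bM$ to the pair $(\sigma^{-1}(w),\sigma^{-1}(i))$, obtaining an interval $I_0$ with $\sigma^{-1}(i)\in I_0$, $\sigma^{-1}(w)\in W_{I_0}$, and $M_{\sigma^{-1}(w)\vpi_{\sigma^{-1}(i)}^{J_0}}=M_{\sigma^{-1}(w)\vpi_{\sigma^{-1}(i)}^{I_0}}$ for all intervals $J_0 \supset I_0$; then $I:=\sigma(I_0)$ satisfies $i\in I$, $w\in W_I$, and for any interval $J \supset I$ (so that $\sigma^{-1}(J)\supset I_0$) one computes $\sigma(\bM)_{w\vpi_i^J}=M_{\sigma^{-1}(w)\vpi_{\sigma^{-1}(i)}^{\sigma^{-1}(J)}}=\sigma(\bM)_{w\vpi_i^I}$, as required.

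The only place where genuine care is needed is the first step: confirming that $\sigma$ intertwines the Weyl-group action, the chamber-weight identification $\vpi_i^I=-w_0^I(\Lambda_{\omega_I(i)})$, and the restriction $\bM \mapsto \bM_K$ in a mutually compatible fashion, so that $\sigma(\bM)_K$ coincides with $\bM_{\sigma^{-1}(K)}$ under the diagram isomorphism. Once this bookkeeping is settled, both conditions follow at once from the $\sigma$-invariance of the Cartan matrix and from the corresponding properties of $\bM$, so I expect this identification—rather than any inequality—to be the main (and essentially only) obstacle.
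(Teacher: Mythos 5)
Your proposal is correct and follows essentially the same route as the paper's proof: both reduce conditions (a) and (b) for $\sigma(\bM)$ to the corresponding conditions for $\bM$ at the $\sigma^{-1}$-translated data, using \eqref{eq:sig-gam}, Remark~\ref{rem:sig-vpi}, and the $\sigma$-invariance of the Cartan matrix. Your packaging of the key step as the single identification $\sigma(\bM)_{K}\cong\bM_{\sigma^{-1}(K)}$ is a slightly cleaner organization of the same bookkeeping the paper carries out inline.
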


\begin{proof}
We prove that $\sigma(\bM) \in \bz_{\BZ}$; 
we can prove that $\sigma^{-1}(\bM) \in \bz_{\BZ}$ similarly. 
Write $\bM \in \bz_{\BZ}$ and $\sigma(\bM)$ as: 
$\bM=(M_{\gamma})_{\gamma \in \Gamma_{\BZ}}$ and 
$\sigma(\bM)=(M_{\gamma}')_{\gamma \in \Gamma_{\BZ}}$, 
respectively.
First we prove that $\sigma(\bM)=
(M_{\gamma}')_{\gamma \in \Gamma_{\BZ}}$ 
satisfies condition~(a) of Definition~\ref{dfn:BZdatum2}. 
Let $K$ be an interval in $\BZ$.
We need to show that $\sigma(\bM)_{K}=
(M_{\gamma}')_{\gamma \in \Gamma_{K}}$ satisfies 
condition~(1) of Definition~\ref{dfn:BZdatum} 
(with $I$ replaced by $K$). 
Fix $w \in W_{K}$, and $i \in K$. 
For simplicity of notation, 
we set $w_{1}:=\sigma^{-1}(w)$, 
$i_{1}:=\sigma^{-1}(i)$, and $K_{1}:=\sigma^{-1}(K)$; 
note that $w_{1} \in W_{K_{1}}$, and $i_{1} \in K_{1}$. 
Since $\bM=(M_{\gamma})_{\gamma \in \Gamma_{\BZ}} \in \bz_{\BZ}$, 
it follows from condition~(a) of Definition~\ref{dfn:BZdatum2} 
that $\bM_{K_{1}}=(M_{\gamma})_{\gamma \in \Gamma_{K_{1}}} 
\in \bz_{K_{1}}$. Hence we see from 
condition~(1) of Definition~\ref{dfn:BZdatum} that 
\begin{equation*}
M_{w_{1}\vpi_{i_{1}}^{K_{1}}}+
M_{w_{1}s_{i_{1}}\vpi_{i_{1}}^{K_{1}}}+
\sum_{j \in K_{1} \setminus \{i_{1}\}} 
a_{j,i_{1}} M_{w_{1}\vpi_{j}^{K_{1}}} \le 0.
\end{equation*}
Here, by the equality 
$a_{\sigma^{-1}(j),i_{1}}=a_{j,\sigma(i_{1})}$, 
\begin{equation*}
\sum_{j \in K_{1} \setminus \{i_{1}\}} 
a_{j, i_{1}} M_{w_{1}\vpi_{j}^{K_{1}}}=
\sum_{j \in K \setminus \{i\}} 
a_{\sigma^{-1}(j),i_{1}} M_{w_{1}\vpi_{\sigma^{-1}(j)}^{K_{1}}}=
\sum_{j \in K \setminus \{i\}} 
a_{ji} M_{w_{1}\vpi_{\sigma^{-1}(j)}^{K_{1}}}.
\end{equation*}
Also, we see from \eqref{eq:sig-gam} and 
Remark~\ref{rem:sig-vpi} that 
\begin{align*}
& M_{w\vpi_{i}^{K}}'=M_{\sigma^{-1}(w\vpi_{i}^{K})}=
  M_{w_{1}\vpi_{i_{1}}^{K_{1}}}, \\[1.5mm]
& M_{ws_{i}\vpi_{i}^{K}}'=
  M_{\sigma^{-1}(ws_{i}\vpi_{i}^{K})}=  
  M_{w_{1}s_{i_{1}}\vpi_{i_{1}}^{K_{1}}}, \\[1.5mm]
& M_{w\vpi_{j}^{K}}'=M_{\sigma^{-1}(w\vpi_{j}^{K})}=
  M_{w_{1}\vpi_{\sigma^{-1}(j)}^{K_{1}}} \quad 
\text{for $j \in K \setminus \{i\}$}.
\end{align*}
Combining these, we obtain 
\begin{equation*}
M_{w\vpi_{i}^{K}}'+
M_{ws_{i}\vpi_{i}^{K}}'+
\sum_{j \in K \setminus \{i\}} 
a_{ji} M_{w\vpi_{j}^{K}}' \le 0, 
\end{equation*}
as desired. Similarly, we can show that 
$\sigma(\bM)_{K}=
(M_{\gamma}')_{\gamma \in \Gamma_{K}}$ satisfies 
condition (2) of Definition~\ref{dfn:BZdatum} 
(with $I$ replaced by $K$); use the fact that 
if $i,\,j \in K$ and $w \in W_{K}$ are such that 
$a_{ij}=a_{ji}=-1$, and 
$ws_{i} > w$, $ws_{j} > w$, then 
$a_{i_{1},j_{1}}=a_{j_{1},i_{1}}=-1$, and 
$w_{1}s_{i_{1}} > w_{1}$, $w_{1}s_{j_{1}} > w_{1}$, 
where 
$i_{1}:=\sigma^{-1}(i),\,j_{1}:=\sigma^{-1}(j) \in 
K_{1}=\sigma^{-1}(K)$, and 
$w_{1}:=\sigma^{-1}(w) \in W_{K_{1}}$. 
It remains to show that 
$M_{w_{0}^{K}\vpi_{i}^{K}}'=0$ for all $i \in K$. 
Let $i \in K$, and set $i_{1}:=
\sigma^{-1}(i) \in K_{1}=\sigma^{-1}(K)$. 
Then, by \eqref{eq:sig-gam} and Remark~\ref{rem:sig-vpi}, 
we have
\begin{equation*}
M_{w_{0}^{K}\vpi_{i}^{K}}'=
M_{\sigma^{-1}(w_{0}^{K}\vpi_{i}^{K})}=
M_{w_{0}^{K_{1}}\vpi_{i_{1}}^{K_{1}}}, 
\end{equation*}
which is equal to zero 
since $\bM_{K_{1}} \in \bz_{K_{1}}$. 
This proves that $\sigma(\bM)_{K} \in \bz_{K}$, as desired.

Next we prove that $\sigma(\bM)=
(M_{\gamma}')_{\gamma \in \Gamma_{\BZ}}$ 
satisfies condition (b) of Definition~\ref{dfn:BZdatum2}. 
Fix $w \in W_{\BZ}$, and $i \in \BZ$. 
Take an interval $I$ in $\BZ$ such that 
$I_{1}:=\sigma^{-1}(I)$ is an element of 
$\Int(\bM; w_{1},i_{1})$, where 
$w_{1}:=\sigma^{-1}(w)$ and 
$i_{1}:=\sigma^{-1}(i)$. 
Let $J$ be an arbitrary interval in $\BZ$ 
containing $I$, and set $J_{1}:=\sigma^{-1}(J)$; 
note that $J_{1} \supset I_{1}$.
Then, we have 
\begin{align*}
M_{w\vpi_{i}^{J}}' & =
M_{\sigma^{-1}(w\vpi_{i}^{J})}=
M_{w_{1}\vpi_{i_{1}}^{J_{1}}} 
 \quad \text{by \eqref{eq:sig-gam} and Remark~\ref{rem:sig-vpi}} \\
& = M_{w_{1}\vpi_{i_{1}}^{I_{1}}} 
 \quad \text{since $I_{1} \in \Int(\bM; w_{1},i_{1})$ 
 and $J_{1} \supset I_{1}$} \\
& = M_{\sigma^{-1}(w\vpi_{i}^{I})}
 \quad \text{by \eqref{eq:sig-gam} and Remark~\ref{rem:sig-vpi}} \\
& = M_{w\vpi_{i}^{I}}'.
\end{align*}
This proves that $\sigma(\bM)=
(M_{\gamma}')_{\gamma \in \Gamma_{\BZ}}$ 
satisfies condition (b) of Definition~\ref{dfn:BZdatum2}, 
thereby completing the proof of the lemma.
\end{proof}
%
%%%%%%%%%%%%%%%%%%%%
%%% rem:sig-wlam %%%
%%%%%%%%%%%%%%%%%%%%
%
\begin{rem} \label{rem:sig-wlam}
Let $\bM=
(M_{\gamma})_{\gamma \in \Gamma_{\BZ}} \in \bz_{\BZ}$, and 
write $\sigma(\bM) \in \bz_{\BZ}$ as: $\sigma(\bM)=
(M_{\gamma}')_{\gamma \in \Gamma_{\BZ}}$. 
Fix $w \in W_{\BZ}$, and $i \in \BZ$. 
Set $w_{1}:=\sigma^{-1}(w)$, and $i_{1}:=\sigma^{-1}(i)$. 
We see from the proof of 
Lemma~\ref{lem:sigma-stable} that 
if we take an interval $I$ in $\BZ$ such that 
$I_{1}:=\sigma^{-1}(I)$ is an element of 
$\Int(\bM; w_{1},i_{1})$, then 
the interval $I$ is an element of 
$\Int(\sigma(\bM);w,i)$. 
Moreover, since 
$M_{w\vpi_{i}^{I}}'=
 M_{w_{1}\vpi_{i_{1}}^{I_{1}}}$,  
we have 
\begin{equation*}
M_{w\Lambda_{i}}'=
M_{w\vpi_{i}^{I}}'=
M_{w_{1}\vpi_{i_{1}}^{I_{1}}}=
M_{w_{1}\Lambda_{i_{1}}}=
M_{\sigma^{-1}(w\Lambda_{i})},
\end{equation*}
where $M_{w\Lambda_{i}}':=\Theta(\sigma(\bM))_{w\Lambda_{i}}$, and 
$M_{w_{1}\Lambda_{i_{1}}}:=\Theta(\bM)_{w_{1}\Lambda_{i_{1}}}$. 
\end{rem}

By Lemma~\ref{lem:sigma-stable}, 
we obtain maps $\sigma:\bz_{\BZ} \rightarrow \bz_{\BZ}$, 
$\bM \mapsto \sigma(\bM)$, and 
$\sigma^{-1}:\bz_{\BZ} \rightarrow \bz_{\BZ}$, 
$\bM \mapsto \sigma^{-1}(\bM)$; 
since both of the composite maps 
$\sigma \sigma^{-1}$ and 
$\sigma^{-1} \sigma$ 
are the identity map on $\bz_{\BZ}$, 
it follows that 
$\sigma:\bz_{\BZ} \rightarrow \bz_{\BZ}$ and 
$\sigma^{-1}:\bz_{\BZ} \rightarrow \bz_{\BZ}$ 
are bijective. 
%
%%%%%%%%%%%%%%%%%%
%%% lem:sig-ef %%%
%%%%%%%%%%%%%%%%%%
%
\begin{lem} \label{lem:sig-ef}
{\rm (1)} 
Let $\bM \in \bz_{\BZ}$, and $p \in \BZ$. 
Then, $\ve_{p}(\sigma(\bM))=\ve_{\sigma^{-1}(p)}(\bM)$. 

{\rm (2)}
There hold $\sigma \circ e_{p} = e_{\sigma(p)} \circ \sigma$ and 
$\sigma \circ f_{p} = f_{\sigma(p)} \circ \sigma$ on 
$\bz_{\BZ} \cup \{\bzero\}$ for all $p \in \BZ$. 
Here it is understood that $\sigma(\bzero):=\bzero$.
\end{lem}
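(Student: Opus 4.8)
The plan is to prove part (1) by a direct computation from the definition \eqref{eq:dfn-vep2} of $\ve_p$, and then to deduce the two intertwining relations of part (2) by comparing components for $f_p$ and by a short formal argument that reduces the $e_p$ case to the $f_p$ case.

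For part (1), I would write $\sigma(\bM)=(M_\gamma')_{\gamma\in\Gamma_\BZ}$, set $p_1:=\sigma^{-1}(p)$, and start from
\[
\ve_p(\sigma(\bM))=-\Bigl(\Theta(\sigma(\bM))_{\Lambda_p}+\Theta(\sigma(\bM))_{s_p\Lambda_p}+\sum_{q\in\BZ\setminus\{p\}}a_{qp}\,\Theta(\sigma(\bM))_{\Lambda_q}\Bigr).
\]
Remark~\ref{rem:sig-wlam} gives $\Theta(\sigma(\bM))_{w\Lambda_i}=\Theta(\bM)_{\sigma^{-1}(w)\Lambda_{\sigma^{-1}(i)}}$; applied with $(w,i)=(e,p),(s_p,p),(e,q)$ this rewrites the three terms as $\Theta(\bM)_{\Lambda_{p_1}}$, $\Theta(\bM)_{s_{p_1}\Lambda_{p_1}}$, and $\Theta(\bM)_{\Lambda_{\sigma^{-1}(q)}}$. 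Since $\sigma$ is a Dynkin diagram automorphism, $a_{qp}=a_{\sigma^{-1}(q),p_1}$, so the substitution $r=\sigma^{-1}(q)$ turns the sum into $\sum_{r\neq p_1}a_{r,p_1}\Theta(\bM)_{\Lambda_r}$; comparing the result with \eqref{eq:dfn-vep2} for $\ve_{p_1}(\bM)$ yields $\ve_p(\sigma(\bM))=\ve_{p_1}(\bM)=\ve_{\sigma^{-1}(p)}(\bM)$.

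For the relation $\sigma\circ f_p=f_{\sigma(p)}\circ\sigma$, I would fix $\bM=(M_\gamma)\in\bz_\BZ$ and $\gamma\in\Gamma_\BZ$, set $p':=\sigma(p)$ and $\gamma':=\sigma^{-1}(\gamma)$, and compare $(\sigma(f_p\bM))_\gamma=(f_p\bM)_{\gamma'}$ with $(f_{p'}\sigma(\bM))_\gamma$ via the formula \eqref{eq:AM2}. Four identities do the job: the base value $(\sigma(\bM))_\gamma=M_{\gamma'}$; the reflected value $(\sigma(\bM))_{s_{p'}\gamma}=M_{\sigma^{-1}(s_{p'}\gamma)}=M_{s_p\gamma'}$, using the compatibility $\sigma^{-1}(s_{p'}\delta)=s_p\sigma^{-1}(\delta)$ which follows from $\sigma(s_i)=s_{\sigma(i)}$ and $\sigma(\Lambda_i)=\Lambda_{\sigma(i)}$; the constant $c_{p'}(\sigma(\bM))=\Theta(\sigma(\bM))_{\Lambda_{p'}}-\Theta(\sigma(\bM))_{s_{p'}\Lambda_{p'}}-1=M_{\Lambda_p}-M_{s_p\Lambda_p}-1=c_p(\bM)$, again by Remark~\ref{rem:sig-wlam}; and the case distinction, since $\pair{h_{p'}}{\gamma}=\pair{\sigma(h_p)}{\gamma}=\pair{h_p}{\sigma^{-1}(\gamma)}=\pair{h_p}{\gamma'}$ by the $\sigma$-invariance of the pairing. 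With these, \eqref{eq:AM2} for $f_{p'}\sigma(\bM)$ at $\gamma$ becomes \eqref{eq:AM2} for $f_p\bM$ at $\gamma'$, giving the claim componentwise. The $e_p$ relation then needs almost no new work: by part (1), $\ve_{p'}(\sigma(\bM))=\ve_p(\bM)$, so $e_p\bM=\bzero$ iff $e_{p'}\sigma(\bM)=\bzero$, and both sides are $\bzero$ simultaneously; when $e_p\bM\neq\bzero$, put $\bN:=e_p\bM$, whence $f_p\bN=\bM$ by Lemma~\ref{lem:ef}\,(1), apply the $f$-relation to obtain $f_{p'}\sigma(\bN)=\sigma(\bM)$, and then apply $e_{p'}$ and use $e_{p'}f_{p'}=\mathrm{id}$ (Lemma~\ref{lem:ef}\,(1)) to conclude $\sigma(\bN)=e_{p'}\sigma(\bM)$, i.e. $\sigma(e_p\bM)=e_{\sigma(p)}\sigma(\bM)$.

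The only delicate point is the bookkeeping in the $f_p$ comparison: making sure the $W_\BZ$-action on chamber weights intertwines correctly with $\sigma$ (the identity $\sigma^{-1}(s_{p'}\delta)=s_p\sigma^{-1}(\delta)$) and that the $\Theta$-components entering $c_p$ and $\ve_p$ are indexed consistently through Remark~\ref{rem:sig-wlam}. Once the compatibility of $\sigma$ with the pairing, the simple reflections, and the Cartan matrix recorded in \S\ref{subsec:da} is in hand, there is no combinatorial difficulty, and the $e_p$ relation comes essentially for free from the $f_p$ relation together with the inverse relations of Lemma~\ref{lem:ef}\,(1).
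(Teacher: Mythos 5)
Your proposal is correct and follows essentially the same route as the paper: part (1) by direct substitution into \eqref{eq:dfn-vep2} via Remark~\ref{rem:sig-wlam}, the $f_{p}$ relation by a componentwise comparison of the two instances of \eqref{eq:AM2} (checking $c_{\sigma(p)}(\sigma(\bM))=c_{p}(\bM)$ and the pairing condition), and the $e_{p}$ relation deduced formally from the $f_{p}$ relation together with Lemma~\ref{lem:ef}\,(1). No gaps.
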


\begin{proof}
Part (1) follows immediately from \eqref{eq:dfn-vep2} 
by using Remark~\ref{rem:sig-wlam}. We will prove part (2). 
Let $\bM \in \bz_{\BZ}$, and $p \in \BZ$. 
First we show that 
$\sigma (f_{p}\bM) = f_{\sigma(p)}(\sigma(\bM))$, i.e., 
$\bigl(\sigma (f_{p}\bM)\bigr)_{\gamma} = 
 \bigl(f_{\sigma(p)}(\sigma(\bM))\bigr)_{\gamma}$
for all $\gamma \in \Gamma_{\BZ}$. 
We write $\bM$ and $\sigma(\bM)$ as: 
$\bM=(M_{\gamma})_{\gamma \in \Gamma_{\BZ}}$ and 
$\sigma(\bM)=(M_{\gamma}')_{\gamma \in \Gamma_{\BZ}}$, 
respectively. 
It follows from \eqref{eq:AM2} that 
%
%%%%%%%%%%%%%%%%%%
%%% eq:sig-ef1 %%%
%%%%%%%%%%%%%%%%%%
%
\begin{align}
& \bigl(\sigma (f_{p}\bM)\bigr)_{\gamma} =
  (f_{p}\bM)_{\sigma^{-1}(\gamma)} \nonumber \\[3mm]
& \qquad =
\begin{cases}
 \min \bigl(
   M_{\sigma^{-1}(\gamma)},\ 
   M_{s_{p}\sigma^{-1}(\gamma)}+c_{p}(\bM)
   \bigr)
   & \text{if $\pair{h_{p}}{\sigma^{-1}(\gamma)} > 0$}, \\[1.5mm]
 M_{\sigma^{-1}(\gamma)} & \text{otherwise},
\end{cases} \label{eq:sig-ef1}
\end{align}
where $c_{p}(\bM)=
 M_{\Lambda_{p}}-M_{s_{p}\Lambda_{p}}-1$ 
with $M_{\Lambda_{p}}:=\Theta(\bM)_{\Lambda_{p}}$ and 
$M_{s_{p}\Lambda_{p}}:=\Theta(\bM)_{s_{p}\Lambda_{p}}$.
Also, it follows from \eqref{eq:AM2} that
%
%%%%%%%%%%%%%%%%%
%%% eq:sig-f1 %%%
%%%%%%%%%%%%%%%%%
%
\begin{equation} \label{eq:sig-f1}
\bigl(f_{\sigma(p)}(\sigma(\bM))\bigr)_{\gamma} = 
\begin{cases}
 \min \bigl(
   M_{\gamma}',\ 
   M_{s_{\sigma(p)}\gamma}'+c_{\sigma(p)}(\sigma(\bM))
   \bigr)
   & \text{if $\pair{h_{\sigma(p)}}{\gamma} > 0$}, \\[1.5mm]
 M_{\gamma}' & \text{otherwise},
\end{cases}
\end{equation}
where $c_{\sigma(p)}(\sigma(\bM))=
 M_{\Lambda_{\sigma(p)}}'-M_{s_{\sigma(p)}\Lambda_{\sigma(p)}}'-1$ 
with $M_{\Lambda_{\sigma(p)}}':=
\Theta(\sigma(\bM))_{\Lambda_{\sigma(p)}}$ and 
$M_{s_{\sigma(p)}\Lambda_{\sigma(p)}}':=
\Theta(\sigma(\bM))_{s_{\sigma(p)}\Lambda_{\sigma(p)}}$.
Here we see from Remark~\ref{rem:sig-wlam} that 
\begin{equation*}
M_{\Lambda_{\sigma(p)}}'=
M_{\sigma^{-1}(\Lambda_{\sigma(p)})}=
M_{\Lambda_{p}}
\quad \text{and} \quad
M_{s_{\sigma(p)}\Lambda_{\sigma(p)}}'=
M_{\sigma^{-1}(s_{\sigma(p)}\Lambda_{\sigma(p)})}=
M_{s_{p}\Lambda_{p}},
\end{equation*}
and hence that $c_{\sigma(p)}(\sigma(\bM))=c_{p}(\bM)$. 
In addition, 
\begin{equation*}
M_{\gamma}'=M_{\sigma^{-1}(\gamma)}
\quad \text{and} \quad 
M_{s_{\sigma(p)}\gamma}'=
 M_{\sigma^{-1}(s_{\sigma(p)}\gamma)}=
 M_{s_{p}\sigma^{-1}(\gamma)}
\end{equation*}
by the definitions. Observe that 
$\pair{h_{\sigma(p)}}{\gamma}=
\pair{\sigma(h_{p})}{\gamma}=
\pair{h_{p}}{\sigma^{-1}(\gamma)}$, 
and hence that 
$\pair{h_{\sigma(p)}}{\gamma} > 0$ 
if and only if 
$\pair{h_{p}}{\sigma^{-1}(\gamma)} > 0$.
Substituting these into \eqref{eq:sig-f1}, 
we obtain 
\begin{align*}
\bigl(f_{\sigma(p)}(\sigma(\bM))\bigr)_{\gamma} 
& = 
\begin{cases}
 \min \bigl(
   M_{\sigma^{-1}(\gamma)},\ 
   M_{s_{p}\sigma^{-1}(\gamma)}+c_{p}(\bM)
   \bigr)
   & \text{if $\pair{h_{p}}{\sigma^{-1}(\gamma)} > 0$}, \\[1.5mm]
 M_{\sigma^{-1}(\gamma)} & \text{otherwise},
\end{cases} \\[3mm]
& = \bigl(\sigma (f_{p}\bM)\bigr)_{\gamma},
\end{align*}
as desired. 

Next we show that 
$\sigma (e_{p}\bM) = e_{\sigma(p)}(\sigma(\bM))$. 
If $e_{p}\bM=\bzero$, or equivalently, 
$\ve_{p}(\bM)=0$, then it follows from part (1) that 
$\ve_{\sigma(p)}(\sigma(\bM))=\ve_{p}(\bM)=0$, 
and hence $e_{\sigma(p)}(\sigma(\bM))=\bzero$, 
which implies that $\sigma (e_{p}\bM) = 
e_{\sigma(p)}(\sigma(\bM))=\bzero$. 
Assume, therefore, that $e_{p}\bM \ne \bzero$, or equivalently, 
$\ve_{p}(\bM) > 0$. Then, it follows from part (1) that 
$\ve_{\sigma(p)}(\sigma(\bM))=\ve_{p}(\bM) > 0$, 
and hence $e_{\sigma(p)}(\sigma(\bM)) \ne \bzero$. 
Consequently, we see from Lemma~\ref{lem:ef}\,(1) that 
$f_{\sigma(p)}e_{\sigma(p)}(\sigma(\bM))=\sigma(\bM)$. 
Also, 
\begin{align*}
f_{\sigma(p)}(\sigma (e_{p}\bM))
 & = \sigma(f_{p}e_{p}\bM) \quad 
   \text{since $f_{\sigma(p)} \circ \sigma= \sigma \circ f_{p}$} \\
 & =\sigma(\bM) \quad \text{by Lemma~\ref{lem:ef}\,(1)}.
\end{align*}
Thus, we have 
$f_{\sigma(p)}e_{\sigma(p)}(\sigma(\bM))=\sigma(\bM)=
 f_{\sigma(p)}(\sigma (e_{p}\bM))$. 
Applying $e_{\sigma(p)}$ to both sides of this equation, 
we obtain $e_{\sigma(p)}(\sigma(\bM))=\sigma (e_{p}\bM)$ 
by Lemma~\ref{lem:ef}\,(1), 
as desired. This completes the proof of the lemma. 
\end{proof}

%==============================%
%     START SUBSECTION 0403    %
%==============================%
%
\subsection{
 BZ data of type $A_{\ell}^{(1)}$ 
 and a crystal structure on them.}
\label{subsec:cry-aff}
%
%%%%%%%%%%%%%%%%
%%% dfn:sigM %%%
%%%%%%%%%%%%%%%%
%
\begin{dfn} \label{dfn:sigM}
A BZ datum of type $A_{\ell}^{(1)}$ is 
a BZ datum $\bM=(M_{\gamma})_{\gamma \in \Gamma_{\BZ}} 
\in \bz_{\BZ}$ of type $A_{\infty}$ such that 
$\sigma(\bM)=\bM$, or equivalently, 
$M_{\sigma^{-1}(\gamma)}=M_{\gamma}$ 
for all $\gamma \in \Gamma_{\BZ}$. 
\end{dfn}
%
%%%%%%%%%%%%%%%%%%%%%
%%% rem:sig-wlam2 %%%
%%%%%%%%%%%%%%%%%%%%%
%
\begin{rem} \label{rem:sig-wlam2}
Keep the notation of Remark~\ref{rem:sig-wlam}. 
In addition, we assume that $\sigma(\bM)=\bM$. 
Because $I \in \Int (\sigma(\bM);w,i)=\Int (\bM;w,i)$ and 
$M_{w\vpi_{i}^{I}}'=M_{w\vpi_{i}^{I}}$ 
by the assumption that $\sigma(\bM)=\bM$, 
it follows that $M_{w\Lambda_{i}}'=
M_{w\vpi_{i}^{I}}'=M_{w\vpi_{i}^{I}}=M_{w\Lambda_{i}}$. 
Since $M_{w\Lambda_{i}}'=M_{\sigma^{-1}(w\Lambda_{i})}$ 
as shown in Remark~\ref{rem:sig-wlam}, 
we obtain $M_{\sigma^{-1}(w\Lambda_{i})}=M_{w\Lambda_{i}}$.
\end{rem}

Denote by $\bz_{\BZ}^{\sigma}$ 
the set of all BZ data of type $A_{\ell}^{(1)}$; that is,
\begin{equation}
\bz_{\BZ}^{\sigma}:=
 \bigl\{\bM \in \bz_{\BZ} \mid \sigma(\bM)=\bM \bigr\}.
\end{equation}
Let us define a crystal structure for $U_{q}(\ha{\Fg}^{\vee})$ 
on the set $\bz_{\BZ}^{\sigma}$ 
(see Proposition~\ref{prop:crystal} below). 

For $\bM \in \bz_{\BZ}^{\sigma}$, we set 
%
%%%%%%%%%%%%%%%%%
%%% eq:def-wt %%%
%%%%%%%%%%%%%%%%%
%
\begin{equation} \label{eq:def-wt}
\wt(\bM):=\sum_{i \in \ha{I}} M_{\Lambda_{i}} \ha{h}_{i},
\end{equation}
where $M_{\Lambda_{i}}:=\Theta(\bM)_{\Lambda_{i}}$ 
for $i \in \BZ$. 

In what follows, we need the following notation. 
Let $L$ be a finite subset of $\BZ$ such that 
$|q-q'| \ge 2$ for all $q,\,q' \in L$ 
with $q \ne q'$. Then, it follows from 
Lemma~\ref{lem:ef}\,(3) that 
$f_{q}f_{q'}=f_{q'}f_{q}$ and 
$e_{q}e_{q'}=e_{q'}e_{q}$ for all $q,\,q' \in L$. 
Hence we can define the following operator on 
$\bz_{\BZ} \cup \{\bzero\}$: 
\begin{equation*}
f_{L}:=\prod_{q \in L}f_{q} \quad \text{and} \quad
e_{L}:=\prod_{q \in L}e_{q}. 
\end{equation*}
For $\bM \in \bz_{\BZ}^{\sigma}$ and $p \in \BZ$, 
we define $\ha{f}_{p}\bM=
(M_{\gamma}')_{\gamma \in \Gamma_{\BZ}}$ by 
%
%%%%%%%%%%%%%%%%%%
%%% eq:def-haf %%%
%%%%%%%%%%%%%%%%%%
%
\begin{equation} \label{eq:def-haf}
(\ha{f}_{p}\bM)_{\gamma}=M_{\gamma}':= 
(f_{L(\gamma,p)}\bM)_{\gamma}
\quad \text{for $\gamma \in \Gamma_{\BZ}$}, 
\end{equation}
where we set
\begin{equation*}
L(\gamma,p):=\bigl\{ q \in p+(\ell+1)\BZ \mid 
 \pair{h_{q}}{\gamma} > 0 \bigr\}
\end{equation*}
for $\gamma \in \Gamma_{\BZ}$ and $p \in \ha{I}$; 
note that $L(\gamma,p)$ is a finite subset of $p+(\ell+1)\BZ$. 
It is obvious that 
if $p \in \BZ$ and $q \in \BZ$ are congruent 
modulo $\ell+1$, then 
%
%%%%%%%%%%%%%%%%
%%% eq:fp-fq %%%
%%%%%%%%%%%%%%%%
%
\begin{equation} \label{eq:fp-fq}
\ha{f}_{p}\bM=\ha{f}_{q}\bM 
\quad \text{for all $\bM \in \bz_{\BZ}^{\sigma}$}.
\end{equation} 
%
%%%%%%%%%%%%%%%
%%% rem:haf %%%
%%%%%%%%%%%%%%%
%
\begin{rem} \label{rem:haf}
Let $\bM \in \bz_{\BZ}^{\sigma}$, and $p \in \BZ$. 
For each $\gamma \in \Gamma_{\BZ}$, take an arbitrary 
finite subset $L$ of $p + (\ell+1)\BZ$ containing 
$L(\gamma,\,p)$. Then we have 
%
%%%%%%%%%%%%%%
%%% eq:haf %%%
%%%%%%%%%%%%%%
%
\begin{equation} \label{eq:haf}
(f_{L}\bM)_{\gamma}=
(f_{L(\gamma,p)}\bM)_{\gamma}=
(\ha{f}_{p}\bM)_{\gamma}.
\end{equation}
Indeed, we have 
$(f_{L}\bM)_{\gamma}=
(f_{L(\gamma,p)}f_{L \setminus L(\gamma,p)} \bM)_{\gamma}$. 
Since $\pair{h_{q}}{\gamma} \le 0$ for all 
$q \in L \setminus L(\gamma,p)$ by the definition of $L(\gamma,p)$, 
we deduce, using \eqref{eq:AM2} repeatedly, that 
$(f_{L(\gamma,p)}f_{L \setminus L(\gamma,p)} \bM)_{\gamma}=
 (f_{L(\gamma,p)}\bM)_{\gamma}$. 
\end{rem}
%
%%%%%%%%%%%%%%%%%%%%%%%
%%% prop:haf-stable %%%
%%%%%%%%%%%%%%%%%%%%%%%
%
\begin{prop} \label{prop:haf-stable}
Let $\bM \in \bz_{\BZ}^{\sigma}$, and $p \in \BZ$. 
Then, $\ha{f}_{p}\bM$ is an element of $\bz_{\BZ}^{\sigma}$. 
\end{prop}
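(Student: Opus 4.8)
The plan is to verify the two defining properties of $\bz_{\BZ}^{\sigma}$ in turn: first that $\ha{f}_{p}\bM \in \bz_{\BZ}$, i.e. that $\ha{f}_{p}\bM$ satisfies conditions (a) and (b) of Definition~\ref{dfn:BZdatum2}, and then that $\sigma(\ha{f}_{p}\bM)=\ha{f}_{p}\bM$. The unifying device is that, although $\ha{f}_{p}$ is built from the infinite index set $p+(\ell+1)\BZ$, on any finite collection of components it agrees with a genuine finite composite $f_{L}:=\prod_{q\in L}f_{q}$ for a finite subset $L\subset p+(\ell+1)\BZ$. By Lemma~\ref{lem:ef}\,(3) such an $f_{L}$ is well defined independently of the order of composition, and by Proposition~\ref{prop:fj} it maps $\bz_{\BZ}$ into itself. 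Concretely, Remark~\ref{rem:haf} gives $(\ha{f}_{p}\bM)_{\gamma}=(f_{L}\bM)_{\gamma}$ whenever $L\supseteq L(\gamma,p)$, so every local computation reduces to one for the honest element $f_{L}\bM\in\bz_{\BZ}$.

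For condition (a), fix an interval $K$. Since $\Gamma_{K}$ is finite, I would choose a finite $L\subset p+(\ell+1)\BZ$ with $L\supseteq\bigcup_{\gamma\in\Gamma_{K}}L(\gamma,p)$; then $(\ha{f}_{p}\bM)_{\gamma}=(f_{L}\bM)_{\gamma}$ for all $\gamma\in\Gamma_{K}$, i.e. $(\ha{f}_{p}\bM)_{K}=(f_{L}\bM)_{K}$, which lies in $\bz_{K}$ because $f_{L}\bM\in\bz_{\BZ}$. For condition (b), fix $w\in W_{\BZ}$, $i\in\BZ$, and a finite interval $I_{1}$ with $w\in W_{I_{1}}$, $i\in I_{1}$. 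The heart of the matter is the stabilization of $L(w\vpi_{i}^{J},p)$ as $J$ grows. For $J\supseteq I_{1}$ the element $w$ fixes $\Lambda_{(\min J)-1}$ and $\Lambda_{(\max J)+1}$ (these indices lie outside $I_{1}$), so by \eqref{eq:pair-vpi} one has $w\vpi_{i}^{J}=w\Lambda_{i}-\Lambda_{(\min J)-1}-\Lambda_{(\max J)+1}$, whence $\pair{h_{q}}{w\vpi_{i}^{J}}=\pair{h_{q}}{w\Lambda_{i}}-\delta_{q,(\min J)-1}-\delta_{q,(\max J)+1}$. Since $w\Lambda_{i}$ has a fixed finite support, for all large $J$ the two boundary indices lie outside that support, where they contribute only $-1$ and hence never enter $L(w\vpi_{i}^{J},p)$; thus $L(w\vpi_{i}^{J},p)=\{q\in p+(\ell+1)\BZ\mid \pair{h_{q}}{w\Lambda_{i}}>0\}=:L_{0}$, independently of $J$. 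Then $(\ha{f}_{p}\bM)_{w\vpi_{i}^{J}}=(f_{L_{0}}\bM)_{w\vpi_{i}^{J}}$, and since $f_{L_{0}}\bM\in\bz_{\BZ}$ already satisfies condition (b), choosing $I$ large enough to realize both the $L_{0}$-stabilization and a stabilization interval in $\Int(f_{L_{0}}\bM;w,i)$ produces the desired $I$.

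For $\sigma$-invariance I would show $(\ha{f}_{p}\bM)_{\gamma}=(\ha{f}_{p}\bM)_{\sigma^{-1}(\gamma)}$ for every $\gamma\in\Gamma_{\BZ}$, which is exactly $\sigma(\ha{f}_{p}\bM)=\ha{f}_{p}\bM$. Using $\pair{h_{q}}{\sigma^{-1}(\gamma)}=\pair{h_{\sigma(q)}}{\gamma}$ one checks $L(\gamma,p)=\sigma\bigl(L(\sigma^{-1}(\gamma),p)\bigr)$. Writing $L':=L(\sigma^{-1}(\gamma),p)$ and applying Lemma~\ref{lem:sig-ef}\,(2) repeatedly together with $\sigma(\bM)=\bM$ (Definition~\ref{dfn:sigM}), one gets $\sigma(f_{L'}\bM)=f_{\sigma(L')}\bM=f_{L(\gamma,p)}\bM$; therefore, by \eqref{eq:def-haf}, $(\ha{f}_{p}\bM)_{\gamma}=(f_{L(\gamma,p)}\bM)_{\gamma}=\bigl(\sigma(f_{L'}\bM)\bigr)_{\gamma}=(f_{L'}\bM)_{\sigma^{-1}(\gamma)}=(\ha{f}_{p}\bM)_{\sigma^{-1}(\gamma)}$, as required.

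I expect the main obstacle to be the stabilization step in condition (b): carrying out the computation of $\pair{h_{q}}{w\vpi_{i}^{J}}$ carefully enough to confirm that the two boundary indices $(\min J)-1$ and $(\max J)+1$ never contribute to $L(w\vpi_{i}^{J},p)$, so that the infinite operator $\ha{f}_{p}$ may be replaced by a single finite operator $f_{L_{0}}$ uniformly for all large $J$. The remaining verifications are then routine reductions to the finite-type facts in Section~\ref{sec:BZdatum}.
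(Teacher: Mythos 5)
Your proof is correct and follows essentially the same route as the paper: reduce $\ha{f}_{p}$ to a finite composite $f_{L}$ via Remark~\ref{rem:haf}, verify condition (a) through Proposition~\ref{prop:fj} and Lemma~\ref{lem:res1}, verify condition (b) with a single finite set that works uniformly for all large intervals, and obtain $\sigma$-invariance from $\sigma(L(\sigma^{-1}(\gamma),p))=L(\gamma,p)$ and Lemma~\ref{lem:sig-ef}\,(2). The only cosmetic difference is the choice of the uniform set in step (b): the paper takes $L=\bigl\{q\in p+(\ell+1)\BZ\mid w^{-1}h_{q}\ne h_{q}\bigr\}$ (adjoining $i$ if necessary) and shows it contains every $L(w\vpi_{i}^{I},p)$, whereas you show that $L(w\vpi_{i}^{J},p)$ literally stabilizes to $\bigl\{q\in p+(\ell+1)\BZ\mid \pair{h_{q}}{w\Lambda_{i}}>0\bigr\}$ for all sufficiently large $J$ --- both are valid.
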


By this proposition, for each $p \in \BZ$, 
we obtain a map $\ha{f}_{p}$ from $\bz_{\BZ}^{\sigma}$ 
to itself sending $\bM \in \bz_{\BZ}$ to 
$\ha{f}_{p}\bM \in \bz_{\BZ}$, 
which we call the lowering Kashiwara operator 
on $\bz_{\BZ}^{\sigma}$. 
By convention, we set $\ha{f}_{p}\bzero:=\bzero$ for all $p \in \BZ$. 

\begin{proof}[Proof of Proposition~\ref{prop:haf-stable}]
First we show that $\ha{f}_{p}\bM$ satisfies 
condition (a) of Definition~\ref{dfn:BZdatum2}. 
Let $K$ be an interval in $\BZ$. 
Take a finite subset $L$ of $p+(\ell+1)\BZ$ such that 
$L \supset L(\gamma,\,p)$ for all $\gamma \in \Gamma_{K}$. 
Then, we see from Remark~\ref{rem:haf} that 
$(\ha{f}_{p}\bM)_{\gamma} = (f_{L}\bM)_{\gamma}$ 
for all $\gamma \in \Gamma_{K}$, and hence that 
$(\ha{f}_{p}\bM)_{K} = (f_{L}\bM)_{K}$. 
Since $f_{L}\bM \in \bz_{\BZ}$ by 
Proposition~\ref{prop:fj}, 
it follows from condition (a) of 
Definition~\ref{dfn:BZdatum2} that 
$(f_{L}\bM)_{K} \in \bz_{K}$, and hence 
$(\ha{f}_{p}\bM)_{K} \in \bz_{K}$. 

Next we show that $\ha{f}_{p}\bM$ satisfies 
condition (b) of Definition~\ref{dfn:BZdatum2}.
Fix $w \in W_{\BZ}$ and $i \in \BZ$. We set 
%
%%%%%%%%%%%%
%%% eq:L %%%
%%%%%%%%%%%%
%
\begin{equation} \label{eq:L}
L:=
\begin{cases}
\bigl\{q \in p+(\ell+1)\BZ \mid w^{-1} h_{q} \ne h_{q}\bigr\}
 & \text{if $i \notin p+(\ell+1)\BZ$}, \\[3mm]
\bigl\{q \in p+(\ell+1)\BZ \mid w^{-1} h_{q} \ne h_{q}\bigr\} \cup \{i\}
 & \text{otherwise.}
\end{cases}
\end{equation}
It is easily checked that 
$L$ is a finite subset of $p+(\ell+1)\BZ$. 
Furthermore, we can verify that 
$L \supset L(w\vpi_{i}^{I},p)$ for all intervals 
$I$ in $\BZ$ such that $w \in W_{I}$ and $i \in I$.
Indeed, suppose that $q \in p+(\ell+1)\BZ$ is not contained in $L$;
note that $q \ne i$ and $w^{-1} h_{q} = h_{q}$. We see that 
\begin{equation*}
\pair{h_{q}}{w\vpi_{i}^{I}} = 
\pair{w^{-1}h_{q}}{\vpi_{i}^{I}}=
\pair{h_{q}}{\vpi_{i}^{I}}, 
\end{equation*}
and that $\pair{h_{q}}{\vpi_{i}^{I}} \le 0$ 
by \eqref{eq:pair-vpi} since $q \ne i$. 
This implies that $q$ is not contained in 
$L(w\vpi_{i}^{I},p)$.

Now, let us take $I \in \Int (f_{L}\bM; w,i)$, 
and let $J$ be an arbitrary interval in $\BZ$ containing $I$. 
We claim that 
$(\ha{f}_{p}\bM)_{w\vpi_{i}^{J}}=
 (\ha{f}_{p}\bM)_{w\vpi_{i}^{I}}$. 
Since $I \in \Int (f_{L}\bM; w,i)$, 
it follows that 
$(f_{L}\bM)_{w\vpi_{i}^{J}}=
(f_{L}\bM)_{w\vpi_{i}^{I}}$. 
Also, because $L \supset L(w\vpi_{i}^{J},p)$ and 
$L \supset L(w\vpi_{i}^{I},p)$ as seen above, 
we see from Remark~\ref{rem:haf} that 
$(\ha{f}_{p}\bM)_{w\vpi_{i}^{J}}=
(f_{L}\bM)_{w\vpi_{i}^{J}}$ and 
$(\ha{f}_{p}\bM)_{w\vpi_{i}^{I}}=
(f_{L}\bM)_{w\vpi_{i}^{I}}$.
Combining these, we obtain 
$(\ha{f}_{p}\bM)_{w\vpi_{i}^{J}}=
(f_{L}\bM)_{w\vpi_{i}^{J}} = 
(f_{L}\bM)_{w\vpi_{i}^{I}} = 
(\ha{f}_{p}\bM)_{w\vpi_{i}^{I}}$, 
as desired. 
Thus, we have shown that $\ha{f}_{p}\bM$ satisfies 
condition (b) of Definition~\ref{dfn:BZdatum2}, 
and hence $\ha{f}_{p}\bM \in \bz_{\BZ}$. 

Finally, we show that 
$\sigma(\ha{f}_{p}\bM)=\ha{f}_{p}\bM$, or equivalently, 
$(\ha{f}_{p}\bM)_{\sigma^{-1}(\gamma)}=
 (\ha{f}_{p}\bM)_{\gamma}$ for all $\gamma \in \Gamma_{\BZ}$.
Fix $\gamma \in \Gamma_{\BZ}$. 
Observe that 
$\sigma(L(\sigma^{-1}(\gamma),p))=L(\gamma,p)$
since $\pair{h_{\sigma(q)}}{\gamma}=
\pair{\sigma(h_{q})}{\gamma}=
\pair{h_{q}}{\sigma^{-1}(\gamma)}$. 
Therefore, we have 
\begin{align*}
(\ha{f}_{p}\bM)_{\sigma^{-1}(\gamma)} 
& = (f_{L(\sigma^{-1}(\gamma),p)}\bM)_{\sigma^{-1}(\gamma)}
  = \bigl(\sigma(f_{L(\sigma^{-1}(\gamma),p)}\bM)\bigr)_{\gamma} \\
& = \bigl(f_{\sigma(L(\sigma^{-1}(\gamma),p))}\sigma(\bM)\bigr)_{\gamma} 
    \quad \text{by Lemma~\ref{lem:sig-ef}\,(2)} \\
& = \bigl(f_{\sigma(L(\sigma^{-1}(\gamma),p))}\bM\bigr)_{\gamma} 
    \quad \text{by the assumption that $\sigma(\bM)=\bM$} \\
& = \bigl(f_{L(\gamma,p)}\bM\bigr)_{\gamma}
  \quad \text{since $\sigma(L(\sigma^{-1}(\gamma),p))=L(\gamma,p)$} \\
& = (\ha{f}_{p}\bM)_{\gamma},
\end{align*}
as desired. This completes the proof of the proposition. 
\end{proof}

Now, for $\bM \in \bz_{\BZ}^{\sigma}$
and $p \in \BZ$, we set
%
%%%%%%%%%%%%%%%%%%%
%%% eq:def-have %%%
%%%%%%%%%%%%%%%%%%%
%
\begin{equation} \label{eq:def-have}
\ha{\ve}_{p}(\bM):= - \left(
     M_{\Lambda_{p}}+M_{s_{p}\Lambda_{p}}
     +\sum_{q \in \BZ \setminus \{p\}} a_{qp} M_{\Lambda_{q}}
    \right)=\ve_{p}(\bM), 
\end{equation}
where $M_{\Lambda_{i}}:=\Theta(\bM)_{\Lambda_{i}}$ 
for $i \in \BZ$, and $M_{s_{p}\Lambda_{p}}:=
\Theta(\bM)_{s_{p}\Lambda_{p}}$. 
It follows from \eqref{eq:vej} that 
$\ha{\ve}_{p}(\bM)=\ve_{p}(\bM)$ is 
a nonnegative integer. 
Also, using Lemma~\ref{lem:sig-ef}\,(1) repeatedly, 
we can easily verify that 
if $p \in \BZ$ and $q \in \BZ$ are congruent modulo $\ell+1$, 
then 
%
%%%%%%%%%%%%%%%
%%% eq:have %%%
%%%%%%%%%%%%%%%
%
\begin{equation} \label{eq:have}
\ha{\ve}_{p}(\bM)=\ve_{p}(\bM)=\ve_{q}(\bM)=\ha{\ve}_{q}(\bM)
\quad \text{for all $\bM \in \bz_{\BZ}^{\sigma}$}. 
\end{equation}
%
%%%%%%%%%%%%%%%%%%%%
%%% lem:def-hae1 %%%
%%%%%%%%%%%%%%%%%%%%
%
\begin{lem} \label{lem:def-hae1}
Let $\bM \in \bz_{\BZ}^{\sigma}$, and $p \in \BZ$. 
Suppose that $\ha{\ve}_{p}(\bM) > 0$. Then, 
$e_{L}\bM \ne \bzero$ for every finite subset $L$ of $p+(\ell+1)\BZ$. 
\end{lem}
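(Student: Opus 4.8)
The plan is to induct on the cardinality $a:=|L|$, peeling off one index at a time and tracking the value of $\ve$ at the remaining indices by means of Lemma~\ref{lem:ef}. The crucial structural input is that, since $\ell \ge 2$, any two distinct elements $q,\,q'$ of $L \subseteq p+(\ell+1)\BZ$ satisfy $|q-q'| \ge \ell+1 \ge 3 \ge 2$; hence by Lemma~\ref{lem:ef}\,(3) the operators $e_q$ ($q \in L$) commute, so $e_{L}$ is independent of the order of its factors and any single factor may be peeled off. Moreover, every $q \in L$ is congruent to $p$ modulo $\ell+1$, so \eqref{eq:have} together with \eqref{eq:def-have} gives $\ve_q(\bM)=\ve_p(\bM)=\ha{\ve}_p(\bM)>0$ for all $q \in L$.

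The base case $a=1$, say $L=\{q_1\}$, is immediate: since $\ve_{q_1}(\bM)=\ve_p(\bM)>0$, the construction of $e_{q_1}$ (preceding Proposition~\ref{prop:ej}) gives $e_{q_1}\bM \ne \bzero$. For the inductive step, I assume the assertion for all subsets of $p+(\ell+1)\BZ$ of cardinality less than $a$, and write $L=L' \sqcup \{q_a\}$ with $|L'|=a-1$. By commutativity $e_{L}\bM=e_{q_a}(e_{L'}\bM)$, and $e_{L'}\bM \ne \bzero$ by the induction hypothesis; it therefore suffices to show $\ve_{q_a}(e_{L'}\bM)>0$. I order $L'=\{q_1,\dots,q_{a-1}\}$ and set $\bM_0:=\bM$ and $\bM_k:=e_{q_k}\bM_{k-1}$. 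By commutativity each $\bM_k=e_{\{q_1,\dots,q_k\}}\bM$ is an $e_{S}$-image with $|S|=k<a$, hence is nonzero by the induction hypothesis. Since $|q_k-q_a|\ge 2$ and $\bM_k=e_{q_k}\bM_{k-1}\ne\bzero$, Lemma~\ref{lem:ef}\,(2) yields $\ve_{q_a}(\bM_k)=\ve_{q_a}(\bM_{k-1})$ for each $k$; chaining these equalities gives $\ve_{q_a}(e_{L'}\bM)=\ve_{q_a}(\bM_{a-1})=\ve_{q_a}(\bM_0)=\ve_{q_a}(\bM)=\ve_p(\bM)>0$. Consequently $e_{q_a}(e_{L'}\bM)\ne\bzero$, i.e. $e_{L}\bM\ne\bzero$, which completes the induction.

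The one point that needs care — and the main (if mild) obstacle — is the invariance $\ve_{q_a}(e_{L'}\bM)=\ve_{q_a}(\bM)$: Lemma~\ref{lem:ef}\,(2) only transports $\ve_{q_a}$ across a single operator $e_{q_k}$ when the intermediate datum $\bM_k$ is nonzero, so one must know in advance that every partial product $e_{\{q_1,\dots,q_k\}}\bM$ survives. This is precisely what the induction hypothesis supplies, applied to the proper subsets of $L$; this is why strong induction on $|L|$ (uniformly over all subsets, for the fixed $\bM$) is the natural framing rather than a one-index-at-a-time argument that presumes nonvanishing of the intermediate stages.
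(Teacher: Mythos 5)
Your proof is correct and follows essentially the same route as the paper's: induction on $|L|$, using \eqref{eq:have} for the base case and Lemma~\ref{lem:ef}\,(2) to propagate $\ve_{q}$ across the operators already applied. The only (cosmetic) difference is that the paper carries the invariance $\ve_{q}(e_{L}\bM)=\ha{\ve}_{p}(\bM)>0$ for $q \notin L$ as part of a strengthened induction hypothesis, whereas you re-derive it inside the inductive step by chaining across the partial products $e_{\{q_{1},\dots,q_{k}\}}\bM$, whose nonvanishing your strong induction hypothesis supplies.
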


\begin{proof}
We show by induction on the cardinality $|L|$ of $L$ that 
$e_{L}\bM \ne \bzero$, and 
$\ve_{q}(e_{L}\bM)=\ha{\ve}_{p}(\bM) > 0$ 
for all $q \in p+(\ell+1)\BZ$ with $q \notin L$. 
Assume first that $|L|=1$. Then, 
$L=\{q'\}$ for some $q' \in p+(\ell+1)\BZ$, 
and $e_{L}=e_{q'}$. 
It follows from \eqref{eq:have}
that $\ve_{q'}(\bM)=\ha{\ve}_{p}(\bM) > 0$, 
which implies that $e_{q'}\bM \ne \bzero$. 
Also, for $q \in p+(\ell+1)\BZ$ with $q \ne q'$, 
it follows from Lemma~\ref{lem:ef}\,(2) and 
\eqref{eq:have} that 
$\ve_{q}(e_{q'}\bM)=\ve_{q}(\bM)=\ha{\ve}_{p}(\bM)$. 

Assume next that $|L| > 1$. 
Take an arbitrary $q' \in L$, and set $L':=L \setminus \{q'\}$. 
Then, by the induction hypothesis, 
we have $e_{L'}\bM \ne \bzero$, and 
$\ve_{q'}(e_{L'}\bM)=\ha{\ve}_{p}(\bM) > 0$; 
note that $q' \notin L'$. 
This implies that $e_{L}\bM=e_{q'}(e_{L'}\bM) \ne \bzero$. 
Also, for $q \in p+(\ell+1)\BZ$ with $q \notin L$, 
we see from Lemma~\ref{lem:ef}\,(2) and 
the induction hypothesis that 
$\ve_{q}(e_{L}\bM)=\ve_{q}(e_{q'}e_{L'}\bM)=
\ve_{q}(e_{L'}\bM)=\ha{\ve}_{p}(\bM)$. 
This proves the lemma. 
\end{proof}

For $\bM \in \bz_{\BZ}^{\sigma}$ and $p \in \BZ$, 
we define $\ha{e}_{p}\bM$ as follows.
If $\ha{\ve}_{p}(\bM)=0$, then 
we set $\ha{e}_{p}\bM:=\bzero$. 
If $\ha{\ve}_{p}(\bM) > 0$, then 
we define $\ha{e}_{p}\bM=
(M_{\gamma}')_{\gamma \in \Gamma_{\BZ}}$ by 
%
%%%%%%%%%%%%%%%%%%
%%% eq:def-hae %%%
%%%%%%%%%%%%%%%%%%
%
\begin{equation} \label{eq:def-hae}
(\ha{e}_{p}\bM)_{\gamma}=M_{\gamma}':= 
(e_{L(\gamma,p)}\bM)_{\gamma}
\quad \text{for each $\gamma \in \Gamma_{\BZ}$};
\end{equation}
note that $e_{L(\gamma,p)}\bM \ne \bzero$ 
by Lemma~\ref{lem:def-hae1}. 
It is easily seen by \eqref{eq:have} that 
if $p \in \BZ$ and $q \in \BZ$ are congruent 
modulo $\ell+1$, then 
%
%%%%%%%%%%%%%%%%
%%% eq:ep-eq %%%
%%%%%%%%%%%%%%%%
%
\begin{equation} \label{eq:ep-eq}
\ha{e}_{p}\bM=\ha{e}_{q}\bM
\quad \text{for all $\bM \in \bz_{\BZ}^{\sigma}$}.
\end{equation}
%
%%%%%%%%%%%%%%%
%%% rem:hae %%%
%%%%%%%%%%%%%%%
%
\begin{rem} \label{rem:hae}
Let $\bM \in \bz_{\BZ}^{\sigma}$, and $p \in \BZ$. 
Assume that $\ha{\ve}_{p}(\bM) > 0$, or equivalently, 
$\ha{e}_{p}\bM \ne \bzero$. 
For each $\gamma \in \Gamma_{\BZ}$, take an arbitrary 
finite subset $L$ of $p + (\ell+1)\BZ$ containing $L(\gamma,\,p)$. 
Then we see by Lemma~\ref{lem:def-hae1} that 
$e_{L}\bM \ne \bzero$. Moreover, 
by the same argument as for \eqref{eq:haf} 
(using \eqref{eq:ep-gamma} instead of \eqref{eq:AM2}), 
we derive
%
%%%%%%%%%%%%%%
%%% eq:hae %%%
%%%%%%%%%%%%%%
%
\begin{equation} \label{eq:hae}
(e_{L}\bM)_{\gamma}=
(e_{L(\gamma,p)}\bM)_{\gamma}=
(\ha{e}_{p}\bM)_{\gamma}.
\end{equation}
\end{rem}
%
%%%%%%%%%%%%%%%%%%%%%%%
%%% prop:hae-stable %%%
%%%%%%%%%%%%%%%%%%%%%%%
%
\begin{prop} \label{prop:hae-stable}
Let $\bM \in \bz_{\BZ}^{\sigma}$, and $p \in \BZ$. 
Then, $\ha{e}_{p}\bM$ is contained in 
$\bz_{\BZ}^{\sigma} \cup \{\bzero\}$. 
\end{prop}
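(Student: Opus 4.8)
The plan is to follow the proof of Proposition~\ref{prop:haf-stable} almost verbatim, replacing the lowering operators by raising operators throughout. First I would dispose of the trivial case: if $\ha{\ve}_{p}(\bM)=0$ then $\ha{e}_{p}\bM=\bzero$ by definition, and there is nothing to prove; so I assume $\ha{\ve}_{p}(\bM)>0$, which by Lemma~\ref{lem:def-hae1} guarantees that $e_{L}\bM\ne\bzero$ for every finite subset $L$ of $p+(\ell+1)\BZ$. It then remains to verify the three properties characterizing membership in $\bz_{\BZ}^{\sigma}$: conditions (a) and (b) of Definition~\ref{dfn:BZdatum2} (which place $\ha{e}_{p}\bM$ in $\bz_{\BZ}$), together with the $\sigma$-invariance $\sigma(\ha{e}_{p}\bM)=\ha{e}_{p}\bM$.

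For condition (a), I would fix an interval $K$ in $\BZ$ and, since $\Gamma_{K}$ is finite, choose a single finite subset $L\subset p+(\ell+1)\BZ$ with $L\supset L(\gamma,p)$ for all $\gamma\in\Gamma_{K}$. By Remark~\ref{rem:hae} this gives $(\ha{e}_{p}\bM)_{\gamma}=(e_{L}\bM)_{\gamma}$ for every $\gamma\in\Gamma_{K}$, hence $(\ha{e}_{p}\bM)_{K}=(e_{L}\bM)_{K}$. Since every truncation of $e_{L}\bM$ is nonzero by Lemma~\ref{lem:def-hae1}, iterating Proposition~\ref{prop:ej} shows $e_{L}\bM\in\bz_{\BZ}$, so $(e_{L}\bM)_{K}\in\bz_{K}$ by condition (a) for $e_{L}\bM$; therefore $(\ha{e}_{p}\bM)_{K}\in\bz_{K}$.

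For condition (b), I would fix $w\in W_{\BZ}$ and $i\in\BZ$ and use the very same auxiliary finite set $L$ defined in \eqref{eq:L}, which satisfies $L\supset L(w\vpi_{i}^{I},p)$ for every interval $I$ with $w\in W_{I}$, $i\in I$, exactly as verified in the proof of Proposition~\ref{prop:haf-stable}. Choosing $I\in\Int(e_{L}\bM;w,i)$ and any interval $J\supset I$, the stability of $e_{L}\bM$ gives $(e_{L}\bM)_{w\vpi_{i}^{J}}=(e_{L}\bM)_{w\vpi_{i}^{I}}$, while Remark~\ref{rem:hae} identifies each of these with the corresponding component of $\ha{e}_{p}\bM$; combining them yields $(\ha{e}_{p}\bM)_{w\vpi_{i}^{J}}=(\ha{e}_{p}\bM)_{w\vpi_{i}^{I}}$, which is condition (b).

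Finally, for the $\sigma$-invariance I would fix $\gamma\in\Gamma_{\BZ}$ and first record the key identity $\sigma\bigl(L(\sigma^{-1}(\gamma),p)\bigr)=L(\gamma,p)$, which follows from $\pair{h_{\sigma(q)}}{\gamma}=\pair{h_{q}}{\sigma^{-1}(\gamma)}$ together with the fact that $\sigma$ permutes $p+(\ell+1)\BZ$. Then, using Lemma~\ref{lem:sig-ef}\,(2) repeatedly to move $\sigma$ past the commuting product $e_{L(\sigma^{-1}(\gamma),p)}$, and the hypothesis $\sigma(\bM)=\bM$, I would compute $(\ha{e}_{p}\bM)_{\sigma^{-1}(\gamma)}=(e_{L(\sigma^{-1}(\gamma),p)}\bM)_{\sigma^{-1}(\gamma)}=\bigl(e_{L(\gamma,p)}\bM\bigr)_{\gamma}=(\ha{e}_{p}\bM)_{\gamma}$. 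I expect the main subtlety, compared with the $\ha{f}_{p}$ case, to be bookkeeping the nonvanishing hypothesis: one must invoke $\ha{\ve}_{p}(\bM)>0$ and Lemma~\ref{lem:def-hae1} at each point where a truncated product $e_{L}\bM$ is restricted, applied to, or pushed through $\sigma$, so that every intermediate expression genuinely lies in $\bz_{\BZ}$ rather than collapsing to $\bzero$.
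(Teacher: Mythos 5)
Your proposal is correct and is exactly the argument the paper intends: the paper omits the proof of Proposition~\ref{prop:hae-stable}, stating only that it is similar to that of Proposition~\ref{prop:haf-stable}, and your write-up is precisely that analogous argument, with the one genuinely new ingredient (invoking $\ha{\ve}_{p}(\bM)>0$ and Lemma~\ref{lem:def-hae1} to keep every truncated product $e_{L}\bM$ away from $\bzero$) correctly identified and placed.
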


Because the proof of this proposition is similar to that of 
Proposition~\ref{prop:haf-stable}, we omit it. By this proposition, 
for each $p \in \BZ$, we obtain a map $\ha{e}_{p}$ from $\bz_{\BZ}^{\sigma}$ 
to $\bz_{\BZ}^{\sigma} \cup \{\bzero\}$ sending $\bM \in \bz_{\BZ}$ to 
$\ha{e}_{p}\bM \in \bz_{\BZ} \cup \{\bzero\}$, 
which we call the raising Kashiwara operator on $\bz_{\BZ}^{\sigma}$. 
By convention, we set $\ha{e}_{p}\bzero:=\bzero$ for all $p \in \BZ$. 

Finally, we set 
%
%%%%%%%%%%%%%%%%%%%
%%% eq:def-havp %%%
%%%%%%%%%%%%%%%%%%%
%
\begin{equation} \label{eq:def-havp}
\ha{\vp}_{p}(\bM):=
 \pair{\wt(\bM)}{\ha{\alpha}_{\ol{p}}}+\ha{\ve}_{p}(\bM)
\quad
\text{for $\bM \in \bz_{\BZ}^{\sigma}$ and $p \in \BZ$},
\end{equation} 
where $\ol{p}$ denotes a unique element in 
$\ha{I}=\bigl\{0,\,1,\,\dots,\,\ell\bigr\}$ 
to which $p \in \BZ$ is congruent modulo $\ell+1$. 
%
%%%%%%%%%%%%%%%%%%%%
%%% prop:crystal %%%
%%%%%%%%%%%%%%%%%%%%
%
\begin{prop} \label{prop:crystal}
The set $\bz_{\BZ}^{\sigma}$, 
equipped with the maps 
$\wt$, $\ha{e}_{p},\,\ha{f}_{p} \ (p \in \ha{I})$, and 
$\ha{\ve}_{p},\,\ha{\vp}_{p} \ (p \in \ha{I})$ above, is 
a crystal for $U_{q}(\ha{\Fg}^{\vee})$. 
\end{prop}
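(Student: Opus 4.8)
The plan is to verify the axioms of a $U_{q}(\ha{\Fg}^{\vee})$-crystal one by one. Since \eqref{eq:fp-fq}, \eqref{eq:ep-eq}, and \eqref{eq:have} show that $\ha{f}_{p}$, $\ha{e}_{p}$, and $\ha{\ve}_{p}$ depend only on the residue $\ol{p} \in \ha{I}$ of $p$ modulo $\ell+1$, these maps descend to operators indexed by $\ha{I}$, and it suffices to check, for each $p$: (i) $\ha{\vp}_{p}(\bM)=\ha{\ve}_{p}(\bM)+\pair{\wt(\bM)}{\ha{\alpha}_{\ol{p}}}$; (ii) the weight shifts $\wt(\ha{f}_{p}\bM)=\wt(\bM)-\ha{h}_{\ol{p}}$ and, when $\ha{e}_{p}\bM\neq\bzero$, $\wt(\ha{e}_{p}\bM)=\wt(\bM)+\ha{h}_{\ol{p}}$; (iii) the $\ve$-shifts $\ha{\ve}_{p}(\ha{f}_{p}\bM)=\ha{\ve}_{p}(\bM)+1$ and $\ha{\ve}_{p}(\ha{e}_{p}\bM)=\ha{\ve}_{p}(\bM)-1$; and (iv) the inversion $\ha{e}_{p}\ha{f}_{p}\bM=\bM$, together with $\ha{f}_{p}\ha{e}_{p}\bM=\bM$ whenever $\ha{e}_{p}\bM\neq\bzero$. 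Axiom (i) is nothing but the definition \eqref{eq:def-havp}. The governing principle for the rest is locality: on any finite interval the folded operators $\ha{f}_{p},\ha{e}_{p}$ coincide with honest products $f_{L}=\prod_{q\in L}f_{q}$ and $e_{L}=\prod_{q\in L}e_{q}$ of the commuting type-$A_{\infty}$ Kashiwara operators indexed by a finite $L\subset p+(\ell+1)\BZ$ (Remarks~\ref{rem:haf} and \ref{rem:hae}), so that the finite-type relations of Lemma~\ref{lem:ef} may be transferred.

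For (ii) I would read off the $\Lambda$-components. By \eqref{eq:lam-i} and its $e$-analogue (the latter following from \eqref{eq:ep-gamma}, Fact~\ref{fact:ej}, and \eqref{eq:pair-vpi}), each $f_{q}$ lowers the $\Lambda_{q}$-component by $1$ and fixes every other $\Lambda$-component, and dually for $e_{q}$. Choosing $L\supseteq L(\Lambda_{i},p)$ and noting $\ol{p}\in L(\vpi_{\ol{p}}^{I},p)$ (since $\pair{h_{\ol{p}}}{\vpi_{\ol{p}}^{I}}=1$ by \eqref{eq:pair-vpi}), while $i\notin p+(\ell+1)\BZ$ for $i\in\ha{I}\setminus\{\ol{p}\}$, I obtain $(\ha{f}_{p}\bM)_{\Lambda_{i}}=M_{\Lambda_{i}}-\delta_{i,\ol{p}}$ and $(\ha{e}_{p}\bM)_{\Lambda_{i}}=M_{\Lambda_{i}}+\delta_{i,\ol{p}}$ for $i\in\ha{I}$; with \eqref{eq:def-wt} this gives the weight shifts. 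For (iii) I localize $\ha{f}_{p}\bM$ to a large interval $I$ so that $\ha{\ve}_{p}(\ha{f}_{p}\bM)=\ve_{p}((\ha{f}_{p}\bM)_{I})=\ve_{p}(f_{L}\bM)$ for a suitable $L$; writing $L=\{p\}\sqcup L''$ with every element of $L''$ at distance $\ge\ell+1\ge 2$ from $p$, Lemma~\ref{lem:ef}\,(2) gives $\ve_{p}(f_{L''}\bM)=\ve_{p}(\bM)$ and one further application of $f_{p}$ raises $\ve_{p}$ by $1$, so $\ha{\ve}_{p}(\ha{f}_{p}\bM)=\ve_{p}(\bM)+1=\ha{\ve}_{p}(\bM)+1$; the $\ha{e}_{p}$-case is identical using the $e$-part of Lemma~\ref{lem:ef}\,(2), with nonvanishing guaranteed by Lemma~\ref{lem:def-hae1}. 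The $\ha{\vp}_{p}$-relations then follow formally from (i), (ii), (iii), and the identity $\pair{\ha{h}_{\ol{p}}}{\ha{\alpha}_{\ol{p}}}=\ha{a}_{\ol{p}\,\ol{p}}=2$.

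For the inversion (iv) I would pass to a single finite $L\subset p+(\ell+1)\BZ$ that simultaneously computes $\ha{f}_{p}\bM$ and $\ha{e}_{p}(\ha{f}_{p}\bM)$ on a prescribed finite interval $I$, so that $(\ha{e}_{p}\ha{f}_{p}\bM)_{I}=(e_{L}f_{L}\bM)_{I}$. Since the indices in $L$ are pairwise at distance $\ge 2$, Lemma~\ref{lem:ef}\,(3) allows reordering, and Lemma~\ref{lem:ef}\,(1) gives $e_{q}f_{q}=\mathrm{id}$ for each $q$, whence $e_{L}f_{L}=\mathrm{id}$ on $\bz_{\BZ}$ and $\ha{e}_{p}\ha{f}_{p}\bM=\bM$; the identity $\ha{f}_{p}\ha{e}_{p}\bM=\bM$ (when $\ha{e}_{p}\bM\neq\bzero$) is obtained the same way, Lemma~\ref{lem:def-hae1} ensuring $e_{q}\bM\neq\bzero$ at each stage so that Lemma~\ref{lem:ef}\,(1) applies. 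I expect this last compatibility step to be the main obstacle: one must guarantee a \emph{single} finite $L$ (equivalently a single interval $I$) on which the composite of the globally defined operators agrees with the composite $e_{L}f_{L}$ of genuine type-$A_{\infty}$ operators. This is exactly where the restriction-compatibility results Lemma~\ref{lem:IK-fj}\,(2) and Lemma~\ref{lem:IK-ej}\,(2), together with Remarks~\ref{rem:haf} and \ref{rem:hae}, are indispensable: they ensure that evaluating $e_{L}f_{L}\bM$ on $\Gamma_{I}$ reads only the restriction $\bM_{I'}$ to a slightly larger interval $I'$, so that enlarging $L$ does not disturb the components already computed, and the finite-interval crystal structure of Theorem~\ref{thm:binf} can then be invoked verbatim.
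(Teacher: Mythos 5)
Your proposal is correct and follows essentially the same route as the paper: axiom (i) from the definition, the weight and $\ve$-shifts read off from the $\Lambda_{i}$- and $s_{p}\Lambda_{p}$-components via \eqref{eq:pair-vpi}, \eqref{eq:lam-i}, \eqref{eq:silami} and Remarks~\ref{rem:haf}/\ref{rem:hae}, and the inversion $\ha{e}_{p}\ha{f}_{p}=\mathrm{id}$ by reducing to $e_{L}f_{L}$ on a single sufficiently large $L$ and invoking Lemma~\ref{lem:ef}\,(1),(3). The one compatibility step you flag as the main obstacle is exactly what the paper isolates as Lemma~\ref{lem:tech2} (built on Lemma~\ref{lem:tech}, Lemmas~\ref{lem:IK-fj}\,(2) and \ref{lem:IK-ej}\,(2)), so your inline argument and the paper's packaged lemma coincide in substance.
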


\begin{proof}
It is obvious from \eqref{eq:def-havp} that 
$\ha{\vp}_{p}(\bM)=
 \pair{\wt(\bM)}{\ha{\alpha}_{p}}+\ha{\ve}_{p}(\bM)$ 
for $\bM \in \bz_{\BZ}^{\sigma}$ and $p \in \ha{I}$ 
(see condition (1) of \cite[Definition~4.5.1]{HK}). 

We show that $\wt(\ha{f}_{p}\bM)=\wt(\bM)-\ha{h}_{p}$ 
for $\bM \in \bz_{\BZ}^{\sigma}$ and $p \in \ha{I}$  
(see condition (3) of \cite[Definition~4.5.1]{HK}). 
Write $\bM$, $f_{p}\bM$, and $\ha{f}_{p}\bM$ as: 
$\bM=(M_{\gamma})_{\gamma \in \Gamma_{\BZ}}$, 
$f_{p}\bM=(M_{\gamma}')_{\gamma \in \Gamma_{\BZ}}$, and 
$\ha{f}_{p}\bM=(M_{\gamma}'')_{\gamma \in \Gamma_{\BZ}}$, respectively; 
write $\Theta(\bM)$, $\Theta(f_{p}\bM)$, and 
$\Theta(\ha{f}_{p}\bM)$ as: 
$\Theta(\bM)=(M_{\xi})_{\xi \in \Xi_{\BZ}}$, 
$\Theta(f_{p}\bM)=(M_{\xi}')_{\xi \in \Xi_{\BZ}}$, and 
$\Theta(\ha{f}_{p}\bM)=(M_{\xi}'')_{\xi \in \Xi_{\BZ}}$, respectively.
We claim that $M_{\Lambda_{i}}''=M_{\Lambda_{i}}'$ 
for all $i \in \BZ$. Fix $i \in \BZ$, and take 
an interval $I$ in $\BZ$ such that 
$I \in \Int(\ha{f}_{p}\bM;e,i) \cap \Int(f_{p}\bM;e,i)$. 
Then, we have 
$M_{\Lambda_{i}}''=M_{\vpi_{i}^{I}}''=
 (\ha{f}_{p}\bM)_{\vpi_{i}^{I}}$, 
and $M_{\Lambda_{i}}'=M_{\vpi_{i}^{I}}'$ by the definitions. 
Also, since $L(\vpi_{i}^{I},p) \subset \{p\}$ 
by \eqref{eq:pair-vpi}, it follows from 
Remark~\ref{rem:haf} that 
$(\ha{f}_{p}\bM)_{\vpi_{i}^{I}}=
 (f_{p}\bM)_{\vpi_{i}^{I}}=M_{\vpi_{i}^{I}}'$. 
Combining these, we infer that 
$M_{\Lambda_{i}}''=M_{\Lambda_{i}}'$, as desired.  
Therefore, we see from \eqref{eq:lam-i} that 
%
%%%%%%%%%%%%%%%
%%% eq:cry0 %%%
%%%%%%%%%%%%%%%
%
\begin{equation} \label{eq:cry0}
M_{\Lambda_{i}}''=
M_{\Lambda_{i}}'=
\begin{cases}
M_{\Lambda_{p}}-1 & \text{if $i=p$}, \\[1.5mm]
M_{\Lambda_{i}} & \text{otherwise}.
\end{cases}
\end{equation}
The equation $\wt(\ha{f}_{p}\bM)=\wt(\bM)-\ha{h}_{p}$ 
follows immediately from \eqref{eq:cry0} and 
the definition \eqref{eq:def-wt} of the map $\wt$. 

Similarly, we can show that 
$\wt(\ha{e}_{p}\bM)=\wt(\bM)+\ha{h}_{p}$ 
for $\bM \in \bz_{\BZ}^{\sigma}$ and $p \in \ha{I}$ 
if $\ha{e}_{p}\bM \ne \bzero$  
(see condition (2) of \cite[Definition~4.5.1]{HK}). 

Let us show that $\ha{\ve}_{p}(\ha{f}_{p}\bM)=\ha{\ve}_{p}(\bM)+1$ 
and $\ha{\vp}_{p}(\ha{f}_{p}\bM)=\ha{\vp}_{p}(\bM)-1$ for 
$\bM \in \bz_{\BZ}^{\sigma}$ and $p \in \ha{I}$  
(see condition (5) of \cite[Definition~4.5.1]{HK}). 
The second equation follows immediately from the first one and 
the definition \eqref{eq:def-havp} of the map $\ha{\vp}$, 
since $\wt(\ha{f}_{p}\bM)=\wt(\bM)-\ha{h}_{p}$ as shown above. 
It, therefore, suffices to show the first equation; 
to do this, we use the notation above. 
We claim that $M_{s_{p}\Lambda_{p}}''=
M_{s_{p}\Lambda_{p}}'=M_{s_{p}\Lambda_{p}}$. 
Indeed, let $I$ be an interval in $\BZ$ such that 
$I \in \Int (\ha{f}_{p}\bM; s_{p},p) \cap \Int (f_{p}\bM; s_{p},p)$. 
Then, in exactly the same way as above, 
we see that 
\begin{align*}
M_{s_{p}\Lambda_{p}}'' & 
 =M_{s_{p}\vpi_{p}^{I}}''
 =(\ha{f}_{p}\bM)_{s_{p}\vpi_{p}^{I}} \\
& =(f_{p}\bM)_{s_{p}\vpi_{p}^{I}}
\quad \text{by Remark~\ref{rem:haf} (note that 
  $L(s_{p}\vpi_{p}^{I},\,p)=\emptyset$ by \eqref{eq:pair-vpi})} \\
& =M_{s_{p}\vpi_{p}^{I}}'
  =M_{s_{p}\Lambda_{p}}'.
\end{align*}
In addition, the equality 
$M_{s_{p}\Lambda_{p}}'=M_{s_{p}\Lambda_{p}}$ 
follows from \eqref{eq:silami}. 
Hence we get $M_{s_{p}\Lambda_{p}}''=
M_{s_{p}\Lambda_{p}}$, as desired.  
Using this and \eqref{eq:cry0}, we deduce 
from the definition \eqref{eq:def-have} of the 
map $\ha{\ve}_{p}$ that 
$\ha{\ve}_{p}(\ha{f}_{p}\bM)=\ha{\ve}_{p}(\bM)+1$. 

Similarly, we can show that 
$\ha{\ve}_{p}(\ha{e}_{p}\bM)=\ha{\ve}_{p}(\bM)-1$ 
and $\ha{\vp}_{p}(\ha{e}_{p}\bM)=\ha{\vp}_{p}(\bM)+1$ for 
$\bM \in \bz_{\BZ}^{\sigma}$ and $p \in \ha{I}$  
if $\ha{e}_{p}\bM \ne \bzero$ 
(see condition (4) of \cite[Definition~4.5.1]{HK}). 

Finally, we show that $\ha{e}_{p}\ha{f}_{p}\bM=\bM$ 
for $\bM \in \bz_{\BZ}^{\sigma}$ and $p \in \ha{I}$, and 
that $\ha{f}_{p}\ha{e}_{p}\bM=\bM$ 
for $\bM \in \bz_{\BZ}^{\sigma}$ and $p \in \ha{I}$ 
if $\ha{e}_{p}\bM \ne \bzero$ 
(see condition (6) of \cite[Definition~4.5.1]{HK}). 
We give a proof only for the first equation, 
since the proof of the second one is similar. 
Write $\bM \in \bz_{\BZ}^{\sigma}$ as: 
$\bM=(M_{\gamma})_{\gamma \in \Gamma_{\BZ}}$. 
Note that $\ha{e}_{p}\ha{f}_{p}\bM \ne \bzero$, 
since $\ha{\ve}_{p}(\ha{f}_{p}\bM)=
\ha{\ve}_{p}(\bM)+1 > 0$. 
We need to show that 
$(\ha{e}_{p}\ha{f}_{p}\bM)_{\gamma}=M_{\gamma}$ 
for all $\gamma \in \Gamma_{\BZ}$. 
Fix $\gamma \in \Gamma_{\BZ}$. 
We deduce from Lemma~\ref{lem:tech2} below that
\begin{equation*}
(\ha{e}_{p}\ha{f}_{p}\bM)_{\gamma}=
(e_{L(\gamma,p)}f_{L(\gamma,p)}\bM)_{\gamma}. 
\end{equation*}
Therefore, it follows from Lemma~\ref{lem:ef}\,(1) and (3) that 
$e_{L(\gamma,p)}f_{L(\gamma,p)}\bM=\bM$. 
Hence we obtain 
$(\ha{e}_{p}\ha{f}_{p}\bM)_{\gamma}=M_{\gamma}$. 
Thus, we have shown that $\ha{e}_{p}\ha{f}_{p}\bM=\bM$, 
thereby completing the proof of the proposition. 
\end{proof}
%
%%%%%%%%%%%%%%%%%%%%
%%% rem:have-max %%%
%%%%%%%%%%%%%%%%%%%%
%
\begin{rem} \label{rem:have-max}
Let $\bM \in \bz_{\BZ}^{\sigma}$, and $p \in \ha{I}$. 
From the definition, it follows that 
$\ha{\ve}_{p}(\bM)=0$ if and only if 
$\ha{e}_{p}\bM=\bzero$, and that
$\ha{\ve}_{p}(\bM) \in \BZ_{\ge 0}$. 
In addition, $\ha{\ve}_{p}(\ha{e}_{p}\bM)=
\ha{\ve}_{p}(\bM)-1$. 
Consequently, we deduce that 
$\ha{\ve}_{p}(\bM)=
 \max \bigl\{N \ge 0 \mid 
 \ha{e}_{p}^{N}\bM \ne \bzero\bigr\}$. 
Moreover, by \eqref{eq:have} and \eqref{eq:ep-eq}, 
the same is true for all $p \in \BZ$. 
\end{rem}

The following lemma will be needed 
in the proof of Lemma~\ref{lem:tech2} below. 
%
%%%%%%%%%%%%%%%%
%%% lem:tech %%%
%%%%%%%%%%%%%%%%
%
\begin{lem} \label{lem:tech}
Let $K$ be an interval in $\BZ$, and 
let $X$ be a product of Kashiwara operators of the form{\rm:}
$X=x_{1}x_{2} \cdots x_{a}$, where $x_{b} \in 
\bigl\{f_{q},\, e_{q} \mid \min K < q < \max K\bigr\}$ 
for each $1 \le b \le a$.
If $\bM \in \bz_{\BZ}^{\sigma}$ and 
$X\ha{y}_{p} \bM \ne \bzero$ for some $p \in \BZ$, 
where $\ha{y}_{p}=\ha{e}_{p}$ or $\ha{f}_{p}$, then 
there exists a finite subset $L_{0}$ of $p+(\ell+1)\BZ$ 
such that $Xy_{L}\bM \ne \bzero$ and 
$(X\ha{y}_{p}\bM)_{K}=(Xy_{L}\bM)_{K}$ 
for every finite subset $L$ of $p+(\ell+1)\BZ$ 
containing $L_{0}$, 
where $y_{L}=e_{L}$ if $\ha{y}_{p}=\ha{e}_{p}$, and 
$y_{L}=f_{L}$ if $\ha{y}_{p}=\ha{f}_{p}$. 
\end{lem}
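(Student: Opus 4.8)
The plan is to argue by induction on the length $a$ of $X$, using a strengthened hypothesis that can be propagated. Concretely, I would prove: there exist a finite subset $L_{0} \subset p+(\ell+1)\BZ$ and an interval $I \supseteq K$ such that for every finite $L \subseteq p+(\ell+1)\BZ$ with $L \supseteq L_{0}$, both $X\ha{y}_{p}\bM$ and $Xy_{L}\bM$ are nonzero, lie in $\bz_{\BZ}(I,\,K)$, and satisfy $(X\ha{y}_{p}\bM)_{I}=(Xy_{L}\bM)_{I}$. Since $K \subseteq I$, this yields at once the stated equality on $\Gamma_{K}$. Carrying the witnessing interval $I$, the $\bz_{\BZ}(I,\,K)$-membership, and agreement on the \emph{larger} set $\Gamma_{I}$ (rather than only $\Gamma_{K}$) inside the induction is exactly what makes the inductive step close.

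For the inductive step I would write $X=x_{1}X'$ with $x_{1}=e_{q_{1}}$ or $f_{q_{1}}$ and $\min K<q_{1}<\max K$, and apply the strengthened hypothesis to $X'$ (length $a-1$), obtaining $L_{0}'$ and $I'\supseteq K$ with $\bN:=X'\ha{y}_{p}\bM$ and $\bN_{L}:=X'y_{L}\bM$ both in $\bz_{\BZ}(I',\,K)$ and agreeing on $\Gamma_{I'}$. Because $q_{1}$ and $q_{1}\pm1$ lie in $K$, membership in $\bz_{\BZ}(I',\,K)$ supplies precisely the $\Int$-conditions needed to invoke the single-operator restriction formulas \eqref{eq:res-fj} and \eqref{eq:res-ej}, giving $(x_{1}\bN)_{I'}=x_{1}(\bN_{I'})$ and $(x_{1}\bN_{L})_{I'}=x_{1}((\bN_{L})_{I'})$ as finite-type operators on $\bz_{I'}$; the $\ve$-invariance \eqref{eq:vej} handles nonvanishing in the $e$-case. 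As $\bN_{I'}=(\bN_{L})_{I'}$, these coincide, and Lemmas~\ref{lem:IK-fj}\,(1) and \ref{lem:IK-ej}\,(1) keep $x_{1}\bN$ and $x_{1}\bN_{L}$ inside $\bz_{\BZ}(I',\,K)$. This reproduces the strengthened statement for $X$ with $I=I'$.

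The base case $a=0$ reduces to three tasks for a suitable interval $I$: that $\ha{y}_{p}\bM \in \bz_{\BZ}(I,\,K)$ (immediate from Remark~\ref{rem:limit}\,(2)); that $(y_{L}\bM)_{I}=(\ha{y}_{p}\bM)_{I}$ and $y_{L}\bM\neq\bzero$ for $L\supseteq L_{0}:=\bigcup_{\gamma\in\Gamma_{I}}L(\gamma,p)$ (from Remarks~\ref{rem:haf} and \ref{rem:hae}, with Lemma~\ref{lem:def-hae1} in the $e$-case); and --- the crux --- that $y_{L}\bM\in\bz_{\BZ}(I,\,K)$ for all such $L$, with one fixed $I$. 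I expect this last uniform membership to be the main obstacle: the indices of $y_{L}$ lie in $p+(\ell+1)\BZ$ and are generally not in $K$, so neither the stability Lemmas~\ref{lem:IK-fj}\,(1), \ref{lem:IK-ej}\,(1) nor the $\Int$-tracking Remarks~\ref{rem:fj}, \ref{rem:ej} apply directly, and $y_{L}\bM$ genuinely varies with $L$.

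To overcome this I would split $L=(L\cap B)\sqcup(L\setminus B)$ (legitimate since the $y_{q}$ with $q\in p+(\ell+1)\BZ$ pairwise commute, the spacing being $\ell+1\ge3$), where $B:=p+(\ell+1)\BZ\cap[\min\ha{K}-1,\,\max\ha{K}+1]$ for a large auxiliary interval $\ha{K}\supseteq K$ chosen so that none of $\min\ha{K}-1,\min\ha{K},\max\ha{K},\max\ha{K}+1$ lies in $p+(\ell+1)\BZ$; this is possible because the gaps of the lattice have length $\ell\ge2$, and it forces every index of $B$ to be interior to $\ha{K}$. For $q\in L\setminus B$ one checks via \eqref{eq:pair-vpi} that $\pair{h_{q}}{v\vpi_{k}^{J}}\le0$ for all $v\in W_{\ha{K}}$, $k\in\ha{K}$ and all intervals $J$, so by \eqref{eq:AM2} (resp.\ \eqref{eq:ep-gamma}, using Lemma~\ref{lem:def-hae1} for nonvanishing) the ``far'' factors $y_{L\setminus B}$ fix all these components; hence $\Int(y_{L\setminus B}\bM;v,k)=\Int(\bM;v,k)$ and $y_{L\setminus B}\bM\in\bz_{\BZ}(I,\ha{K})$ once $\bM\in\bz_{\BZ}(I,\ha{K})$. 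The remaining ``near'' factors $y_{L\cap B}$ have indices interior to $\ha{K}$, so Lemmas~\ref{lem:IK-fj}\,(1) and \ref{lem:IK-ej}\,(1) keep the result in $\bz_{\BZ}(I,\ha{K})\subseteq\bz_{\BZ}(I,\,K)$, yielding the uniform membership. Choosing $I$ simultaneously large for $\ha{y}_{p}\bM$ and for $\bM\in\bz_{\BZ}(I,\ha{K})$ via Remark~\ref{rem:limit}\,(2) then completes the base case, and with it the induction.
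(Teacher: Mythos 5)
Your proof is correct, and its skeleton matches the paper's: fix one sufficiently large interval $I \supseteq K$, show that $\ha{y}_{p}\bM$ and $y_{L}\bM$ have the same restriction to $\Gamma_{I}$ and both lie in $\bz_{\BZ}(I,K)$, then push $X$ through the restriction to $I$. Two points of execution differ. First, you re-run the induction over the factors of $X$ carrying a strengthened hypothesis, whereas the paper simply invokes Lemmas~\ref{lem:IK-fj}\,(2) and \ref{lem:IK-ej}\,(2), which already package that induction; your inductive step is in effect a re-derivation of those lemmas together with the propagation of the equality of restrictions to $I$. Second --- and this is the substantive divergence --- for the crux you correctly isolate, namely the uniform membership $y_{L}\bM\in\bz_{\BZ}(I,K)$, the paper proves a small claim that $L(v\vpi_{k}^{J},p)=L(v\vpi_{k}^{I},p)$ for all $J\supseteq I$, $v\in W_{K}$, $k\in K$, so that Remarks~\ref{rem:haf}/\ref{rem:hae} identify $(y_{L}\bM)_{v\vpi_{k}^{J}}$ with $(\ha{y}_{p}\bM)_{v\vpi_{k}^{J}}$ for \emph{every} $J$, and the $\Int$-conditions are inherited from $\ha{y}_{p}\bM$ (which lies in $\bz_{\BZ}(I,K)$ by the choice of $I$). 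You instead split $y_{L}$ into factors far from and interior to an auxiliary interval $\ha{K}$, show via \eqref{eq:pair-vpi} that the far factors fix all components $v\vpi_{k}^{J}$ and hence preserve the $\Int$-conditions of $\bM$ itself, and absorb the near factors with the stability Lemmas~\ref{lem:IK-fj}\,(1), \ref{lem:IK-ej}\,(1). Both devices rest on the same computation ($v^{-1}h_{q}=h_{q}$ and $\pair{h_{q}}{\vpi_{k}^{J}}\le 0$ for $q$ far from $K$); the paper's transfer from $\ha{y}_{p}\bM$ is shorter, while your near/far factorization is self-contained and makes explicit why the lattice spacing $\ell+1\ge 3$ and the commutativity from Lemma~\ref{lem:ef}\,(3) are what allow a single $I$ to work for all $L$.
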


\begin{proof}
Note that $\ha{y}_{p}\bM \ne \bzero$ since 
$X \ha{y}_{p} \bM \ne \bzero$ by our assumption.
Let $I$ be an interval in $\BZ$ containing $K$ such that 
$I \in \Int (\ha{y}_{p}\bM;v,k)$ 
for all $v \in W_{K}$ and $k \in K$, and such that 
$\min I < \min K \le \max K < \max I$. 
Then, we have $\ha{y}_{p}\bM \in \bz_{\BZ}(I,K)$ 
(for the definition of $\bz_{\BZ}(I,K)$, 
 see the paragraph following Remark~\ref{rem:fj}). 
Because $X$ is a product of those Kashiwara operators 
which are taken from the set 
$\bigl\{f_{q},\, e_{q} \mid \min K < q < \max K\bigr\}$, 
it follows from Lemmas~\ref{lem:IK-fj}\,(2) and \ref{lem:IK-ej}\,(2) that 
%
%%%%%%%%%%%%%%%%
%%% eq:haef1 %%%
%%%%%%%%%%%%%%%%
%
\begin{equation} \label{eq:haef1}
X (\ha{y}_{p} \bM)_{I} \ne \bzero \quad \text{and} \quad
(X \ha{y}_{p} \bM)_{I}=X (\ha{y}_{p} \bM)_{I}. 
\end{equation}

Now, we set $L_{0}:=\bigcup_{\zeta \in \Gamma_{I}} L(\zeta,p)$, and 
take an arbitrary finite subset $L$ of $p+(\ell+1)\BZ$ containing $L_{0}$. 
Then, we see from Remark~\ref{rem:haf} (if $\ha{y}_{p}=\ha{f}_{p}$) 
or Remark~\ref{rem:hae} (if $\ha{y}_{p}=\ha{e}_{p}$) that 
%
%%%%%%%%%%%%%%%%%
%%% eq:L-zeta %%%
%%%%%%%%%%%%%%%%%
%
\begin{equation} \label{eq:L-zeta}
(\ha{y}_{p}\bM)_{\zeta}=(y_{L}\bM)_{\zeta}
\quad \text{for all $\zeta \in \Gamma_{I}$},
\end{equation}
which implies that 
$(\ha{y}_{p}\bM)_{I}=(y_{L}\bM)_{I}$. 
Combining this and \eqref{eq:haef1}, we obtain 
%
%%%%%%%%%%%%%%%%
%%% eq:haef2 %%%
%%%%%%%%%%%%%%%%
%
\begin{equation} \label{eq:haef2}
X (y_{L} \bM)_{I} \ne \bzero \quad \text{and} \quad
(X \ha{y}_{p} \bM)_{I}= X (y_{L}\bM)_{I}. 
\end{equation}
We show that $I \in \Int (y_{L}\bM;v,k)$ 
for all $v \in W_{K}$ and $k \in K$. 
To do this, we need the following claim. 

\begin{claim*}
Keep the notation above. 
If $J$ is an interval in $\BZ$ containing $I$, then 
$L(v\vpi_{k}^{J},p)=L(v\vpi_{k}^{I},p)$ 
for all $v \in W_{K}$ and $k \in K$. 
\end{claim*}

\noindent
{\it Proof of Claim.}
Fix $v \in W_{K}$ and $k \in K$. 
First, let us show that if $q \in p+(\ell+1)\BZ$ is 
not contained in $I$, then $q$ is contained neither 
in $L(v\vpi_{k}^{J},p)$ nor in $L(v\vpi_{k}^{I},p)$. 
Because $\min I < \min K$ and $\max I > \max K$, 
we have $q < (\min K)-1$ or $q > (\max K)+1$. 
Hence it follows that $v^{-1}h_{q}=h_{q}$ since $v \in W_{K}$. 
Also, note that $q \ne k$ since $k \in K \subset I$.
Therefore, we see that 
$\pair{h_{q}}{v\vpi_{k}^{J}}=\pair{h_{q}}{\vpi_{k}^{J}} \le 0$ and 
$\pair{h_{q}}{v\vpi_{k}^{I}}=\pair{h_{q}}{\vpi_{k}^{I}} \le 0$ 
by \eqref{eq:pair-vpi}, which implies that 
$q \notin L(v\vpi_{k}^{J},p)$ and $q \notin L(v\vpi_{k}^{I},p)$. 

Next, let us consider the case that 
$q \in p+(\ell+1)\BZ$ is contained in $I$. 
In this case, we have $v^{-1}h_{q} \in 
\bigoplus_{i \in I} \BZ h_{i} \subset 
\bigoplus_{i \in J} \BZ h_{i}$, and hence 
$\pair{h_{q}}{v\vpi_{k}^{J}}=\pair{v^{-1}h_{q}}{\vpi_{k}^{J}}=
 \pair{v^{-1}h_{q}}{\vpi_{k}^{I}}=\pair{h_{q}}{v\vpi_{k}^{I}}$ 
by \eqref{eq:pair-vpi}. 
In particular, 
$\pair{h_{q}}{v\vpi_{k}^{J}} > 0$
if and only if 
$\pair{h_{q}}{v\vpi_{k}^{I}} > 0$. 
Therefore, $q \in L(v\vpi_{k}^{J},p)$ if and only if 
$q \in L(v\vpi_{k}^{I},p)$. This proves the claim. \bqed

%%%%
\vsp
%%%%

Fix $v \in W_{K}$ and $k \in K$, 
and let $J$ be an arbitrary interval in $\BZ$ containing $I$. 
We verify that 
$(y_{L}\bM)_{v\vpi_{k}^{J}}=
 (y_{L}\bM)_{v\vpi_{k}^{I}}$. 
Since $I \in \Int (\ha{y}_{p}\bM;v,k)$ 
by assumption, it follows that 
$(\ha{y}_{p}\bM)_{v\vpi_{k}^{J}}=
 (\ha{y}_{p}\bM)_{v\vpi_{k}^{I}}$. 
Note that 
$(\ha{y}_{p}\bM)_{v\vpi_{k}^{I}}=
 (y_{L}\bM)_{v\vpi_{k}^{I}}$ by \eqref{eq:L-zeta} 
since $v\vpi_{k}^{I} \in \Gamma_{I}$. 
Also, it follows from the claim above that 
$L(v\vpi_{k}^{J},p)=L(v\vpi_{k}^{I},p) \subset L_{0} \subset L$. 
Hence we see again from 
Remark~\ref{rem:haf} (if $\ha{y}_{p}=\ha{f}_{p}$) 
or Remark~\ref{rem:hae} (if $\ha{y}_{p}=\ha{e}_{p}$) that 
$(\ha{y}_{p}\bM)_{v\vpi_{k}^{J}}=
 (y_{L}\bM)_{v\vpi_{k}^{J}}$. 
Combining these, we obtain 
$(y_{L}\bM)_{v\vpi_{k}^{J}}=
(\ha{y}_{p}\bM)_{v\vpi_{k}^{J}}=
(\ha{y}_{p}\bM)_{v\vpi_{k}^{I}}=
(y_{L}\bM)_{v\vpi_{k}^{I}}$, as desired. 
Thus we have shown that $I \in \Int (y_{L}\bM;v,k)$ 
for all $v \in W_{K}$ and $k \in K$, which implies that 
$y_{L}\bM \in \bz_{\BZ}(I,K)$. 

Here we recall that $X$ is 
a product of those Kashiwara operators which are taken from the set 
$\bigl\{f_{q},\, e_{q} \mid \min K < q < \max K\bigr\}$ by assumption, 
and that $X(y_{L}\bM)_{I} \ne \bzero$ by \eqref{eq:haef2}. 
Therefore, we deduce again from 
Lemmas~\ref{lem:IK-fj}\,(2) and \ref{lem:IK-ej}\,(2) that 
$Xy_{L}\bM \ne \bzero$, and 
$X(y_{L}\bM)_{I}=(Xy_{L}\bM)_{I}$. 
Combining this and \eqref{eq:haef2}, 
we obtain 
$(X\ha{y}_{p} \bM)_{I}=(Xy_{L}\bM)_{I}$. 
Since $K \subset I$ (recall the correspondences 
\eqref{eq:bij-index1} and \eqref{eq:bij-index3}), it follows that 
\begin{equation*}
(X\ha{y}_{p} \bM)_{K}=
\bigl((X\ha{y}_{p}\bM)_{I}\bigr)_{K}=
\bigl((Xy_{L}\bM)_{I}\bigr)_{K}=
(Xy_{L}\bM)_{K}. 
\end{equation*}
This completes the proof of the lemma. 
\end{proof}

We used the following lemma in the proof of 
Proposition~\ref{prop:crystal} above; 
we will also use this lemma in the proof of 
Theorem~\ref{thm:main2} below. 
%
%%%%%%%%%%%%%%%%%
%%% lem:tech2 %%%
%%%%%%%%%%%%%%%%%
%
\begin{lem} \label{lem:tech2}
Let $p,\,q \in \BZ$ be such that $0 < |p-q| < \ell$, and let 
$\ha{X}$ be a product of Kashiwara operators of the form\,{\rm:}
$\ha{X}=\ha{x}_{1}\ha{x}_{2} \cdots \ha{x}_{a}$, where 
$\ha{x}_{b} \in 
 \bigl\{\ha{e}_{p},\,\ha{f}_{p},\,\ha{e}_{q},\,\ha{f}_{q}\bigr\}$
for each $1 \le b \le a$. 
If $\bM \in \bz_{\BZ}^{\sigma}$ and $\ha{X}\bM \ne \bzero$, 
then $X\bM \ne \bzero$, and 
$(\ha{X}\bM)_{\gamma}=(X\bM)_{\gamma}$ for each $\gamma \in \Gamma_{\BZ}$,
where $X$ is a product of Kashiwara operators of the form 
$X:=x_{1}x_{2} \cdots x_{a}$, with 
%
%%%%%%%%%%%%
%%% eq:xb %%
%%%%%%%%%%%%
%
\begin{equation} \label{eq:xb}
x_{b}=
 \begin{cases}
 e_{L_{p}} & \text{\rm if $\ha{x}_{b}=\ha{e}_{p}$},  \\[1.5mm]
 f_{L_{p}} & \text{\rm if $\ha{x}_{b}=\ha{f}_{p}$},  \\[1.5mm]
 e_{L_{q}} & \text{\rm if $\ha{x}_{b}=\ha{e}_{q}$},  \\[1.5mm]
 f_{L_{q}} & \text{\rm if $\ha{x}_{b}=\ha{f}_{q}$},
 \end{cases}
\end{equation}
for each $1 \le b \le a$. Here, 
$L_{p}$ is an arbitrary finite subset of $p+(\ell+1)\BZ$ 
such that $L_{p} \supset L(\gamma,p)$ and such that 
$L_{q}:=\bigl\{t+(q-p) \mid t \in L_{p} \bigr\} 
 \supset L(\gamma,q)$.
\end{lem}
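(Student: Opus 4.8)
The plan is to argue by induction on the length $a$ of the word $\ha{X}=\ha{x}_{1}\ha{x}_{2}\cdots\ha{x}_{a}$, peeling off the innermost (first-applied) operator $\ha{x}_{a}$ and reducing the conversion of one hatted operator to Lemma~\ref{lem:tech}. Fix $\gamma\in\Gamma_{\BZ}$ and a valid pair $(L_{p},L_{q})$. For the base case $a=1$ we have $\ha{X}=\ha{y}_{p}$ (with $\ha{y}_{p}\in\{\ha{e}_{p},\ha{f}_{p}\}$, and likewise for $q$), and the asserted equality $(\ha{y}_{p}\bM)_{\gamma}=(y_{L_{p}}\bM)_{\gamma}$ is exactly Remark~\ref{rem:haf} (for $\ha{f}_{p}$) or Remark~\ref{rem:hae} (for $\ha{e}_{p}$), since $L_{p}\supset L(\gamma,p)$; nonvanishing of $y_{L_{p}}\bM$ is automatic for $f$ and follows from Lemma~\ref{lem:def-hae1} for $e$ (note $\ha{e}_{p}\bM\neq\bzero$ forces $\ha{\ve}_{p}(\bM)>0$).

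For the inductive step, set $\bN:=\ha{x}_{a}\bM$, which lies in $\bz_{\BZ}^{\sigma}$ by Proposition~\ref{prop:haf-stable} or \ref{prop:hae-stable} and is nonzero because $\ha{X}\bM=\ha{X}'\bN\neq\bzero$, where $\ha{X}':=\ha{x}_{1}\cdots\ha{x}_{a-1}$. Applying the induction hypothesis to the shorter word $\ha{X}'$ and the base point $\bN$, with the \emph{same} prescribed $(L_{p},L_{q})$, gives $X'\bN\neq\bzero$ and $(\ha{X}'\bN)_{\gamma}=(X'\bN)_{\gamma}$, i.e.\ $(\ha{X}\bM)_{\gamma}=(X'\ha{x}_{a}\bM)_{\gamma}$, where $X'$ is the unhatted word obtained from $\ha{X}'$ via \eqref{eq:xb}. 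Now $X'$ is a product of $e_{q'},f_{q'}$ with $q'\in L_{p}\cup L_{q}$, so choosing a finite interval $K$ with $\gamma\in\Gamma_{K}$ and $\min K<q'<\max K$ for all $q'\in L_{p}\cup L_{q}$, the word $X'$ is of the interior type required by Lemma~\ref{lem:tech}. Taking its ``$\ha{y}_{p}$'' to be $\ha{x}_{a}$, Lemma~\ref{lem:tech} yields a finite $L_{0}$ with $X'y_{L}\bM\neq\bzero$ and $(X'\ha{x}_{a}\bM)_{K}=(X'y_{L}\bM)_{K}$ for all finite $L\supset L_{0}$ in the appropriate residue class; reading off the $\gamma$-component (recall $\gamma\in\Gamma_{K}$) then gives $(\ha{X}\bM)_{\gamma}=(X'y_{L}\bM)_{\gamma}=(X\bM)_{\gamma}$ with $X=X'x_{a}$, completing the induction.

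The main obstacle is the bookkeeping of the index sets, namely that a \emph{single} translate-pair $(L_{p},L_{q})$ with $L_{q}=L_{p}+(q-p)$ can be used uniformly throughout the word, and that the $L$ produced by Lemma~\ref{lem:tech} can be replaced by the prescribed (possibly small) $L_{p}$. The point is that forming $(X\bM)_{\gamma}$ only reads components of $\bM$ at chamber weights $\delta$ obtained from $\gamma$ by reflections $s_{q'}$ with $q'\in L_{p}\cup L_{q}$, so I must check $L_{p}\supset L(\delta,p)$ and $L_{q}\supset L(\delta,q)$ for all such $\delta$. Since any two elements of $p+(\ell+1)\BZ$ are at mutual distance $\ge\ell+1\ge 3$, a reflection $s_{q'}$ with $q'\equiv p$ changes $\pair{h_{q''}}{\,\cdot\,}$ only at $q''=q'$ among residues $\equiv p$ (whence $L(s_{q'}\gamma,p)\subset L(\gamma,p)\subset L_{p}$ by \eqref{eq:pair-vpi}), and only at the neighbours $q''=q'\pm1$ among residues $\equiv q$ when $q\equiv p\pm1\ (\mathrm{mod}\ \ell+1)$; as $0<|p-q|<\ell$, any such new index equals $q'+(q-p)\in L_{p}+(q-p)=L_{q}$. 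Thus the relation $L_{q}=L_{p}+(q-p)$ is exactly what keeps both containments valid along the whole orbit, and Remarks~\ref{rem:haf} and \ref{rem:hae} show that enlarging $L_{p}$ beyond $L(\delta,p)$ leaves each relevant component unchanged, which both justifies matching the $L$ from Lemma~\ref{lem:tech} to the prescribed $L_{p}$ and handles the separate cases $\ha{x}_{a}\in\{\ha{e}_{p},\ha{f}_{p},\ha{e}_{q},\ha{f}_{q}\}$ symmetrically.
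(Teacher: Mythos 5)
Your induction has the same skeleton as the paper's proof: convert the hatted operators one at a time, using Lemma~\ref{lem:tech} to trade a single $\ha{x}_{b}$ for $y_{L}$ underneath an already-converted word (the paper phrases this as an induction on the position $b$ with invariant $(\ha{X}\bM)_{\gamma}=(X_{b}\ha{X}_{b}\bM)_{\gamma}$, which amounts to the same computation as your peeling of $\ha{x}_{a}$). The base case and the application of the induction hypothesis to $\ha{X}'$ and $\bN=\ha{x}_{a}\bM$ are fine.

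The gap is in the last step, where the (possibly much larger) set $L\supset L_{0}$ produced by Lemma~\ref{lem:tech} must be replaced by the prescribed $L_{r}\in\{L_{p},L_{q}\}$ underneath $X'$, i.e.\ you must show $(X'y_{L}\bM)_{\gamma}=(X'y_{L_{r}}\bM)_{\gamma}$ and also $X'y_{L_{r}}\bM\ne\bzero$. You justify this by the principle that $(X\bM)_{\gamma}$ only reads components of $\bM$ at chamber weights in the reflection orbit of $\gamma$, and then verify that $L_{p}$ and $L_{q}$ dominate $L(\delta,\cdot)$ along that orbit. That orbit analysis is correct, but the locality principle itself is not available: by \eqref{eq:AM2} the $\gamma$-component of $f_{q'}\bN$ also depends on the constant $c_{q'}(\bN)=\Theta(\bN)_{\Lambda_{q'}}-\Theta(\bN)_{s_{q'}\Lambda_{q'}}-1$, whose ingredients are not in the orbit of $\gamma$; and for $e_{q'}$ there is no component formula at all (Fact~\ref{fact:ej} characterizes $e_{q'}\bN$ only by a uniqueness property), so the claim that $(e_{q'}\bN)_{\gamma}$ is determined by the orbit components of $\bN$ is unproven. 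Hence knowing that $y_{L}\bM$ and $y_{L_{r}}\bM$ agree on the orbit of $\gamma$ (which Remarks~\ref{rem:haf} and \ref{rem:hae} do give you) does not yet imply that $X'$ produces the same $\gamma$-component from both, nor does it give the nonvanishing of $X\bM$. The paper closes this step differently: taking $L\supset L_{0}\cup L_{r}$, it factors $y_{L}=y_{L\setminus L_{r}}\,y_{L_{r}}$, uses Remark~\ref{rem:dist} to see that every index in $L\setminus L_{r}$ is at distance at least $2$ from every index occurring in $X'$ and in $L_{r}$, commutes $y_{L\setminus L_{r}}$ to the outside by Lemma~\ref{lem:ef}\,(3) to get $X'y_{L}\bM=y_{L\setminus L_{r}}X'y_{L_{r}}\bM$ (which in particular yields $X'y_{L_{r}}\bM\ne\bzero$), and then discards the outer factor on the $\gamma$-component because $\pair{h_{t}}{\gamma}\le 0$ for all $t\in L\setminus L_{r}$, via \eqref{eq:AM2} and \eqref{eq:ep-gamma}. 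Your distance estimates already contain the arithmetic inputs for this; the missing move is the commutation, not the bookkeeping.
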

%
%%%%%%%%%%%%%%%%
%%% rem:dist %%%
%%%%%%%%%%%%%%%%
%
\begin{rem} \label{rem:dist}
Keep the notation and assumptions of Lemma~\ref{lem:tech2}. 
If $r \in p+(\ell+1)\BZ$ is not contained in $L_{p}$, 
then $|r-t| \ge 2$ for all $t \in L_{p} \cup L_{q}$. Indeed, 
if $t \in L_{p}$, then it is obvious that 
$|r-t| \ge \ell+1 > 2$. If $t \in L_{q}$, then 
\begin{equation*}
|r-t| 
  = \bigl|r-\{t+(p-q)\}+(p-q)\bigr|
   \ge \bigl|r-\{t+(p-q)\}\bigr| - |p-q|.
\end{equation*}
Here note that $\bigl|r-\{t+(p-q)\}\bigr| \ge \ell+1$ 
since $t+(p-q) \in L_{p}$, and that $|p-q| < \ell$ by assumption. 
Therefore, we get $|r-t| \ge 2$. 
Similarly, we can show that 
if $r \in q+(\ell+1)\BZ$ is not contained in $L_{q}$, 
then $|r-t| \ge 2$ for all $t \in L_{p} \cup L_{q}$.
\end{rem}

\begin{proof}[Proof of Lemma~\ref{lem:tech2}]
For each $1 \le b \le a$, we set 
$\ha{X}_{b}:=\ha{x}_{b+1}\ha{x}_{b+2} \cdots \ha{x}_{a}$ and 
$X_{b}:=x_{1}x_{2} \cdots x_{b}$. 
We prove by induction on $b$ the claim 
that $X_{b}\ha{X}_{b}\bM \ne \bzero$ and 
$(\ha{X}\bM)_{\gamma} = (X_{b}\ha{X}_{b}\bM)_{\gamma}$ 
for all $1 \le b \le a$; the assertion of the lemma 
follows from the case $b=a$. 
We see easily from Remark~\ref{rem:haf} 
(if $\ha{x}_{1}=\ha{f}_{p}$ or $\ha{f}_{q}$) or 
Remark~\ref{rem:hae}
(if $\ha{x}_{1}=\ha{e}_{p}$ or $\ha{e}_{q}$) that 
the claim above holds if $b=1$. 
Assume, therefore, that $b > 1$. 
By the induction hypothesis, we have 
%
%%%%%%%%%%%%%%%%%%
%%% eq:tech2-1 %%%
%%%%%%%%%%%%%%%%%%
%
\begin{equation} \label{eq:tech2-1}
X_{b-1}\ha{X}_{b-1}\bM=
X_{b-1}\ha{x}_{b}\ha{X}_{b}\bM \ne \bzero
\quad \text{and} \quad
(\ha{X}\bM)_{\gamma}=
 (X_{b-1}\ha{x}_{b}\ha{X}_{b}\bM)_{\gamma}.
\end{equation}
Take an interval $K$ in $\BZ$ 
such that $\gamma \in \Gamma_{K}$, and such that 
$\min K < t < \max K$ for all $t \in L_{p} \cup L_{q}$. 
Define $r \in \{p,\,q\}$ by: 
$r=p$ if $\ha{x}_{b}=\ha{e}_{p}$ or $\ha{f}_{p}$, and 
$r=q$ if $\ha{x}_{b}=\ha{e}_{q}$ or $\ha{f}_{q}$. 
Then we deduce from Lemma~\ref{lem:tech} that 
there exists a finite subset $L$ of $r+(\ell+1)\BZ$ 
such that 
\begin{equation*}
X_{b-1}x_{b}'\ha{X}_{b}\bM \ne \bzero
\quad \text{and} \quad
(X_{b-1}\ha{x}_{b}\ha{X}_{b}\bM)_{K}=
(X_{b-1}x_{b}'\ha{X}_{b}\bM)_{K},
\end{equation*}
where $x_{b}'$ is defined by the formula \eqref{eq:xb}, 
with $L_{p}$ and $L_{q}$ replaced by 
$L \cup L_{p}$ and $L \cup L_{q}$, respectively.
Also, it follows from 
Remark~\ref{rem:dist} and Lemma~\ref{lem:ef}\,(3) that 
\begin{equation*}
(\bzero \ne \,) \quad 
X_{b-1}x_{b}'\ha{X}_{b}\bM = 
X_{b-1}x_{b}''x_{b}\ha{X}_{b}\bM = 
x_{b}''X_{b-1}x_{b}\ha{X}_{b}\bM=
x_{b}''X_{b}\ha{X}_{b}\bM,
\end{equation*}
where $x_{b}''$ is defined by the formula \eqref{eq:xb}, 
with $L_{p}$ and $L_{q}$ replaced by 
$L \setminus L_{p}$ and $L \setminus L_{q}$, respectively.
In particular, we obtain 
$X_{b}\ha{X}_{b}\bM \ne \bzero$. 
Moreover, since $\gamma \in \Gamma_{K}$, we have 
\begin{equation*}
(X_{b-1}\ha{x}_{b}\ha{X}_{b}\bM)_{\gamma}=
(X_{b-1}x_{b}'\ha{X}_{b}\bM)_{\gamma} = 
(x_{b}''X_{b}\ha{X}_{b}\bM)_{\gamma}.
\end{equation*}
Since $L_{r} \supset L(\gamma,r)$, the intersection of 
$L \setminus L_{r}$ and $L(\gamma,r)$ is empty, and hence 
$\pair{h_{t}}{\gamma} \le 0$ for all $t \in L \setminus L_{r}$. 
Therefore, we see from \eqref{eq:AM2} 
(if $\ha{x}_{1}=\ha{f}_{p}$ or $\ha{f}_{q}$) or 
\eqref{eq:ep-gamma} 
(if $\ha{x}_{1}=\ha{e}_{p}$ or $\ha{e}_{q}$) that 
$(x_{b}''X_{b}\ha{X}_{b}\bM)_{\gamma}=
 (X_{b}\ha{X}_{b}\bM)_{\gamma}$.
Combining these with \eqref{eq:tech2-1}, we conclude that 
$(\ha{X}\bM)_{\gamma} = (X_{b}\ha{X}_{b}\bM)_{\gamma}$, 
as desired. This proves the lemma.
\end{proof}

%==============================%
%     START SUBSECTION 0404    %
%==============================%
%
\subsection{Main results.}
\label{subsec:main}

Recall the BZ datum $\bO$ of type $A_{\infty}$ 
whose $\gamma$-component is equal to $0$ for each 
$\gamma \in \Gamma_{\BZ}$ (see Example~\ref{ex:O}).
It is obvious that $\sigma(\bO)=\bO$, and hence 
$\bO \in \bz_{\BZ}^{\sigma}$.
Also, $\ha{\ve}_{p}(\bO)=0$ for all $p \in \ha{I}$, 
which implies that $\ha{e}_{p}\bO=\bzero$ for all $p \in \ha{I}$. 
Let $\bz_{\BZ}^{\sigma}(\bO)$ denote the connected component 
of (the crystal graph of) the crystal 
$\bz_{\BZ}^{\sigma}$ containing $\bO$. 
The following theorem is the first main result of this paper; 
the proof will be given in the next section. 
%
%%%%%%%%%%%%%%%%
%%% thm:main %%%
%%%%%%%%%%%%%%%%
%
\begin{thm} \label{thm:main}
The crystal $\bz_{\BZ}^{\sigma}(\bO)$ 
is isomorphic, as a crystal for $U_{q}(\ha{\Fg}^{\vee})$, 
to the crystal basis $\ha{\CB}(\infty)$ of 
the negative part $U_{q}^{-}(\ha{\Fg}^{\vee})$ 
of $U_{q}(\ha{\Fg}^{\vee})$. 
\end{thm}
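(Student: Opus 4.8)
The plan is to deduce Theorem~\ref{thm:main} from Theorem~\ref{thm:main2} by invoking the standard relationship, in Kashiwara's theory, between the family of highest weight crystals $\{\ha{\CB}(\ha{\lambda})\}_{\ha{\lambda}}$ and the crystal $\ha{\CB}(\infty)$ of $U_{q}^{-}(\ha{\Fg}^{\vee})$. Recall that for each dominant integral weight $\ha{\lambda}$ there is a surjective projection $\pi_{\ha{\lambda}}:\ha{\CB}(\infty) \to \ha{\CB}(\ha{\lambda}) \sqcup \{\bzero\}$ sending the highest weight element $u_{\infty}$ to $u_{\ha{\lambda}}$, which commutes with the lowering operators $\ha{f}_{p}$ and, wherever the image is nonzero, with the raising operators $\ha{e}_{p}$; these become injective on larger and larger subsets as $\ha{\lambda}$ runs through the dominant integral weights, so that $\ha{\CB}(\infty)=\varinjlim_{\ha{\lambda}} \ha{\CB}(\ha{\lambda})$ as a crystal. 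I will match this inductive system against the nested family of explicit subsets of $\bz_{\BZ}^{\sigma}(\bO)$ realizing the $\ha{\CB}(\ha{\lambda})$ furnished by Theorem~\ref{thm:main2}.

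First I would record the structural features of $\bz_{\BZ}^{\sigma}(\bO)$ that mirror those of $\ha{\CB}(\infty)$. By Remark~\ref{rem:highest} every component of a BZ datum is nonpositive, so by the weight formula~\eqref{eq:def-wt} the weights of $\bz_{\BZ}^{\sigma}$ lie in $-\sum_{i \in \ha{I}} \BZ_{\ge 0}\,\ha{h}_{i}$, with $\bO$ the unique element of weight $0$. Since $\ha{e}_{p}\bO=\bzero$ for all $p$ and, by Proposition~\ref{prop:crystal} (condition~(6) of \cite[Definition~4.5.1]{HK}), $\ha{e}_{p}\ha{f}_{p}=\mathrm{id}$ so that $\ha{f}_{p}$ never produces $\bzero$, the connected component $\bz_{\BZ}^{\sigma}(\bO)$ is generated from $\bO$ by the operators $\ha{f}_{p}$ alone, and $\bO$ is its unique highest weight element. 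Thus $\bz_{\BZ}^{\sigma}(\bO)$ is a connected crystal with weights bounded above and a single source, exactly as is $\ha{\CB}(\infty)$.

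Next I would make exhaustion and compatibility precise. Write $\bz_{\BZ}^{\sigma}(\bO;\ha{\lambda})$ for the explicit subset of $\bz_{\BZ}^{\sigma}(\bO)$ that Theorem~\ref{thm:main2} identifies with $\ha{\CB}(\ha{\lambda})$. I would verify that (i) these subsets are nested, $\bz_{\BZ}^{\sigma}(\bO;\ha{\lambda}) \subseteq \bz_{\BZ}^{\sigma}(\bO;\ha{\mu})$ whenever $\ha{\mu}-\ha{\lambda}$ is dominant; (ii) on each piece the raising operators $\ha{e}_{p}$ are the restrictions of those on $\bz_{\BZ}^{\sigma}(\bO)$, while the lowering operator $\ha{f}_{p}$ of $\bz_{\BZ}^{\sigma}(\bO)$ agrees with the modified operator $\ha{F}_{p}$ of the $\ha{\CB}(\ha{\lambda})$-structure whenever the result remains in the piece, in the affine analog of the relation between $f_{p}$ and $F_{p}$ in Subsection~\ref{subsec:crystal}; and (iii) every $\bM \in \bz_{\BZ}^{\sigma}(\bO)$ lies in $\bz_{\BZ}^{\sigma}(\bO;\ha{\lambda})$ once $\ha{\lambda}$ is sufficiently dominant. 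Claim~(iii) is where the effort concentrates: since any $\bM \in \bz_{\BZ}^{\sigma}(\bO)$ is obtained from $\bO$ by finitely many $\ha{f}_{p}$, its components are bounded, so the explicit defining inequalities of $\bz_{\BZ}^{\sigma}(\bO;\ha{\lambda})$ hold as soon as each $\pair{\ha{h}_{\ol{p}}}{\ha{\lambda}}$ is large enough. Granting (i)--(iii), the inclusions $\bz_{\BZ}^{\sigma}(\bO;\ha{\lambda}) \hookrightarrow \bz_{\BZ}^{\sigma}(\bO)$ realize the same inductive system as the $\pi_{\ha{\lambda}}$, whence $\bz_{\BZ}^{\sigma}(\bO) \cong \varinjlim_{\ha{\lambda}} \ha{\CB}(\ha{\lambda}) \cong \ha{\CB}(\infty)$ as crystals for $U_{q}(\ha{\Fg}^{\vee})$.

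The main obstacle I anticipate lies in (ii) and (iii): one must check that the $\ha{\CB}(\ha{\lambda})$-operators extracted from Theorem~\ref{thm:main2} genuinely glue to the operators $\ha{e}_{p},\,\ha{f}_{p}$ of Proposition~\ref{prop:crystal}, and that the limit is taken along a cofinal family of dominant weights. Because $A_{\ell}^{(1)}$ carries an imaginary direction, I must let $\ha{\lambda}$ grow in every fundamental direction simultaneously and confirm that no element of $\bz_{\BZ}^{\sigma}(\bO)$ escapes every piece; the boundedness of components noted in (iii) is precisely what rules this out. Once the gluing and cofinality are established, the identification with $\ha{\CB}(\infty)$ is formal.
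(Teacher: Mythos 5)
Your proposal is correct and follows essentially the same route as the paper: the paper's own proof deduces Theorem~\ref{thm:main} from Theorem~\ref{thm:main2} by citing \cite[\S8.1]{Kas} for the realization of $\ha{\CB}(\infty)$ as the direct limit of the $\ha{\CB}(\ha{\lambda})$'s and then verifying via \eqref{eq:blam-aff} that the direct limit of the $\bz_{\BZ}^{\sigma}(\bO;\ha{\lambda})$'s is $\bz_{\BZ}^{\sigma}(\bO)$. Your items (i)--(iii) simply spell out in more detail the exhaustion and operator-compatibility checks that the paper leaves as ``we can verify.''
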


For each dominant integral weight $\ha{\lambda} \in \ha{\Fh}$ 
for $\ha{\Fg}^{\vee}$, let $\bz_{\BZ}^{\sigma}(\bO; \ha{\lambda})$ denote 
the subset of $\bz_{\BZ}^{\sigma}(\bO)$ consisting of all elements 
$\bM=(M_{\gamma})_{\gamma \in \Gamma_{\BZ}} \in \bz_{\BZ}^{\sigma}(\bO)$ 
satisfying the condition (cf. \eqref{eq:blambda}) that 
%
%%%%%%%%%%%%%%%%%%%
%%% eq:blam-aff %%%
%%%%%%%%%%%%%%%%%%%
%
\begin{equation} \label{eq:blam-aff}
M_{-s_{i}\Lambda_{i}} \ge -\pair{\ha{\lambda}}{\ha{\alpha}_{\ol{i}}}
\quad \text{for all $i \in \BZ$};
\end{equation}
recall that $\ol{i}$ denotes a unique element in 
$\ha{I}=\bigl\{0,\,1,\,\dots,\,\ell\bigr\}$ to which 
$i \in \BZ$ is congruent modulo $\ell+1$. 
Let us define a crystal structure for $U_{q}(\ha{\Fg}^{\vee})$ 
on the set $\bz_{\BZ}^{\sigma}(\bO; \ha{\lambda})$ 
(see Proposition~\ref{prop:crystal2} below). 
%
%%%%%%%%%%%%%%%%%%%%%
%%% lem:lam-ej-st %%%
%%%%%%%%%%%%%%%%%%%%%
%
\begin{lem} \label{lem:lam-ej-st}
The set $\bz_{\BZ}^{\sigma}(\bO; \ha{\lambda}) \cup \{\bzero\}$ 
is stable under the raising Kashiwara operators
$\ha{e}_{p}$ on $\bz_{\BZ}^{\sigma}$ for $p \in \BZ$. 
\end{lem}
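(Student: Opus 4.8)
The plan is to check the two conditions defining membership in $\bz_{\BZ}^{\sigma}(\bO;\ha{\lambda})\cup\{\bzero\}$ for $\ha{e}_{p}\bM$: that it lies in the connected component $\bz_{\BZ}^{\sigma}(\bO)$ (or equals $\bzero$), and that, if nonzero, it satisfies the inequalities \eqref{eq:blam-aff}. The first condition is essentially automatic: Proposition~\ref{prop:hae-stable} already gives $\ha{e}_{p}\bM\in\bz_{\BZ}^{\sigma}\cup\{\bzero\}$, and if $\ha{e}_{p}\bM\ne\bzero$ then $\ha{f}_{p}\ha{e}_{p}\bM=\bM$ by the crystal axioms verified in Proposition~\ref{prop:crystal}. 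Thus $\ha{e}_{p}\bM$ is joined to $\bM\in\bz_{\BZ}^{\sigma}(\bO)$ by a $p$-colored edge of the crystal graph, and hence lies in the same connected component $\bz_{\BZ}^{\sigma}(\bO)$.

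The substance is the inequality, which I would deduce from a single uniform monotonicity statement: if $\ha{e}_{p}\bM\ne\bzero$, then $(\ha{e}_{p}\bM)_{\gamma}\ge M_{\gamma}$ for every $\gamma\in\Gamma_{\BZ}$. Granting this, I specialize to $\gamma=-s_{i}\Lambda_{i}$ for each $i\in\BZ$ and combine with the hypothesis \eqref{eq:blam-aff} for $\bM$ to obtain $(\ha{e}_{p}\bM)_{-s_{i}\Lambda_{i}}\ge M_{-s_{i}\Lambda_{i}}\ge-\pair{\ha{\lambda}}{\ha{\alpha}_{\ol{i}}}$, which is precisely \eqref{eq:blam-aff} for $\ha{e}_{p}\bM$. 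This uniformity in $\gamma$ is what keeps the reduction short: I need not isolate which weights actually have their components altered by $\ha{e}_{p}$.

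To prove the monotonicity I would first record the analogous statement for the type $A_{\infty}$ operators on $\bz_{\BZ}$: for $\bN\in\bz_{\BZ}$ and $q\in\BZ$ with $e_{q}\bN\ne\bzero$, one has $(e_{q}\bN)_{\gamma}\ge N_{\gamma}$ for all $\gamma$. This is immediate from results already at hand, since $f_{q}e_{q}\bN=\bN$ by Lemma~\ref{lem:ef}(1), while Remark~\ref{rem:AM2}(1) gives $(f_{q}\bN')_{\gamma}\le N'_{\gamma}$ for any $\bN'\in\bz_{\BZ}$; taking $\bN'=e_{q}\bN$ yields $N_{\gamma}=(f_{q}e_{q}\bN)_{\gamma}\le(e_{q}\bN)_{\gamma}$. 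Now by definition \eqref{eq:def-hae} we have $(\ha{e}_{p}\bM)_{\gamma}=(e_{L(\gamma,p)}\bM)_{\gamma}$, where $e_{L(\gamma,p)}$ is a finite composite of the commuting operators $e_{q}$, $q\in L(\gamma,p)\subset p+(\ell+1)\BZ$. Since $\ha{e}_{p}\bM\ne\bzero$ forces $\ha{\ve}_{p}(\bM)>0$, Lemma~\ref{lem:def-hae1} ensures that $e_{L'}\bM\ne\bzero$ for every subset $L'\subseteq L(\gamma,p)$; hence the per-operator monotonicity above applies to the fixed weight $\gamma$ at each stage of the composite and telescopes to $(e_{L(\gamma,p)}\bM)_{\gamma}\ge M_{\gamma}$, as required.

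The step demanding the most care is this last telescoping: one must apply the per-operator estimate to one and the same weight $\gamma$ through successive applications, invoking Lemma~\ref{lem:def-hae1} at each stage to guarantee that the intermediate elements remain in $\bz_{\BZ}$ (so that Lemma~\ref{lem:ef}(1) is applicable) rather than collapsing to $\bzero$. Everything else is formal.
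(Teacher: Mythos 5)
Your proposal is correct and follows essentially the same route as the paper: reduce to the componentwise monotonicity $(\ha{e}_{p}\bM)_{\gamma}\ge M_{\gamma}$ and deduce it from the inverse relation between $\ha{e}_{p}$ and $\ha{f}_{p}$ together with the fact that the lowering operators never increase components (Remark~\ref{rem:AM2}(1)). The only difference is cosmetic: the paper applies $\ha{f}_{p}\ha{e}_{p}\bM=\bM$ once and uses that $f_{L(\gamma,p)}$ is non-increasing on $\ha{e}_{p}\bM$, whereas you telescope the single-operator estimate across $e_{L(\gamma,p)}$ (correctly invoking Lemma~\ref{lem:def-hae1} to keep the intermediate elements nonzero).
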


\begin{proof}
Let $\bM=(M_{\gamma})_{\gamma \in \Gamma_{\BZ}} \in 
\bz_{\BZ}^{\sigma}(\bO; \ha{\lambda})$, and $p \in \BZ$. 
Suppose that $\bM':=\ha{e}_{p}\bM \ne \bzero$, and write it as:
$\bM'=\ha{e}_{p}\bM=(M_{\gamma}')_{\gamma \in \Gamma_{\BZ}}$. 
In order to prove that $\ha{e}_{p}\bM \in 
\bz_{\BZ}^{\sigma}(\bO; \ha{\lambda})$, 
it suffices to show that $M_{\gamma} \le M_{\gamma}'$ 
for all $\gamma \in \Gamma_{\BZ}$. 
Fix $\gamma \in \Gamma_{\BZ}$. 
We know from Proposition~\ref{prop:crystal} that 
$\ha{f}_{p}\bM'=\ha{f}_{p}\ha{e}_{p}\bM=\bM$. 
Also, it follows from the definition of $\ha{f}_{p}$ that
$M_{\gamma}=(\ha{f}_{p}\bM')_{\gamma}=
 (f_{L(\gamma,p)}\bM')_{\gamma}$. 
Therefore, we deduce from Remark~\ref{rem:AM2}\,(1) that 
$(f_{L(\gamma,p)}\bM')_{\gamma} \le M_{\gamma}'$, and 
hence $M_{\gamma} \le M_{\gamma}'$. 
This proves the lemma. 
\end{proof}

\begin{rem}
In contrast to the situation in Lemma~\ref{lem:lam-ej-st}, 
the set $\bz_{\BZ}^{\sigma}(\bO; \ha{\lambda})$ 
is not stable under the lowering Kashiwara operators 
$\ha{f}_{p}$ on $\bz_{\BZ}^{\sigma}$ for $p \in \BZ$. 
\end{rem}

For each $p \in \BZ$, 
we define a map $\ha{F}_{p}:
\bz_{\BZ}^{\sigma}(\bO; \ha{\lambda}) \rightarrow 
\bz_{\BZ}^{\sigma}(\bO; \ha{\lambda}) \cup \{\bzero\}$ by: 
\begin{equation}
\ha{F}_{p}\bM=
\begin{cases}
\ha{f}_{p}\bM & 
 \text{if $\ha{f}_{p}\bM$ is contained in 
 $\bz_{\BZ}^{\sigma}(\bO;\ha{\lambda})$}, \\[1.5mm]
\bzero & \text{otherwise},
\end{cases}
\end{equation}
for $\bM \in \bz_{\BZ}^{\sigma}(\bO; \ha{\lambda})$; 
by convention, we set $\ha{F}_{p}\bzero:=\bzero$ for all $p \in \BZ$. 
We define the weight $\Wt(\bM)$ of 
$\bM \in \bz_{\BZ}^{\sigma}(\bO; \ha{\lambda})$ by: 
%
%%%%%%%%%%%%%%%
%%% eq:haWt %%%
%%%%%%%%%%%%%%%
%
\begin{equation} \label{eq:haWt}
\Wt(\bM)
=\ha{\lambda}+\wt(\bM)
=\ha{\lambda}+
\sum_{i \in \ha{I}} M_{\Lambda_{i}} \, \ha{h}_{i},
\end{equation}
where $M_{\Lambda_{i}}:=\Theta(\bM)_{\Lambda_{i}}$ 
for $i \in \ha{I}$. Also, we set 
\begin{equation}
\ha{\Phi}_{p}(\bM):=
 \pair{\Wt(\bM)}{\ha{\alpha}_{\ol{p}}}+\ha{\ve}_{p}(\bM)
\quad \text{for 
$\bM \in \bz_{\BZ}^{\sigma}(\bO;\ha{\lambda})$ 
and $p \in \BZ$}.
\end{equation}
Then, it is easily seen 
from the definition \eqref{eq:def-have} of 
the map $\ha{\ve}_{p}$ and 
Remark~\ref{rem:sig-wlam2} that 
%
%%%%%%%%%%%%%%%%%%%
%%% eq:lam-havp %%%
%%%%%%%%%%%%%%%%%%%
%
\begin{equation} \label{eq:lam-havp}
\ha{\Phi}_{p}(\bM)=
 M_{\Lambda_{p}}-M_{s_{p}\Lambda_{p}}+
 \pair{\ha{\lambda}}{\ha{\alpha}_{\ol{p}}},
\end{equation}
where $M_{\Lambda_{p}}:=\Theta(\bM)_{\Lambda_{p}}$ and 
$M_{s_{p}\Lambda_{p}}:=\Theta(\bM)_{s_{p}\Lambda_{p}}$ 
(cf. \eqref{eq:vp-bM}). 
%
%%%%%%%%%%%%%%%%%%%%%
%%% prop:crystal2 %%%
%%%%%%%%%%%%%%%%%%%%%
%
\begin{prop} \label{prop:crystal2}
{\rm (1)} The set $\bz_{\BZ}^{\sigma}(\bO;\ha{\lambda})$, 
equipped with the maps 
$\Wt$, $\ha{e}_{p},\,\ha{F}_{p} \ (p \in \ha{I})$, and 
$\ha{\ve}_{p},\,\ha{\Phi}_{p} \ (p \in \ha{I})$ above, is 
a crystal for $U_{q}(\ha{\Fg}^{\vee})$. 

{\rm (2)} For $\bM \in \bz_{\BZ}^{\sigma}(\bO;\ha{\lambda})$ 
and $p \in \ha{I}$, there hold
\begin{equation*}
\ha{\ve}_{p}(\bM)=
  \max \bigl\{N \ge 0 \mid \ha{e}_{p}^{N}\bM \ne \bzero\bigr\}, \qquad
\ha{\Phi}_{p}(\bM)=
  \max \bigl\{N \ge 0 \mid \ha{F}_{p}^{N}\bM \ne \bzero\bigr\}.
\end{equation*}
\end{prop}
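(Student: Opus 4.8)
The plan is to verify, for part~(1), the six defining conditions of a crystal in \cite[Definition~4.5.1]{HK}, reducing each to the crystal structure on $\bz_{\BZ}^{\sigma}$ already established in Proposition~\ref{prop:crystal}, and, for part~(2), to pin down precisely when $\ha{e}_{p}$ and $\ha{F}_{p}$ produce $\bzero$. Condition~(1), namely $\ha{\Phi}_{p}(\bM)=\pair{\Wt(\bM)}{\ha{\alpha}_{p}}+\ha{\ve}_{p}(\bM)$, is immediate from the definition of $\ha{\Phi}_{p}$. Since $\Wt=\ha{\lambda}+\wt$ differs from $\wt$ by the constant $\ha{\lambda}$, and since $\ha{e}_{p}$, $\ha{\ve}_{p}$ on $\bz_{\BZ}^{\sigma}(\bO;\ha{\lambda})$ are the restrictions of the operators on $\bz_{\BZ}^{\sigma}$ (with $\bz_{\BZ}^{\sigma}(\bO;\ha{\lambda})\cup\{\bzero\}$ stable under $\ha{e}_{p}$ by Lemma~\ref{lem:lam-ej-st}), conditions~(2) and (4) follow from Proposition~\ref{prop:crystal} together with condition~(1). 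Whenever $\ha{F}_{p}\bM\ne\bzero$ we have $\ha{F}_{p}\bM=\ha{f}_{p}\bM$, so conditions~(3) and (5) likewise follow from the corresponding relations in Proposition~\ref{prop:crystal}. For condition~(6), given $\bM,\bM'\in\bz_{\BZ}^{\sigma}(\bO;\ha{\lambda})$: if $\ha{F}_{p}\bM=\bM'$ then $\ha{f}_{p}\bM=\bM'$ and Proposition~\ref{prop:crystal} gives $\ha{e}_{p}\bM'=\ha{e}_{p}\ha{f}_{p}\bM=\bM$; conversely, if $\ha{e}_{p}\bM'=\bM\ne\bzero$ then $\ha{f}_{p}\bM=\ha{f}_{p}\ha{e}_{p}\bM'=\bM'$ lies in $\bz_{\BZ}^{\sigma}(\bO;\ha{\lambda})$, whence $\ha{F}_{p}\bM=\bM'$.

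For part~(2), the first equality is inherited from Remark~\ref{rem:have-max}, since $\ha{e}_{p}$ and $\ha{\ve}_{p}$ are restrictions and $\bz_{\BZ}^{\sigma}(\bO;\ha{\lambda})\cup\{\bzero\}$ is $\ha{e}_{p}$-stable. The substance is the second equality, whose key step is to show
\[
\ha{F}_{p}\bM\ne\bzero \iff \ha{\Phi}_{p}(\bM)>0, \qquad\text{together with}\qquad \ha{\Phi}_{p}(\bM)\ge 0 .
\]
Since $\ha{f}_{p}\bM$ automatically lies in the connected component $\bz_{\BZ}^{\sigma}(\bO)$ and is $\sigma$-invariant (Proposition~\ref{prop:haf-stable}), $\ha{F}_{p}\bM\ne\bzero$ amounts to checking the inequalities \eqref{eq:blam-aff} for $\ha{f}_{p}\bM$. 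For $\gamma=-s_{i}\Lambda_{i}$ one computes $L(\gamma,p)=\{i\}$ if $\ol{i}=\ol{p}$ and $L(\gamma,p)=\emptyset$ otherwise, so only indices $i$ with $\ol{i}=\ol{p}$ can cause failure; by $\sigma$-invariance it suffices to treat $i=p$. Using $M_{-\Lambda_{p}}=0$ (the normalization $M_{-\vpi_{p}^{I}}=0$ defining $\bz_{I}$) and \eqref{eq:AM2}, I would establish
\[
(\ha{f}_{p}\bM)_{-s_{p}\Lambda_{p}}=(f_{p}\bM)_{-s_{p}\Lambda_{p}}=\min\bigl(M_{-s_{p}\Lambda_{p}},\,M_{\Lambda_{p}}-M_{s_{p}\Lambda_{p}}-1\bigr),
\]
and compare with $-\pair{\ha{\lambda}}{\ha{\alpha}_{\ol{p}}}$ via \eqref{eq:lam-havp}: as $M_{-s_{p}\Lambda_{p}}\ge-\pair{\ha{\lambda}}{\ha{\alpha}_{\ol{p}}}$ already holds, \eqref{eq:blam-aff} is preserved precisely when $\ha{\Phi}_{p}(\bM)-1\ge 0$. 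Nonnegativity of $\ha{\Phi}_{p}(\bM)$ I would obtain by restriction: choosing an interval $I\ni p-1,p,p+1$ with $I\in\Int(\bM;e,p)\cap\Int(\bM;s_{p},p)$ and the dominant weight $\lambda'\in\Fh_{I}$ for $\Fg_{I}^{\vee}$ with $\pair{\lambda'}{\alpha_{i}}=\pair{\ha{\lambda}}{\ha{\alpha}_{\ol{i}}}$ for $i\in I$ (which exists since $\pair{\ha{\lambda}}{\ha{\alpha}_{\ol{i}}}\ge0$), the inequalities \eqref{eq:blam-aff} give $\bM_{I}\in\bz_{I}(\lambda')$, and then \eqref{eq:vp-bM} yields $\ha{\Phi}_{p}(\bM)=\Phi_{p}(\bM_{I})=\max\{N\ge0\mid F_{p}^{N}\bM_{I}\ne\bzero\}\ge 0$ by Theorem~\ref{thm:blambda}. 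Combining this characterization with condition~(5) of part~(1), namely $\ha{\Phi}_{p}(\ha{F}_{p}\bM)=\ha{\Phi}_{p}(\bM)-1$, and iterating gives $\ha{\Phi}_{p}(\bM)=\max\{N\ge0\mid\ha{F}_{p}^{N}\bM\ne\bzero\}$.

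The main obstacle is the second equality in part~(2): isolating the single relevant chamber weight $-s_{p}\Lambda_{p}$, justifying that the $\bz_{I}$-normalization forces $M_{-\Lambda_{p}}=0$, and carrying out the $\min$-computation in tandem with the nonnegativity $\ha{\Phi}_{p}(\bM)\ge 0$; once these are in place, the iteration argument and all of part~(1) are essentially bookkeeping on top of Proposition~\ref{prop:crystal}.
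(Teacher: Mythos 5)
Your proposal is correct and follows essentially the same route as the paper: part (1) by transport from Proposition~\ref{prop:crystal}, nonnegativity of $\ha{\Phi}_{p}$ by restricting to $\bz_{I}(\lambda)\cong\CB_{I}(\lambda)$ for a suitable interval $I$ and weight $\lambda$, and the observation that $-s_{p}\Lambda_{p}$ (up to $\sigma$) is the only chamber weight that can violate \eqref{eq:blam-aff} after applying $\ha{f}_{p}$, since $L(-s_{i}\Lambda_{i},p)=\emptyset$ for $\ol{i}\ne\ol{p}$. The only (harmless) variation is that you establish the key equivalence $\ha{F}_{p}\bM=\bzero\iff\ha{\Phi}_{p}(\bM)=0$ by computing the $\min$ in \eqref{eq:AM2} directly, using the normalization $M_{-\Lambda_{p}}=0$ and \eqref{eq:lam-havp}, whereas the paper routes the same computation through $F_{p}\bM_{I}=\bzero$ in the finite-rank crystal $\bz_{I}(\lambda)$ and Theorem~\ref{thm:blambda}.
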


\begin{proof}
(1) This follows easily from Proposition~\ref{prop:crystal}. 
As examples, we show that 
%
%%%%%%%%%%%%%%%%%
%%% eq:cry2-1 %%%
%%%%%%%%%%%%%%%%%
%
\begin{equation} \label{eq:cry2-1}
\Wt(\ha{F}_{p}\bM)=\Wt(\bM)-\ha{h}_{p},
\end{equation}
%
%%%%%%%%%%%%%%%%%
%%% eq:cry2-2 %%%
%%%%%%%%%%%%%%%%%
%
\begin{equation} \label{eq:cry2-2}
\ha{\ve}_{p}(\ha{F}_{p}\bM)=\ha{\ve}_{p}(\bM)+1
\quad \text{and} \quad 
\ha{\Phi}_{p}(\ha{F}_{p}\bM)=\ha{\Phi}_{p}(\bM)-1, 
\end{equation}
for $\bM \in \bz_{\BZ}^{\sigma}(\bO;\ha{\lambda})$ 
and $p \in \ha{I}$ if $\ha{F}_{p}\bM \ne \bzero$. 
Note that in this case, $\ha{F}_{p}\bM=\ha{f}_{p}\bM$ 
by the definition of $\ha{F}_{p}$. 
First we show \eqref{eq:cry2-1}. 
It follows from the definition of $\Wt$ that 
\begin{equation*}
\Wt(\ha{F}_{p}\bM)=
\Wt(\ha{f}_{p}\bM)=
\ha{\lambda}+\wt(\ha{f}_{p}\bM). 
\end{equation*}
Since $\wt(\ha{f}_{p}\bM)=\wt(\bM)-\ha{h}_{p}$ 
by Proposition~\ref{prop:crystal}, we have
\begin{equation*}
\Wt(\ha{F}_{p}\bM)=
\ha{\lambda}+\wt(\ha{f}_{p}\bM)=
\ha{\lambda}+\wt(\bM)-\ha{h}_{p}=\Wt(\bM)-\ha{h}_{p},
\end{equation*}
as desired. Next we show \eqref{eq:cry2-2}. 
It follows from (the proof of) 
Proposition~\ref{prop:crystal} that 
$\ha{\ve}_{p}(\ha{F}_{p}\bM)=
\ha{\ve}_{p}(\ha{f}_{p}\bM)=\ha{\ve}_{p}(\bM)+1$.
Also, we compute: 
\begin{align*}
\ha{\Phi}_{p}(\ha{F}_{p}\bM) & =
\ha{\Phi}_{p}(\ha{f}_{p}\bM) =
\pair{\Wt(\ha{f}_{p}\bM)}{\ha{\alpha}_{p}}+
\ha{\ve}_{p}(\ha{f}_{p}\bM) 
\quad \text{by the definition of $\ha{\Phi}_{p}$} \\
& = \pair{\Wt(\bM)-\ha{h}_{p}}{\ha{\alpha}_{p}}+
    \ha{\ve}_{p}(\bM)+1 
\quad \text{by \eqref{eq:cry2-1} and 
Proposition~\ref{prop:crystal}} \\
& = \pair{\Wt(\bM)}{\ha{\alpha}_{p}}+ \ha{\ve}_{p}(\bM)-1 
  = \ha{\Phi}_{p}(\bM)-1
\quad \text{by the definition of $\ha{\Phi}_{p}$},
\end{align*}
as desired. 

(2) The first equation 
follows immediately from Remark~\ref{rem:have-max} 
together with Lemma~\ref{lem:lam-ej-st}.
We will prove the second equation. 
Fix $p \in \ha{I}$. We first show that 
%
%%%%%%%%%%%%%%%%
%%% eq:Phi-0 %%%
%%%%%%%%%%%%%%%%
%
\begin{equation} \label{eq:Phi-0}
\ha{\Phi}_{p}(\bM) \ge 0
\quad \text{for all $\bM \in 
\bz_{\BZ}^{\sigma}(\bO;\ha{\lambda})$}.
\end{equation}
Fix $\bM \in \bz_{\BZ}^{\sigma}(\bO;\ha{\lambda})$, 
and take an interval $I$ in $\BZ$ such that 
$I \in \Int(\bM;e,p) \cap \Int(\bM;s_{p},p)$. 
Then we see from \eqref{eq:lam-havp} that 
%
%%%%%%%%%%%%%%%%
%%% eq:Phi-1 %%%
%%%%%%%%%%%%%%%%
%
\begin{equation} \label{eq:Phi-1}
\ha{\Phi}_{p}(\bM)=
 M_{\Lambda_{p}}-M_{s_{p}\Lambda_{p}}+
 \pair{\ha{\lambda}}{\ha{\alpha}_{p}}=
 M_{\vpi_{p}^{I}}-M_{s_{p}\vpi_{p}^{I}}+
 \pair{\ha{\lambda}}{\ha{\alpha}_{p}}.
\end{equation}
Now we define a dominant integral weight 
$\lambda \in \Fh_{I}$ for $\Fg_{I}^{\vee}$ by: 
$\pair{\lambda}{\alpha_{i}}=
 \pair{\ha{\lambda}}{\ha{\alpha}_{\ol{i}}}$ 
for $i \in I$. Then, we deduce 
from \eqref{eq:blambda}, \eqref{eq:blam-aff}, and 
\eqref{eq:bij-index3} that $\bM_{I} \in \bz_{I}$ 
is contained in $\bz_{I}(\lambda) \subset \bz_{I}$. 
Because $\bz_{I}(\lambda)$ is isomorphic, 
as a crystal for $U_{q}(\Fg_{I}^{\vee})$, to 
the crystal basis $\CB_{I}(\lambda)$ 
(see Theorem~\ref{thm:blambda}), it follows that 
$\Phi_{p}(\bM_{I}) \ge 0$. 
Also, we see from \eqref{eq:vp-bM} that 
%
%%%%%%%%%%%%%%%%
%%% eq:Phi-2 %%%
%%%%%%%%%%%%%%%%
%
\begin{equation} \label{eq:Phi-2}
\Phi_{p}(\bM_{I})=
 M_{\vpi_{p}^{I}}-M_{s_{p}\vpi_{p}^{I}}+
 \pair{\lambda}{\alpha_{p}}.
\end{equation}
Since $\pair{\lambda}{\alpha_{p}}=
 \pair{\ha{\lambda}}{\ha{\alpha}_{p}}$ 
by the definition of $\lambda \in \Fh_{I}$, 
we conclude from \eqref{eq:Phi-1} and \eqref{eq:Phi-2}
that $\ha{\Phi}_{p}(\bM)=\Phi_{p}(\bM_{I}) \ge 0$, 
as desired.

Next we show that 
for $\bM \in \bz_{\BZ}^{\sigma}(\bO;\ha{\lambda})$,
%
%%%%%%%%%%%%%%%%
%%% eq:Phi-3 %%%
%%%%%%%%%%%%%%%%
%
\begin{equation} \label{eq:Phi-3}
\ha{F}_{p}\bM=\bzero
\quad \text{if and only if} \quad
\ha{\Phi}_{p}(\bM)=0.
\end{equation}
Fix $\bM \in \bz_{\BZ}^{\sigma}(\bO;\ha{\lambda})$. 
Suppose that $\ha{\Phi}_{p}(\bM)=0$, and $\ha{F}_{p}\bM \ne \bzero$. 
Then, since $\ha{\Phi}_{p}(\ha{F}_{p}\bM)=
\ha{\Phi}_{p}(\bM)-1$ by \eqref{eq:cry2-2}, we have 
$\ha{\Phi}_{p}(\ha{F}_{p}\bM)=-1$, 
which contradicts \eqref{eq:Phi-0}. 
Hence, if $\ha{\Phi}_{p}(\bM)=0$, then 
$\ha{F}_{p}\bM=\bzero$. To show the converse, assume 
that $\ha{F}_{p}\bM=\bzero$, or equivalently, 
$\ha{f}_{p}\bM \notin \bz_{\BZ}^{\sigma}(\bO;\ha{\lambda})$. 
Let us write $\bM \in \bz_{\BZ}^{\sigma}(\bO;\ha{\lambda})$ and 
$\ha{f}_{p}\bM \in \bz_{\BZ}^{\sigma}(\bO)$ as: 
$\bM=(M_{\gamma})_{\gamma \in \Gamma_{\BZ}}$ and 
$\ha{f}_{p}\bM=(M_{\gamma}')_{\gamma \in \Gamma_{\BZ}}$, 
respectively.
From the assumption that 
$\ha{f}_{p}\bM \notin \bz_{\BZ}^{\sigma}(\bO;\ha{\lambda})$, 
it follows that 
$M_{-s_{q}\Lambda_{q}}'
 < -\pair{\ha{\lambda}}{\ha{\alpha}_{\ol{q}}}$ for some $q \in \BZ$. 
Note that since $M_{\gamma}'=M_{\sigma^{-1}(\gamma)}'$ 
for all $\gamma \in \Gamma_{\BZ}$, 
we may assume $q \in \ha{I}$. Then, we infer that 
this $q$ is equal to $p$. Indeed, 
for each $i \in \ha{I} \setminus \{p\}$, 
we have $L(-s_{i}\Lambda_{i},p)=\emptyset$, since 
$\pair{h_{i}}{s_{i}\Lambda_{i}}=-1$ and 
$\pair{h_{j}}{s_{i}\Lambda_{i}} \ge 0$ 
for all $j \in \BZ$ with $j \ne i$. 
Therefore, by the definition of $\ha{f}_{p}$, 
\begin{equation*}
M_{-s_{i}\Lambda_{i}}'
 =(\ha{f}_{p}\bM)_{-s_{i}\Lambda_{i}}
 =(f_{\emptyset}\bM)_{-s_{i}\Lambda_{i}}
 =M_{-s_{i}\Lambda_{i}}.
\end{equation*}
Hence it follows that $M_{-s_{i}\Lambda_{i}}'=
M_{-s_{i}\Lambda_{i}} \ge -\pair{\ha{\lambda}}{\ha{\alpha}_{\ol{i}}}$ 
since $\bM \in \bz_{\BZ}^{\sigma}(\bO;\ha{\lambda})$. 
Consequently, $q \in \ha{I}$ is not equal to 
any $i \in \ha{I} \setminus \{p\}$, that is, $q=p$. 

Now, as in the proof of \eqref{eq:Phi-0} above, 
take an interval $I$ in $\BZ$ such that 
$I \in \Int(\bM;e,p) \cap \Int(\bM;s_{p},p)$, and 
then define a dominant integral weight 
$\lambda \in \Fh_{I}$ for $\Fg_{I}^{\vee}$ by: 
$\pair{\lambda}{\alpha_{i}}=
 \pair{\ha{\lambda}}{\ha{\alpha}_{\ol{i}}}$ 
for $i \in I$; we know from the argument above that 
$\bM_{I} \in \bz_{I}(\lambda)$, and 
$\ha{\Phi}_{p}(\bM)=\Phi_{p}(\bM_{I})$. 
Therefore, in order to show that 
$\ha{\Phi}_{p}(\bM)=0$, it suffices to show that 
$\Phi_{p}(\bM_{I})=0$, which is equivalent to 
$F_{p}\bM_{I}=\bzero$ by Theorem~\ref{thm:blambda}. 
Recall from the above that $M_{-s_{p}\Lambda_{p}}' < 
-\pair{\ha{\lambda}}{\ha{\alpha}_{p}}=
-\pair{\lambda}{\alpha_{p}}$.
Also, it follows from the definition of 
$\ha{f}_{p}$ on $\bz_{\BZ}^{\sigma}$ and 
the definition of $f_{p}$ on $\bz_{\BZ}$ that 
\begin{align*}
M_{-s_{p}\Lambda_{p}}' & =
 (\ha{f}_{p}\bM)_{-s_{p}\Lambda_{p}} = 
 (f_{p}\bM)_{-s_{p}\Lambda_{p}}
 \quad \text{since $L(-s_{p}\Lambda_{p},p)=\bigl\{p\bigr\}$} \\
& =(f_{p}\bM_{I})_{-s_{p}\Lambda_{p}}. 
\end{align*}
Combining these, we obtain 
$(f_{p}\bM_{I})_{-s_{p}\Lambda_{p}} < 
-\pair{\lambda}{\alpha_{p}}$, which implies that 
$f_{p}\bM_{I} \notin \bz_{I}(\lambda)$, 
and hence $F_{p}\bM_{I}=\bzero$ by the definition. 
Thus we have shown \eqref{eq:Phi-3}. 

From \eqref{eq:Phi-0}, \eqref{eq:Phi-3}, and the second equation of 
\eqref{eq:cry2-2}, we deduce that 
$\ha{\Phi}_{p}(\bM)=
  \max \bigl\{N \ge 0 \mid \ha{F}_{p}^{N}\bM \ne \bzero\bigr\}$
for $\bM \in \bz_{\BZ}^{\sigma}(\bO;\ha{\lambda})$ 
and $p \in \ha{I}$, as desired. 
This completes the proof of the proposition. 
\end{proof}

The following theorem is the second main result of this paper; 
the proof will be given in the next section. 
%
%%%%%%%%%%%%%%%%%
%%% thm:main2 %%%
%%%%%%%%%%%%%%%%%
%
\begin{thm} \label{thm:main2}
Let $\ha{\lambda} \in \Fh$ be 
a dominant integral weight for $\ha{\Fg}^{\vee}$. 
The crystal $\bz_{\BZ}^{\sigma}(\bO; \ha{\lambda})$ is 
isomorphic, as a crystal for $U_{q}(\ha{\Fg}^{\vee})$, 
to the crystal basis $\ha{\CB}(\ha{\lambda})$ of 
the irreducible highest weight $U_{q}(\ha{\Fg}^{\vee})$-module 
of highest weight $\ha{\lambda}$. 
\end{thm}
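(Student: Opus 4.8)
The plan is to apply Stembridge's characterization of highest weight crystals for simply-laced Kac-Moody algebras (as restated in the Appendix) to the crystal $\bz_{\BZ}^{\sigma}(\bO;\ha{\lambda})$; this is legitimate because $\ha{\Fg}^{\vee}$ is of type $A_{\ell}^{(1)}$ and hence simply-laced. By Proposition~\ref{prop:crystal2}, the set $\bz_{\BZ}^{\sigma}(\bO;\ha{\lambda})$ is a crystal for $U_{q}(\ha{\Fg}^{\vee})$ which is, moreover, seminormal in the sense that $\ha{\ve}_{p}(\bM)=\max\{N\ge 0 \mid \ha{e}_{p}^{N}\bM \ne \bzero\}$ and $\ha{\Phi}_{p}(\bM)=\max\{N\ge 0 \mid \ha{F}_{p}^{N}\bM \ne \bzero\}$. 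Since $\ha{\lambda}$ is dominant, $\bO$ satisfies \eqref{eq:blam-aff}, so $\bO\in\bz_{\BZ}^{\sigma}(\bO;\ha{\lambda})$; it has weight $\Wt(\bO)=\ha{\lambda}$ and is annihilated by every $\ha{e}_{p}$ because $\ha{\ve}_{p}(\bO)=0$. Thus it remains to verify (i) that $\bO$ is the unique highest weight element and that the crystal is connected, and (ii) the local (two-index) axioms of Stembridge.

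For (i), since $\Wt(\bM)=\ha{\lambda}+\wt(\bM)$ lies in $\ha{\lambda}-\sum_{i}\BZ_{\ge 0}\ha{h}_{i}$ and each $\ha{e}_{p}$ raises the weight by $\ha{h}_{p}$ while preserving $\bz_{\BZ}^{\sigma}(\bO;\ha{\lambda})\cup\{\bzero\}$ (Lemma~\ref{lem:lam-ej-st}), any sequence of raising operators terminates at a highest weight element. If $\mathbf{N}$ is such an element, then $\ha{\ve}_{p}(\mathbf{N})=\ve_{p}(\mathbf{N})=0$ for all $p\in\BZ$; restricting to an arbitrary finite interval $I$ and using \eqref{eq:vej}, one gets $\ve_{p}(\mathbf{N}_{I})=0$ for all $p\in I$, so $\mathbf{N}_{I}$ is the unique highest weight element of $\bz_{I}\cong\CB_{I}(\infty)$ (Theorem~\ref{thm:binf}), namely the zero collection $\bO_{I}$. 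As this holds for every $I$, we conclude $\mathbf{N}=\bO$. Connectedness then follows: given $\bM$, a chain $\ha{e}_{p_{1}}\cdots\ha{e}_{p_{k}}\bM=\bO$ can be reversed by the operators $\ha{F}_{p}$, since whenever $\ha{e}_{p}\bM'\ne\bzero$ one has $\ha{f}_{p}\ha{e}_{p}\bM'=\bM'$ by Proposition~\ref{prop:crystal}, and $\bM'\in\bz_{\BZ}^{\sigma}(\bO;\ha{\lambda})$ forces $\ha{F}_{p}(\ha{e}_{p}\bM')=\ha{f}_{p}(\ha{e}_{p}\bM')=\bM'$.

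For (ii), the Stembridge axioms involve only a pair of indices $p,q\in\ha{I}$ at a time. The crucial point is that one may choose representatives $p,q\in\BZ$ of the two classes modulo $\ell+1$ with $0<|p-q|<\ell$: for a pair other than $\{0,\ell\}$ one takes $p,q\in\{0,\dots,\ell\}$ directly, while for $\{0,\ell\}$ one replaces $0$ by $\ell+1$, so that $|p-q|=1<\ell$ (here $\ell\ge 2$ is used); by \eqref{eq:fp-fq}, \eqref{eq:ep-eq}, and \eqref{eq:have} this changes neither the operators nor the $\ha{\ve}$-values. With such representatives, Lemma~\ref{lem:tech2} rewrites any word in $\ha{e}_{p},\ha{f}_{p},\ha{e}_{q},\ha{f}_{q}$ applied to $\bM$ as the corresponding word in $e_{L_{p}},f_{L_{p}},e_{L_{q}},f_{L_{q}}$, agreeing component-wise on each $\gamma\in\Gamma_{\BZ}$. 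By Remark~\ref{rem:dist} the index set $L_{p}\cup L_{q}$ splits into blocks mutually at distance $\ge 2$, so by Lemma~\ref{lem:ef}\,(3) operators attached to distinct blocks commute, and each block is either a single index (when $\ha{a}_{pq}=0$) or an adjacent pair (when $\ha{a}_{pq}=-1$). Restricting to a sufficiently large interval $I$, using $\ha{\ve}_{p}(\bM)=\ve_{p}(\bM_{I})$ and the fact, established in the proof of Proposition~\ref{prop:crystal2}\,(2), that $\bM_{I}\in\bz_{I}(\lambda)\cong\CB_{I}(\lambda)$, the verification of each local axiom reduces to the corresponding rank-one or rank-two statement in the genuine crystal basis $\CB_{I}(\lambda)$ (Theorem~\ref{thm:blambda}), where it holds.

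I expect the heart of the argument, and its main difficulty, to be step (ii): carrying out the reduction of the affine two-index local axioms to finite type. The delicate points are the bookkeeping that lets Lemma~\ref{lem:tech2} and Remark~\ref{rem:dist} decouple the clustered representatives of each affine node into mutually far-apart blocks, and the treatment of the affine-adjacent pair $\{0,\ell\}$, which is brought within the range $0<|p-q|<\ell$ required by Lemma~\ref{lem:tech2} only through the shift of representatives. Once these are in place, transferring Stembridge's axioms from $\CB_{I}(\lambda)$ and invoking the characterization theorem yields the desired isomorphism $\bz_{\BZ}^{\sigma}(\bO;\ha{\lambda})\cong\ha{\CB}(\ha{\lambda})$.
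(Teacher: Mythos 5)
Your proposal follows essentially the same route as the paper: Proposition~\ref{prop:crystal2} supplies the semiregular crystal structure, condition (C6) of Theorem~\ref{thm:stem} is checked on $\bO$, and the local axioms (C1)--(C5) are verified by choosing representatives $r,\,t$ with $0<|r-t|<\ell$ exactly as in \eqref{eq:def-rt} and reducing, via Lemma~\ref{lem:tech2} and Remark~\ref{rem:dist}, to the finite-rank crystals $\bz_{I}(\lambda_{I})\cong\CB_{I}(\lambda_{I})$. The only substantive difference is your step (i): the paper leaves the connectedness hypothesis of Theorem~\ref{thm:stem} implicit, whereas you argue it directly; note, though, that your appeal to \eqref{eq:vej} yields $\ve_{p}(\mathbf{N}_{I})=0$ only for those $p$ with $p-1,\,p+1$ handled by $I$, so the conclusion $\mathbf{N}_{I}=\bO_{I}$ for every interval $I$ still needs an extra word at the endpoints of $I$.
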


%==============================%
%     START SUBSECTION 0405    %
%==============================%
%
\subsection{Proofs of Theorems~\ref{thm:main} and \ref{thm:main2}.}
\label{subsec:proofs}

We first prove Theorem~\ref{thm:main2}; 
Theorem~\ref{thm:main} is obtained as a corollary 
of Theorem~\ref{thm:main2}. 
\begin{proof}[Proof of Theorem~\ref{thm:main2}]
By Proposition~\ref{prop:crystal2} and 
Theorem~\ref{thm:stem} in the Appendix, 
it suffices to prove that 
the crystal $\bz_{\BZ}^{\sigma}(\bO; \ha{\lambda})$
satisfies conditions (C1)--(C6) of Theorem~\ref{thm:stem}. 
First we prove that the crystal 
$\bz_{\BZ}^{\sigma}(\bO; \ha{\lambda})$ satisfies condition (C6). 
Note that $\bO \in \bz_{\BZ}^{\sigma}(\bO; \ha{\lambda})$. 
It follows from the definition of the raising Kashiwara operators 
$\ha{e}_{p}$, $p \in \ha{I}$, on $\bz_{\BZ}^{\sigma}(\bO; \ha{\lambda})$ 
(see also the beginning of \S\ref{subsec:main}) that 
$\ha{e}_{p}\bO=\bzero$ for all $p \in \ha{I}$. 
Also, $\Theta(\bO)_{\Lambda_{p}}$ and $\Theta(\bO)_{s_{p}\Lambda_{p}}$ 
are equal to $0$ by the definitions. Therefore, 
it follows from \eqref{eq:haWt} and \eqref{eq:lam-havp} that 
$\Wt(\bO)=\ha{\lambda}$ and 
$\ha{\Phi}_{p}(\bO)=
\pair{\ha{\lambda}}{\ha{\alpha}_{p}}$ for all $p \in \ha{I}$, as desired.

We also need to prove that 
the crystal $\bz_{\BZ}^{\sigma}(\bO; \ha{\lambda})$
satisfies conditions (C1)--(C5) of Theorem~\ref{thm:stem}. 
We will prove that 
$\bz_{\BZ}^{\sigma}(\bO; \ha{\lambda})$
satisfies condition~(C5); the proofs for 
the other conditions are similar. 
Namely, we will prove the following assertion:  
Let $\bM \in \bz_{\BZ}^{\sigma}(\bO; \ha{\lambda})$, 
and $p,\,q \in \ha{I}$. Assume that 
$\ha{F}_{p}\bM \ne \bzero$ and $\ha{F}_{q}\bM \ne \bzero$, and 
that $\ha{\Phi}_{q}(\ha{F}_{p}\bM)=\ha{\Phi}_{q}(\bM)+1$ and 
$\ha{\Phi}_{p}(\ha{F}_{q}\bM)=\ha{\Phi}_{p}(\bM)+1$. Then, 
\begin{equation} \label{eq:step1}
\ha{F}_{p}\ha{F}_{q}^{2}\ha{F}_{p}\bM \ne \bzero
\quad \text{\rm and} \quad 
\ha{F}_{q}\ha{F}_{p}^{2}\ha{F}_{q}\bM \ne \bzero,
\end{equation}
\begin{equation} \label{eq:step2}
\ha{F}_{p}\ha{F}_{q}^{2}\ha{F}_{p}\bM=
\ha{F}_{q}\ha{F}_{p}^{2}\ha{F}_{q}\bM,
\end{equation}
\begin{equation} \label{eq:step3}
\ha{\ve}_{q}(\ha{F}_{p}\bM)=
\ha{\ve}_{q}(\ha{F}_{p}^{2}\ha{F}_{q}\bM)
\quad \text{\rm and} \quad
\ha{\ve}_{p}(\ha{F}_{q}\bM)=
\ha{\ve}_{p}(\ha{F}_{q}^{2}\ha{F}_{p}\bM).
\end{equation}
Here we note that $p \ne q$. 
Indeed, if $p=q$, then it follows from 
the second equation of \eqref{eq:cry2-2} that 
$\ha{\Phi}_{p}(\ha{F}_{p}\bM)=\ha{\Phi}_{q}(\bM)-1$, 
which contradicts the assumption that 
$\ha{\Phi}_{p}(\ha{F}_{p}\bM)=\ha{\Phi}_{p}(\bM)+1$. 

A key to the proof of \eqref{eq:step1}--\eqref{eq:step3} 
is Claim~\ref{c:main2-2} below. 
For an interval $I$ in $\BZ$, 
we define a dominant integral weight 
$\lambda_{I} \in \Fh_{I}$ for $\Fg_{I}^{\vee}$ by: 
%
%%%%%%%%%%%%%%%
%%% eq:lamI %%%
%%%%%%%%%%%%%%%
%
\begin{equation} \label{eq:lamI}
\pair{\lambda_{I}}{\alpha_{i}}=
\pair{\ha{\lambda}}{\ha{\alpha}_{\ol{i}}}
\quad \text{for $i \in I$}.
\end{equation}
As mentioned in the proof of 
Proposition~\ref{prop:crystal2}\,(2), 
$\bM_{I} \in \bz_{I}$ is contained in 
$\bz_{I}(\lambda_{I}) \subset \bz_{I}$; 
recall from Theorem~\ref{thm:blambda} that 
$\bz_{I}(\lambda_{I})$ is isomorphic, 
as a crystal for $U_{q}(\Fg_{I}^{\vee})$, to 
the crystal basis $\CB_{I}(\lambda_{I})$. 
%
%%%%%%%%%%%%%%%%%
%%% c:main2-2 %%%
%%%%%%%%%%%%%%%%%
%
\begin{claim} \label{c:main2-2}
Let $r,\,t \in \BZ$ be such that 
$\ol{r}=p$, $\ol{t}=q$, and $0 < |r-t| < \ell$. 
Assume that an interval $I$ in $\BZ$ satisfies 
the following conditions\,{\rm:}

{\rm (a1)} 
$I \in \Int(\bM;e,r) \cap \Int(\bM;s_{r},r)${\rm;}

{\rm (a2)}
$I \in \Int(\bM;e,t) \cap \Int(\bM;s_{t},t)${\rm;} 

{\rm (a3)} 
$I \in \Int(\ha{F}_{p}\bM;e,t) \cap 
 \Int(\ha{F}_{p}\bM;s_{t},t)${\rm;}

{\rm (a4)} 
$I \in \Int(\ha{F}_{q}\bM;e,r) \cap 
 \Int(\ha{F}_{q}\bM;s_{r},r)${\rm.}

{\rm (i)} We have 
$\Phi_{r}(\bM_{I})=\ha{\Phi}_{p}(\bM) > 0$ and 
$\Phi_{t}(\bM_{I})=\ha{\Phi}_{q}(\bM) > 0$, and hence 
$F_{r}\bM_{I} \ne \bzero$ and $F_{t}\bM_{I} \ne \bzero$. 
Also, we have 
$\Phi_{t}(F_{r}\bM_{I})=\Phi_{t}(\bM_{I})+1$ and 
$\Phi_{r}(F_{t}\bM_{I})=\Phi_{r}(\bM_{I})+1$. 

{\rm (ii)} We have
\begin{equation*}
F_{r}F_{t}^{2}F_{r}\bM_{I} \ne \bzero
\quad \text{\rm and} \quad 
F_{t}F_{r}^{2}F_{t}\bM_{I} \ne \bzero,
\end{equation*}
\begin{equation*}
F_{r}F_{t}^{2}F_{r}\bM_{I}=
F_{t}F_{r}^{2}F_{t}\bM_{I},
\end{equation*}
\begin{equation*}
\ve_{t}(F_{r}\bM_{I})=
\ve_{t}(F_{r}^{2}F_{t}\bM_{I})
\quad \text{\rm and} \quad
\ve_{r}(F_{t}\bM_{I})=
\ve_{r}(F_{t}^{2}F_{r}\bM_{I}).
\end{equation*}
\end{claim}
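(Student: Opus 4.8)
The claim is the bridge that lets us run the finite-type crystal theory of $\bz_I(\lambda_I)\cong\CB_I(\lambda_I)$ inside the affine picture, so the plan is to first transport the standing affine hypotheses on $\bM$ into the corresponding finite-type hypotheses on $\bM_I$ for the pair $(r,t)$ (this is part~(i)), and then simply read off part~(ii) from the fact that $\CB_I(\lambda_I)$ satisfies Stembridge's local axioms. Before anything else I would record a numerical observation: condition~(C5) concerns a pair with $\ha{a}_{pq}=\ha{a}_{qp}=-1$, so $p-q\equiv\pm1\pmod{\ell+1}$, and the congruences $\ol{r}=p$, $\ol{t}=q$ give $r-t\equiv p-q\equiv\pm1\pmod{\ell+1}$. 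The only integer in $(-\ell,\ell)\setminus\{0\}$ that is congruent to $\pm1$ modulo $\ell+1$ is $\pm1$ itself, so the hypothesis $0<|r-t|<\ell$ forces $|r-t|=1$. Hence $a_{rt}=a_{tr}=-1$ and $r,t$ are adjacent nodes of $\Fg_I$ (and both lie in $I$ by (a1),(a2)), which is exactly the configuration in which the finite (C5) speaks.

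For the first two equalities of part~(i) I would reproduce the computation already used in the proof of Proposition~\ref{prop:crystal2}\,(2). By \eqref{eq:vp-bM}, $\Phi_r(\bM_I)=M_{\vpi_r^I}-M_{s_r\vpi_r^I}+\pair{\lambda_I}{\alpha_r}$; condition~(a1) replaces the two components by $M_{\Lambda_r}$ and $M_{s_r\Lambda_r}$, the $\sigma$-invariance of $\bM$ (Remark~\ref{rem:sig-wlam2}) replaces these by $M_{\Lambda_p}$ and $M_{s_p\Lambda_p}$ since $\ol{r}=p$, and \eqref{eq:lamI} gives $\pair{\lambda_I}{\alpha_r}=\pair{\ha{\lambda}}{\ha{\alpha}_p}$; comparison with \eqref{eq:lam-havp} yields $\Phi_r(\bM_I)=\ha{\Phi}_p(\bM)$, and symmetrically, using (a2), $\Phi_t(\bM_I)=\ha{\Phi}_q(\bM)$. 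Positivity is then immediate: $\ha{F}_p\bM\ne\bzero$ forces $\ha{\Phi}_p(\bM)>0$ by Proposition~\ref{prop:crystal2}\,(2), and since $\bz_I(\lambda_I)\cong\CB_I(\lambda_I)$ the value $\Phi_r(\bM_I)$ is the maximal $N$ with $F_r^N\bM_I\ne\bzero$, whence $F_r\bM_I\ne\bzero$; likewise $F_t\bM_I\ne\bzero$.

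The delicate point, and the step I expect to be the main obstacle, is the pair of increment identities $\Phi_t(F_r\bM_I)=\Phi_t(\bM_I)+1$ and $\Phi_r(F_t\bM_I)=\Phi_r(\bM_I)+1$. One cannot simply assert $(\ha{F}_p\bM)_I=F_r\bM_I$, because $\ha{f}_p$ acts through every index congruent to $p$ modulo $\ell+1$, not through $r$ alone, so these two data differ away from the nodes near $t$. The remedy is to compare only the two components $\vpi_t^I$ and $s_t\vpi_t^I$ that enter $\Phi_t$ via \eqref{eq:vp-bM}. Using \eqref{eq:pair-vpi}, $\pair{h_r}{\vpi_t^I}=0$ leaves the $\vpi_t^I$-component untouched by $f_r$, while $\pair{h_r}{s_t\vpi_t^I}=-a_{rt}=1>0$, so the $s_t\vpi_t^I$-component of $F_r\bM_I=f_r\bM_I$ is the $\min$ in \eqref{eq:AM} with $c_r(\bM_I)=c_r(\bM)$ by (a1). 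On the other side, \eqref{eq:pair-vpi} and $|r-t|=1$ give $L(\vpi_t^I,p)=\emptyset$ and $L(s_t\vpi_t^I,p)=\{r\}$ (the unique index congruent to $p$ adjacent to $t$, where $\ell\ge2$ rules out the opposite neighbour), so by \eqref{eq:AM2} the same two components of $(\ha{F}_p\bM)_I=(\ha{f}_p\bM)_I$ coincide with those of $f_r\bM_I$. Therefore $\Phi_t(F_r\bM_I)=\Phi_t\bigl((\ha{F}_p\bM)_I\bigr)$, and applying the single-node computation of the previous paragraph to $\ha{F}_p\bM$, which is legitimate because (a3) supplies the required memberships in $\Int$, gives $\Phi_t\bigl((\ha{F}_p\bM)_I\bigr)=\ha{\Phi}_q(\ha{F}_p\bM)=\ha{\Phi}_q(\bM)+1=\Phi_t(\bM_I)+1$. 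The second increment identity is proved the same way with the roles of $r,t$ and of (a3),(a4) exchanged.

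With part~(i) in hand, part~(ii) requires no further computation. Part~(i) says precisely that $\bM_I\in\bz_I(\lambda_I)$ satisfies the hypotheses of Stembridge's condition~(C5) for the adjacent pair $(r,t)$. Since $\bz_I(\lambda_I)\cong\CB_I(\lambda_I)$ (Theorem~\ref{thm:blambda}) is the crystal basis of an integrable highest weight module over the simply-laced algebra $U_q(\Fg_I^{\vee})$, it satisfies conditions (C1)--(C6) of Theorem~\ref{thm:stem}; in particular the conclusions of (C5) hold, and these conclusions are verbatim the three displayed assertions of part~(ii). This closes the claim, with essentially all of the genuine work sitting in the component-by-component comparison of the third paragraph.
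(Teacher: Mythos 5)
Your proposal is correct and follows essentially the same route as the paper: part (i) is the component-by-component comparison of $M_{\vpi_t^I}$ and $M_{s_t\vpi_t^I}$ between $F_r\bM_I$ and $(\ha{F}_p\bM)_I$ via Remark~\ref{rem:haf} and the inclusions $L(\vpi_t^I,p)=\emptyset$, $L(s_t\vpi_t^I,p)\subseteq\{r\}$, combined with the $\sigma$-invariance (Remark~\ref{rem:sig-wlam2}) and \eqref{eq:lam-havp}; part (ii) is read off from Theorem~\ref{thm:blambda} and Stembridge's (C5). Your preliminary reduction to $|r-t|=1$ is harmless extra detail the paper does not use (its argument works for any $0<|r-t|<\ell$), and your assertion $L(s_t\vpi_t^I,p)=\{r\}$ should strictly be the containment $L(s_t\vpi_t^I,p)\subseteq\{r\}$, which is all Remark~\ref{rem:haf} requires.
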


\noindent
{\it Proof of Claim~\ref{c:main2-2}.} 
(i) We write $\bM \in \bz_{\BZ}^{\sigma}(\bO;\ha{\lambda})$ and 
$\Theta(\bM)$ as: 
$\bM=(M_{\gamma})_{\gamma \in \Gamma_{\BZ}}$ and 
$\Theta(\bM)=(M_{\xi})_{\xi \in \Xi_{\BZ}}$, respectively. 
Then, we compute:
\begin{align*}
\Phi_{r}(\bM_{I}) & =
 M_{\vpi_{r}^{I}}-M_{s_{r}\vpi_{r}^{I}}+
 \pair{\lambda_{I}}{\alpha_{r}} 
 \quad \text{by \eqref{eq:vp-bM}} \\
& = 
 M_{\Lambda_{r}}-M_{s_{r}\Lambda_{r}}+
 \pair{\lambda_{I}}{\alpha_{r}} 
 \quad \text{by condition (a1)}. 
\end{align*}
Since $r$ is congruent to $p$ modulo $\ell+1$ by assumption, 
we have $r=\sigma^{n}(p)$ for some $n \in \BZ$. 
Hence, by Remark~\ref{rem:sig-wlam2},
\begin{align*}
& M_{\Lambda_{r}}=M_{\Lambda_{\sigma^{n}(p)}}=
  M_{\sigma^{n}(\Lambda_{p})}=M_{\Lambda_{p}}, \\
& M_{s_{r}\Lambda_{r}}=
  M_{s_{\sigma^{n}(p)}\Lambda_{\sigma^{n}(p)}}=
  M_{\sigma^{n}(s_{p}\Lambda_{p})}=
  M_{s_{p}\Lambda_{p}}.
\end{align*}
Also, by the definition of $\lambda_{I}$, we have 
$\pair{\lambda_{I}}{\alpha_{r}}=\pair{\ha{\lambda}}{\ha{\alpha}_{p}}$. 
Substituting these into the above, we obtain 
\begin{equation*}
\Phi_{r}(\bM_{I})=
 M_{\Lambda_{p}}-M_{s_{p}\Lambda_{p}}+
 \pair{\ha{\lambda}}{\ha{\alpha}_{p}} 
 = \ha{\Phi}_{p}(\bM)
 \quad \text{by \eqref{eq:lam-havp}}.
\end{equation*}
Since $\ha{\Phi}_{p}(\bM) > 0$ 
by the assumption that 
$\ha{F}_{p}\bM \ne \bzero$, 
we get $\Phi_{r}(\bM_{I})=\ha{\Phi}_{p}(\bM_{I}) > 0$, 
as desired. Similarly, we can show that $\Phi_{t}(\bM_{I})=
\ha{\Phi}_{q}(\bM) > 0$. 

Now, we write 
$\ha{F}_{p}\bM 
 \in \bz_{\BZ}^{\sigma}(\bO;\ha{\lambda})$ and 
$\Theta(\ha{F}_{p}\bM)$ as: 
$\ha{F}_{p}\bM=(M_{\gamma}')_{\gamma \in \Gamma_{\BZ}}$ and 
$\Theta(\ha{F}_{p}\bM)=(M_{\xi}')_{\xi \in \Xi_{\BZ}}$, 
respectively. 
Since $L(\vpi_{t}^{I},p)=\emptyset \subset \{r\}$ 
(recall that $0 < |r-t| < \ell$), we have 
\begin{align*}
M_{\Lambda_{t}}' & = M_{\vpi_{t}^{I}}' 
 \quad \text{by condition (a3)} \\
& = (\ha{F}_{p}\bM)_{\vpi_{t}^{I}}
  = (F_{r}\bM)_{\vpi_{t}^{I}} 
 \quad \text{by Remark~\ref{rem:haf}} \\
& = (F_{r}\bM_{I})_{\vpi_{t}^{I}} 
 \quad \text{by conditions (a1), (a2), and the definition of $F_{r}M$}. 
\end{align*}
Also, it follows from \eqref{eq:pair-vpi} that
$\bigl\{i \in \BZ \mid \pair{h_{i}}{s_{t}\vpi_{t}^{I}} > 0 \bigr\}
 \subset \bigl\{t-1,\,t+1\bigr\}$. 
Since $0 < |r-t| < \ell$, it is easily seen that $r+(\ell+1)n > t+1$ 
and $r-(\ell+1)n < t-1$ for every $n \in \BZ_{> 0}$. Hence we deduce that 
$L(s_{t}\vpi_{t}^{I},p) \subset \{r\}$. 
Using this fact, we can show in exactly the same way as above that 
$M_{s_{t}\Lambda_{t}}'=(F_{r}\bM_{I})_{s_{t}\vpi_{t}^{I}}$. 
Therefore, 
\begin{align*}
\Phi_{t}(F_{r}\bM_{I}) & =
 (F_{r}\bM_{I})_{\vpi_{t}^{I}}-
 (F_{r}\bM_{I})_{s_{t}\vpi_{t}^{I}}+
 \pair{\lambda_{I}}{\alpha_{t}} 
 \quad \text{by \eqref{eq:vp-bM}} \\
& = 
 M_{\Lambda_{t}}'-M_{s_{t}\Lambda_{t}}'+
 \pair{\lambda_{I}}{\alpha_{t}} \\
& = 
 M_{\Lambda_{q}}'-M_{s_{q}\Lambda_{q}}'+
 \pair{\ha{\lambda}}{\ha{\alpha}_{q}} 
 \quad \text{by Remark~\ref{rem:sig-wlam2} and 
 the definition of $\lambda_{I}$} \\
& =\ha{\Phi}_{q}(\ha{F}_{p}\bM) 
 \quad \text{by \eqref{eq:lam-havp}}.
\end{align*}
Because $\ha{\Phi}_{q}(\ha{F}_{p}\bM)=
\ha{\Phi}_{q}(\bM)+1$ by our assumption, and 
$\ha{\Phi}_{q}(\bM)=\Phi_{t}(\bM_{I})$ as shown above, 
we obtain 
$\Phi_{t}(F_{r}\bM_{I}) = 
\ha{\Phi}_{q}(\ha{F}_{p}\bM) = 
\ha{\Phi}_{q}(\bM)+1=
\Phi_{t}(\bM_{I})+1$, 
as desired. The equation 
$\Phi_{r}(F_{t}\bM_{I})=\Phi_{r}(\bM_{I})+1$ 
can be shown similarly. 

(ii) Because $\bz_{I}(\lambda_{I})$ is isomorphic, 
as a crystal for $U_{q}(\Fg_{I}^{\vee})$, to 
the crystal basis $\CB_{I}(\lambda_{I})$ by Theorem~\ref{thm:blambda},
this crystal satisfies condition (C5) of Theorem~\ref{thm:stem}. 
Hence the equations in part~(ii) 
follow immediately from part~(i). 
This proves Claim~\ref{c:main2-2}. \bqed

%%%%
\vsp
%%%%

First we show \eqref{eq:step1}; 
we only prove that 
$\ha{F}_{p}\ha{F}_{q}^{2}\ha{F}_{p}\bM \ne \bzero$, 
since we can prove that 
$\ha{F}_{q}\ha{F}_{p}^{2}\ha{F}_{q}\bM \ne \bzero$ similarly. 
Recall that $\ha{F}_{p}\bM \ne \bzero$ by our assumption. 
Also, since $\ha{F}_{q}\bM \ne \bzero$ by our assumption, 
it follows from Proposition~\ref{prop:crystal2}\,(2) 
that $\ha{\Phi}_{q}(\bM) > 0$. 
Therefore, we have $\ha{\Phi}_{q}(\ha{F}_{p}\bM)=
\ha{\Phi}_{q}(\bM)+1 \ge 2$ by our assumption, 
which implies that $\ha{F}_{q}^{2}\ha{F}_{p}\bM \ne \bzero$ 
by Proposition~\ref{prop:crystal2}\,(2). 
We set $\bM'':=\ha{F}_{q}^{2}\ha{F}_{p}\bM \in 
\bz_{\BZ}^{\sigma}(\bO;\ha{\lambda})$, and 
write $\bM''$ and $\Theta(\bM'')$ as: 
$\bM''=(M_{\gamma}'')_{\gamma \in \Gamma_{\BZ}}$ and 
$\Theta(\bM'')=(M_{\xi}'')_{\xi \in \Xi_{\BZ}}$, respectively.
In order to show that 
$\ha{F}_{p}\ha{F}_{q}^{2}\ha{F}_{p}\bM=
 \ha{F}_{p}\bM'' \ne \bzero$, 
it suffices to show that 
\begin{equation*}
\ha{\Phi}_{p}(\bM'')=
M_{\Lambda_{p}}''-M_{s_{p}\Lambda_{p}}''+
\pair{\ha{\lambda}}{\ha{\alpha}_{p}} > 0
\end{equation*}
by Proposition~\ref{prop:crystal2}\,(2) 
along with equation \eqref{eq:lam-havp}.
We define $r,\,t \in \BZ$ by:
%
%%%%%%%%%%%%%%%%%
%%% eq:def-rt %%%
%%%%%%%%%%%%%%%%%
%
\begin{equation} \label{eq:def-rt}
(r,t)=
\begin{cases}
(p,q) & \text{if $|p-q| < \ell$}, \\[1.5mm]
(\ell,\,\ell+1) & \text{if $p=\ell$ and $q=0$}, \\[1.5mm]
(\ell+1,\,\ell) & \text{if $p=0$ and $q=\ell$}.
\end{cases}
\end{equation}
Let $K$ be an interval in $\BZ$ 
such that $r,\,t \in K$, and % $\min K < r,\,t < \max K$, and 
take an interval $I$ in $\BZ$ satisfying 
conditions (a1)--(a4) in Claim~\ref{c:main2-2} and 
the following: 

(b1) $I \in \Int(\bM'';e,r) \cap \Int(\bM'';s_{r},r)$; 

(b2) $I \in \Int(\bM;v,k)$ for all $v \in W_{K}$ and $k \in K$. 

\noindent
It follows from Remark~\ref{rem:sig-wlam2} and 
condition (b1) that $M_{\Lambda_{p}}''=M_{\Lambda_{r}}''=
M_{\vpi_{r}^{I}}''$. Also, 
\begin{align*}
M_{\vpi_{r}^{I}}'' & = 
 (\ha{F}_{q}^{2}\ha{F}_{p}\bM)_{\vpi_{r}^{I}}=
 (\ha{f}_{q}^{2}\ha{f}_{p}\bM)_{\vpi_{r}^{I}}
 \quad \text{by the definitions of $\ha{F}_{q}$ and $\ha{F}_{p}$} \\
& = 
 (\ha{f}_{t}^{2}\ha{f}_{r}\bM)_{\vpi_{r}^{I}} 
 \quad \text{by \eqref{eq:fp-fq}}.
\end{align*}
Here we note that $L(\vpi_{r}^{I},r)=\{r\}$ and 
$L(\vpi_{r}^{I},t)=\emptyset$ since $0 < |r-t| < \ell$. 
Therefore, we deduce from Lemma~\ref{lem:tech2}
(with $p=r$, $q=t$, 
$\ha{X}=\ha{f}_{t}^{2}\ha{f}_{r}$, 
$\gamma=\vpi_{r}^{I}$, and $L_{r}=\{r\}$) that 
$f_{t}^{2}f_{r}\bM \ne \bzero$ and 
$(\ha{f}_{t}^{2}\ha{f}_{r}\bM)_{\vpi_{r}^{I}}=
 (f_{t}^{2}f_{r}\bM)_{\vpi_{r}^{I}}$. 
Since $\bM \in \bz_{\BZ}(I,K)$ by condition (b2), 
we see from Lemma~\ref{lem:IK-fj}\,(2) that 
$(f_{t}^{2}f_{r}\bM)_{I}=f_{t}^{2}f_{r}\bM_{I}$, 
and hence that 
$(f_{t}^{2}f_{r}\bM)_{\vpi_{r}^{I}}=
 (f_{t}^{2}f_{r}\bM_{I})_{\vpi_{r}^{I}}$.  
Also, because $r,\,t \in \BZ$ satisfies the conditions that 
$\ol{r}=p$, $\ol{t}=q$, and $0 < |r-t| < \ell$, and 
because the interval $I$ satisfies conditions (a1)--(a4) 
of Claim~\ref{c:main2-2}, it follows from 
Claim~\ref{c:main2-2}\,(ii) that 
$F_{t}^{2}F_{r}\bM_{I} \ne \bzero$, and hence 
$f_{t}^{2}f_{r}\bM_{I}=F_{t}^{2}F_{r}\bM_{I}$. 
Putting the above together, we obtain 
$M_{\Lambda_{p}}''=(F_{t}^{2}F_{r}\bM_{I})_{\vpi_{r}^{I}}$. 
Similarly, we can show that 
$M_{s_{p}\Lambda_{p}}''=
(F_{t}^{2}F_{r}\bM_{I})_{s_{r}\vpi_{r}^{I}}$. 
Consequently, we see that 
\begin{align*}
\ha{\Phi}_{p}(\bM'') & =
M_{\Lambda_{p}}''-M_{s_{p}\Lambda_{p}}''+
\pair{\ha{\lambda}}{\ha{\alpha}_{p}} \\
& = 
(F_{t}^{2}F_{r}\bM_{I})_{\vpi_{r}^{I}}-
(F_{t}^{2}F_{r}\bM_{I})_{s_{r}\vpi_{r}^{I}}+
\pair{\lambda_{I}}{\alpha_{r}} \\
& = 
\Phi_{r}(F_{t}^{2}F_{r}\bM_{I}) 
\quad \text{by \eqref{eq:vp-bM} together with Theorem~\ref{thm:blambda}} \\
& > 0 \quad \text{by Claim~\ref{c:main2-2}\,(ii)}.
\end{align*}
Thus we have shown \eqref{eq:step1}. 

%%%%%

Next we show equation \eqref{eq:step2}. 
Define $r,\,t \in \BZ$ as in \eqref{eq:def-rt}. 
Since $\ha{F}_{p}\ha{F}_{q}^{2}\ha{F}_{p}\bM \ne \bzero$ and 
$\ha{F}_{q}\ha{F}_{p}^{2}\ha{F}_{q}\bM \ne \bzero$
by \eqref{eq:step1}, it follows from 
the definitions of $\ha{F}_{p}$ and $\ha{F}_{q}$ along with 
\eqref{eq:fp-fq} that 
\begin{align*}
& \ha{F}_{p}\ha{F}_{q}^{2}\ha{F}_{p}\bM=
 \ha{f}_{p}\ha{f}_{q}^{2}\ha{f}_{p}\bM=
 \ha{f}_{r}\ha{f}_{t}^{2}\ha{f}_{r}\bM, \\
& \ha{F}_{q}\ha{F}_{p}^{2}\ha{F}_{q}\bM=
 \ha{f}_{q}\ha{f}_{p}^{2}\ha{f}_{q}\bM=
 \ha{f}_{t}\ha{f}_{r}^{2}\ha{f}_{t}\bM.
\end{align*}
Therefore, it suffices to show that 
\begin{equation*}
(\ha{f}_{r}\ha{f}_{t}^{2}\ha{f}_{r}\bM)_{\gamma}=
(\ha{f}_{t}\ha{f}_{r}^{2}\ha{f}_{t}\bM)_{\gamma}
\quad \text{for all $\gamma \in \Gamma_{\BZ}$}.
\end{equation*}
Fix $\gamma \in \Gamma_{\BZ}$, and take 
a finite subset $L_{r}$ of $r+(\ell+1)\BZ$ such that 
$L_{r} \supset L(\gamma,r)$ and such that $L_{t}:=
\bigl\{u+(t-r) \mid u \in L_{r} \bigr\} \supset L(\gamma,t)$. 
We infer from Lemma~\ref{lem:tech2} that 
\begin{equation*}
(\ha{f}_{r}\ha{f}_{t}^{2}\ha{f}_{r}\bM)_{\gamma}=
 (f_{L_{r}}f_{L_{t}}^{2}f_{L_{r}}\bM)_{\gamma}
\quad \text{and} \quad
(\ha{f}_{t}\ha{f}_{r}^{2}\ha{f}_{t}\bM)_{\gamma}=
 (f_{L_{t}}f_{L_{r}}^{2}f_{L_{t}}\bM)_{\gamma}.
\end{equation*}
Let us write $L_{r}$ and $L_{t}$ as: 
$L_{r}=\bigl\{r_{1},\,r_{2},\,\dots,\,r_{a}\bigr\}$ and 
$L_{t}=\bigl\{t_{1},\,t_{2},\,\dots,\,t_{a}\bigr\}$, respectively, 
where $t_{b}=r_{b}+(t-r)$ for each $1 \le b \le a$; 
note that $0 < |r_{b}-t_{b}| < \ell$ 
for all $1 \le b \le a$. 
Let $K$ be an interval in $\BZ$ 
containing $L_{r} \cup L_{t}$, 
% $\min K < u < \max K$ for all $u \in L_{r} \cup L_{t}$, 
and take an interval $I$ in $\BZ$ satisfying 
the following:

(a1)'
$I \in \Int(\bM;e,r_{b}) \cap \Int(\bM;s_{r_{b}},r_{b})$
for all $1 \le b \le a$; 

(a2)'
$I \in \Int(\bM;e,t_{b}) \cap \Int(\bM;s_{t_{b}},t_{b})$ 
for all $1 \le b \le a$; 

(a3)'
$I \in \Int(\ha{F}_{p}\bM;e,t_{b}) \cap 
 \Int(\ha{F}_{p}\bM;s_{t_{b}},t_{b})$
for all $1 \le b \le a$; 

(a4)'
$I \in \Int(\ha{F}_{q}\bM;e,r_{b}) \cap 
 \Int(\ha{F}_{q}\bM;s_{r_{b}},r_{b})$
for all $1 \le b \le a$; 

(c1) $\gamma \in \Gamma_{I}$; 

(c2) $I \in \Int(\bM;v,k)$ 
for all $v \in W_{K}$ and $k \in K$. 

\noindent
Then, since $\bM \in \bz_{\BZ}(I,K)$ by condition (c2), 
we see from Lemma~\ref{lem:IK-fj}\,(3) that 
\begin{equation*}
(f_{L_{r}}f_{L_{t}}^{2}f_{L_{r}}\bM)_{I}=
f_{L_{r}}f_{L_{t}}^{2}f_{L_{r}}\bM_{I}
\quad \text{and} \quad
(f_{L_{t}}f_{L_{r}}^{2}f_{L_{t}}\bM)_{I}=
f_{L_{t}}f_{L_{r}}^{2}f_{L_{t}}\bM_{I}, 
\end{equation*}
and hence, by condition (c1), that 
\begin{equation*}
(f_{L_{r}}f_{L_{t}}^{2}f_{L_{r}}\bM)_{\gamma}=
(f_{L_{r}}f_{L_{t}}^{2}f_{L_{r}}\bM_{I})_{\gamma}
\quad \text{and} \quad
(f_{L_{t}}f_{L_{r}}^{2}f_{L_{t}}\bM)_{\gamma}=
(f_{L_{t}}f_{L_{r}}^{2}f_{L_{t}}\bM_{I})_{\gamma}.
\end{equation*}
Thus, in order to show that 
$(\ha{f}_{r}\ha{f}_{t}^{2}\ha{f}_{r}\bM)_{\gamma}=
 (\ha{f}_{t}\ha{f}_{r}^{2}\ha{f}_{t}\bM)_{\gamma}$, 
it suffices to show that 
%
%%%%%%%%%%%%%%%
%%% eq:s2-1 %%%
%%%%%%%%%%%%%%%
%
\begin{equation} \label{eq:s2-1}
f_{L_{r}}f_{L_{t}}^{2}f_{L_{r}}\bM_{I} = 
f_{L_{t}}f_{L_{r}}^{2}f_{L_{t}}\bM_{I}.
\end{equation}

We now define 
\begin{align*}
& X_{b}:=
(F_{r_{b}}F_{t_{b}}^{2}F_{r_{b}}) \cdots 
(F_{r_{2}}F_{t_{2}}^{2}F_{r_{2}})
(F_{r_{1}}F_{t_{1}}^{2}F_{r_{1}}), \\
& Y_{b}:=
(F_{t_{b}}F_{r_{b}}^{2}F_{t_{b}}) \cdots 
(F_{t_{2}}F_{r_{2}}^{2}F_{t_{2}})
(F_{t_{1}}F_{r_{1}}^{2}F_{t_{1}}), 
\end{align*}
for $0 \le b \le a$; 
$X_{0}$ and $Y_{0}$ are understood to be 
the identity map on $\bz_{I}(\lambda_{I})$. 
We will show by induction on $b$ that 
$X_{b}\bM_{I} \ne \bzero$, 
$Y_{b}\bM_{I} \ne \bzero$, and 
$X_{b}\bM_{I}=Y_{b}\bM_{I}$ for all $0 \le b \le a$. 
If $b=0$, then there is nothing to prove. 
Assume, therefore, that $b > 0$. 
Note that $\bM_{I} \in \bz_{I}(\lambda_{I})$
(see the comment preceding Claim~\ref{c:main2-2}). 
Hence, $X_{b-1}\bM_{I} \in \bz_{I}(\lambda_{I})$ 
since $X_{b-1}\bM_{I} \ne \bzero$ by the induction hypothesis. 
Because $\bz_{I}(\lambda_{I}) \cong \CB_{I}(\lambda_{I})$ as 
crystals for $U_{q}(\Fg^{\vee}_{I})$ by Theorem~\ref{thm:blambda}, 
we have
\begin{equation*}
\Phi_{r_{b}}(X_{b-1}\bM_{I})=
 \max \bigl\{N \ge 0 \mid 
 F_{r_{b}}^{N}X_{b-1}\bM_{I} \ne \bzero
 \bigr\}.
\end{equation*}
Here, observe that $F_{r_{b}}X_{b-1}=X_{b-1}F_{r_{b}}$ 
by the definition of $X_{b-1}$ since for $1 \le c \le b-1$, 
%
%%%%%%%%%%%%%%%%%
%%% eq:dif-rt %%%
%%%%%%%%%%%%%%%%%
%
\begin{equation} \label{eq:dif-rt}
|r_{b}-r_{c}| \ge \ell+1, \quad \text{and} \quad
|r_{b}-t_{c}| \ge 
\underbrace{|r_{b}-r_{c}|}_{\ge \ell+1}-
\underbrace{|r_{c}-t_{c}|}_{< \ell}
 > (\ell+1)-\ell=1.
\end{equation}
As a result, we have
\begin{equation*}
\max \bigl\{
  N \ge 0 \mid 
  F_{r_{b}}^{N}X_{b-1}\bM_{I} \ne \bzero
 \bigr\}
=
\max \bigl\{
  N \ge 0 \mid 
  F_{r_{b}}^{N}\bM_{I} \ne \bzero
 \bigr\}
=\Phi_{r_{b}}(\bM_{I}),
\end{equation*}
and hence $\Phi_{r_{b}}(X_{b-1}\bM_{I}) = 
\Phi_{r_{b}}(\bM_{I})$.
Recall that for each $1 \le b \le a$, 
the integers $r_{b}$ and $t_{b}$ are such that 
$\ol{r_{b}}=p$, $\ol{t_{b}}=q$, and 
$0 < |r_{b}-t_{b}| < \ell$, and that 
the interval $I$ satisfies conditions (a1)'--(a4)', 
which are just conditions (a1)--(a4) of 
Claim~\ref{c:main2-2}, with $r$ and $t$ 
replaced by $r_{b}$ and $t_{b}$, respectively. 
Consequently, it follows from Claim~\ref{c:main2-2}\,(i) that 
$\Phi_{r_{b}}(\bM_{I}) = 
 \ha{\Phi}_{p}(\bM) > 0$, and hence 
$\Phi_{r_{b}}(X_{b-1}\bM_{I}) = \Phi_{r_{b}}(\bM_{I}) = 
 \ha{\Phi}_{p}(\bM) > 0$.
Similarly, we can show that 
$\Phi_{t_{b}}(X_{b-1}\bM_{I})=
 \Phi_{t_{b}}(\bM_{I})=
 \ha{\Phi}_{q}(\bM) > 0$. 
Moreover, since $F_{t_{b}}X_{b-1}=X_{b-1}F_{t_{b}}$ and 
$F_{r_{b}}X_{b-1}=X_{b-1}F_{r_{b}}$, we have 
\begin{align*}
\Phi_{r_{b}}(F_{t_{b}}X_{b-1}\bM_{I}) 
& = \max \bigl\{
  N \ge 0 \mid 
  F_{r_{b}}^{N}F_{t_{b}}X_{b-1}\bM_{I} \ne \bzero
 \bigr\} \\
& = \max \bigl\{
  N \ge 0 \mid 
  F_{r_{b}}^{N}F_{t_{b}}\bM_{I} \ne \bzero
 \bigr\}
\quad \text{} \\
& = \Phi_{r_{b}}(F_{t_{b}}\bM_{I}).
\end{align*}
Also, it follows from Claim~\ref{c:main2-2}\,(i) that 
$\Phi_{r_{b}}(F_{t_{b}}\bM_{I})=\Phi_{r_{b}}(\bM_{I})+1$; 
note that $\Phi_{r_{b}}(\bM_{I})=\Phi_{r_{b}}(X_{b-1}\bM_{I})$ 
as shown above. Combining these, we get 
$\Phi_{r_{b}}(F_{t_{b}}X_{b-1}\bM_{I})=
 \Phi_{r_{b}}(X_{b-1}\bM_{I})+1$.
Similarly, we have $\Phi_{t_{b}}(F_{r_{b}}X_{b-1}\bM_{I})=
 \Phi_{t_{b}}(X_{b-1}\bM_{I})+1$. 
Here we remark that the crystal 
$\bz_{I}(\lambda_{I}) \cong \CB_{I}(\lambda_{I})$ 
satisfies condition (C5) of Theorem~\ref{thm:stem}. 
Therefore, we obtain 
\begin{equation*}
X_{b}\bM_{I}=F_{r_{b}}F_{t_{b}}^{2}F_{r_{b}}X_{b-1}\bM_{I} \ne \bzero
\quad \text{and} \quad
F_{t_{b}}F_{r_{b}}^{2}F_{t_{b}}X_{b-1}\bM_{I} \ne \bzero, 
\end{equation*}
and
\begin{equation*}
\bzero \ne X_{b}\bM_{I}=
F_{r_{b}}F_{t_{b}}^{2}F_{r_{b}}X_{b-1}\bM_{I}=
F_{t_{b}}F_{r_{b}}^{2}F_{t_{b}}X_{b-1}\bM_{I}.
\end{equation*}
Also, since $X_{b-1}\bM_{I}=Y_{b-1}\bM_{I}$ 
by the induction hypothesis, we obtain 
\begin{equation*}
Y_{b}\bM_{I}=
F_{t_{b}}F_{r_{b}}^{2}F_{t_{b}}Y_{b-1}\bM_{I}=
F_{t_{b}}F_{r_{b}}^{2}F_{t_{b}}X_{b-1}\bM_{I} \ne \bzero, 
\end{equation*}
and 
\begin{equation*}
X_{b}\bM_{I}=
F_{t_{b}}F_{r_{b}}^{2}F_{t_{b}}X_{b-1}\bM_{I}=
F_{t_{b}}F_{r_{b}}^{2}F_{t_{b}}Y_{b-1}\bM_{I}=
Y_{b}\bM_{I}. 
\end{equation*}
Thus, we have shown that 
$X_{b}\bM_{I} \ne \bzero$, 
$Y_{b}\bM_{I} \ne \bzero$, and 
$X_{b}\bM_{I}=Y_{b}\bM_{I}$ for all $0 \le b \le a$, 
as desired.

Since $X_{a}\bM_{I} \ne \bzero$, we have 
\begin{align*}
X_{a}\bM_{I} & = 
(F_{r_{a}}F_{t_{a}}^{2}F_{r_{a}}) \cdots 
(F_{r_{2}}F_{t_{2}}^{2}F_{r_{2}})
(F_{r_{1}}F_{t_{1}}^{2}F_{r_{1}})\bM_{I} \\
& = 
(f_{r_{a}}f_{t_{a}}^{2}f_{r_{a}}) \cdots 
(f_{r_{2}}f_{t_{2}}^{2}f_{r_{2}})
(f_{r_{1}}f_{t_{1}}^{2}f_{r_{1}})\bM_{I} \\
& = f_{L_{r}}f_{L_{t}}^{2}f_{L_{r}}\bM_{I} 
\quad \text{by Theorem~\ref{thm:binf}};
\end{align*}
on the crystal $\bz_{I} \cong \CB_{I}(\infty)$, we have 
$f_{r_{b}}f_{r_{c}}=f_{r_{c}}f_{r_{b}}$ and 
$f_{t_{b}}f_{t_{c}}=f_{t_{c}}f_{t_{b}}$ for all 
$1 \le b,\,c \le a$, and 
$f_{r_{b}}f_{t_{c}}=f_{t_{c}}f_{r_{b}}$ for all 
$1 \le b,\,c \le a$ with $b \ne c$ (see \eqref{eq:dif-rt}). 
Similarly, we can show that 
$Y_{a}\bM_{I} = f_{L_{t}}f_{L_{r}}^{2}f_{L_{t}}\bM_{I}$. 
Since $X_{a}\bM_{I}=Y_{a}\bM_{I}$ as shown above, 
we obtain \eqref{eq:s2-1}, and hence \eqref{eq:step2}. 

Finally, we show \eqref{eq:step3}; 
we give a proof only for the first equation, since 
the proof of the second one is similar. 
Define $r,\,t \in \BZ$ as in \eqref{eq:def-rt}; 
note that $\ha{a}_{pq}=a_{rt}$ and 
$\ha{a}_{qp}=a_{tr}$ by the definitions. 
Let $K$ be an interval in $\BZ$ 
such that $r,\,t \in K$, and 
take an interval $I$ in $\BZ$ satisfying 
conditions (a1)--(a4) in Claim~\ref{c:main2-2}, 
conditions (b1), (b2) in the proof of \eqref{eq:step1} 
with $\bM''=\ha{F}_{q}^{2}\ha{F}_{p}\bM$ and $r$ replaced by 
$\ha{F}_{p}^{2}\ha{F}_{q}\bM$ and $t$, respectively, and the following: 

(d) $I \in \Int(\bM;e,t-1) \cap \Int(\bM;e,t) \cap \Int(\bM;e,t+1)$. 

\noindent
Then, we see from the proof of Claim~\ref{c:main2-2}\,(i) that 
$\ha{\Phi}_{q}(\ha{F}_{p}\bM)=\Phi_{t}(F_{r}\bM_{I})$. Therefore, 
%
%%%%%%%%%%%%%%%
%%% eq:s3-1 %%%
%%%%%%%%%%%%%%%
%
\begin{align} 
\ha{\ve}_{q}(\ha{F}_{p}\bM) & = 
\ha{\Phi}_{q}(\ha{F}_{p}\bM) - 
 \pair{\Wt(\ha{F}_{p}\bM)}{\ha{\alpha}_{q}} \nonumber \\
& =
\Phi_{t}(F_{r}\bM_{I}) - 
\pair{\Wt(\bM)-\ha{h}_{p}}{\ha{\alpha}_{q}} \nonumber \\
& = 
\Phi_{t}(F_{r}\bM_{I}) - 
\pair{\ha{\lambda}+\wt(\bM)-\ha{h}_{p}}{\ha{\alpha}_{q}}. 
\label{eq:s3-1}
\end{align}
Let us compute the value 
$\pair{\wt(\bM)}{\ha{\alpha}_{q}}$. 
We deduce from the definition \eqref{eq:def-wt} of $\wt(\bM)$ 
along with Remark~\ref{rem:sig-wlam2} that 
$\pair{\wt(\bM)}{\ha{\alpha}_{q}}=
-M_{\Lambda_{q-1}}+2M_{\Lambda_{q}}-M_{\Lambda_{q+1}}$. 
Also, 
\begin{align*}
& -M_{\Lambda_{q-1}}+2M_{\Lambda_{q}}-M_{\Lambda_{q+1}}
  = -M_{\Lambda_{t-1}}+2M_{\Lambda_{t}}-M_{\Lambda_{t+1}}
  \quad \text{by Remark~\ref{rem:sig-wlam2}} \\ 
& \qquad 
  = -M_{\vpi_{t-1}^{I}}+2M_{\vpi_{t}^{I}}-M_{\vpi_{t+1}^{I}} 
  = \pair{\wt(\bM_{I})}{\alpha_{t}} \quad \text{by condition (d)}.
\end{align*}
Hence we obtain 
$\pair{\wt(\bM)}{\ha{\alpha}_{q}}=
\pair{\wt(\bM_{I})}{\alpha_{t}}$. 
In addition, note that 
$\pair{\ha{\lambda}}{\ha{\alpha}_{q}}=
 \pair{\lambda_{I}}{\alpha_{t}}$ 
by the definition \eqref{eq:lamI} of $\lambda_{I} \in \Fh_{I}$, 
and that $\pair{\ha{h}_{p}}{\ha{\alpha}_{q}}=
\ha{a}_{pq}=a_{rt}=\pair{h_{r}}{\alpha_{t}}$. 
Substituting these equations into \eqref{eq:s3-1}, 
we see that  
\begin{align*} 
\ha{\ve}_{q}(\ha{F}_{p}\bM) & =
\Phi_{t}(F_{r}\bM_{I}) - 
\pair{\lambda_{I}+\wt(\bM_{I})-h_{r}}{\alpha_{t}} \\
& =
\Phi_{t}(F_{r}\bM_{I}) - 
\pair{\Wt(\bM_{I})-h_{r}}{\alpha_{t}} \\
& = 
\Phi_{t}(F_{r}\bM_{I}) - 
\pair{\Wt(F_{r}\bM_{I})}{\alpha_{t}} 
= \ve_{t}(F_{r}\bM_{I}).
\end{align*}
Now, the same argument as in the proof of \eqref{eq:step1}
yields $\ha{\Phi}_{q}(\ha{F}_{p}^{2}\ha{F}_{q}\bM)=
\Phi_{t}(F_{r}^{2}F_{t}\bM_{I})$. Using this, we can show 
in exactly the same way as above that 
$\ha{\ve}_{q}(\ha{F}_{p}^{2}\ha{F}_{q}\bM)=
 \ve_{t}(F_{r}^{2}F_{t}\bM_{I})$. 
Since we know from Claim~\ref{c:main2-2}\,(ii) that 
$\ve_{t}(F_{r}\bM_{I})=\ve_{t}(F_{r}^{2}F_{t}\bM_{I})$, 
we conclude that 
$\ha{\ve}_{q}(\ha{F}_{p}\bM)=
 \ha{\ve}_{q}(\ha{F}_{p}^{2}\ha{F}_{q}\bM)$, as desired. 
Thus we have shown \eqref{eq:step3}. 
This completes the proof of the theorem. 
\end{proof}

\begin{proof}[Proof of Theorem~\ref{thm:main}]
Recall from \cite[\S8.1]{Kas} that the crystal basis $\ha{\CB}(\infty)$ 
can be regarded as the ``direct limit'' of 
$\ha{\CB}(\ha{\lambda})$'s as $\ha{\lambda} \in \ha{\Fh}$ tends to infinity, 
i.e., as $\pair{\ha{\lambda}}{\ha{\alpha}_{i}} \to +\infty$ 
for all $i \in \ha{I}$. 
Also, by using \eqref{eq:blam-aff}, we can verify that 
the direct limit of $\bz_{\BZ}^{\sigma}(\bO; \ha{\lambda})$'s 
(as $\ha{\lambda} \in \ha{\Fh}$ tends to infinity) is nothing but 
$\bz_{\BZ}^{\sigma}(\bO)$. 
Consequently, the crystal $\bz_{\BZ}^{\sigma}(\bO)$ 
is isomorphic to the crystal basis $\ha{\CB}(\infty)$. 
This proves Theorem~\ref{thm:main}.
\end{proof}

%========================%
%     START SECTION A    %
%========================%
%
\appendix
\section{Appendix.}
\label{appendix}

%============================%
%     START SUBSECTION A1    %
%============================%
%
\subsection{Characterization of some crystal bases in the simply-laced case.}
\label{subsec:stemb}

In this appendix, 
let $A=(a_{ij})_{i,\,j \in I}$ be 
a generalized Cartan matrix indexed by a finite set $I$ 
such that $a_{ij} \in \bigl\{0,\,-1\bigr\}$ 
for all $i,\,j \in I$ with $i \ne j$. 
Let $\Fg$ be the (simply-laced) 
Kac-Moody algebra over $\BC$ 
associated to this generalized Cartan matrix $A$, 
with Cartan subalgebra $\Fh$, and 
simple coroots $h_{i}$, $i \in I$. 
Let $U_{q}(\Fg)$ be the quantized universal enveloping algebra 
over $\BC(q)$ associated to $\Fg$. 
For a dominant integral weight 
$\lambda \in \Fh^{\ast}:=\Hom_{\BC}(\Fh,\BC)$ for $\Fg$, 
let $\CB(\lambda)$ denote the crystal basis of the 
irreducible highest weight $U_{q}(\Fg)$-module of highest 
weight $\lambda$. 

Let $\CB$ be a crystal for $U_{q}(\Fg)$, equipped with 
the maps $\wt$, $e_{p},\,f_{p} \ (p \in I)$, and 
$\ve_{p},\,\vp_{p} \ (p \in I)$. 
We assume that $\CB$ is semiregular in the sense of 
\cite[p.86]{HK}; namely, for $x \in \CB$ and $p \in I$, 
\begin{align*}
& \ve_{p}(x)=
  \max \bigl\{N \ge 0 \mid e_{p}^{N}x \ne \bzero\bigr\} 
  \in \BZ_{\ge 0}, \\
& \vp_{p}(x)=
  \max \bigl\{N \ge 0 \mid f_{p}^{N}x \ne \bzero\bigr\} 
  \in \BZ_{\ge 0},
\end{align*}
where $\bzero$ is an additional element, 
which is not contained in $\CB$. 
Let $X$ denote the crystal graph of the crystal $\CB$. 
We further assume that the crystal graph $X$ is connected. 
The following theorem is a restatement of results 
in \cite{Stem}. 
%
%%%%%%%%%%%%%%%%
%%% thm:stem %%%
%%%%%%%%%%%%%%%%
%
\begin{thm} \label{thm:stem}
Keep the setting above. 
Let $\lambda \in \Fh^{\ast}$ be a dominant integral weight for $\Fg$. 
The crystal $\CB$ is isomorphic, as a crystal for $U_{q}(\Fg)$, 
to the crystal basis $\CB(\lambda)$ if and only if 
$\CB$ satisfies the following conditions {\rm (C1)--(C6)}{\rm:}

{\rm (C1)} If $x \in \CB$ and $p,\,q \in I$ are such that 
$p \ne q$ and $e_{p}x \ne \bzero$, then 
$\ve_{q}(x) \le \ve_{q}(e_{p}x)$ and 
$\vp_{q}(e_{p}x) \le \vp_{q}(x)$. 

{\rm (C2)} Let $x \in \CB$, and $p,\,q \in I$. 
Assume that $e_{p}x \ne \bzero$ and $e_{q}x \ne \bzero$, 
and that $\ve_{q}(e_{p}x)=\ve_{q}(x)$. 
Then, $e_{p}e_{q}x \ne \bzero$, $e_{q}e_{p}x \ne \bzero$, 
and $e_{p}e_{q}x=e_{q}e_{p}x$. 

{\rm (C3)} Let $x \in \CB$, and $p,\,q \in I$. 
Assume that $e_{p}x \ne \bzero$ and $e_{q}x \ne \bzero$, 
and that $\ve_{q}(e_{p}x)=\ve_{q}(x)+1$ and 
$\ve_{p}(e_{q}x)=\ve_{p}(x)+1$. 
Then, $e_{p}e_{q}^{2}e_{p}x \ne \bzero$, 
$e_{q}e_{p}^{2}e_{q}x \ne \bzero$, and 
$e_{p}e_{q}^{2}e_{p}x=e_{q}e_{p}^{2}e_{q}x$. 
Moreover, 
$\vp_{q}(e_{p}x)=\vp_{q}(e_{p}^{2}e_{q}x)$ and 
$\vp_{p}(e_{q}x)=\vp_{p}(e_{q}^{2}e_{p}x)$. 

{\rm (C4)} Let $x \in \CB$, and $p,\,q \in I$. 
Assume that $f_{p}x \ne \bzero$ and $f_{q}x \ne \bzero$, 
and that $\ve_{q}(f_{p}x)=\ve_{q}(x)$. 
Then, $f_{p}f_{q}x \ne \bzero$, $f_{q}f_{p}x \ne \bzero$, 
and $f_{p}f_{q}x=f_{q}f_{p}x$. 

{\rm (C5)} Let $x \in \CB$, and $p,\,q \in I$. 
Assume that $f_{p}x \ne \bzero$ and $f_{q}x \ne \bzero$, 
and that $\vp_{q}(f_{p}x)=\vp_{q}(x)+1$ and 
$\vp_{p}(f_{q}x)=\vp_{p}(x)+1$. 
Then, $f_{p}f_{q}^{2}f_{p}x \ne \bzero$, 
$f_{q}f_{p}^{2}f_{q}x \ne \bzero$, and 
$f_{p}f_{q}^{2}f_{p}x=f_{q}f_{p}^{2}f_{q}x$. 
Moreover, 
$\ve_{q}(f_{p}x)=\ve_{q}(f_{p}^{2}f_{q}x)$ and 
$\ve_{p}(f_{q}x)=\ve_{p}(f_{q}^{2}f_{p}x)$. 

{\rm (C6)} 
There exists an element $x_{0} \in \CB$ of weight $\lambda$ 
such that $e_{p}x_{0}=\bzero$ and 
$\vp_{p}(x_{0})=\pair{h_{p}}{\lambda}$ for all $p \in I$. 
\end{thm}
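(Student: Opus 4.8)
The plan is to deduce the statement from Stembridge's local characterization of simply-laced crystal graphs in \cite{Stem}, so that the proof reduces to a careful dictionary between the two formulations together with an identification of the highest weight vertex. Stembridge works with an $I$-edge-colored directed graph, attaching to each vertex $x$ and each $p \in I$ the lengths of the maximal monochromatic $p$-strings through $x$; under our standing assumption that $\CB$ is semiregular, these graph-theoretic lengths coincide with the maps $\ve_{p}, \vp_{p}$ given on $\CB$. First I would record this identification and regard $\CB$ as such an edge-colored graph, with an edge $x \xrightarrow{p} y$ whenever $f_{p}x = y$ (equivalently $e_{p}y = x$), observing that connectedness of the crystal graph $X$ is exactly the connectedness hypothesis in Stembridge's result.

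Next I would match conditions (C1)--(C5) with Stembridge's axioms. Condition (C1) is the monotonicity requirement on the change of $\ve_{q}, \vp_{q}$ along a $p$-edge. The key observation is that, because $A$ is simply-laced with $a_{pq} \in \{0, -1\}$ for $p \ne q$, the difference $\ve_{q}(e_{p}x) - \ve_{q}(x)$ lies in $\{0, 1\}$ whenever $e_{p}x \ne \bzero$; thus the hypotheses ``$\ve_{q}(e_{p}x) = \ve_{q}(x)$'' and ``$\ve_{q}(e_{p}x) = \ve_{q}(x) + 1$'' of (C2)--(C3) (and the $f$-analogues in (C4)--(C5)) are mutually exclusive and jointly exhaustive. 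In Stembridge's terms, the first case yields the commuting-square relation $e_{p}e_{q}x = e_{q}e_{p}x$ of (C2)/(C4), while the second yields the hexagon relation $e_{p}e_{q}^{2}e_{p}x = e_{q}e_{p}^{2}e_{q}x$ of (C3)/(C5), together with the auxiliary $\vp$-/$\ve$-balancing equalities. Having verified that (C1)--(C5) are precisely the simply-laced axioms, Stembridge's theorem gives that a connected semiregular crystal satisfying them is isomorphic, as a crystal for $U_{q}(\Fg)$, to $\CB(\mu)$, where $\mu = \wt(x_{0})$ is the weight of its unique source vertex $x_{0}$, i.e. the one annihilated by all $e_{p}$.

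It then remains to pin down $\mu = \lambda$ using (C6). For the ``if'' direction, the element $x_{0}$ supplied by (C6) satisfies $e_{p}x_{0} = \bzero$ for all $p$, so by connectedness it is the unique source, and $\wt(x_{0}) = \lambda$; the condition $\vp_{p}(x_{0}) = \pair{h_{p}}{\lambda}$ is the compatibility needed to make $x_{0}$ a genuine highest weight vertex of weight $\lambda$, whence $\CB \cong \CB(\lambda)$. For the ``only if'' direction, one checks that $\CB(\lambda)$ itself is a connected semiregular simply-laced crystal, so it satisfies (C1)--(C5) by Stembridge's result, and that its highest weight vector furnishes an $x_{0}$ as in (C6); transporting along the given isomorphism $\CB \cong \CB(\lambda)$ then shows that $\CB$ satisfies (C1)--(C6).

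The hard part will be the faithful bookkeeping in the middle step: verifying that the conditions phrased here in terms of $\ve$-/$\vp$-differences translate exactly into the axioms as stated in \cite{Stem}, in particular confirming that the simply-laced hypothesis forces the relevant differences into $\{0,1\}$ so that the case analysis in (C2)/(C3) and (C4)/(C5) is complete, and that no regularity hypothesis beyond connectedness and semiregularity is needed. Once this dictionary is in place, the two implications follow formally from Stembridge's theorem together with the uniqueness of the highest weight vertex in a connected crystal.
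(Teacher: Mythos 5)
Your proposal is correct and follows essentially the same route as the paper: both arguments translate (C1)--(C5) into Stembridge's axioms (P1)--(P6), (P5$'$), (P6$'$) via the dictionary $\ve(x,p)\leftrightarrow\vp_{p}(x)$, $\delta(x,p)\leftrightarrow-\ve_{p}(x)$ (with semiregularity giving (P1), the crystal axioms giving (P2)--(P3), and (C1) giving (P4)), and then invoke Stembridge's uniqueness result together with (C6) to pin down the highest weight vertex. Your observation that (C1) plus the weight identity forces $\ve_{q}(e_{p}x)-\ve_{q}(x)\in\{0,1\}$ is exactly the content of (P3)--(P4) in the paper's verification.
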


\begin{proof}[(Sketch of) Proof]
First we prove the ``if'' part. 
Recall that the crystal graph $X$ of the crystal $\CB$ is 
an $I$-colored directed graph. 
We will show that $X$ is $A$-regular 
in the sense of \cite[Definition~1.1]{Stem}. 
It is obvious that $X$ satisfies 
condition (P1) on page 4809 of \cite{Stem}
since $\CB$ is assumed to be semiregular. 
Also, it follows immediately from the axioms of a crystal 
that $X$ satisfies condition (P2) on page 4809 of \cite{Stem}. 
Now we note that for $x \in \CB$ and $p \in I$, 
$\ve(x,p)$ (resp., $\delta(x,p)$) 
in the notation of \cite{Stem} agrees with 
$\vp_{p}(x)$ (resp., $-\ve_{p}(x)$) in our notation. Hence, 
for $x \in \CB$ and $p,\,q \in I$ with $e_{p}x \ne \bzero$, 
$\Delta_{p}\delta(x,q)$ (resp., $\Delta_{p}\ve(x,q)$) in 
the notation of \cite{Stem} agrees with 
$-\ve_{q}(e_{p}x)+\ve_{q}(x)$ 
(resp., $\vp_{q}(e_{p}x)-\vp_{q}(x)$) in our notation. 
Hence, in our notation, we can rewrite 
condition (P3) on page 4809 of \cite{Stem} as: 
$\bigl\{-\ve_{q}(e_{p}x)+\ve_{q}(x)\bigr\}+
\bigl\{\vp_{q}(e_{p}x)-\vp_{q}(x)\bigr\}=a_{pq}$ 
for $x \in \CB$ and $p,\,q \in I$ such that 
$p \ne q$ and $e_{p}x \ne \bzero$. 
From the axioms of a crystal, we have
\begin{align*}
\vp_{q}(e_{p}x)-\ve_{q}(e_{p}x) & =
\pair{h_{q}}{\wt(e_{p}x)}=
\pair{h_{q}}{\alpha_{p}}+\pair{h_{q}}{\wt x} \\
& = a_{qp}+\pair{h_{q}}{\wt x}, \\
\vp_{q}(x)-\ve_{q}(x) & =\pair{h_{q}}{\wt x}.
\end{align*}
Thus, condition (P3) on page 4809 of \cite{Stem} 
holds for the crystal graph $X$. 
Similarly, in our notation, we can rewrite 
condition (P4) on page 4809 of \cite{Stem} as: 
$-\ve_{q}(e_{p}x)+\ve_{q}(x) \le 0$ and 
$\vp_{q}(e_{p}x)-\vp_{q}(x) \le 0$ 
for $x \in \CB$ and $p,\,q \in I$ such that 
$p \ne q$ and $e_{p}x \ne \bzero$, 
which is equivalent to condition (C1). 
In addition, note that for $x \in \CB$ and $p,\,q \in I$ 
with $f_{p}x \ne \bzero$, $\nabla_{p}\delta(x,q)$ 
(resp., $\nabla_{p}\ve(x,q)$) in the notation of \cite{Stem} 
agrees with $-\ve_{q}(x)+\ve_{q}(f_{p}x)$ 
(resp., $\vp_{q}(x)-\vp_{q}(f_{p}x)$) 
in our notation. In is easy to check that 
conditions (P5) and (P6) on page 4809 of \cite{Stem} 
are equivalent to conditions (C2) and (C3), respectively. 
Similarly, it is easily seen that 
conditions (P5') and (P6') on page 4809 of \cite{Stem} 
are equivalent to conditions (C4) and (C5), 
respectively. Thus, we have shown that 
the crystal graph $X$ is $A$-regular. 

We know from \cite[\S3]{Stem} that 
the crystal graph of the crystal basis $\CB(\lambda)$ 
is $A$-regular. Also, it is obvious that 
the highest weight element $u_{\lambda}$ of $\CB(\lambda)$ 
satisfies the condition that $e_{p}u_{\lambda}=\bzero$ and 
$\vp_{p}(u_{\lambda})=\pair{h_{p}}{\lambda}$ for all $p \in I$ 
(cf. condition (C6)). 
Therefore, we conclude from \cite[Proposition~1.4]{Stem} 
that the crystal graph $X$ of the crystal $\CB$ is 
isomorphic, as an $I$-colored directed graph, to 
the crystal graph of the crystal basis $\CB(\lambda)$; 
note that $x_{0} \in \CB$ corresponds to 
$u_{\lambda} \in \CB(\lambda)$ under this isomorphism. 
Since the crystal graphs of $\CB$ and $\CB(\lambda)$ are both connected, and 
since $x_{0} \in \CB$ and $u_{\lambda} \in \CB(\lambda)$ are both of weight $\lambda$, 
it follows that the crystal $\CB$ is isomorphic to 
the crystal basis $\CB(\lambda)$. 
This proves the ``if'' part. 

The ``only if'' part is now clear from the argument above. 
Thus we have proved the theorem. 
\end{proof}

%======================%
%     BIBLIOGRAPHY     %
%======================%

{\small
\setlength{\baselineskip}{13pt}
\renewcommand{\refname}{References}

}

\end{document}